\newcommand{\mysection}[1]{\section{#1}
\setcounter{equation}{0}}
\newtheorem{theorem}{Theorem}[section]
\newtheorem{corollary}[theorem]{Corollary}
\newtheorem{lemma}[theorem]{Lemma}
\theoremstyle{definition}
\newtheorem{remark}[theorem]{Remark}
\theoremstyle{definition}
\theoremstyle{definition}
\newtheorem{assumption}[theorem]{Assumption}
\def\dashint{\operatorname%
{\,\,\text{\bf--}\kern-.98em\DOTSI\intop\ilimits@\!\!}}
\def\bR{\mathbb{R}}
\def\bZ{\mathbb{Z}}
\def\bC{\mathbb{C}}
\def\sba{\textsl{\textbf{a}}}
\def\sbx{\textsl{\textbf{x}}}
\def\sby{\textsl{\textbf{y}}}
\def\sbz{\textsl{\textbf{z}}}
\def\sbw{\textsl{\textbf{w}}}
\def\fL{\mathfrak{L}}
\def\cA{\mathcal{A}}
\def\cB{\mathcal{B}}
\def\cC{\mathcal{C}}
\def\cT{\mathcal{T}}
\def\cL{\mathcal{L}}
\def\cI{\mathcal{I}}
\newcommand{\Div}{\operatorname{div}}
\newcommand{\dist}{\text{dist}}
\begin{document}
\title[]{Elliptic equations in divergence form with partially  BMO coefficients}

\author[H. Dong]{Hongjie Dong}
\address[H. Dong]{Division of Applied Mathematics, Brown University,
182 George Street, Providence, RI 02912, USA}
\email{Hongjie\_Dong@brown.edu}
\thanks{H. Dong was partially supported by a start-up funding from the Division of Applied Mathematics of Brown University, NSF grant number DMS-0635607 from IAS, and NSF grant number DMS-0800129.}

\author[D. Kim]{Doyoon Kim}
\address[D. Kim]{Department of Mathematics, University of Southern California,
3620 South Vermont Avenue, KAP 108, Los Angeles, CA 90089-2532, USA}
\email{doyoonki@usc.edu}

\subjclass{35K15, 35J15, 35R05, 35J25}

\keywords{Second-order equations, vanishing mean oscillation, partially small BMO coefficients, Sobolev spaces, mixed norms.}

\begin{abstract}
The solvability in Sobolev spaces is proved for divergence form second order elliptic equations in the whole space, a half space, and a bounded Lipschitz domain. For equations in the whole space or a half space, the leading coefficients $a^{ij}$ are assumed to be measurable in one direction and have small BMO semi-norms  in  the other directions. For equations in a bounded domain, additionally we  assume that $a^{ij}$ have small BMO semi-norms in a neighborhood of the boundary of  the domain.
We give a unified approach of both the Dirichlet boundary problem and the conormal derivative problem.
We also investigate elliptic equations in Sobolev spaces with mixed norms under the same assumptions on the coefficients.
\end{abstract}

\maketitle

\mysection{Introduction}

We study the solvability of elliptic operators in divergence form
\begin{equation}							 \label{eq0617_02}
\cL u = (a^{ij} u_{x^i}+ a^{j}u)_{x^j} +b^{i} u_{x^i} + c u
\end{equation}
in Sobolev spaces with rough leading coefficients. Throughout the paper,
the usual summation conventions over repeated indices are enforced. We assume all the coefficients are bounded and measurable, and $a^{ij}$ are uniformly elliptic.

There have been many research activities in this direction. For divergence form elliptic equations the strongest results up to date  can be found in Byun \cite{Byun05a}, Byun and Wang \cite{ByunWang04}, \cite{ByunWang05}, and Krylov \cite{Krylov_2005}.

In \cite{Byun05a}, the $W^{1}_p$ solvability was obtained for the Dirichlet problem of divergence form elliptic equations in a Lipschitz domain with a small Lipschitz constant. For equations in a so-called Reifenberg flat domain,
the solvability of the Dirichlet problem and the conormal derivative problem was established in \cite{ByunWang04} and \cite{ByunWang05}. In those papers the coefficients $a^{ij}$ are assumed to have small BMO semi-norms and lower order terms are not included. The main tools in \cite{Byun05a}, \cite{ByunWang04}, \cite{ByunWang05} are the weak compactness, the Hardy-Littlewood maximal function, and the Vitali
covering lemma originally used by M. Safonov. Before that, the solvability for the Dirichlet and Neumann problems of divergence form elliptic equations with VMO coefficients were obtained in \cite{DFG} for  $C^{1,1}$ domains, and in \cite{AuscherQafsaoui} for $C^{1}$ domains.

In \cite{Krylov_2005}, Krylov gave a unified approach of the $L_p$ solvability of both divergence and non-divergence form parabolic and elliptic equations with leading coefficients VMO in the spatial variables (and measurable in the time variable in the parabolic case). Unlike the arguments in \cite{CFL2}, \cite{DFG} and \cite{HHH}, which are based on certain estimates of Calder\'on-Zygmund theorem and the Coifman-Rochberg-Weiss commutator
theorem, the proofs in \cite{Krylov_2005} rely mainly on pointwise estimates of sharp functions of spatial derivatives of solutions. It is worth noting that although the results in \cite{Krylov_2005} are stated for equations with VMO coefficients, the proofs there only require $a^{ij}$ to have locally small BMO semi-norms.
We also remark that for divergence form parabolic equations a similar result was also obtained in Byun \cite{Byun07} by adapting the approach in \cite{Byun05a}. Krylov's method was later improved and generalized in \cite{DK08}, \cite{KimKrylov07}-\cite{Kim07a}, \cite{Krylov_2007_mixed_VMO} and \cite{Krylov08}. With the leading coefficients in the same class, Krylov \cite{Krylov_2007_mixed_VMO} established the solvability of both divergence and non-divergence parabolic equations in mixed-norm Sobolev spaces.

There are many other results  in the literature regarding the $L_p$ theory of second order parabolic and elliptic equations with discontinuous coefficients. For non-divergence form equations, we refer the reader to \cite{MR1239929}, \cite{CFL2}, \cite{Krylov:book:2008}, \cite{Lo72}, \cite{Weid02} and references therein. For divergence form equations, see also \cite{Shen05} and references therein.


The theory of elliptic and parabolic equations with partially VMO coefficients
is originated in Kim and Krylov \cite{KimKrylov07}, where the authors proved the $W^2_p$ solvability of elliptic equations in non-divergence form with leading coefficients measurable in a fixed direction and VMO in  the others. Very recently, their result was generalized by Krylov \cite{Krylov08}, where the leading coefficients are assumed to be measurable in
one direction and VMO in the orthogonal directions in each small
ball with the direction depending on the ball. For non-divergence parabolic equations, the $W^{1,2}_{q,p}$ solvability was established in Kim \cite{Kim07a}, in which most leading coefficients are measurable in time variable as well as one spatial variable, and VMO in the other variables. We remark that to our best knowledge, at the time of this writing, all known results concerning $L_p$ solvability of elliptic and parabolic equations with partially VMO/BMO coefficients are only for non-divergence form.

In this paper we consider divergence form elliptic equations in the whole space, a half space and a Lipschitz domain with a small Lipschitz constant. We deal with equations with partially BMO leading coefficients with locally small BMO semi-norms (Theorem \ref{th081901}), a class of coefficients which is more general than those treated  previously in \cite{Krylov_2005}, \cite{Byun05a}, \cite{ByunWang04} and \cite{ByunWang05}. More precisely, we assume the coefficients $a^{ij}$ are measurable in $x^1$ direction and BMO in the other directions with locally small BMO semi-norms (see Assumption \ref{assumption20080424} for a more rigorous definition). This is the same class of coefficients considered in \cite{KimKrylov07}, in which non-divergence form elliptic equations are studied. For equations in a Lipschitz domain, additionally we  assume that $a^{ij}$ have small BMO semi-norms in a neighborhood of the boundary of the domain.
Under these assumptions, we establish the unique $W^1_p$ solvability of divergence form elliptic equations. We give a unified approach of both the Dirichlet boundary problem and the conormal derivative problem in a half space (Theorem \ref{thm6.6}, \ref{thm6.7}) and in a bounded Lipschitz domain (Theorem \ref{thmA} and \ref{thmB}). We also investigate elliptic equations in Sobolev spaces with mixed norms under the same assumption on the coefficients. We point out that, as in \cite{Krylov_2005} and \cite{Krylov_2007_mixed_VMO}, one feature of these results
is that the matrix $\{a^{ij}\}$ is not assumed to be symmetric.

One of the motivations of the paper is the following problem. Consider the equation $\left(a^{ij}u_{x^i}\right)_{x^j} = \Div g$ in $B_2$, the ball of radius $2$ centered at the origin, with zero Dirichlet boundary condition. The coefficients $a^{ij}$ are assumed to be bounded, uniformly elliptic and piecewise uniformly continuous on $B_1$ and $B_2\setminus B_1$. This is a very natural problem and the $W^1_2$ solvability of it follows immediately from the Lax-Milgram lemma. However, for the $W^1_p$ solvability when $p\neq 2$, it seems to us that none of the results above are applicable in this case. We will give a solution to the problem at the end of Section \ref{sec082001} as an application of our main results.

Our approach is based on the aforementioned method from \cite{Krylov_2005}. However, since $a^{ij}$ are merely measurable in $x^1$, we are only able to estimate the sharp function of $u_{x'}$, not the full gradient $u_x$ as in \cite{Krylov_2005}.
Here and throughout the paper, we denote $x' = (x^2,\cdots,x^d) \in \bR^{d-1}$,
so by $u_{x'}$ we mean one of $u_{x^i}$, $i = 2, \cdots, d$,
or the whole collection of them.
Roughly speaking, the main difficulty is to bound $u_{x^1}$ by $u_{x'}$. One idea in the paper is to break the `symmetry' of the coordinates so that $x^1$ is distinguished from $x'$.
Another idea is to estimate the sharp of $a^{11}u_{x^1}$ instead of $u_{x^1}$. This estimate together with a generalized Stein-Fefferman theorem proved in \cite{Krylov08} enables us to bound $u_{x^1}$. The main advantage of the approach is that here we can obtain the boundary estimate
immediately from the estimate in the whole space since the leading coefficients are allowed to be just measurable in one direction. In a forthcoming paper, we will extend our results to systems with variably partially BMO coefficients.

A brief outline of the paper:  in the next section, we introduce the notation and state the main results, Theorem \ref{th081901}, \ref{thm6.6}, \ref{thm6.7}, \ref{thmA} and \ref{thmB}. Section \ref{sec_aux} is devoted to several auxiliary results which will be used later, in which we estimate the $L_p$ norm $u_{x^1}$ by the $L_p$ norm of $u_{x'}$ (Theorem \ref{theorem5.3}). Then in Section \ref{ellSec1}, we give an estimate of the sharp function of $u_{x'}$. By combining this with Theorem \ref{theorem5.3}, we are able to prove Theorem \ref{th081901} in Section \ref{ellSec2}. Theorem \ref{thm6.6} and \ref{thm6.7} are proved in Section \ref{sec6}, while Theorem \ref{thmA} and \ref{thmB} are proved in Section \ref{sec7},  Finally, the last four sections are devoted to the mixed norm estimate.

\mysection{Main results}								 \label{sec082001}

Before we state our assumptions and main theorems, we introduce some necessary notations.
By $\bR^d$ we mean a $d$-dimensional Euclidean space and
a point in $\bR^d$ is denoted by $x=(x^1,\cdots,x^d)=(x^1,x')$.
For given two positive integers $d_1$ and $d_2$ such that
$d_1 + d_2 = d$,
we set
$$
\sbx_1 = (x^1, \cdots, x^{d_1}) \in \bR^{d_1},
\quad
\sbx_2 = (x^{d_1+1}, \cdots, x^d) \in \bR^{d_2}.
$$
That is, for example, $\sbx_1$ represents the first $d_1$ coordinates of $x \in \bR^d$.

If $\Omega$ is an open subset in $\bR^d$,
we define
$$
\| u \|_{L_{q,p}(\Omega)} := \| u \|_{L_q^{\sbx_2}L_p^{\sbx_1}(\Omega)}
= \left(\int_{\bR^{d_2}} \left( \int_{\bR^{d_1}} |u(x)|^p I_{\Omega}(x) \, d \sbx_1 \right)^{q/p} \, \sbx_2 \right)^{1/q}.
$$
Note that, in case $p = q$,  $L_p(\Omega) = L_{p,p}(\Omega) = L_p^{\sbx_2}L_p^{\sbx_1}(\Omega)$.
Set
$$
\bR^{d}_{+} = \{ x \in \bR^d: x = (x^1, \cdots, x^d), x^1 > 0 \}.
$$
A function $u$ belongs to $W_{q,p}^1(\Omega)$ if $u, u_x \in L_{q,p}(\Omega)$.
Unless specified otherwise,
by $L_p$ we mean $L_p(\bR^d)$.
Similarly, whenever we use $L_{q,p}$, $W_p^1$, $L_{p, \text{loc}}$, $W_{p,\text{loc}}$,
and $C_0^{\infty}$,
we understand that $\bR^d$ is omitted.

For a function $f$ in $\bR^{d}$, we set
\begin{equation*}
(f)_{\Omega} = \frac{1}{|\Omega|} \int_{\Omega} f(x) \, dx
= \dashint_{\Omega} f(x) \, dx,
\end{equation*}
where $|\Omega|$ is the
$d$-dimensional Lebesgue measure of $\Omega$.

Throughout the paper we assume that the coefficients $a^{ij}$, $a^{i}$, $b^i$, and $c$ are bounded by a constant $K \ge 1$.
Moreover, we assume the uniform ellipticity condition on $a^{ij}$, i.e.,
$$
\delta |\xi|^2 \le a^{ij}(x)\xi^i \xi^j
$$
for all $x$ and $\xi \in \bR^d$, where $\delta \in (0,1)$.

We need a very mild regularity assumption on the coefficients $a^{ij}$.
To present this assumption, let
$$
B_r(x) = \{ y \in \bR^d: |x-y| < r\},
\quad
B'_r(x') = \{ y' \in \bR^{d-1}: |x'-y'| < r\},
$$
$$
\Gamma_r(x) = (x^1-r, x^1+r) \times B'_r(x').
$$
Set $B_r = B_r(0)$, $B'_r = B'_r(0)$, and
$|B'_r|$ is the $d-1$-dimensional volume of $B'_r(0)$.
Denote
$$
\text{osc}_{x'}\left(a^{ij},\Gamma_r(x)\right)
= \frac{1}{2r}\int_{x^1-r}^{x^1+r}
\dashint_{B'_r(x')} \big| a^{ij}(y^1, y') - \dashint_{B'_r(x')} a^{ij}(y^1,z') \, dz' \big| \, dy' \, dy^1,
$$
where
$$
\dashint_{B'_r(x')} a^{ij}(y^1,z') \, dz'
= \frac{1}{|B'_r|} \int_{B'_r(x')} a^{ij}(y^1,z') \, dz'.
$$
Then we set
$$
a^{\#}_R = \sup_{x \in \bR^d} \sup_{r \le R}  \sup_{ij} \text{osc}_{x'} \left(a^{ij},\Gamma_r(x)\right).
$$

The following assumption  contains a parameter $\gamma>0$, which will be
specified later.
\begin{assumption}[$\gamma$]                          \label{assumption20080424}
There is a constant $R_0\in (0,1]$ such that $a_{R_0}^{\#} \le \gamma$.
\end{assumption}

We state the main results concerning elliptic equations in divergence form in the usual Sobolev spaces $W_p^1$.
For equations in Sobolev spaces with {\em mixed norms} $W_{q,p}^1$, as indicated in the introduction, our results are presented in Section \ref{sec9}.

\begin{theorem}[Equations in the whole space]
    							\label{th081901}
Let $p \in (1,\infty)$
and $f$, $g = (g_1, \cdots, g_d) \in L_p$.
Then there exists a constant $\gamma=\gamma(d,p,\delta,K)$
such that, under Assumption \ref{assumption20080424} ($\gamma$),
the following hold true.

\noindent
(i)
For any $u \in W_p^1$ satisfying
\begin{equation}							 \label{eq081902}
\cL u - \lambda u = \Div g + f,
\end{equation}
we have
\begin{equation}							 \label{eq080904}
\lambda \| u \|_{L_p} + \sqrt{\lambda} \| u_x \|_{L_p}
\le N \sqrt{\lambda} \| g \|_{L_p} + N\| f \|_{L_p},
\end{equation}
provided that $\lambda \ge \lambda_0$,
where $N$ and $\lambda_0 \ge 0$
depend only on $d$, $p$, $\delta$, $K$ and $R_0$.

\noindent
(ii)
For any  $\lambda > \lambda_0$, there exists a unique $u \in W_p^1$ satisfying \eqref{eq081902}.

\noindent
(iii)
If $a^i = b^i = c = 0$ and $a^{ij} = a^{ij}(x^1)$, i.e., measurable functions of $x^1 \in \bR$  only with no regularity assumptions,
then one can take $\lambda_0 = 0$.
\end{theorem}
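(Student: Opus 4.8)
The plan is to follow the strategy of Krylov's $L_p$ approach as adapted to the partially-BMO divergence-form setting, exploiting the auxiliary results that have been announced (in particular Theorem~\ref{theorem5.3}, which bounds $\|u_{x^1}\|_{L_p}$ by $\|u_{x'}\|_{L_p}$ plus data, and the sharp-function estimate for $u_{x'}$ to be proved in the section on interior estimates). For part~(i), the starting point is the \emph{a priori} estimate: one first reduces to the case of zero lower-order terms by absorbing $a^j u$, $b^i u_{x^i}$, $c u$ into the right-hand side --- the $a^j u$ and $cu$ terms go into $f$ (or into $g$ after writing $c u = \Div(0)+cu$), and $b^i u_{x^i}$ can be absorbed using $\sqrt\lambda$-weighted interpolation $\|u_x\|_{L_p}\le \varepsilon\sqrt\lambda\|u_x\|_{L_p}+\dots$; since these contributions carry extra powers of $\lambda^{-1/2}$ they are swallowed by the left-hand side once $\lambda_0$ is large. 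Then, with only the principal part, combine the interior sharp-function estimate for $u_{x'}$ with the Fefferman--Stein theorem and the Hardy--Littlewood maximal function theorem to get $\|u_{x'}\|_{L_p}\le$ (small constant)$\cdot\|u_x\|_{L_p}+N\sqrt\lambda\|g\|_{L_p}+N\|f\|_{L_p}/\sqrt\lambda$ for the operator without lower-order terms, after the usual scaling normalization that sets $\lambda=1$; feeding this into Theorem~\ref{theorem5.3} controls $\|u_{x^1}\|_{L_p}$ as well, and a final absorption of the small-constant term yields \eqref{eq080904}. The smallness required of $\gamma$ is exactly what makes the ``small BMO'' oscillation term in the sharp-function estimate beatable.

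For part~(ii), existence, the standard method of continuity applies: the estimate \eqref{eq080904} in part~(i) holds uniformly for the family $\cL_t = t\cL + (1-t)(\Delta - \dots)$, $t\in[0,1]$ (all members satisfy the same hypotheses with the same $\gamma$), so solvability for $t=0$ --- the constant-coefficient case $\Delta u - \lambda u = \Div g + f$, which is classical via Fourier transform or Lax--Milgram followed by a bootstrap to $W^1_p$ --- transfers to $t=1$. Uniqueness is immediate from the a priori estimate \eqref{eq080904}. One subtlety: since $p$ may differ from $2$, to run the method of continuity one needs the solution space to be fixed as $W^1_p$; this is fine because the estimate is in $W^1_p$ throughout, but one should note that the a priori estimate is stated for $u\in W^1_p$ assumed to solve the equation, so the continuity argument needs the a priori bound together with the closedness of the range, which \eqref{eq080904} provides.

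Part~(iii) is the delicate point and I expect it to be the main obstacle. Here $a^{ij}=a^{ij}(x^1)$ with no continuity in $x^1$ at all, all lower-order terms vanish, and one wants $\lambda_0=0$, i.e. the estimate $\lambda\|u\|_{L_p}+\sqrt\lambda\|u_x\|_{L_p}\le N\sqrt\lambda\|g\|_{L_p}+N\|f\|_{L_p}$ for \emph{all} $\lambda>0$, and also solvability down to $\lambda=0$ in a suitable sense (in the whole space, $\lambda>0$, but the constant is now $\lambda$-independent). The route is a scaling argument: if $u$ solves $\cL u - \lambda u = \Div g + f$ then $u_\rho(x):=u(\rho x)$ solves an equation of the same type with $\lambda$ replaced by $\rho^2\lambda$ and coefficients $a^{ij}(\rho x^1)$ --- which still depend only on the first variable, so Assumption~\ref{assumption20080424} is satisfied with $\gamma=0$ (indeed $a^\#_R=0$) for \emph{every} $R_0$, hence the constants $N,\lambda_0$ from parts~(i)--(ii) do not degenerate. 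Choosing $\rho=\lambda_0^{-1/2}$ (when $\lambda\le\lambda_0$, say, or more simply rescaling so that the new spectral parameter equals a fixed value $\ge\lambda_0$) converts the estimate with threshold $\lambda_0$ into one with threshold $0$ after tracking how each norm scales ($\|u_\rho\|_{L_p}=\rho^{-d/p}\|u\|_{L_p}$, etc.); the homogeneity of both sides of \eqref{eq080904} under this scaling is precisely balanced, so the $\lambda_0$ drops out. The thing to be careful about is that the sharp-function/Fefferman--Stein machinery used in part~(i) to get the $x'$-estimate must itself be scale-invariant in the $x^1$-independent case --- which it is, because the key local estimates for equations with coefficients depending only on $x^1$ (these are essentially one-dimensional ODE-type estimates in the $x^1$ variable combined with the constant-coefficient theory in $x'$) hold without any smallness or lower-order perturbation, so there is no loss when $\rho\to 0$ or $\rho\to\infty$. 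Establishing that the purely-$x^1$-dependent building-block estimates are genuinely $\lambda$-free is where the real work sits, but it should already be encapsulated in the auxiliary results of Section~\ref{sec_aux}.
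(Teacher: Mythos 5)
Your overall skeleton for part (i) — sharp-function estimate for $u_{x'}$, Fefferman--Stein, Hardy--Littlewood, then Theorem~\ref{theorem5.3} to recover $u_{x^1}$ — is the right one, and parts (ii) and (iii) essentially match the paper. But part (i) has a concrete gap: you feed the result into Theorem~\ref{theorem5.3} (and implicitly Lemma~\ref{lem3.52}) for an arbitrary $p\in(1,\infty)$, whereas both are proved only for $p\in(2,\infty)$. The reason is structural, not cosmetic: the argument uses $M(|u_x|^{2\tau})$ with $\tau>1$ and needs the Hardy--Littlewood bound in $L_{p/(2\tau)}$, which requires $p>2\tau>2$. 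The paper therefore first reduces to $p>2$ by duality; without that reduction your proof leaves $p\in(1,2]$ uncovered.

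A second issue concerns how $\lambda$ and the compact support enter. Theorem~\ref{theorem3.42} (the sharp-function bound) and Theorem~\ref{theorem5.3} are both stated for $\lambda=0$ \emph{and} for $u$ vanishing outside a ball of radius $\le R_0$ (resp.\ $\le \mu^{-1}R_0$). They do not apply directly to a general $u\in W^1_p$ solving \eqref{eq081902}. A ``scaling normalization that sets $\lambda=1$'' does not reduce to the $\lambda=0$ case — it still leaves a zeroth-order term — and it also rescales the radius $R_0$ in Assumption~\ref{assumption20080424}, so it is not a clean normalization in the partially-BMO setting. What the paper actually does at this point is a partition of unity into balls of radius comparable to $\mu^{-1}R_0$ combined with Agmon's trick (extending $u(x)$ to $u(x)\cos(\sqrt\lambda\, y)$ in $d+1$ variables so that the equation becomes $\lambda$-free), which is where both the threshold $\lambda_0$ and the $R_0$-dependence of $N$ are generated. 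You allude to ``Krylov's approach'' but omit both of these steps; as written, the passage from the compactly supported, $\lambda=0$ estimate to \eqref{eq080904} is not justified.
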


The next two theorems are about the Dirichlet problem and the conormal derivative problem
on a half space.

\begin{theorem}[Dirichlet problem on a half space]
                                \label{thm6.6}
Let $p \in (1,\infty)$ and $f$, $g = (g_1, \cdots, g_d) \in L_p(\bR^{d}_{+})$.
Then there exists a constant $\gamma = \gamma(d,p,\delta,K)$
such that,
under Assumption \ref{assumption20080424} ($\gamma$),
for any  $u \in W^1_p(\bR^d_{+})$ satisfying
\begin{equation}	\label{eq2008061901}
\left\{
  \begin{array}{ll}
    \cL u-\lambda u=\Div g+f & \hbox{in $\bR^d_+$} \\
    u=0 & \hbox{on $\partial \bR^d_+$}
  \end{array},
\right.
\end{equation}
we have
\begin{equation}
                                \label{eq24.2.52pm}
\sqrt{\lambda}\|u_x\|_{L_p(\bR^{d}_{+})}
+\lambda \|u\|_{L_p(\bR^{d}_{+})}
\leq N\sqrt{\lambda}\|g\|_{L_p(\bR^{d}_{+})}
+N\|f\|_{L_p(\bR^{d}_{+})},
\end{equation}
provided that $\lambda \ge \lambda_0$,
where $N$ and $\lambda_0 \ge 0$ depend only on $d$, $p$, $\delta$, $K$, and $R_0$.
Moreover, for any $\lambda > \lambda_0$ and $g,f\in L_p(\bR^{d}_{+})$, there exists a unique $u\in W^1_p(\bR^{d}_{+})$ satisfying \eqref{eq2008061901}.
\end{theorem}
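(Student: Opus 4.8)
The plan is to reduce the half-space problem to the already-established whole-space result, Theorem \ref{th081901}, by an odd-reflection argument. The key point that makes this work in the present setting is precisely the one emphasized in the introduction: the coefficients $a^{ij}$ are only required to be measurable in $x^1$ (the direction normal to $\partial\bR^d_+$) and to have small $\text{osc}_{x'}$ in the tangential directions $x'$. Reflection across $\{x^1=0\}$ therefore does not destroy any regularity that we are using. Concretely, given $u\in W^1_p(\bR^d_+)$ solving \eqref{eq2008061901}, extend $u$ to $\bR^d$ as an odd function of $x^1$, i.e. $\tilde u(x^1,x')=-u(-x^1,x')$ for $x^1<0$; since $u=0$ on $\partial\bR^d_+$ this gives $\tilde u\in W^1_p(\bR^d)$. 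One extends the coefficients by reflection as well, flipping signs so that the divergence-form operator is preserved: for $x^1<0$ set $\tilde a^{11}(x)=a^{11}(-x^1,x')$, $\tilde a^{1j}(x)=-a^{1j}(-x^1,x')$ and $\tilde a^{i1}(x)=-a^{i1}(-x^1,x')$ for $i,j\neq1$, $\tilde a^{ij}(x)=a^{ij}(-x^1,x')$ for $i,j\neq1$, and similarly $\tilde b^1,\tilde a^1$ pick up a sign while $\tilde b^i,\tilde a^i$ ($i\neq1$) do not; $\tilde c(x)=c(-x^1,x')$. The data are reflected to $\tilde g_1(x)=-g_1(-x^1,x')$, $\tilde g_i(x)=g_i(-x^1,x')$ for $i\neq1$, and $\tilde f$ odd in $x^1$.

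The next step is to check that $\tilde u$ solves $\tilde{\cL}\tilde u-\lambda\tilde u=\Div\tilde g+\tilde f$ on all of $\bR^d$ in the weak sense. On $\bR^d_+$ and on $\bR^d_-=\{x^1<0\}$ this is immediate from the chain rule and the sign choices; the content is that no distributional mass is created on the interface $\{x^1=0\}$. This is verified by testing against $\phi\in C_0^\infty(\bR^d)$, splitting the integral over $x^1>0$ and $x^1<0$, and integrating by parts on each half; the boundary terms on $\{x^1=0\}$ cancel because $u$ vanishes there (killing the terms where $\phi$ is not differentiated) and because the conormal-type boundary contributions from the two sides are equal and opposite by the sign rules on $\tilde a^{1j}$, $\tilde a^1$, $\tilde g_1$. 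One then verifies that $\tilde a^{ij}$ still satisfies the ellipticity and boundedness hypotheses with the same $\delta,K$, and — crucially — that $\tilde a^{ij}$ satisfies Assumption \ref{assumption20080424}$(\gamma)$ with the same $R_0$: the reflection is an isometry fixing the $x^1$-axis, so $\text{osc}_{x'}(\tilde a^{ij},\Gamma_r(x))$ at a point with $x^1<0$ equals $\text{osc}_{x'}(a^{ij},\Gamma_r(-x^1,x'))\le a^{\#}_{R_0}\le\gamma$, and for cylinders straddling $\{x^1=0\}$ one uses that the tangential average is taken slice-by-slice in $y^1$, so the straddling cylinder's oscillation is still controlled by the supremum of the one-sided oscillations (the odd sign on the off-diagonal entries $a^{1j}$ is irrelevant because the oscillation only sees $y'$-variation within each fixed $y^1$-slice). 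Hence $\gamma=\gamma(d,p,\delta,K)$ chosen for Theorem \ref{th081901} works here too.

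With these verifications in hand, Theorem \ref{th081901}(i) applied to $\tilde u$ yields
\[
\lambda\|\tilde u\|_{L_p}+\sqrt\lambda\|\tilde u_x\|_{L_p}
\le N\sqrt\lambda\|\tilde g\|_{L_p}+N\|\tilde f\|_{L_p}
\]
for $\lambda\ge\lambda_0=\lambda_0(d,p,\delta,K,R_0)$, and since each reflected quantity has $L_p(\bR^d)$ norm equal to $2^{1/p}$ times the corresponding $L_p(\bR^d_+)$ norm, the factors of $2^{1/p}$ cancel and \eqref{eq24.2.52pm} follows. For existence and uniqueness: uniqueness is immediate from \eqref{eq24.2.52pm} applied to the difference of two solutions (with $g=f=0$), for $\lambda>\lambda_0$. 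For existence, given $g,f\in L_p(\bR^d_+)$, reflect them to $\tilde g,\tilde f\in L_p(\bR^d)$, invoke Theorem \ref{th081901}(ii) to get a unique $\tilde u\in W^1_p(\bR^d)$ solving $\tilde{\cL}\tilde u-\lambda\tilde u=\Div\tilde g+\tilde f$; then observe that $x\mapsto-\tilde u(-x^1,x')$ is also a solution of the same equation (this uses that $\tilde g,\tilde f$ are odd in $x^1$ and that $\tilde{\cL}$ is invariant under the reflection, by construction of the extended coefficients), so by uniqueness $\tilde u$ is odd in $x^1$, hence $\tilde u=0$ on $\{x^1=0\}$ in the trace sense, and $u:=\tilde u|_{\bR^d_+}\in W^1_p(\bR^d_+)$ solves \eqref{eq2008061901}.

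The main obstacle is the interface computation in the second step: one must be scrupulous about which coefficients flip sign under reflection and check, via integration by parts against test functions supported across $\{x^1=0\}$, that the weak formulation on $\bR^d$ holds with no spurious surface measure on the hyperplane — this is exactly where the Dirichlet condition $u=0$ on $\partial\bR^d_+$ is consumed. A secondary technical point, already flagged above, is confirming that the small-BMO Assumption \ref{assumption20080424} survives reflection uniformly, including for cylinders $\Gamma_r(x)$ that cross the hyperplane; this is where it is essential that $a^{\#}_R$ is built from the \emph{tangential} oscillation $\text{osc}_{x'}$ only and imposes nothing on the $x^1$-dependence.
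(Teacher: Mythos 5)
Your proposal reproduces the paper's argument almost exactly: odd reflection of $u$ and of the data across $\{x^1=0\}$, with the off-diagonal coefficients $\tilde a^{1j},\tilde a^{i1},\tilde a^1,\tilde b^1$ and the tangential components of $g$ flipping sign, so that the Dirichlet half-space problem reduces to Theorem \ref{th081901} on all of $\bR^d$. The existence step (solve the reflected whole-space problem, observe that $x\mapsto -u(-x^1,x')$ is also a solution, invoke uniqueness to conclude the solution is odd and hence has zero trace) and the uniqueness step (odd-extend a given half-space solution and verify the whole-space weak formulation by splitting the test function into its even and odd parts) are both the same as in the paper, and the estimate follows via the factor-of-two norm comparison between a function and its odd extension (Lemma \ref{lem8.2}).

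One small overclaim worth correcting: you assert that the reflected coefficients satisfy Assumption \ref{assumption20080424} with the \emph{same} constant $\gamma$, arguing that the tangential oscillation over a straddling cylinder is controlled slice-by-slice. But for a cylinder $\Gamma_r(x)$ with $x^1-r<0<x^1+r$ one gets
\[
\text{osc}_{x'}\bigl(\tilde a^{ij},\Gamma_r(x)\bigr)
=\frac{1}{2r}\Bigl(\int_0^{r-x^1}\mathrm{I}(s)\,ds+\int_0^{x^1+r}\mathrm{I}(s)\,ds\Bigr),
\]
where $\mathrm{I}(s)$ is the tangential oscillation of the original $a^{ij}$ at level $s$ over $B'_r(x')$. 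Taking $x^1=0$ gives $r^{-1}\int_0^r\mathrm{I}$, which can be as large as twice $(2r)^{-1}\int_0^{2r}\mathrm{I}=\text{osc}_{x'}(a^{ij},\Gamma_r(r,x'))$ when $\mathrm{I}$ concentrates near $s=0$; and one cannot pass to a shorter cylinder because $\mathrm{I}$ itself depends on the $x'$-radius $r$. So the sharp statement is that $\tilde a^{ij}$ satisfies Assumption \ref{assumption20080424}$(2\gamma)$, which is exactly what the paper records. This costs nothing: one simply declares the $\gamma$ of Theorem \ref{thm6.6} to be half the $\gamma$ from Theorem \ref{th081901}.

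A second, more minor point of presentation: your interface verification is phrased as ``integrating by parts on each half and checking the boundary terms cancel.'' Since the coefficients are only measurable and solutions are merely in $W^1_p$, there are no boundary terms to integrate by parts in the classical sense; the correct (and cleaner) mechanism, which the paper uses, is the change of variables $x^1\mapsto -x^1$ on the $\{x^1<0\}$ half of the integral in the weak formulation, which turns a whole-space test function $\varphi$ into its odd part $\phi(x)=\varphi(x)-\varphi(-x^1,x')\in W^1_{p'}(\bR^d_+)$ with zero trace, at which point the half-space weak identity for $v$ with test function $\phi$ gives exactly the whole-space weak identity for $\tilde v$ with test function $\varphi$. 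The Dirichlet condition is consumed in guaranteeing that $\phi$ is an admissible test function for $v$, not in cancelling any explicit boundary integrals.
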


\begin{theorem}[Conormal derivative problem on a half space]
                                \label{thm6.7}
Let $p \in (1,\infty)$ and $f$, $g = (g_1, \cdots, g_d) \in L_p(\bR^{d}_{+})$.
Then there exists a constant $\gamma = \gamma(d, p, \delta, K)$
such that,
under Assumption \ref{assumption20080424} ($\gamma$),
for any $u \in W^1_p(\bR^d_{+})$ satisfying
\begin{equation}	\label{eq2008061902}
\left\{
  \begin{array}{ll}
    \cL u-\lambda u=\Div g+f & \hbox{in $\bR^d_+$} \\
    a^{i1}u_{x^i}+a^1u=g_1 & \hbox{on $\partial \bR^d_+$}
  \end{array},
\right.
\end{equation}
we have
\begin{equation}
                                        \label{eq27.3.49pm}
\sqrt{\lambda}\|u_x\|_{L_p(\bR^{d}_{+})}
+\lambda \|u\|_{L_p(\bR^{d}_{+})}
\leq N\sqrt{\lambda}\|g\|_{L_p(\bR^{d}_{+})}
+N\|f\|_{L_p(\bR^{d}_{+})},
\end{equation}
provided that $\lambda \ge \lambda_0$,
where $N$ and $\lambda_0$ depend only on $d$, $p$, $\delta$, $K$, and $R_0$.
Moreover, for any $\lambda > \lambda_0$ and $g,f\in L_p(\bR^{d}_{+})$, there exists a unique $u\in W^1_p(\bR^{d}_{+})$ satisfying \eqref{eq2008061902}.
\end{theorem}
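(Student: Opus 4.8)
The plan is to deduce the conormal derivative problem on the half space from the whole-space result (Theorem \ref{th081901}) via an even/odd reflection adapted to the fact that $a^{ij}$ need only be measurable in $x^1$. First I would treat the \emph{a priori} estimate \eqref{eq27.3.49pm}. Given $u \in W^1_p(\bR^d_+)$ solving \eqref{eq2008061902}, extend $u$ to $\bar u$ on $\bR^d$ by even reflection in $x^1$, i.e. $\bar u(x^1,x') = u(-x^1,x')$ for $x^1<0$. Correspondingly, reflect the coefficients: $a^{ij}$, $a^i$, $b^i$, $c$ get reflected so that the ellipticity constant, the bound $K$, and crucially the quantity $a^\#_{R_0}$ are preserved — here one uses that the oscillation in Assumption \ref{assumption20080424} is measured only in the $x'$ directions and averaged over $x^1$-intervals, so the even reflection does not create any new $x'$-oscillation; the reflected $a^{ij}$ remain merely measurable in $x^1$, which is harmless since Assumption \ref{assumption20080424}($\gamma$) only constrains the $x'$-behaviour. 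The right-hand data $g_1,\dots,g_d,f$ are reflected oddly/evenly in the standard way ($g_1$ odd, $g_j$ and $f$ even for $j\ge 2$) so that, using the conormal boundary condition to kill the boundary terms in the weak formulation, $\bar u$ solves $\bar{\cL}\bar u - \lambda \bar u = \Div \bar g + \bar f$ in the whole space $\bR^d$ in the weak sense. Then apply Theorem \ref{th081901}(i) to $\bar u$ and restore the half-space norms, noting $\|\bar g\|_{L_p(\bR^d)} \le 2^{1/p}\|g\|_{L_p(\bR^d_+)}$ etc.; this yields \eqref{eq27.3.49pm} with $\lambda_0$ and $N$ depending only on the stated quantities.

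For existence and uniqueness, uniqueness follows immediately from the a priori estimate \eqref{eq27.3.49pm} applied to the difference of two solutions (with $g=f=0$). For existence I would use the method of continuity: connect $\cL$ to the constant-coefficient operator $\Delta$ through the family $\cL_t = t\cL + (1-t)\Delta$, $t\in[0,1]$, whose coefficients satisfy the same assumptions uniformly in $t$; the a priori estimate \eqref{eq27.3.49pm} holds uniformly along this path, so solvability for $t=0$ (the Laplacian with conormal, i.e.\ Neumann, boundary condition — known, or itself obtained by reflection from the whole-space solvability in Theorem \ref{th081901}) propagates to $t=1$. The only subtlety is to set up the continuity argument in the right pair of Banach spaces: the solution space $W^1_p(\bR^d_+)$ with the conormal condition built into the weak formulation, and the data space; since the conormal condition is natural (it is part of the variational identity rather than a trace constraint), the relevant linear map $u \mapsto (\Div g + f)$ is well-defined and bounded, and the a priori bound shows it is injective with closed range, so one only needs density/surjectivity at one endpoint.

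I would expect the main obstacle to be the careful bookkeeping of the reflection at the level of the weak (distributional) formulation: one must verify that the odd/even reflection of the data, combined with the conormal boundary condition $a^{i1}u_{x^i} + a^1 u = g_1$ on $\partial\bR^d_+$, makes the boundary contributions in the integration-by-parts identity cancel exactly, so that $\bar u$ is a genuine weak solution across the hyperplane $\{x^1=0\}$ with no spurious singular measure supported there. This requires choosing the reflected lower-order coefficients $a^i$ (the $i=1$ component odd, $i\ge 2$ components even, and similarly $b^i$) consistently with the reflected $a^{ij}$ (with $a^{1j}$ and $a^{i1}$, $i\ne j$, picking up a sign), and checking that the weak formulation tested against arbitrary $\varphi\in C_0^\infty(\bR^d)$ reduces, after splitting the integral into $x^1>0$ and $x^1<0$ and changing variables, to the half-space weak formulation tested against $\varphi(x^1,x')+\varphi(-x^1,x')$ — an even test function, which is exactly the class for which the conormal problem is posed. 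Once this is in place, everything else is a direct appeal to Theorem \ref{th081901} and routine functional analysis.
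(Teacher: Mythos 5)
Your proposal matches the paper's core strategy: reflect $u$ evenly in $x^1$, reflect the coefficients and data with the odd/even signs you describe ($a^{1j},a^{j1},a^1,b^1,g_1$ odd, the rest even), observe that the conormal condition is exactly what makes this extension a weak solution of the reflected whole-space equation, and then invoke Theorem \ref{th081901} together with the norm equivalences of Lemma \ref{lem8.2}. One small inaccuracy: the reflected $\tilde a^{ij}$ satisfy Assumption \ref{assumption20080424} with $2\gamma$ rather than $\gamma$ --- near $\{x^1=0\}$ the map $y^1\mapsto|y^1|$ folds an $x^1$-interval two-to-one, so the $\text{osc}_{x'}$ quantity can double --- but this is harmless since only smallness is used. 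The genuine divergence is the existence step. The paper does not run a method of continuity on the half space: Theorem \ref{th081901} already yields the unique $u\in W^1_p(\bR^d)$ solving the reflected whole-space equation; since $u(-x^1,x')$ is also a solution, $u$ is even in $x^1$ by whole-space uniqueness, and testing the whole-space weak identity against even test functions shows that $u|_{\bR^d_+}$ solves \eqref{eq2008061902} weakly. This sidesteps both the base-case issue and the functional-analytic framing of your continuity argument (choosing the data space correctly when the map $u\mapsto\Div g+f$ is not injective in $(g,f)$); note in particular that your proposed base case --- the Neumann Laplacian on a half space ``by reflection'' --- is precisely the same reflection device one applies directly to $\cL$, so the homotopy adds no content. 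Your route is correct in principle but more roundabout. Your uniqueness argument (estimate applied to a difference) is fine; the paper instead reflects an arbitrary half-space solution evenly, shows the extension solves the reflected whole-space equation, and appeals to whole-space uniqueness, which reaches the same conclusion.
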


Solutions of \eqref{eq2008061902} are understood in the weak sense. More precisely, we say $u\in W^1_p(\bR_+^d)$ satisfies \eqref{eq2008061902} if we have
\begin{equation}
                                                \label{eq27.3.58pm}
\int_{\bR_+^d}\left(-a^{ij}u_{x^i}\phi_{x^j}-a^ju \phi_{x^j}+b^i u_{x^i}\phi+(c-\lambda)u\phi\right)\,dx=\int_{\bR_+^d}\left(-g_j\phi_{x^j}+f\phi\right)\,dx
\end{equation}
for any $\phi\in W^1_{p'}(\bR^d_+)$, where $p'$ satisfy $1/p+1/p'=1$. For discussions about the conormal derivative problem, we refer the reader to \cite{Lieb1} and \cite{Lieb2}.

Next we consider the solvability of divergence form elliptic equations in domains with the homogeneous Dirichlet boundary condition:
\begin{equation}	\label{eq27.8.51}
\left\{
  \begin{array}{ll}
    \cL u=\Div g+f & \hbox{in $\Omega$} \\
    u=0 & \hbox{on $\partial \Omega$}
  \end{array}.
\right.
\end{equation}
We shall impose a little bit more regularity assumption on $a^{ij}$ near the boundary. For any $x\in \bR^d$, denote
$$
\dist(x,\partial \Omega)=\inf_{y\in \partial\Omega}|x-y|.
$$
\begin{assumption}[$\gamma$]
                                        \label{assump1}
There is a constant $R_1\in (0,1]$ such that, for any $x_0\in \bR^d$ with $\dist(x_0,\partial\Omega)\le R_1$ and any $r\in (0,R_1]$, we have
$$
\sup_{ij}\dashint_{B_r(x_0)}|a^{ij}(x)-(a^{ij})_{B_r(x_0)}|\,dx\le \gamma.
$$
\end{assumption}

We also impose the same assumption on domains as in \cite{Byun05a}, i.e. the boundary $\partial \Omega$ of the domain $\Omega$ is locally the graph of a Lipschitz continuous function with a small Lipschitz constant. More precisely, we make the following assumption containing a parameter $\theta\in (0,1]$, which will be specified later.

\begin{assumption}[$\theta$]
                                    \label{assump2}
There is a constant $R_2\in (0,1]$ such that, for any $x_0\in \partial\Omega$ and $r\in(0,R_2]$, there exists a Lipschitz
function $\phi$: $\bR^{d-1}\to \bR$ such that
$$
\Omega\cap B_r(x_0) = \{x \in B_r(x_0)\, :\, x^1 >\phi(x')\}
$$
and
$$
\sup_{x',y'\in B_r'(x_0'),x' \neq y'}\frac {|\phi(y')-\phi(x')|}{|y'-x'|}\le \theta
$$
in some coordinate system.
\end{assumption}
Note that all $C^1$ domains satisfy this assumption for any $\theta>0$.

\begin{theorem}[Dirichlet problem on a bounded domain]
                                            \label{thmA}
Let $p\in (1,\infty)$ and $\Omega$ be a bounded domain. Assume $a^i_{x^i}+c\le 0$ in $\Omega$ in the weak sense. Then there exist $\gamma=\gamma(d,p,\delta,K)$ and $\theta=\theta(d,p,\delta,K)$ such that, under Assumption \ref{assumption20080424} ($\gamma$), \ref{assump1} ($\gamma$), and Assumption \ref{assump2} ($\theta$), for any $f$, $g = (g_1, \cdots, g_d) \in L_{p}(\Omega)$ there exists a unique $u\in W^1_p(\Omega)$ satisfying \eqref{eq27.8.51}.
Moreover, we have
\begin{equation}
                            \label{eq27.8.52}
\|u\|_{W^1_p(\Omega)}\le N\|f\|_{L_p(\Omega)}+N\|g\|_{L_p(\Omega)},
\end{equation}
where $N$ is independent of $f,g$ and $u$.
\end{theorem}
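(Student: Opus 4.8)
The plan is to reduce the estimate \eqref{eq27.8.52} and the existence assertion to the whole-space result (Theorem \ref{th081901}) and the half-space Dirichlet result (Theorem \ref{thm6.6}) by a partition-of-unity argument together with a boundary flattening, and then to pass from a resolvent equation $\cL u-\lambda u=\Div g+f$ with $\lambda$ large down to $\lambda=0$ by a Fredholm argument, in which the sign condition $a^i_{x^i}+c\le 0$ enters only at the last step.

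First I would establish the a priori estimate. Cover $\bar\Omega$ by finitely many balls $B^{(k)}=B_{r_k}(x_k)$ of radius comparable to $\min\{R_0,R_1,R_2\}$, each either contained in $\Omega$ or centred on $\partial\Omega$, with a subordinate partition of unity $\{\zeta_k\}$. If $B^{(k)}$ is interior, $\zeta_k u$ satisfies on $\bR^d$ an equation of the form \eqref{eq081902} whose right side is $\zeta_k(\Div g+f)$ plus commutator terms, supported in $B^{(k)}$, involving only $u$ and $u_x$ with bounded coefficients, and Theorem \ref{th081901}(i) applies since the $a^{ij}$ already satisfy Assumption \ref{assumption20080424} globally. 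If $B^{(k)}$ is a boundary ball, I would flatten $\partial\Omega\cap B^{(k)}$ by the bi-Lipschitz map $y=(x^1-\phi(x'),x')$ furnished by Assumption \ref{assump2} ($\theta$), which is $O(\theta)$-close to the identity; the homogeneous Dirichlet condition is preserved, the equation becomes a divergence-form equation on a half space, and --- because Assumption \ref{assump1} ($\gamma$) gives the $a^{ij}$ full (isotropic) small BMO near $\partial\Omega$ rather than merely the partial smallness of Assumption \ref{assumption20080424} --- the transformed coefficients, suitably extended to all of $\bR^d_+$, again satisfy Assumption \ref{assumption20080424} but with $\gamma$ replaced by $N(\gamma+K\theta)$; hence Theorem \ref{thm6.6} applies once $\gamma$ and $\theta$ are chosen small depending on $d,p,\delta,K$. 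Summing the local estimates, the contributions of $\|u_x\|_{L_p(\Omega)}$ and $\|u\|_{L_p(\Omega)}$ from commutator and lower-order terms appear on the right side weighted by $1/\sqrt\lambda$ and $1$ respectively, thanks to the $\sqrt\lambda$, $\lambda$ weights in \eqref{eq080904} and \eqref{eq24.2.52pm}, so for $\lambda\ge\lambda_1=\lambda_1(d,p,\delta,K,\Omega,R_0,R_1,R_2)$ they absorb into the left side and give
\begin{equation*}
\lambda\|u\|_{L_p(\Omega)}+\sqrt\lambda\|u_x\|_{L_p(\Omega)}\le N\sqrt\lambda\|g\|_{L_p(\Omega)}+N\|f\|_{L_p(\Omega)}
\end{equation*}
for solutions of $\cL u-\lambda u=\Div g+f$, $u=0$ on $\partial\Omega$; taking $\lambda=\lambda_1$ and replacing $f$ by $f-\lambda_1 u$ yields $\|u\|_{W^1_p(\Omega)}\le N(\|f\|_{L_p(\Omega)}+\|g\|_{L_p(\Omega)}+\|u\|_{L_p(\Omega)})$ for solutions of \eqref{eq27.8.51}.

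For solvability I would first handle $\lambda\ge\lambda_1$: the same partition of unity applied to the model solutions from Theorems \ref{th081901} and \ref{thm6.6} produces an operator $T$ with $(\cL-\lambda)T=I+S$, where $S$ collects the localization errors; these are lower order and carry the $1/\sqrt\lambda$ gain of the model estimates, so $\|S\|\le 1/2$ for $\lambda$ large and $T(I+S)^{-1}$ is a right inverse, which together with the a priori estimate makes the zero-boundary realization of $\cL-\lambda$ on $W^1_p(\Omega)$ invertible. To reach $\lambda=0$, note that $(\cL-\lambda_1)^{-1}$ maps $L_p(\Omega)$ into $W^1_p(\Omega)$, hence compactly into $L_p(\Omega)$ (Rellich--Kondrachov, $\Omega$ bounded Lipschitz), so $\cL u=\Div g+f$ is a compact perturbation of $\cL u-\lambda_1 u=\Div g+(f-\lambda_1 u)$ and the Fredholm alternative reduces the claim to uniqueness: if $u\in W^1_p(\Omega)$, $u=0$ on $\partial\Omega$, and $\cL u=0$, then $u=0$. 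For $p\ge 2$ this follows from the $W^1_2$ case, which is the weak maximum principle (equivalently, Lax--Milgram with the coercivity supplied by $a^i_{x^i}+c\le 0$); for $p<2$, writing $u=(\cL-\lambda_1)^{-1}(-\lambda_1 u)$ and using Sobolev embedding one bootstraps $u\in W^1_q(\Omega)$ along a geometrically increasing sequence of exponents until $q\ge 2$, reducing to the previous case. Finally \eqref{eq27.8.52} follows either from the resulting invertibility and the closed graph theorem or from the a priori estimate via a compactness--contradiction argument using $W^1_p(\Omega)\hookrightarrow\hookrightarrow L_p(\Omega)$, which removes $\|u\|_{L_p(\Omega)}$ from the right side.

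The step I expect to be the main obstacle is the boundary analysis: showing that after the Lipschitz flattening the transformed leading coefficients still satisfy the partial small-BMO Assumption \ref{assumption20080424} with a constant that can be made as small as needed. This rests on the product bound $\mathrm{osc}(ab)\lesssim\|a\|_\infty\,\mathrm{osc}(b)+\|b\|_\infty\,\mathrm{osc}(a)$ applied to products of the $a^{ij}$ with the (merely bounded, but $\theta$-small) entries of $\nabla\phi$, combined with Assumption \ref{assump1}, and on careful bookkeeping of how $\gamma$, $\theta$, $R_0$, $R_1$, $R_2$ and the domain-dependent number of covering balls enter the constants and the admissible smallness thresholds. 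A secondary technical point is choosing the extensions of the flattened coefficients outside each localization ball so that the global hypotheses of Theorem \ref{thm6.6} are met while keeping uniform ellipticity and the small-BMO bound.
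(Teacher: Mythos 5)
Your proposal is correct and follows the same skeleton as the paper---interior estimate from Theorem \ref{th081901} via cutoff, boundary estimate from Theorem \ref{thm6.6} after a Lipschitz flattening, an $\mathrm{osc}$-product bound to show the transformed coefficients have BMO modulus $\lesssim\gamma+\theta$, a coefficient extension by a cutoff, and a partition of unity followed by choosing $\lambda$ large to absorb $\|u_x\|_{L_p(\Omega)}$---but you diverge from the paper in the final steps, where the paper's route is shorter. For solvability the paper does not build a parametrix $T$ with $(\cL-\lambda)T=I+S$: it restricts to $p\in(2,\infty)$ (handling $p<2$ by duality), notes that for bounded $\Omega$ the data lie in $L_2(\Omega)$ so the classical $W^1_2$ solvability (Theorem \ref{thm9.1}, which uses $a^i_{x^i}+c\le0$) already furnishes a solution, and then invokes the method of continuity, so that only the a priori estimate \eqref{eq27.8.52} remains to be proved. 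More significantly, to remove $\|u\|_{L_p(\Omega)}$ from the right side the paper does not pass through Fredholm theory, compactness--contradiction, or the closed graph theorem: it interpolates $\|u\|_{L_p}\le N(\epsilon)\|u\|_{L_2}+\epsilon\|u\|_{L_{p_1}}$ for a slightly larger $p_1$, controls $\|u\|_{L_{p_1}}$ by $\|u_x\|_{L_p}$ via Poincar\'e--Sobolev and absorbs it, and then bounds the remaining $\|u\|_{L_2}$ directly by the $W^1_2$ estimate \eqref{eq27.9.46}. Your compactness--contradiction alternative is legitimate (it requires precisely the uniqueness you obtain from the maximum principle plus bootstrap), but it is more circuitous and reproves information the $L_2$ theory already supplies; the paper's trick of anchoring the estimate on the available $L_2$ theory keeps the final step elementary and self-contained. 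One small caution: your identification of the $W^1_2$ uniqueness with ``Lax--Milgram coercivity supplied by $a^i_{x^i}+c\le0$'' is loose---the sign condition does not give coercivity of the bilinear form by itself; the paper simply cites the weak maximum principle for this.
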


Our last result is about the solvability of divergence form elliptic equations in domains with the conormal derivative boundary condition:
\begin{equation}	\label{eq27.9.21}
\left\{
  \begin{array}{ll}
    \cL u=\Div g+f & \hbox{in $\Omega$} \\
    a^{ij}u_{x^i}n^j+a^j u n^j=g_j n^j & \hbox{on $\partial \Omega$}
  \end{array},
\right.
\end{equation}
where $n=(n^1,\cdots,n^d)$ is the outward normal direction of $\partial\Omega$, which is defined almost everywhere on $\partial \Omega$. Like before, solutions of \eqref{eq27.9.21} are understood in the weak sense. More precisely, we say $u\in W^1_{p}(\Omega)$ satisfies \eqref{eq27.9.21} if we have
\begin{equation}
                                                \label{eq27.9.28pm}
\int_{\Omega}\left(-a^{ij}u_{x^i}\phi_{x^j}-a^ju \phi_{x^j}+b^i u_{x^i}\phi+ cu\phi \right)\,dx
=\int_{\Omega}\left(-g_j\phi_{x^j}+f\phi\right)\,dx,
\end{equation}
for any $\phi\in W^1_{p'}(\Omega)$.

\begin{theorem}[Conormal derivative problem on a bounded domain]
                                        \label{thmB}
Let $p\in (1,\infty)$ and $\Omega$ be a bounded domain. Assume $a^i_{x^i}+c\le 0$ in $\Omega$ in the weak sense. Then there exist $\gamma=\gamma(d,p,\delta,K)$ and $\theta=\theta(d,p,\delta,K)$ such that, under Assumption \ref{assumption20080424} ($\gamma$), \ref{assump1} ($\gamma$), and Assumption \ref{assump2} ($\theta$),

\noindent
(i) If in the weak sense $a^i_{x^i}+c\equiv 0$ in $\Omega$ and $a^i n^i=0$ on $\partial \Omega$, then for any $f$, $g = (g_1, \cdots, g_d) \in L_{p}(\Omega)$, the equation \eqref{eq27.9.21} has a unique up to a constant solution $u\in W^1_p(\Omega)$ provided that $b^i=c=0$ and $\int_\Omega f\,dx=0$.
Moreover, we have
\begin{equation*}
\|u_x\|_{L_p(\Omega)}\le N\|f\|_{L_p(\Omega)}+N\|g\|_{L_p(\Omega)}.
\end{equation*}

\noindent
(ii) Otherwise, the solution is unique and we have
\begin{equation*}
\|u\|_{W^1_p(\Omega)}\le N\|f\|_{L_p(\Omega)}+N\|g\|_{L_p(\Omega)}.
\end{equation*}
The constant $N$ is independent of $f,g$ and $u$.
\end{theorem}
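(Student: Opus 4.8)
\emph{Proof proposal.} The plan is to mirror the proof of Theorem \ref{thmA}, using the conormal half-space estimate (Theorem \ref{thm6.7}) in place of the Dirichlet one (Theorem \ref{thm6.6}). The three ingredients are a localized a priori estimate carrying a lower-order term, removal of that term, and the method of continuity. All constants will be allowed to depend on $d,p,\delta,K,R_0,R_1,R_2$.

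First I would establish, for any $u\in W^1_p(\Omega)$ satisfying \eqref{eq27.9.21},
\begin{equation*}
\|u\|_{W^1_p(\Omega)}\le N\|f\|_{L_p(\Omega)}+N\|g\|_{L_p(\Omega)}+N\|u\|_{L_p(\Omega)}.
\end{equation*}
Fix a finite cover of $\overline\Omega$ by balls of radius $r_0\sim\min\{R_0,R_1,R_2\}$ with a subordinate partition of unity $\{\zeta_k\}$, and add $\lambda u$ to both sides of \eqref{eq27.9.21} so that the large-$\lambda$ estimates apply to $f-\lambda u\in L_p$. For a ball whose double lies in $\Omega$, the function $\zeta_k u$ solves a whole-space equation $\cL(\zeta_k u)-\lambda(\zeta_k u)=\Div\tilde g_k+\tilde f_k$, where the commutator terms $\tilde g_k,\tilde f_k$ are controlled by $\|g\|_{L_p},\|f\|_{L_p},\|u\|_{L_p}$ and $\|u_x\|_{L_p}$ on $\mathrm{supp}\,\zeta_k$, so Theorem \ref{th081901} applies. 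For a ball centered near $\partial\Omega$ I would flatten the boundary by the bi-Lipschitz map $\Phi$ associated with Assumption \ref{assump2}, which satisfies $\|D\Phi-I\|_{L_\infty}\lesssim\theta$; since the weak formulation \eqref{eq27.9.28pm} is invariant under such changes of variables — test functions map to test functions and the bilinear form maps to that of the transformed operator — the pushforward of $\zeta_k u$ solves a conormal problem on $\bR^d_+$ whose leading coefficients equal $a^{ij}\circ\Phi$ times $L_\infty$ factors lying within $O(\theta)$ of $\delta^{ij}$. By Assumption \ref{assump1} and the elementary bound $\mathrm{osc}(fh)\le\|f\|_{L_\infty}\mathrm{osc}(h)+\|h\|_{L_\infty}\mathrm{osc}(f)$, these coefficients have cylinder BMO semi-norms $\lesssim\gamma+\theta$, so after extending them off $\mathrm{supp}\,\zeta_k$ without increasing their BMO, Theorem \ref{thm6.7} applies once $\gamma$ and $\theta$ are small, which fixes the admissible $\gamma(d,p,\delta,K)$ and $\theta(d,p,\delta,K)$. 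Summing over $k$, absorbing the commutator terms for $\lambda$ large (so that $N\|u_x\|_{L_p}\le\tfrac12\sqrt\lambda\|u_x\|_{L_p}$ and $N(1+\sqrt\lambda)\|u\|_{L_p}\le\tfrac12\lambda\|u\|_{L_p}$), and undoing the shift yields the displayed estimate.

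Next I would remove $N\|u\|_{L_p(\Omega)}$. For this one needs uniqueness of the homogeneous problem: in case (ii), testing \eqref{eq27.9.28pm} with $\phi=u$ when $p=2$ and invoking $a^i_{x^i}+c\le0$ together with ellipticity forces $u=0$, and for general $p$ one combines a duality argument with the adjoint conormal problem (which has the same structure) and the local regularity supplied by the a priori estimates; in case (i) the same energy identity shows the homogeneous solutions are exactly the constants. Granting this, a standard compactness argument based on the compact embedding $W^1_p(\Omega)\subset L_p(\Omega)$ for bounded Lipschitz $\Omega$ (if the estimate failed, extract $u_n\to u$ in $L_p$ from a normalized sequence, note $\{u_n\}$ is Cauchy in $W^1_p$ by applying the estimate to differences, and reach a contradiction via uniqueness) upgrades the bound to $\|u\|_{W^1_p(\Omega)}\le N(\|f\|_{L_p(\Omega)}+\|g\|_{L_p(\Omega)})$ in case (ii), and, after restricting to $\{\int_\Omega u\,dx=0\}$ and using Poincar\'e's inequality, to $\|u_x\|_{L_p(\Omega)}\le N(\|f\|_{L_p(\Omega)}+\|g\|_{L_p(\Omega)})$ in case (i); the compatibility condition $\int_\Omega f\,dx=0$ there is forced by testing \eqref{eq27.9.28pm} with $\phi\equiv1$. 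Existence then follows from the method of continuity along $\cL_t=t\cL+(1-t)(\Delta-1)$ in case (ii) and $\cL_t=t\cL+(1-t)\Delta$ on mean-zero functions in case (i); each $\cL_t$ retains the sign condition and the coefficient hypotheses, the $t=0$ endpoint is classical (or itself covered by the interior/boundary scheme above), and the a priori bound just obtained is uniform in $t$.

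The main obstacle is the boundary step: one must verify that the conormal condition is genuinely preserved under the flattening — this is exactly where the divergence structure is used — and that the transformed leading coefficients still satisfy the partially small BMO hypothesis, which is what ties the size of $\theta$ to $d,p,\delta,K$. A secondary difficulty is the uniqueness for $p\neq2$ that is needed to discard the lower-order term $\|u\|_{L_p(\Omega)}$.
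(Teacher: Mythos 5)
Your proposal follows the same overall architecture as the paper's proof: partition of unity, interior estimate from Theorem \ref{th081901}, boundary flattening via the bi-Lipschitz map and then the conormal half-space estimate of Theorem \ref{thm6.7}, absorption of $\|u_x\|_{L_p}$ by taking $\lambda$ large, and finally removal of the residual $\|u\|_{L_p}$ term. This is indeed how the paper proceeds (its Lemma \ref{lem9.2}, Lemma \ref{lem9.7}, and the analogue of \eqref{eq5.29pm}). Where you diverge is in the last step: you propose a compactness-and-contradiction argument to discard $\|u\|_{L_p}$, whereas the paper uses the interpolation bound $\|u\|_{L_p(\Omega)}\le N(\epsilon)\|u\|_{L_2(\Omega)}+N_4\epsilon\|u_x\|_{L_p(\Omega)}$ (valid after first restricting to $p>2$ and handling $p<2$ by duality) together with the $W^1_2$-solvability of the conormal problem (Theorem \ref{thm9.6}, proved via Lax--Milgram, Fredholm alternative, and the strong maximum principle). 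Both routes work; the interpolation route is more direct and avoids introducing uniqueness as a hypothesis for the estimate itself.

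There is, however, a genuine gap in your uniqueness argument for case (ii). Testing \eqref{eq27.9.28pm} with $\phi=u$ gives
\begin{equation*}
\int_\Omega a^{ij}u_{x^i}u_{x^j}\,dx=\int_\Omega\left(-a^ju\,u_{x^j}+b^iu_{x^i}u+cu^2\right)dx,
\end{equation*}
and the $b^i$-term has no sign. Moreover, the sign condition $a^i_{x^i}+c\le0$ is defined in the paper only against nonnegative $\phi\in C_0^1(\Omega)$, so it cannot be applied to $\phi=u^2$, which is not compactly supported (and one would pick up a boundary contribution involving $a^in^iu^2$ that is not controlled in case (ii)). Hence ``energy $+$ sign condition'' does not force $u=0$, and your compactness step has nothing to contradict. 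The paper circumvents this by appealing to the strong maximum principle for the conormal derivative problem (Lemma \ref{lem9.5}, citing \cite{Gilbarg&Trudinger:book:1983} and \cite{Lieb}), which yields that the only solutions of the homogeneous problem are constants, and then reads off $u\equiv0$ (case (ii)) or $u\equiv C$ (case (i)) from the definitions \eqref{eq27.9.28pm} and \eqref{eq0831}. You should replace the energy argument by this maximum-principle input; with that fix, the rest of your scheme — including the observation that the conormal weak formulation is preserved under bi-Lipschitz flattening and that the transformed coefficients have BMO $\lesssim\gamma+\theta$ — is correct and matches the paper.
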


Here, by $a^i_{x^i}+c\le 0$ in $\Omega$, we mean
$$
\int_{\Omega} (-a^i \phi_{x^i}+c\phi)\,dx\le 0
$$
for any nonnegative $\phi\in C_0^1(\Omega)$. By $a^i_{x^i}+c\equiv 0$ in $\Omega$ and $a^i n^i=0$ on $\partial \Omega$, we mean
\begin{equation}
                                \label{eq0831}
\int_{\Omega} (-a^i \phi_{x^i}+c\phi)\,dx=0.
\end{equation}
for any $\phi\in W^1_2(\Omega)$.

Restricted to equations in Lipschitz domains, Theorem \ref{thmA} and \ref{thmB} improve the previous results in \cite{Byun05a}, \cite{ByunWang04} and \cite{ByunWang05} in two aspects: first we only assume that the leading coefficients have partially small BMO semi-norms in the interior of the domain; second we also allow lower order terms. At the time of this writing it is not clear to us whether our method can be extended to deal with equations in Reifenberg flat domains.
We remark that for the Poisson equation in arbitrary Lipschitz domains but with a restricted range of $p$, the solvability result was established by Jerison and Kenig \cite{JeKe} (see also \cite{Shen05} for a generalization to equations with VMO coefficients).

We end this section by giving an example dealing with elliptic equations with piecewise continuous leading coefficients on a bounded domain.
This is another nice application of Theorem \ref{th081901}, showing the possibility that the results in this paper can be applied to many different equations with {\em not necessarily continuous coefficients}.
For simplicity, consider
\begin{equation}							 \label{eq092501}
\left(a^{ij}u_{x^i}\right)_{x^j} = \Div g
\quad \text{in} \,\, B_2,
\quad
u|_{\partial B_2} = 0,
\end{equation}
where each $a^{ij}$ is piecewise uniformly continuous on $B_1$ and $B_2 \setminus B_1$.
As always, $a^{ij}$ are assumed to be uniformly elliptic.
For the solvability of the equation \eqref{eq092501} in $W_p^1(B_2)$,
Theorem \ref{thmA} is not applicable because the coefficients $a^{ij}$ do not have partially small BMO semi-norms in any fixed directions.
However, upon having an appropriate partition of unity and change of variables,
the interior estimate is derived from the $L_p$-estimate of equations with piecewise continuous coefficients.
Here by `piecewise continuous coefficients' we mean coefficients $a^{ij}$ continuous on $\bR^d_+$ and on $\bR^d \setminus \bR^d_+$.
Needless to say, this class of coefficients satisfies the assumptions of Theorem \ref{th081901}. The interior and boundary estimates give us
$$
\|u_x\|_{L_p(B_2)}\le N\|g\|_{L_p(B_2)}+N\|u\|_{L_p(B_2)}.
$$
Then, for $p>2$, one can use the argument in the proof of Theorem \ref{thmA} below to absorb the term $N\|u\|_{L_p(B_2)}$ to the left-hand side. Thus we obtain an estimate as in Theorem \ref{thmA}. The estimate when $p\in (1,2)$ follows from the duality argument.
Consequently, for a given $g \in L_p(B_2)$, $1 < p < \infty$, there exists a unique $u \in W_p^1(B_2)$
satisfying \eqref{eq092501}.

\mysection{Auxiliary results for equations with measurable coefficients}
\label{sec_aux}

In this section
we set
$$
\cL_0 u = \left( a^{ij} u_{x^i} \right)_{x^j},
$$
and we do not impose any regularity assumptions on the
coefficients of the operator $\cL_0$, except $a^{11}$.
The coefficient $a^{11}$ is assumed to be a measurable function of $x^1$ only or satisfying
\begin{assumption}[$\gamma$]                          \label{assumption20080424a}
There is a constant $R_0\in (0,1]$ such that $a_{R_0}^{11,\#} \le \gamma$.
\end{assumption}
Here $\gamma>0$ is a constant to be specified, and
$$
a^{11,\#}_R = \sup_{x \in \bR^d} \sup_{r \le R}  \text{osc}_{x'} \left(a^{11},\Gamma_r(x)\right).
$$
However, in Theorem \ref{theorem08061901} all coefficients including $a^{11}$ are measurable functions of $x \in \bR^d$ with no regularity assumptions.

The first result is the classical $L_2$-estimate for elliptic operators in divergence form with measurable coefficients.

\begin{theorem}			\label{theorem08061901}
There exists $N = N(d, \delta)$
such that, for any $\lambda \ge 0$,
$$
\sqrt{\lambda}\| u_x \|_{L_2} + \lambda \| u\|_{L_2} \le N \left( \sqrt{\lambda}\| g \|_{L_2} + \| f \|_{L_2} \right),
$$
provided that $u \in W_2^1$, $f$, $g = (g_1, \cdots, g_d) \in L_2$,
and
\begin{equation}							 \label{eq080501}
\cL_0 u - \lambda u = \Div g  + f.
\end{equation}
Furthermore, for any $\lambda > 0$ and $f$, $g\in L_2$, there exists a unique solution $u\in W^1_2$ to
the equation \eqref{eq080501}.
\end{theorem}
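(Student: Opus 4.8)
The plan is to establish the a priori estimate first and then derive existence and uniqueness by a standard functional-analytic argument. For the estimate, I would test the equation \eqref{eq080501} against $u$ itself (legitimate since $u \in W_2^1$, after an approximation argument using $C_0^\infty$ functions or truncations if needed to handle the behavior at infinity). This gives
\begin{equation*}
\int_{\bR^d} a^{ij} u_{x^i} u_{x^j}\,dx + \lambda \int_{\bR^d} u^2\,dx = \int_{\bR^d}\left( g_j u_{x^j} - f u\right)dx.
\end{equation*}
The uniform ellipticity bounds the first term on the left below by $\delta \|u_x\|_{L_2}^2$, while Cauchy--Schwarz and Young's inequality on the right, absorbing $\varepsilon\|u_x\|_{L_2}^2$ and $\varepsilon\lambda\|u\|_{L_2}^2$ into the left side, yield $\|u_x\|_{L_2}^2 + \lambda\|u\|_{L_2}^2 \le N(\delta)\left(\|g\|_{L_2}^2 + \lambda^{-1}\|f\|_{L_2}^2\right)$ for $\lambda > 0$; multiplying through and taking square roots produces the stated inequality. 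For $\lambda = 0$ the claimed estimate only asserts control of $\sqrt{\lambda}\|u_x\|$ and $\lambda\|u\|$, both of which vanish, so nothing is needed there; alternatively the same computation gives $\delta\|u_x\|_{L_2}^2 \le \|g\|_{L_2}\|u_x\|_{L_2} + \|f\|_{L_2}\|u\|_{L_2}$, and the stated form with $\lambda=0$ is trivial.

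For existence and uniqueness when $\lambda > 0$, I would invoke the Lax--Milgram lemma. Define the bilinear form
\begin{equation*}
B[u,v] = \int_{\bR^d}\left( a^{ij} u_{x^i} v_{x^j} + \lambda u v\right)dx
\end{equation*}
on the Hilbert space $W_2^1 = H^1(\bR^d)$. Boundedness of $B$ is immediate from the bound $|a^{ij}| \le K$ and Cauchy--Schwarz. Coercivity, $B[u,u] \ge \delta\|u_x\|_{L_2}^2 + \lambda\|u\|_{L_2}^2 \ge \min(\delta,\lambda)\|u\|_{W_2^1}^2$, follows from uniform ellipticity. The right-hand side of \eqref{eq080501} defines the bounded linear functional $v \mapsto \int_{\bR^d}(-g_j v_{x^j} + f v)\,dx$ on $W_2^1$. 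Lax--Milgram then gives a unique $u \in W_2^1$ with $B[u,v] = \int(-g_j v_{x^j} + f v)\,dx$ for all $v \in W_2^1$, which is exactly the weak formulation of \eqref{eq080501}; uniqueness is part of the conclusion. One subtlety is that the matrix $\{a^{ij}\}$ is not assumed symmetric, but Lax--Milgram does not require symmetry, so this causes no difficulty.

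I do not expect a genuine obstacle here; this is the classical energy method and Lax--Milgram, and the theorem is stated precisely so that it can serve as the $L_2$ base case for the later $L_p$ theory. The only points requiring minor care are: the approximation argument justifying that $u$ is an admissible test function (handled by density of $C_0^\infty$ in $W_2^1$ together with the boundedness of all coefficients, so both sides of the identity pass to the limit), and keeping track of the $\sqrt{\lambda}$ scaling so that the final inequality has the homogeneity claimed in the statement. The asymmetry of $\{a^{ij}\}$ and the total absence of regularity assumptions on the coefficients are exactly the situations the $L_2$ theory handles effortlessly, which is why this result is placed at the start of the auxiliary section.
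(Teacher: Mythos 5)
Your a priori estimate is obtained exactly as in the paper: test against $u$, use uniform ellipticity on $\int a^{ij}u_{x^i}u_{x^j}\,dx$, and apply Young's inequality to the right-hand side, keeping the $\lambda$-weights so the final inequality has the stated homogeneity. The only place you diverge from the paper is in how existence and uniqueness are obtained. The paper dispatches this in one sentence by the method of continuity, deforming $\cL_0-\lambda$ to a known solvable operator (such as $\Delta-\lambda$) and using the uniform a priori bound along the path; you instead apply Lax--Milgram directly to the bilinear form $B[u,v]=\int(a^{ij}u_{x^i}v_{x^j}+\lambda uv)\,dx$ on $W_2^1$. Both are correct and equally standard. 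Lax--Milgram is slightly more self-contained (no reference solvable operator needed) and you correctly observe that it does not require $\{a^{ij}\}$ to be symmetric; the continuity method is terser once one takes the constant-coefficient solvability for granted, which the paper does. Either way the key content, the energy estimate, is the same, and your treatment of the density/approximation point and of the $\lambda=0$ case is fine.
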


\begin{proof}
We present a proof for the sake of completeness. Due to the method of continuity it is enough to prove the estimate.
Moreover, by the denseness of $C_0^{\infty}$ in $W_2^1$ it suffices to consider $u \in C_0^{\infty}$.
Then from the equation and the uniform ellipticity condition it follows that
$$
\delta \int_{\bR^d} |u_{x}|^2 \, dx + \lambda \int_{\bR^d} |u|^2 \, dx \le \int_{\bR^d} a^{ij} u_{x^i} u_{x^j} \, dx
+ \lambda \int_{\bR^d} |u|^2 \, dx
$$
$$
= \int_{\bR^d} g_i u_{x^i} \, dx - \int_{\bR^d} f u \, dx
$$
$$
\le \delta/2 \int_{\bR^d} |u_{x^i}|^2 \, dx + N \int_{\bR^d} |g|^2 \, dx
+ \lambda/2 \int_{\bR^d} |u|^2 \, dx + \frac{N}{\lambda} \int_{\bR^d} |f|^2 \, dx,
$$
where $N = N(d, \delta)$. This finishes the proof.
\end{proof}

If the above operator $\cL_0$ is replaced by the Laplace operator $\Delta$,
it is well known that the result as in Theorem \ref{theorem08061901}
holds true not only for $p = 2$ but also for $p \in (1,  \infty)$.
More precisely,
if $\lambda > 0$ and $f$, $g \in L_p$,
then there exists a unique solution $u \in W_p^1$ to the equation
$\Delta u - \lambda u = \Div g + f$.
As above, we have
$$
\| u_x \|_{L_p} + \sqrt{\lambda} \| u\|_{L_p} \le N \left( \| g \|_{L_p} + \lambda^{-1/2}\| f \|_{L_p} \right)
$$
for all $\lambda > 0$.
Using this result, we prove the following theorem.

\begin{theorem}							\label{th080601}
Let $p \in (1,\infty)$, $\lambda > 0$, $\kappa \ge 4$, and $r > 0$.
Assume that $u \in W_{p, \text{loc}}^1$, $f$, $g = (g_1, \cdots, g_d) \in L_{p,\text{loc}}$,
and $\Delta u - \lambda u = \Div g + f$ in $B_{\kappa r}$.
Then there exists a constant $N = N(d, p)$ such that
$$
\dashint_{B_r} | u_x - \left( u_x \right)_{B_r} |^p \, dx
\le N \kappa^{-p} \left( |u_x|^p + \lambda^{p/2}|u|^p\right)_{B_{\kappa r}}
+ N \kappa^{d} \left( |g|^p + \lambda^{-p/2}|f|^p\right)_{B_{\kappa r}}.
$$
\end{theorem}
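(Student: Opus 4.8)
The plan is to split $u = v + w$ on $B_{\kappa r}$, where $v$ solves the constant-coefficient (here Laplace) resolvent equation with zero boundary data and the same right-hand side, and $w$ is the remaining caloric-type piece which is $\lambda$-harmonic (i.e.\ $\Delta w - \lambda w = 0$) in $B_{\kappa r/2}$ or so. Concretely, first I would let $v \in W^1_p(B_{\kappa r})$ solve $\Delta v - \lambda v = \Div g + f$ in $B_{\kappa r}$ with $v = 0$ on $\partial B_{\kappa r}$; the $W^1_p$ solvability and the resolvent estimate $\|v_x\|_{L_p} + \sqrt{\lambda}\,\|v\|_{L_p} \le N(\|g\|_{L_p} + \lambda^{-1/2}\|f\|_{L_p})$ on $\bR^d$, quoted just above the statement, transfer to $B_{\kappa r}$ by extending $g, f$ by zero (this is where the factor $\kappa^d$ will enter, after normalizing the average integrals over $B_{\kappa r}$ versus $B_r$). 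Then $w := u - v$ satisfies $\Delta w - \lambda w = 0$ in $B_{\kappa r}$.

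The second step is the interior smoothness estimate for $w$. Since $w$ solves $\Delta w = \lambda w$ in $B_{\kappa r}$, interior elliptic estimates (applied after rescaling $B_{\kappa r}$ to unit size, or directly) give that $w_x$ is smooth and, in particular, using a mean value / Campanato-type bound,
$$
\dashint_{B_r} |w_x - (w_x)_{B_r}|^p\,dx \le N\Big(\frac{r}{\kappa r}\Big)^p \sup_{B_{\kappa r/2}} |D^2 w|^p \cdot (\kappa r)^p \le N \kappa^{-p} \dashint_{B_{\kappa r}} |w_x|^p\,dx,
$$
modulo getting the scaling in $\lambda$ right; one should be careful that differentiating the equation $\Delta w - \lambda w = 0$ and estimating $D^2 w$ brings in $\lambda w$ terms, so it is cleaner to bound the oscillation of $w_x$ over $B_r$ by $\kappa^{-p}$ times an average of $|w_x|^p$ (and possibly $\lambda^{p/2}|w|^p$) over a slightly larger ball, which is a standard consequence of the equation being $\lambda$-harmonic. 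Finally, I would combine: $\|w_x\|_{L_p(B_{\kappa r})} \le \|u_x\|_{L_p(B_{\kappa r})} + \|v_x\|_{L_p(B_{\kappa r})}$, and likewise for the $\lambda^{1/2}\|\cdot\|$ terms, feeding in the resolvent bound on $v$; then $u_x - (u_x)_{B_r} = (v_x - (v_x)_{B_r}) + (w_x - (w_x)_{B_r})$, so
$$
\dashint_{B_r} |u_x - (u_x)_{B_r}|^p \le N\dashint_{B_r}|v_x|^p + N\dashint_{B_r}|w_x - (w_x)_{B_r}|^p,
$$
and each piece is controlled by the asserted right-hand side after converting $\dashint_{B_r}$ into $\kappa^d \dashint_{B_{\kappa r}}$ where needed.

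The main obstacle I anticipate is bookkeeping the powers of $\kappa$ and $\lambda$ simultaneously: the term $\dashint_{B_r}|v_x|^p \le \kappa^d \dashint_{B_{\kappa r}}|v_x|^p \le N\kappa^d(\dashint_{B_{\kappa r}}|g|^p + \lambda^{-p/2}\dashint_{B_{\kappa r}}|f|^p)$ gives the $\kappa^d$ factor, while the $\kappa^{-p}$ factor comes only from the oscillation of the smooth part $w_x$, and one must ensure that in estimating $w$ no stray $\lambda$-dependent constant appears that is not already of the form $\lambda^{p/2}|u|^p$ — this is handled by scaling $B_{\kappa r}$ to $B_1$, which turns $\lambda$ into $\lambda(\kappa r)^2$ and the equation into a genuine resolvent equation on the unit ball, where the constants depend only on $d, p$. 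The hypothesis $\kappa \ge 4$ is used to guarantee enough room between $B_r$ and $\partial B_{\kappa r}$ for the interior estimate on $w$.
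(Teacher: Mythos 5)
Your decomposition is essentially the same as the paper's: split off a piece $v$ solving the inhomogeneous resolvent equation with controlled data, leaving a $\lambda$-harmonic remainder $w$ whose gradient oscillation is bounded via interior estimates, then recombine with the correct powers of $\kappa$. The bookkeeping you describe (the $\kappa^{d}$ coming from converting $\dashint_{B_r}$ to $\kappa^d\dashint_{B_{\kappa r}}$ on the $v$-piece, the $\kappa^{-p}$ from the oscillation of the smooth $w$-piece, and the need to keep $\lambda^{p/2}|w|^p$ in the interior estimate) is also right.

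The one place where your justification does not hold up as written is the treatment of $v$. You propose to let $v\in W^1_p(B_{\kappa r})$ solve the Dirichlet problem $\Delta v-\lambda v=\Div g+f$ in $B_{\kappa r}$, $v=0$ on $\partial B_{\kappa r}$, and then claim the whole-space resolvent estimate ``transfers to $B_{\kappa r}$ by extending $g,f$ by zero.'' Extending the data by zero and invoking the quoted estimate produces a solution on all of $\bR^d$, which has no reason to vanish on $\partial B_{\kappa r}$; that is a different object from the ball Dirichlet solution, so nothing has been ``transferred.'' Either you must prove the ball Dirichlet resolvent estimate separately (with constants independent of $\kappa r$ and $\lambda$, which is true but not among the quoted facts), or you should drop the Dirichlet boundary condition entirely: fix a smooth cutoff $\zeta$ with $\zeta=1$ on $B_{\kappa r/2}$ and $\zeta=0$ off $B_{\kappa r}$, solve $\Delta w-\lambda w=\Div(\zeta g)+\zeta f$ on $\bR^d$ using exactly the quoted estimate, and set $v=u-w$; then $\Delta v-\lambda v=0$ only in $B_{\kappa r/2}$, which is still enough room (this is why the paper assumes $\kappa\ge 4$). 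That second route is precisely what the paper does. Once $v$ is set up this way, the oscillation estimate you need for the $\lambda$-harmonic part is indeed standard and can be obtained self-containedly by Agmon's device --- apply harmonic-function estimates to $\hat v(x,y)=v(x)\cos(\sqrt\lambda\,y)$ on $\bR^{d+1}$ --- which is exactly how Lemma \ref{lemma080701} and Corollary \ref{cor080702} are proved later in the paper; the paper here instead cites the parabolic analogue from \cite{Krylov_2007_mixed_VMO}, but the content is the same.
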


\begin{proof}
We follow the idea in the proof of Theorem 7.1 in \cite{Krylov_2007_mixed_VMO} taking into account the presence of $\lambda$.
We can certainly assume that $u$, $f$, and $g$ have compact supports.
In addition, we assume that $u$, $f$, and $g$ are infinitely differentiable.
Indeed, if not, we take the standard mollifications and prove the estimate for the mollifications.
Then we take the limit because the concerned constants are independent of the smoothness
of the functions involved.

Take a $\zeta \in C_0^{\infty}$ such that
$$
\zeta = 1 \quad \text{on} \quad B_{\kappa r/2},
\quad
\zeta = 0 \quad \text{on}
\quad
\bR^d \setminus B_{\kappa r}.
$$
Then we find a unique solution $w \in W_p^1$
to the equation
$$
\left( \Delta - \lambda \right) w = \Div ( \zeta g ) + \zeta f.
$$
Set $v := u - w$ and observe that
$$
\left( \Delta - \lambda \right) v = \Div ( (1-\zeta) g ) + (1-\zeta) f.
$$
The classical theory on elliptic equations in divergence form indicates that $w$ and $v$ are infinitely differentiable.
In addition, in $B_{\kappa r/2}$,
$$
\left( \Delta - \lambda \right) v = 0.
$$
Then if we view $v$ as a function in $C_{\text{loc}}^{\infty}(\bR^{d+1})$ independent of $t$, by Lemma 7.4 in \cite{Krylov_2007_mixed_VMO}
\begin{equation}							 \label{eq081502}
\left(|v_{x}-\left(v_{x} \right)_{B_r}|^p\right)_{B_r} \le N \kappa^{-p}\left(|v_x|^p+\lambda^{p/2} |v|^p\right)_{B_{\kappa r}},
\end{equation}
where $N$ depends only on $d$ and $p$.

On the other hand, we have
$$
\| w_x \|_{L_p}
+ \sqrt{\lambda}  \| w \|_{L_p}
\le N \left(\| \zeta g \|_{L_p}
+ \lambda^{-1/2} \| \zeta f \|_{L_p} \right),
$$
which implies
$$
\left(|w_x|^p\right)_{B_r}
\le N r^{-d} \left(\| \zeta g \|^p_{L_p}
+ \lambda^{-p/2} \| \zeta f \|^p_{L_p} \right)
\le N \kappa^{d} \left( |g|^p + \lambda^{-p/2} |f|^p \right)_{B_{\kappa r}},
$$
$$
\left(|w_x|^p\right)_{B_{\kappa r}}
+ \lambda^{p/2}  \left(|w|^p\right)_{B_{\kappa r}}
\le N \left(|g|^p + \lambda^{-p/2}|f|^p\right)_{B_{\kappa r}}.
$$
From these inequalities as well as \eqref{eq081502},
we see that
$$
\dashint_{B_r} | u_x - \left( u_x \right)_{B_r} |^p \, dx
\le N\left(|v_{x}-(v_{x})_{B_r}|^p\right)_{B_r}
+ N\left(|w_{x}|^p\right)_{B_r}
$$
$$
\le N \kappa^{-p}\left(|v_x|^p+\lambda^{p/2} |v|^p\right)_{B_{\kappa r}}
+ N \kappa^{d} \left( |g|^p + \lambda^{-p/2}|f|^p\right)_{B_{\kappa r}}.
$$
We also have
$$
\left(|v_x|^p+\lambda^{p/2} |v|^p\right)_{B_{\kappa r}}
\le N \left(|u_x|^p+\lambda^{p/2} |u|^p\right)_{B_{\kappa r}}
+ N \left(|{w}_x|^p+\lambda^{p/2} |w|^p\right)_{B_{\kappa r}}
$$
$$
\le N \left(|u_x|^p+\lambda^{p/2} |u|^p\right)_{B_{\kappa r}}
+ N \left( |g|^p + \lambda^{-p/2}|f|^p \right)_{B_{\kappa r}}.
$$
Combining the above two sets of inequalities we arrive at the desired inequality in the theorem.
\end{proof}

We frequently make use of the following change of variables to `break' the symmetry of coordinates.
Let
$$
\cL_0 u  - \lambda u = \Div g + f
$$
in $\bR^d$.
For a number $\mu \ge 1$, we set
\begin{equation}							 \label{eq081401}
\bar{a}^{ij}(x^1,x') = a^{ij}(\mu^{-1}x^1,x'),
\quad
\bar{u}(x^1, x') = u(\mu^{-1}x^1, x'),
\end{equation}
\begin{equation}							 \label{eq081402}
\tilde{f}(x^1,x') = f(\mu^{-1}x^1, x'),
\quad
\tilde{g}(x^1,x') = (\mu g_1, g_2, \cdots, g_d)(\mu^{-1}x^1,x').
\end{equation}
Clearly $\bar{u}$ satisfies
$$
\left(\mu^{2} \bar{a}^{11} \bar{u}_{x^1}\right)_{x^1}
+ \sum_{j >1}\left(\mu \bar{a}^{1j} \bar{u}_{x^1}\right)_{x^j}
+ \sum_{i >1}\left(\mu \bar{a}^{i1} \bar{u}_{x^i}\right)_{x^1}
+ \sum_{i,j >1}\left(\bar{a}^{ij} \bar{u}_{x^i}\right)_{x^j}
- \lambda \bar{u}
$$
$$
= \Div \tilde{g} + \tilde{f}.
$$
If we set $\bar{\cL_0} w =(\bar{a}^{11} w_{x^1})_{x^1} + \Delta_{d-1}w$,
where $\Delta_{d-1} w = \sum_{i=2}^d w_{x^ix^i}$,
then
\begin{equation}							 \label{eq081403}
\bar{\cL_0} \bar{u} - \mu^{-2} \lambda \bar{u} = \Div \bar{g}
+ \bar{f},
\end{equation}
where
$$
\bar{f} = \mu^{-2} \tilde{f},
\quad
\bar{g}_1 = \mu^{-2} \tilde{g}_1 - \mu^{-1}\sum_{i > 1} \bar{a}^{i1} \bar{u}_{x^i},
$$
$$
\bar{g}_j =  \mu^{-2} \tilde{g}_j - \mu^{-1}\bar{a}^{1j} \bar{u}_{x^1} - \mu^{-2} \sum_{i > 1} \bar{a}^{ij} \bar{u}_{x^i} + \bar{u}_{x^j}
\quad j \ge 2.
$$

We now assume that the coefficient $a^{11}$ is a measurable function of $x^1 \in \bR$.
Under this condition on $a^{11}$ (no regularity assumptions on $a^{ij}$ if $ij > 1$)
we prove an estimate for $\bar{a}^{11}\bar{u}_{x^1}$.

\begin{lemma}							\label{lemma081701}
Let $\lambda > 0$, $r > 0$, $\kappa > 8 K \delta^{-1}$, and $a^{11} = a^{11}(x^1)$.
Assume that $u \in W_{2, \text{loc}}^1$ and
$$
\cL_0 u -\lambda u = \Div g + f,
$$
where $f$, $g \in L_{2,\text{loc}}$.
Then there exists a constant $N= N(d, \delta, K)$ such that
$$
\left( | \bar{a}^{11}\bar{u}_{x^1} - \left( \bar{a}^{11}\bar{u}_{x^1} \right)_{B_r} |^2 \right)_{B_r}^{1/2}
\le N \big(\kappa^{-1} + \kappa^{d/2} \mu^{-1} \big)\left(|\bar{u}_{x^1}|^2\right)^{1/2}_{B_{\kappa r}}
$$
$$
+ N \kappa^{d/2} \left( |\bar{u}_{x'}|^2 + \lambda|\bar{u}|^2 + |\tilde{g}|^2
+ \lambda^{-1}|\tilde{f}|^2 \right)^{1/2}_{B_{\kappa r}}
$$
for all $\mu \ge 1$,
where $\bar{a}^{ij}$, $\bar{u}$, $\tilde{f}$, and $\tilde{g}$  are those defined in \eqref{eq081401} and \eqref{eq081402}.

In particular, if $\lambda = f = 0$, i.e., $\cL_0 u = \Div g$, we have
$$
\left( | \bar{a}^{11}\bar{u}_{x^1} - \left( \bar{a}^{11}\bar{u}_{x^1} \right)_{B_r} |^2 \right)_{B_r}^{1/2}
\le N \big(\kappa^{-1} + \kappa^{d/2} \mu^{-1} \big)\left(|\bar{u}_{x^1}|^2\right)^{1/2}_{B_{\kappa r}}
$$
$$
+ N \kappa^{d/2} \left( |\bar{u}_{x'}|^2 + |\tilde{g}|^2\right)^{1/2}_{B_{\kappa r}}
$$
for all $\mu \ge 1$.
\end{lemma}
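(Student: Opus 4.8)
The plan is to localize the transformed equation \eqref{eq081403} and reduce everything to an interior estimate for the ``flux'' $\bar a^{11}v_{x^1}$ of a solution $v$ of the associated homogeneous equation. First I would mollify $u$, $f$, $g$, so that all functions are smooth; since every constant produced below depends only on $d,\delta,K$, the estimate for the mollifications passes to the limit. Write $\tilde\lambda:=\mu^{-2}\lambda>0$ and recall from \eqref{eq081403} that $\bar u$ solves $\bar{\cL_0}\bar u-\tilde\lambda\bar u=\Div\bar g+\bar f$, where $\bar{\cL_0}w=(\bar a^{11}w_{x^1})_{x^1}+\Delta_{d-1}w$ is uniformly elliptic with ellipticity constant $\delta$ and coefficients bounded by $K$, and $\bar g,\bar f$ are the functions written just before the lemma. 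Fix $\zeta\in C_0^\infty$ with $\zeta=1$ on $B_{\kappa r/2}$ and $\zeta=0$ outside $B_{\kappa r}$; by Theorem \ref{theorem08061901} applied to $\bar{\cL_0}-\tilde\lambda$ there is $w\in W^1_2$ with $\bar{\cL_0}w-\tilde\lambda w=\Div(\zeta\bar g)+\zeta\bar f$ and $\|w_x\|_{L_2}+\tilde\lambda^{1/2}\|w\|_{L_2}\le N(\|\zeta\bar g\|_{L_2}+\tilde\lambda^{-1/2}\|\zeta\bar f\|_{L_2})$, and $v:=\bar u-w$ solves $\bar{\cL_0}v-\tilde\lambda v=0$ in $B_{\kappa r/2}$ since $1-\zeta=0$ there. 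The role of the hypothesis $\kappa>8K\delta^{-1}$ is to keep the concentric balls and $x^1$-slices used below nested inside $B_{\kappa r/2}$.

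The contribution of $w$ is estimated crudely: since $|\bar a^{11}|\le K$,
$$\big(|\bar a^{11}w_{x^1}-(\bar a^{11}w_{x^1})_{B_r}|^2\big)^{1/2}_{B_r}\le 2K\,(|w_x|^2)^{1/2}_{B_r}\le 2K\,|B_r|^{-1/2}\|w_x\|_{L_2}\le N\kappa^{d/2}\big(|\bar g|^2+\tilde\lambda^{-1}|\bar f|^2\big)^{1/2}_{B_{\kappa r}},$$
using the $L_2$ bound for $w$ and $\|\zeta\bar g\|_{L_2}^2\le|B_{\kappa r}|(|\bar g|^2)_{B_{\kappa r}}$. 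Expanding $\bar g,\bar f$ from the formulas after \eqref{eq081403} together with \eqref{eq081401}--\eqref{eq081402}, using $|\bar a^{ij}|\le K$, $\mu\ge1$ and $\tilde\lambda^{-1}|\bar f|^2=\mu^{-2}\lambda^{-1}|\tilde f|^2\le\lambda^{-1}|\tilde f|^2$, the right side is at most $N\kappa^{d/2}\mu^{-1}(|\bar u_{x^1}|^2)^{1/2}_{B_{\kappa r}}+N\kappa^{d/2}(|\bar u_{x'}|^2+|\tilde g|^2+\lambda^{-1}|\tilde f|^2)^{1/2}_{B_{\kappa r}}$; the factor $\mu^{-1}$ on the $\bar u_{x^1}$ term is precisely the gain created by the dilation in $x^1$.

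The essential point is an interior estimate for $P:=\bar a^{11}v_{x^1}$. The structural facts I would exploit are: since $\bar a^{11}$ is independent of $x'$, every $x'$-derivative $D^\beta_{x'}v$ again solves the homogeneous equation, hence obeys the De Giorgi--Nash--Moser local boundedness and the Caccioppoli inequalities on $B_{\kappa r/2}$; and from the equation $P_{x^1}=\tilde\lambda v-\Delta_{d-1}v$ (itself a solution), while $D^\beta_{x'}P=\bar a^{11}(D^\beta_{x'}v)_{x^1}$ is the flux of $D^\beta_{x'}v$. Splitting, for $x$ in a small slice,
$$P(x^1,x')=\Psi(x')+\frac1{2r}\int_{-r}^{r}\!\int_t^{x^1}P_{x^1}(s,x')\,ds\,dt,\qquad \Psi(x'):=\frac1{2r}\int_{-r}^{r}P(t,x')\,dt,$$
the last term is $\le 2r\sup_{B_{2r}}|P_{x^1}|$, and the local boundedness estimate for the solution $P_{x^1}$ combined with Caccioppoli (applied to $v$ and $v_{x'}$) gives $\sup_{B_{2r}}|P_{x^1}|\le N(\kappa r)^{-1}(|v_x|^2+\tilde\lambda|v|^2)^{1/2}_{B_{\kappa r/2}}$, so this term contributes $N\kappa^{-1}(|v_x|^2+\tilde\lambda|v|^2)^{1/2}_{B_{\kappa r/2}}$. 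Since $\Psi$ depends only on $x'$, $\text{osc}_{B_r}\Psi\le 2r\sup_{B'_r}|\nabla_{x'}\Psi|$, and $\nabla_{x'}\Psi(x')=\frac1{2r}\int_{-r}^{r}\bar a^{11}(v_{x'})_{x^1}(t,x')\,dt$ is a slice-average of the flux of $v_{x'}$; I would bound this flux in $L_\infty(B_{2r})$ by the same slicing device applied to $v_{x'}$ --- the slice-average of the flux of $D^\beta_{x'}v$ is, by the Cauchy--Schwarz inequality and iterated Caccioppoli, controlled in $L_2$ of an $(x')$-ball for every $\beta$, hence in $L_\infty$ by the Sobolev embedding on $\bR^{d-1}$, while its $x^1$-oscillation is controlled through $\partial_{x^1}(\bar a^{11}(v_{x'})_{x^1})=\tilde\lambda v_{x'}-\Delta_{d-1}v_{x'}$. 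The outcome is again $\le N(\kappa r)^{-1}(|v_x|^2+\tilde\lambda|v|^2)^{1/2}_{B_{\kappa r/2}}$, so altogether $(|P-(P)_{B_r}|^2)^{1/2}_{B_r}\le N\kappa^{-1}(|v_x|^2+\tilde\lambda|v|^2)^{1/2}_{B_{\kappa r/2}}$.

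Finally I would write $v=\bar u-w$, so $(|v_x|^2+\tilde\lambda|v|^2)_{B_{\kappa r/2}}\le N(|\bar u_x|^2+\tilde\lambda|\bar u|^2)_{B_{\kappa r}}+N(|w_x|^2+\tilde\lambda|w|^2)_{B_{\kappa r}}$, bound the $w$-term by $N(|\bar g|^2+\tilde\lambda^{-1}|\bar f|^2)_{B_{\kappa r}}$ from the $L_2$ estimate, expand $\bar g,\bar f$ and use $\mu\ge1$, $\tilde\lambda\le\lambda$, $\kappa^{-1}\le\kappa^{d/2}$, $\kappa^{-1}\mu^{-1}\le\kappa^{d/2}\mu^{-1}$; adding the two oscillation bounds via the subadditivity of $h\mapsto(|h-(h)_{B_r}|^2)^{1/2}_{B_r}$ on $\bar a^{11}\bar u_{x^1}=\bar a^{11}v_{x^1}+\bar a^{11}w_{x^1}$ yields the stated inequality. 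For $\lambda=f=0$ one has $\tilde\lambda=0$ and $\bar f=0$: $w$ is obtained instead from the Lax--Milgram lemma with $\|w_x\|_{L_2}\le N\|\zeta\bar g\|_{L_2}$, the De Giorgi--Nash--Moser and Caccioppoli estimates used for $v$ hold uniformly down to $\tilde\lambda=0$ (the zero-order term has the good sign), and repeating the argument with the $\lambda$- and $f$-terms deleted gives the second inequality. The only delicate step is the third paragraph: because $\bar a^{11}$ is merely measurable in $x^1$, $v_{x^1}$ has no interior regularity and $\text{osc}_{B_r}(\bar a^{11}v_{x^1})$ cannot be obtained by differentiating the equation in the usual way; one must instead use that the flux has a controlled $x^1$-derivative and is smooth in the $x'$-directions, and --- most delicately --- extract the clean power $\kappa^{-1}$ (not a positive power of $\kappa$) from the slicing-plus-Sobolev argument.
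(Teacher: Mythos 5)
Your localization and perturbation set-up (the cut-off $\zeta$, the solution $w$ of the localized inhomogeneous equation, the crude estimate of the $w$-contribution via Theorem \ref{theorem08061901} and the expansion of $\bar g,\bar f$ to produce the $\mu^{-1}$ gain) closely parallels the paper's; a small difference is that you keep the spectral parameter as $\tilde\lambda=\mu^{-2}\lambda$ whereas the paper renormalizes to $\lambda$ by pushing $(\mu^{-2}-1)\lambda\bar u$ into a modified source $\bar f_\lambda$, but both bookkeepings work out. Where you diverge is the heart of the lemma, the interior oscillation estimate for the flux $P=\bar a^{11}v_{x^1}$ of the homogeneous solution $v$. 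The paper first performs the change of variables $y^1=\phi(x^1)=\int_0^{x^1}(\bar a^{11}(s))^{-1}\,ds$, $y'=x'$, which transforms $\bar a^{11}\partial_{x^1}^2$ into $\partial_{y^1}^2$, turns the flux into the honest derivative $\bar v_{y^1}$, and rewrites the equation as a perturbation of $\Delta-\lambda$ in the $y$-variables; Theorem \ref{th080601}/\ref{thm2.05} then delivers \eqref{eq081503} with no further work. You never change variables and instead attempt a direct flux estimate via De~Giorgi--Nash--Moser, Caccioppoli, slice-averaging, and Sobolev embedding in $x'$. This is a genuinely different (and more elementary) route.

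The gap is in your third paragraph. Your stated intermediate conclusion, that $\sup_{B'_r}|\nabla_{x'}\Psi|\le N(\kappa r)^{-1}\big(|v_x|^2+\tilde\lambda|v|^2\big)^{1/2}_{B_{\kappa r/2}}$ and hence $(|P-(P)_{B_r}|^2)^{1/2}_{B_r}\le N\kappa^{-1}\big(|v_x|^2+\tilde\lambda|v|^2\big)^{1/2}_{B_{\kappa r/2}}$, does not follow from the slicing-plus-Sobolev argument you sketch. The Cauchy--Schwarz step on the thin slab $(-r,r)\times B'_\rho$ forces a mismatch of scales between the slice thickness $\sim r$ and the ball radius $\sim\kappa r$ over which the $L_2$ norms are controlled, and when one runs the iterated Caccioppoli (with arithmetic spacing to avoid a $2^{M^2}$ blow-up) and Sobolev in $\bR^{d-1}$ at scale $r$ or at scale $\kappa r$, one ends up with a power $\kappa^{d/2-1}$ or $\kappa^{-1/2}$ in front of $(|v_{x'}|^2)^{1/2}_{B_{\kappa r/2}}$ rather than the claimed $\kappa^{-1}$. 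You flag this step as ``most delicate,'' but you do not resolve it, and the bound you write is in fact stronger than the analogue \eqref{eq081503} that the paper itself proves (which carries $\kappa^{d}$, not $\kappa^{-2}$, on the $|v_{x'}|^2+\lambda|v|^2$ terms). The argument is salvageable, because the lemma's conclusion only requires $\kappa^{-1}$ on the $|\bar u_{x^1}|^2$ term and tolerates $\kappa^{d/2}$ on $|\bar u_{x'}|^2$ and $\lambda|\bar u|^2$, and your Caccioppoli reduction of $D^{\gamma'}v_{x^1}$ does terminate at $\|v_{x'}\|$ rather than $\|v_{x^1}\|$, so the extra $\kappa^{d/2}$ lands only on the admissible term; but you would have to carry this out carefully and state the correct, weaker intermediate bound. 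The paper's change of variables makes all of this disappear, which is precisely why it is used there.
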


\begin{proof}
The second inequality in the lemma follows easily from the first.
Indeed, if we write $\cL_0 u - \lambda u = \Div g - \lambda u$, by the first inequality
$$
\left( | \bar{a}^{11}\bar{u}_{x^1} - \left( \bar{a}^{11}\bar{u}_{x^1} \right)_{B_r} |^2 \right)_{B_r}^{1/2}
\le N \big(\kappa^{-1} + \kappa^{d/2} \mu^{-1} \big)\left(|\bar{u}_{x^1}|^2\right)^{1/2}_{B_{\kappa r}}
$$
$$
+ N \kappa^{d/2} \left( |\bar{u}_{x'}|^2 + \lambda|\bar{u}|^2 + |\tilde{g}|^2 \right)^{1/2}_{B_{\kappa r}}.
$$
Then letting $\lambda \searrow 0$ gives the result.

To prove the first inequality in the lemma,
recall that $\bar{u}$ satisfies (see \eqref{eq081403})
$$
\bar{\cL_0} \bar{u} - \lambda \bar{u} = \Div \bar{g}
+ \bar{f}_{\lambda},
$$
where $\lambda > 0$ and $\bar{f}_{\lambda} =  \bar{f} + (\mu^{-2} - 1) \lambda \bar{u}$.
Using Theorem \ref{theorem08061901}
we find $w \in W_2^1$ satisfying
$$
\bar{\cL_0} w - \lambda w = \Div \left(I_{B_{\kappa r}} \bar{g} \right)
+ I_{B_{\kappa r}} \bar{f}_{\lambda},
$$
where $I_{\Omega}$ is the indicator function of a set $\Omega$.
Then $v := \bar{u}- w$ satisfies
$$
\bar{\cL_0} v - \lambda v = \Div \left((1- I_{B_{\kappa r}}) \bar{g} \right)
+ (1-I_{B_{\kappa r}}) \bar{f}_{\lambda}.
$$
In particular, $\bar{\cL_0} v - \lambda v= 0$ in $B_{\kappa r}$.

Now we use the following change of variables.
Set
$$
y^1 = \phi(x^1):=\int_0^{x^1}\frac 1 {\bar{a}^{11}(r)}\,dr,
\quad
y^j = x^j,
\quad
j \ge 2.
$$
Since $\delta \le \bar{a}^{11} \le K$, we readily see that the inverse $\phi^{-1}$ exists,
$\phi$ is a bi-Lipschitz function,  and
\begin{equation}							 \label{eq081501}
K^{-1} \le \phi(t)/t \le \delta^{-1},
\quad
\delta \le \phi^{-1}(t)/t \le K
\end{equation}
for $t \neq 0$.
We define
$$
\bar{v}(y^1, y') = v(\phi^{-1}(y^1), y').
$$
We also define $r_1 = \sqrt{2} \delta^{-1} r$ and $\kappa_1 = \kappa/(2K\delta^{-1})$.
Using the fact that $\bar{\cL_0} v - \lambda v= 0$ in $B_{\kappa r}$,
$\kappa_1 r_1 = \kappa r/(\sqrt{2}K)$,
and \eqref{eq081501}, we find that, in $B_{\kappa_1 r_1}$,
$$
\bar{v}_{y^1y^1}
+ \hat{a}^{11}(y^1) \Delta_{d-1} \bar{v}
- \lambda \hat{a}^{11}(y^1) \bar{v}  = 0,
$$
where $\hat{a}^{11}(y^1) = \bar{a}^{11}(\phi^{-1}(y^1))$.
Equivalently, in $B_{\kappa_1 r_1}$,
$$
\Delta \bar{v} - \lambda \bar{v} = \left(1 - \hat{a}^{11}(y^1) \right) \Div \left(0, \bar{v}_{y^2}, \cdots, \bar{v}_{y^d} \right)
- \lambda \left(1 - \hat{a}^{11}(y^1) \right) \bar{v}.
$$
Then by using the change of variables as well as Theorem \ref{th080601} (note that $\kappa_1 \ge 4$)
we obtain
\begin{multline}							 \label{eq081503}
\dashint_{B_r} | \bar{a}^{11}v_{x^1} - \left( \bar{v}_{y^1} \right)_{B_{r_1}} |^2 \, dx
\le
N \dashint_{B_{r_1}} | \bar{v}_{y^1} - \left( \bar{v}_{y^1} \right)_{B_{r_1}} |^2 \, dy
\\
\le N \kappa_1^{-2} \left( |\bar{v}_y|^2 \right)_{B_{\kappa_1 r_1}}
+ N \kappa_1^{d} \left( |\bar{v}_{y'}|^2 + \lambda|\bar{v}|^2\right)_{B_{\kappa_1 r_1}}
\\
\le N \kappa^{-2} \left( |v_x|^2\right)_{B_{\kappa r}}
+ N \kappa^{d} \left( |v_{x'}|^2 + \lambda|v|^2\right)_{B_{\kappa r}},
\end{multline}
where $N = N(d,\delta,K)$.

We also need estimates for $w$.
By Theorem \ref{theorem08061901}
$$
\| w_{x} \|_{L_2}
+ \sqrt{\lambda} \| w \|_{L_2} \le N \left( \| I_{B_{\kappa r}} \bar{g} \|_{L_2}
+ \lambda^{-1/2}\| I_{B_{\kappa r}} \bar{f}_{\lambda} \|_{L_2} \right).
$$
From this and the definition of $\bar{f}_{\lambda}$ it follows that (also note that $\mu \ge 1$)
\begin{equation}							 \label{eq082005}
\left( |w_{x}|^2 \right)_{B_r}
\le N \kappa^d \left( |\bar{g}|^2 + \lambda^{-1}|\bar{f}|^2 + \lambda |\bar u|^2 \right)_{B_{\kappa r}},
\end{equation}
\begin{equation}							 \label{eq082004}
\left( |w_{x}|^2 + \lambda |w|^2 \right)_{B_{\kappa r}}
\le N \left( |\bar{g}|^2 + \lambda^{-1}|\bar{f}|^2 + \lambda |\bar u|^2 \right)_{B_{\kappa r}}.
\end{equation}
This together with $\bar{u} = w + v$ yields
\begin{equation}							 \label{eq082003}
\left( |v_{x'}|^2 + \lambda |v|^2 \right)_{B_{\kappa r}}
\le N \left( |\bar{u}_{x'}|^2 + \lambda |\bar{u}|^2 + |\bar{g}|^2 + \lambda^{-1}|\bar{f}|^2\right)_{B_{\kappa r}}.
\end{equation}

To combine all the inequalities shown above, we start with
$$
I := \left( | \bar{a}^{11}\bar{u}_{x^1} - \left( \bar{a}^{11}\bar{u}_{x^1} \right)_{B_r} |^2 \right)_{B_r}^{1/2}
\le \left( | \bar{a}^{11}\bar{u}_{x^1} - C |^2 \right)_{B_r}^{1/2},
$$
which holds true for any constant $C$.
Upon replacing $C$ with $\left( \bar{v}_{x^1} \right)_{B_{r_1}}$
and using $\bar{u} = w +v$ again,
we arrive at
$$
I \le \left( | \bar{a}^{11}\bar{u}_{x^1} - \left( \bar{v}_{x^1} \right)_{B_{r_1}} |^2 \right)_{B_r}^{1/2}
\le N \left( | \bar{a}^{11}v_{x^1} - \left( \bar{v}_{x^1} \right)_{B_{r_1}} |^2 \right)_{B_r}^{1/2}
+ N \left( |w_{x}|^2 \right)_{B_r}^{1/2}
$$
$$
=: I_1 + I_2.
$$
From \eqref{eq081503}, \eqref{eq082003}, and \eqref{eq082004}
$$
I_1 \le N \kappa^{-1} \left( |\bar{u}_x|^2 \right)_{B_{\kappa r}}^{1/2}
+ N \kappa^{d/2}\left( |\bar{u}_{x'}|^2 + \lambda |\bar{u}|^2 + |\bar{g}|^2 + \lambda^{-1}|\bar{f}|^2\right)^{1/2}_{B_{\kappa r}}.
$$
Here we also used $\bar{u} = w + v$ and $\kappa \ge 1$.
From \eqref{eq082005},
$$
I_2 \le N \kappa^{d/2}\left(\lambda |\bar{u}|^2 + |\bar{g}|^2 + \lambda^{-1}|\bar{f}|^2\right)^{1/2}_{B_{\kappa r}}.
$$
Finally, notice that
$$
\left(|\bar{g}|^2\right)_{B_{\kappa r}}^{1/2}
\le N \mu^{-2} \left(|\tilde{g}|^2\right)^{1/2}_{B_{\kappa r}}
+ N \mu^{-1}\left(|\bar{u}_{x^1}|^2\right)^{1/2}_{B_{\kappa r}}
+ N \left(|\bar{u}_{x'}|^2\right)^{1/2}_{B_{\kappa r}},
$$
$$
(|\bar{f}|^2)_{B_{\kappa r}}^{1/2}
= \mu^{-2} (|\tilde{f}|^2 )_{B_{\kappa r}}^{1/2}.
$$
Therefore,
$$
I \le N (\kappa^{-1} + \kappa^{d/2}\mu^{-1}) \left( |\bar{u}_{x^1}|^2 \right)_{B_{\kappa r}}^{1/2}
+ N \kappa^{d/2}\left( |\bar{u}_{x'}|^2 + \lambda |\bar{u}|^2 + |\tilde{g}|^2
+ \lambda^{-1}|\tilde{f}|^2 \right)^{1/2}_{B_{\kappa r}}
$$
for $\mu \ge 1$.
The lemma is proved.
\end{proof}

We recall the maximal function theorem and the Fefferman-Stein theorem.
Let the maximal and sharp functions of $g$ defined on $\bR^d$ be given by
$$
M g (x) = \sup_{r>0} \dashint_{B_r(x)} |g(y)| \, dy,
$$
$$
g^{\#}(x) = \sup_{r>0} \dashint_{B_r(x)} |g(y) -
(g)_{B_r(x)}| \, dy.
$$
Then
$$
\| g \|_{L_p} \le N \| g^{\#} \|_{L_p},
\quad
\| M g \|_{L_p} \le N \| g\|_{L_p},
$$
if $g \in L_p$, where $1 < p < \infty$ and $N = N(d,p)$.
As is well known, the first inequality above is due to the Fefferman-Stein theorem on sharp functions
and the second one is the Hardy-Littlewood maximal function theorem (this inequality also holds trivially when $p = \infty$).

Theorem \ref{th081201} below is from \cite{Krylov08}
and can be considered as a generalized version of the Fefferman-Stein Theorem.
To state this theorem,
let
$$
\bC_n = \{ C_n(i_1, \cdots, i_d), i_0, \cdots, i_d \in \bZ \},
\quad n \in \bZ
$$
be the collection of
partitions given by the dyadic cubes in $\bR^d$
$$
C_n(i_1, \cdots, i_d) = [ i_1 2^{-n}, (i_1+1)2^{-n} ) \times \cdots \times [ i_d 2^{-n}, (i_d+1)2^{-n} ).
$$

\begin{theorem}							\label{th081201}
Let $p \in (1, \infty)$, and $U,V,F\in L_{1,\text{loc}}$.
Assume that we have $|U| \le V$
and, for any $n \in \bZ$ and $C \in \bC_n$,
there exists a measurable function $U^C$ on $C$
such that $|U| \le U^C \le V$ on $C$ and
\begin{equation}							 \label{eq082006}
\min\left\{ \int_C |U - \left(U\right)_C| \, dx ,
\int_C |U^C - \left(U^C\right)_C| \, dx \right\}
\le \int_C F(x) \, dx.
\end{equation}
Then,
$$
\| U \|_{L_p}^p
\le N(d,p) \|F\|_{L_p}\| V \|_{L_p}^{p-1},
$$
provided that $F,V\in L_p$.
\end{theorem}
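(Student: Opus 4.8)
The plan is to prove this by a dyadic good-$\lambda$ argument of Fefferman--Stein type, applied to the dyadic maximal function of $U$, the novelty being that the mean-oscillation control is allowed to come from the proxy functions $U^{C}$ rather than from $U$ itself. First some reductions. Since $|U|\le V\in L_p$ we already have $U\in L_p$ with $\|U\|_{L_p}\le\|V\|_{L_p}$, so only the sharper bound is at issue and the finiteness of $\|U\|_{L_p}$ may be used freely. Mollifying $U,V,F$ (the asserted constant is insensitive to the regularization) and then cutting everything off to a large cube and letting it grow, one reduces to $U,V,F\in L_1$ with compact support; then the dyadic maximal function $\mathbb MU(x)=\sup_{n\in\bZ}\dashint_{C_n(x)}|U|\,dy$, with $C_n(x)\in\bC_n$ the cube containing $x$, is finite, vanishes at infinity, and carries the usual Calder\'on--Zygmund stopping-time decomposition. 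Since $|U|\le\mathbb MU$ a.e., it suffices to bound $\|\mathbb MU\|_{L_p}^{p}$.

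For the good-$\lambda$ inequality, fix $\alpha>0$, let $\{Q_j\}$ be the maximal cubes in $\bigcup_n\bC_n$ with $(|U|)_{Q_j}>\alpha$ (disjoint, with union $\{\mathbb MU>\alpha\}$ up to a null set and $(|U|)_{Q_j}\le 2^{d}\alpha$ by maximality), and fix $A\ge 2^{d+1}$ large. Since $\{\mathbb MU>A\alpha\}\cap Q_j=\{\mathbb M(U\mathbf 1_{Q_j})>A\alpha\}$, apply the hypothesis to $C=Q_j$. If $\int_{Q_j}|U-(U)_{Q_j}|\le\int_{Q_j}F$, split off the mean (with $|(U)_{Q_j}|\le 2^{d}\alpha\le A\alpha/2$) and use the weak type $(1,1)$ bound for the dyadic maximal function to obtain $|\{\mathbb MU>A\alpha\}\cap Q_j|\le\tfrac{N}{A\alpha}\int_{Q_j}F$. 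Otherwise the hypothesis provides $U^{Q_j}$ with $|U|\le U^{Q_j}\le V$ on $Q_j$ and $\int_{Q_j}|U^{Q_j}-(U^{Q_j})_{Q_j}|\le\int_{Q_j}F$; then $\mathbb M(U\mathbf 1_{Q_j})\le\mathbb M(U^{Q_j}\mathbf 1_{Q_j})$, and truncating $U^{Q_j}$ at height $A\alpha/2$ yields $|\{\mathbb MU>A\alpha\}\cap Q_j|\le\tfrac{N}{A\alpha}\int_{Q_j\cap\{U^{Q_j}>A\alpha/2\}}U^{Q_j}$, which one estimates by $\tfrac{N}{A\alpha}\int_{Q_j}F$ if $(U^{Q_j})_{Q_j}\le A\alpha/4$ (comparing with the oscillation of $U^{Q_j}$) and by $\tfrac{N}{A\alpha}\int_{Q_j\cap\{V>A\alpha/8\}}V$ if $(U^{Q_j})_{Q_j}>A\alpha/4$ (using $U^{Q_j}\le V$). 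Summing over $j$,
$$
|\{\mathbb MU>A\alpha\}|\le\frac{N}{A\alpha}\int_{\{\mathbb MU>\alpha\}}F\,dx+\frac{N}{A\alpha}\int_{\{\mathbb MU>\alpha\}\cap\{V>A\alpha/8\}}V\,dx .
$$

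Multiplying by $p(A\alpha)^{p-1}A\,d\alpha$ and integrating over $\alpha\in(0,\infty)$, the layer-cake formula turns the left side into $\|\mathbb MU\|_{L_p}^{p}$ and, by Fubini, the right side into (up to a $d,p$-constant) $A^{p-1}\int F\,(\mathbb MU)^{p-1}\,dx$ plus $\int V\,\bigl(\min(A\,\mathbb MU,8V)\bigr)^{p-1}\,dx$. H\"older bounds the first by $NA^{p-1}\|F\|_{L_p}\|\mathbb MU\|_{L_p}^{p-1}$, which is of the shape needed (and, via $\|\mathbb MU\|_{L_p}\le\|V\|_{L_p}$, already produces the desired $\|F\|_{L_p}\|V\|_{L_p}^{p-1}$). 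The second summand, coming entirely from the ``bad'' cubes in the second case above, is the crux, and I expect it to be the hardest part: the obvious bounds on it --- $\min(A\,\mathbb MU,8V)^{p-1}\le(8V)^{p-1}$ (giving the unabsorbable fixed multiple $N\|V\|_{L_p}^{p}$), $\le(A\,\mathbb MU)^{p-1}$ (giving $NA^{p-1}\|V\|_{L_p}\|\mathbb MU\|_{L_p}^{p-1}$ and hence only $\|U\|_{L_p}\le N(\|F\|_{L_p}+\|V\|_{L_p})$), or an interpolation $\min(a,b)\le a^{\theta}b^{1-\theta}$ followed by H\"older --- all fall short of the product bound $\|U\|_{L_p}^{p}\le N\|F\|_{L_p}\|V\|_{L_p}^{p-1}$. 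Recovering exactly $\|V\|_{L_p}^{p-1}$ must use simultaneously the sharp form $\int V\min(A\,\mathbb MU,cV)^{p-1}\,dx$ of this term, the fact that on a bad cube the proxy mean $(U^{C})_C$ lies between $(|U|)_C$ and $(V)_C$, and the pointwise inequality $|U|\le V$ --- presumably through a more refined stopping-time/Fubini bookkeeping than the one sketched here --- so as to reach $N\|F\|_{L_p}\|V\|_{L_p}^{p-1}+\varepsilon\|\mathbb MU\|_{L_p}^{p}$ with $\varepsilon=\varepsilon(A)$ small. Absorbing $\varepsilon\|\mathbb MU\|_{L_p}^{p}$ into the left side and using $\|U\|_{L_p}\le\|\mathbb MU\|_{L_p}$ then gives the theorem.
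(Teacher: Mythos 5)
Your good-$\lambda$ setup is a reasonable first attempt, but you yourself identify the gap, and it is a genuine one: the argument as written does not prove the theorem, and the obstruction you flag cannot be dissolved by ``a more refined stopping-time/Fubini bookkeeping'' within the same framework.

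Here is the issue made concrete. In the bad subcase --- where on a stopping cube $Q_j$ the oscillation of the proxy $U^{Q_j}$ is controlled by $F$ but the mean $(U^{Q_j})_{Q_j}$ is large --- the only leverage one has is $U^{Q_j}\le V$, and every estimate you propose turns the resulting good-$\lambda$ term into something whose integrated contribution is
\begin{equation*}
N\int_{\mathbb R^d} V\,\bigl(\min(A\,\mathbb M U,\,cV)\bigr)^{p-1}\,dx .
\end{equation*}
As you observe, bounding $\min(A\,\mathbb M U, cV)$ by $cV$ produces a fixed multiple of $\|V\|_{L_p}^p$, which is not absorbable because $\|V\|_{L_p}$ can be arbitrarily large compared with $\|\mathbb M U\|_{L_p}$; bounding it by $A\,\mathbb M U$ or interpolating gives at best the additive estimate $\|U\|_{L_p}\le N(\|F\|_{L_p}+\|V\|_{L_p})$. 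Even with refinements --- for instance, observing that on such a cube one has the trivial bound $|\{\mathbb M U>A\alpha\}\cap Q_j|\le N2^{d}A^{-1}|Q_j|$ as well as $Q_j\subset\{\mathbb M V>A\alpha/4\}$, which improves the bad term to $N A^{-1}\|V\|_{L_p}^p$ --- one still cannot reach $\|U\|_{L_p}^p\le N\|F\|_{L_p}\|V\|_{L_p}^{p-1}$: optimizing over $A$ (subject to $A\ge A_0(d)$) produces exponents on $\|F\|_{L_p}$ and $\|V\|_{L_p}$ that do not match the claimed product form in the regime $\|F\|_{L_p}\ll\|V\|_{L_p}$. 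In short, the term you label ``the crux'' is indeed a real obstruction, and the final sentences of your proposal are speculation rather than a step you have carried out.

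This precision matters for the paper. The theorem is invoked in Theorem 3.5 with $U=\delta\bar u_{x^1}$ and $V=K|\bar u_{x^1}|$, i.e.\ with $V$ a fixed multiple of $|U|$. The product bound then yields $\|\bar u_{x^1}\|_{L_p}\le N\|F\|_{L_p}$, which is exactly what is needed, whereas the additive bound your argument actually delivers degenerates to $\|\bar u_{x^1}\|_{L_p}\le N\|F\|_{L_p}+N'\|\bar u_{x^1}\|_{L_p}$ with no way to absorb the last term. So the weaker inequality is not merely cosmetically short of the statement; it is useless for the intended application. The paper itself does not give a proof of this theorem --- it is quoted from Krylov's paper on variably partially VMO coefficients --- and the argument there is not a naive good-$\lambda$ inequality of the Fefferman--Stein type. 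To make your approach work you would need a substantively different idea for handling the bad cubes (for example, a weighted or iterated stopping-time argument that pairs the measure of the bad cubes directly against $V^{p-1}\,dx$ rather than Lebesgue measure), and that idea is exactly what is missing from your write-up.
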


If $a^{11}$ is measurable in $x^1 \in \bR$ and has a locally small BMO semi-norm in $x' \in \bR^{d-1}$,
we show in the following lemma that $\bar{u}$, where $u$ is a solution to $\cL_0 u = \Div g$,
satisfies an inequality as in \eqref{eq082006}.

\begin{lemma}							\label{lem082001}
Let $\gamma>0$,
$\mu \ge 1$,
and $\tau$, $\sigma \in (1,\infty)$ such that $1/\tau + 1/\sigma = 1$.
Assume that $a^{11}$ satisfy Assumption \ref{assumption20080424a} ($\gamma$)
and $g \in L_{2,\text{loc}}$.
Also assume that $u \in W_{2, \text{loc}}^1$ vanishes outside $B_{\mu^{-1}R}$,
where $R\in (0,R_0]$,
and satisfies $\cL_0 u = \Div g$.
Then,
for each $C \in \bC_n$, $\mu \ge 1$,
and $\kappa > 8 K \delta^{-1}$,
there exists a measurable function $\bar{a}(x^1) = \bar{a}_{\mu,\kappa, C}(x^1)$ such that $\delta \le \bar{a}(x^1) \le K$
and
$$
\dashint_{C} | \bar{a} \bar{u}_{x^1} - \left(\bar{a} \bar{u}_{x^1}\right)_C | \, dx
\le N F(x)
$$
for all $x \in C$,
where $N = N(d, \delta, K)$ and
$$
F(x)= F_{\mu, \kappa}(x) = (\kappa^{-1} + \kappa^{d/2} \mu^{-1})\left( M |\bar{u}_{x^1}|^2\right)^{1/2}
+ \kappa^{d/2} \left( M |\bar{u}_{x'}|^2 \right)^{1/2}
$$
$$
+ \kappa^{d/2} \mu^{1/(2\sigma)} \gamma^{1/(2\sigma)}
\left( M |\bar{u}_{x^1}|^{2\tau} \right)^{1/(2\tau)}
+ \kappa^{d/2} \left( M |\tilde{g}|^2\right)^{1/2}.
$$
Recall that $\bar{u}$ and $\tilde{g}$ are those in \eqref{eq081401} and \eqref{eq081402}.
\end{lemma}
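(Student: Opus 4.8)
The plan is to combine Lemma \ref{lemma081701} with the smallness of the BMO semi-norm of $a^{11}$ in $x'$ to produce, on each dyadic cube $C$, a candidate coefficient $\bar a$ that is a function of $x^1$ alone and is close to $\bar a^{11}$ in an averaged sense; then one trades $\bar a^{11}\bar u_{x^1}$ for $\bar a\,\bar u_{x^1}$ and estimates the oscillation of the latter directly. First I would fix $C \in \bC_n$ with side length $2^{-n}$ and distinguish two regimes according to how $2^{-n}$ compares with the support radius $\mu^{-1}R$. If $C$ is large compared with the support of $u$, then $\bar u$ and $\bar u_{x^1}$ vanish on most of $C$ and the left-hand side is controlled crudely by the maximal function terms already present in $F$; this case is routine. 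The substantive case is when $2^{-n}$ is comparable to or smaller than $\mu^{-1}R$, where one sets $r := c\,2^{-n}$ and $\kappa$ as in the hypothesis, so that the enlarged ball $B_{\kappa r}$ (centered at an appropriate point of $C$) still lies in the region where Assumption \ref{assumption20080424a}($\gamma$) is available, i.e. $\kappa r \le R_0$ after the rescaling.

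Next, to define $\bar a$ I would take, for a.e. $x^1$ in the relevant slab, $\bar a(x^1)$ to be the average of $\bar a^{11}(x^1, \cdot)$ over the cross-sectional cube; this automatically satisfies $\delta \le \bar a \le K$ by uniform ellipticity and boundedness. The difference $\bar a^{11} - \bar a$ then has, by Assumption \ref{assumption20080424a}($\gamma$) and the definition of $\mathrm{osc}_{x'}$ (together with the scaling $\bar a^{11}(x^1,x')=a^{11}(\mu^{-1}x^1,x')$, which inflates the $x^1$-extent by $\mu$ and hence contributes the factor $\mu$ inside the $\gamma$-term), an $L_1$-oscillation bound of order $\mu\gamma$ on $C$. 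One writes
\[
\bar a \bar u_{x^1} - (\bar a \bar u_{x^1})_C
= \big(\bar a^{11}\bar u_{x^1} - (\bar a^{11}\bar u_{x^1})_C\big)
- \big((\bar a^{11}-\bar a)\bar u_{x^1} - ((\bar a^{11}-\bar a)\bar u_{x^1})_C\big),
\]
and averages $|\cdot|$ over $C$. The first bracket is handled by Lemma \ref{lemma081701} with $\lambda = f = 0$ (its second, simpler form), after covering $C$ by a ball $B_r$ and enlarging to $B_{\kappa r}$; this yields the $(\kappa^{-1}+\kappa^{d/2}\mu^{-1})(M|\bar u_{x^1}|^2)^{1/2}$ and $\kappa^{d/2}(M|\bar u_{x'}|^2)^{1/2} + \kappa^{d/2}(M|\tilde g|^2)^{1/2}$ contributions to $F$, since the averages over $B_r$ and $B_{\kappa r}$ are dominated by maximal functions at points of $C$. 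The second bracket is estimated by Hölder's inequality with exponents $\tau,\sigma$: one factor is $\|\bar a^{11}-\bar a\|_{L_\sigma(C)}$-type, which by interpolating the $L_\infty$-bound $2K$ against the $L_1$-oscillation bound $\mu\gamma$ gives something of order $(\mu\gamma)^{1/\sigma}$, and the other factor is $(\dashint_C |\bar u_{x^1}|^{\tau})^{1/\tau} \lesssim (M|\bar u_{x^1}|^{2\tau})^{1/(2\tau)}$; the power $\kappa^{d/2}$ is reinstated because one must again pass from the small cube to the enlarged ball. This produces exactly the term $\kappa^{d/2}\mu^{1/(2\sigma)}\gamma^{1/(2\sigma)}(M|\bar u_{x^1}|^{2\tau})^{1/(2\tau)}$.

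The main obstacle I anticipate is bookkeeping rather than conceptual: one must arrange all the ball radii, centers, and the constant $\kappa$ so that simultaneously (a) $C \subset B_r$ with $r \approx 2^{-n}$, (b) $B_{\kappa r}$ lies inside the slab where the estimate from Lemma \ref{lemma081701} is valid and where Assumption \ref{assumption20080424a}($\gamma$) applies after the $\mu$-rescaling, and (c) all averages over $B_r$ and $B_{\kappa r}$ are genuinely bounded by the Hardy–Littlewood maximal functions evaluated at an arbitrary point $x \in C$ (which forces the enlargement factors $\kappa^{d/2}$ to appear). Keeping the $\mu$-dependence honest through the change of variables \eqref{eq081401}--\eqref{eq081402} — in particular remembering that the $x^1$-oscillation region for $a^{11}$ scales by $\mu$, which is the source of the $\mu$ (not $\mu^2$) inside the $\gamma$-term — is the one place where an error would propagate. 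Everything else is a routine assembly of Lemma \ref{lemma081701}, Hölder's inequality, interpolation of $L^p$ norms, and the elementary comparison of cube-averages with maximal functions.
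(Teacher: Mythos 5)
Your overall strategy (freeze $a^{11}$ in $x'$ by averaging over a slab, then estimate the resulting defect by H\"older with exponents $\tau,\sigma$, producing $(\mu\gamma)^{1/(2\sigma)}$) is the right idea and matches the paper. However, the way you organize the two pieces contains a genuine gap. You propose to write
$$
\bar a \bar u_{x^1} - (\bar a \bar u_{x^1})_C
= \big(\bar a^{11}\bar u_{x^1} - (\bar a^{11}\bar u_{x^1})_C\big)
- \big((\bar a^{11}-\bar a)\bar u_{x^1} - ((\bar a^{11}-\bar a)\bar u_{x^1})_C\big)
$$
and then bound the first bracket by Lemma \ref{lemma081701}. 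But Lemma \ref{lemma081701} is stated and proved \emph{under the hypothesis} $a^{11}=a^{11}(x^1)$, i.e.\ the $(1,1)$-coefficient must be a function of $x^1$ alone. In the setting of Lemma \ref{lem082001} the coefficient $a^{11}$ is a genuine function of $(x^1,x')$ (only with small BMO oscillation in $x'$), so Lemma \ref{lemma081701} does not apply to the original equation and you have no control on the first bracket. There is no way to save the argument in that decomposition: the whole point of introducing $a(x^1)=\dashint_{B'}a^{11}(x^1,\cdot)$ is precisely that Lemma \ref{lemma081701} becomes \emph{applicable}.

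The fix, which is what the paper actually does, is to perform the ``freezing'' at the level of the \emph{equation}, not at the level of the target expression. Namely, rewrite
$$
(a\,u_{x^1})_{x^1} + \sum_{i+j>2} (a^{ij} u_{x^i})_{x^j}
= \Div g + \big((a-a^{11})u_{x^1}\big)_{x^1},
$$
where $a=a(x^1)$ is the cross-sectional average. Now the $(1,1)$-coefficient is $a(x^1)$, so Lemma \ref{lemma081701} (second form, $\lambda=f=0$) applies and gives an oscillation bound directly for $\bar a\,\bar u_{x^1}$ over $B_r(x_0)$, with the modified first component of $g$ contributing the extra term $N\kappa^{d/2}\big(|(\bar a - \bar a^{11})\bar u_{x^1}|^2\big)^{1/2}_{B_{\kappa r}(x_0)}$. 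This one extra term is then the object to which H\"older and the BMO bound are applied, producing $\kappa^{d/2}\mu^{1/(2\sigma)}\gamma^{1/(2\sigma)}(M|\bar u_{x^1}|^{2\tau})^{1/(2\tau)}$. So the correct argument has a single application of Lemma \ref{lemma081701} to a modified equation and no subtraction of brackets.

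A secondary point: your treatment of the large-cube regime ($\kappa r \ge R$) is not as routine as you suggest. A crude bound $\dashint_C|\bar a\bar u_{x^1}-(\bar a\bar u_{x^1})_C| \le 2K\dashint_C|\bar u_{x^1}|$ gives a term of the form $C(K)\,(M|\bar u_{x^1}|^2)^{1/2}$ with a prefactor that is an $O(1)$ constant, whereas $F$ requires the prefactor $(\kappa^{-1}+\kappa^{d/2}\mu^{-1})$, which is small when $\kappa$ and $\mu$ are taken large. The paper handles this case with the \emph{same} freezing argument (just averaging $a^{11}$ over $B'_R$ rather than $B'_{\kappa r}$) and uses the compact support of $\bar u$ in $B_R$ so that the H\"older step still lands on an $O(\mu\gamma)$ quantity rather than on a constant. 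You would need to do likewise.
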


\begin{proof}
Let $B_r(x_0)$ be the smallest ball containing $C$.
We split into two cases depending on whether $\kappa r < R$ or $\kappa r \ge R$.

If $\kappa r < R$.
Set
$$
a(x^1) = \dashint_{B'_{\kappa r}(x_0')} a^{11}(x^1,y') \, dy',\quad \bar{a}(x^1) = a(\mu^{-1}x^1).$$
Since
$$
(a u_{x^1})_{x^1} + \sum_{ij>1} (a^{ij} u_{x^i})_{x^j}
= \Div g + \left((a - a^{11})u_{x^1}\right)_{x^1},
$$
by Lemma \ref{lemma081701} with an appropriate translation
$$
I := \left( | \bar{a}\bar{u}_{x^1} - \left( \bar{a}\bar{u}_{x^1} \right)_{B_r(x_0)} |^2 \right)_{B_r(x_0)}^{1/2}
\le N (\kappa^{-1} + \kappa^{d/2} \mu^{-1} )\left(|\bar{u}_{x^1}|^2\right)^{1/2}_{B_{\kappa r}(x_0)}
$$
$$
+ N \kappa^{d/2} \left( |\bar{u}_{x'}|^2 + |\tilde{g}|^2 \right)^{1/2}_{B_{\kappa r}(x_0)}
+ N \kappa^{d/2} \left(|(\bar{a} - \bar{a}^{11}) \bar{u}_{x^1}|^2\right)^{1/2}_{B_{\kappa r}(x_0)}.
$$
Note that
$$
\left(|(\bar{a} - \bar{a}^{11}) \bar{u}_{x^1}|^2\right)^{1/2}_{B_{\kappa r}(x_0)}
\le \left(| \bar{a} - \bar{a}^{11} |^{2\sigma}\right)^{1/(2\sigma)}_{B_{\kappa r}(x_0)}
\left(|\bar{u}_{x^1} |^{2\tau} \right)^{1/(2\tau)}_{B_{\kappa r}(x_0)},
$$
where
$$
\left(| \bar{a} - \bar{a}^{11} |^{2\sigma}\right)_{B_{\kappa r}(x_0)}
\le N \dashint_{B_{\kappa r}(x_0)}
| \bar{a} - \bar{a}^{11} | \, dx
$$
$$
\le N \dashint_{x_0^1 - \kappa r}^{\,\,\,\,x_0^1 + \kappa r}
\dashint_{B'_{\kappa r}(x'_0)} \big|a^{11}(\mu^{-1}x^1, x') - \dashint_{B'_{\kappa r}(x'_0)} a^{11}(\mu^{-1}x^1,y') \, dy' \big| \, dx' \, dx^1
$$
$$
\le N \mu \, \text{osc}_{x'}\left(a^{11},\Gamma_{\kappa r}(\mu^{-1}x^1_0, x'_0)\right)
\le N \mu a^{\#}_{\kappa r} \le N \mu a^{\#}_{R}\le N\mu\gamma.
$$
Also note that if $x \in C$, then $B_{2 \kappa r}(x) \supset B_{\kappa r}(x_0)$
and, for example,
$$
\left( |\bar{u}_{x^1}|^2\right)_{B_{\kappa r}(x_0)}
\le 2^{d} \left( |\bar{u}_{x^1}|^2\right)_{B_{2\kappa r}(x)}
\le 2^{d} \left(M |\bar{u}_{x^1}|^2 (x) \right)
$$
for all $x \in C$.
From this observation as well as the above inequalities for $I$,
we obtain $I \le N F_{\mu,\kappa}(x)$ for all $x \in C$.

If $\kappa r \ge R$.
Set
$$
a(x^1) = \dashint_{B'_R} a^{11}(x^1,y') \, dy',
\quad\bar{a}(x^1) = a(\mu^{-1}x^1).$$
Since $u$ vanishes outside $B_{\mu^{-1}R}$,
$\bar{u}$ has a compact support in $B_R$.
Thus
$$
\left(|(\bar{a} - \bar{a}^{11}) \bar{u}_{x^1}|^2\right)_{B_{\kappa r}(x_0)}
= \frac{1}{|B_{\kappa r}|}\int_{B_{\kappa r}(x_0) \cap B_{R}}
|(\bar{a} - \bar{a}^{11}) \bar{u}_{x^1}|^2 \, dx
$$
$$
\le \left(\frac{1}{|B_{\kappa r}|} \int_{B_R} |\bar{a} - \bar{a}^{11}|^{2\sigma} \, dx\right)^{1/\sigma}
\left(|\bar{u}_{x^1}|^2\right)^{1/\tau}_{B_{\kappa r}(x_0)},
$$
where
$$
\frac{1}{|B_{\kappa r}|} \int_{B_R} |\bar{a} - \bar{a}^{11}|^{2\sigma} \, dx
$$
$$
\le N \frac{1}{|B_{\kappa r}|} \int_{-R}^{R}
\int_{B'_R} \big|a^{11}(\mu^{-1}x^1, x') - \dashint_{B'_R} a^{11}(\mu^{-1}x^1,y') \, dy' \big| \, dx' \, dx^1
$$
$$
\le N \mu (\kappa r)^{-d}R^d \, \text{osc}_{x'}\left(a^{11},\Gamma_{R}(0)\right)
\le N \mu a^{\#}_{R} \le N \mu \gamma.
$$
If we proceed as in the first case, we come to $I \le N F_{\mu,\kappa}(x)$ for all $x \in C$.

Finally,
observe that
$$
\dashint_{C} | \bar{a} \bar{u}_{x^1} - \left(\bar{a} \bar{u}_{x^1}\right)_C | \, dx
\le 2 \dashint_{C} | \bar{a} \bar{u}_{x^1} - \left( \bar{a} \bar{u}_{x^1} \right)_{B_r(x_0)} | \, dx
\le N I,
$$
where $N$ is independent of $r$. The lemma is proved.
\end{proof}

Now we are ready to prove that
the $L_p$-norm of $u_{x^1}$ is controlled
by that of $g$ and $u_{x'}$
if $u$ is a solution to $\cL_0 u= \Div g$
with $a^{11}$ measurable in $x^1 \in \bR$ and small BMO in $x' \in \bR^{d-1}$.

\begin{theorem}
                                \label{theorem5.3}
Let $p\in (2,\infty)$ and $g\in L_p$.
There exist constants $\gamma$,  $\mu$, and $N$, depending on $d,p,\delta$ and $K$,
such that, if $a^{11}$ satisfies Assumption \ref{assumption20080424a} ($\gamma$), then for $u\in C_0^\infty$
satisfying $\cL_0 u=\Div g$
and vanishing outside $B_{\mu^{-1} R}$,
where   $R \le R_0$, we have
$$
\|u_{x}\|_{L_p}\leq N(\|u_{x'}\|_{L_p}+\|g\|_{L_p}).
$$
\end{theorem}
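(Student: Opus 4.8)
The plan is to bound $\|u_{x^1}\|_{L_p}$ in terms of $\|u_{x'}\|_{L_p}$ and $\|g\|_{L_p}$, after which $\|u_x\|_{L_p}\le\|u_{x^1}\|_{L_p}+\|u_{x'}\|_{L_p}$ gives the assertion. First I would pass to the rescaled functions $\bar u$, $\bar a^{ij}$, $\tilde g$ of \eqref{eq081401}--\eqref{eq081402}, so that $\cL_0u=\Div g$ becomes the equation \eqref{eq081403} for $\bar u$ with $\lambda=f=0$; since $u\in C_0^\infty$ vanishes outside $B_{\mu^{-1}R}$ with $R\le R_0$, $\bar u\in C_0^\infty$ vanishes outside $B_R$, so Lemma \ref{lem082001} is applicable and every $L_p$ norm occurring below is finite. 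Note that $\|\bar u_{x^1}\|_{L_p}$, $\|\bar u_{x'}\|_{L_p}$, $\|\tilde g\|_{L_p}$ equal $\|u_{x^1}\|_{L_p}$, $\|u_{x'}\|_{L_p}$, and (up to a constant) $\|g\|_{L_p}$ times fixed powers of $\mu$; since $\mu$ will be fixed at the end depending only on $d,p,\delta,K$, it suffices to prove $\|\bar u_{x^1}\|_{L_p}\le N(\|\bar u_{x'}\|_{L_p}+\|\tilde g\|_{L_p})$.

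The core of the argument is an application of the generalized Fefferman--Stein theorem, Theorem \ref{th081201}. Fix $\kappa>8K\delta^{-1}$ and, for each dyadic cube $C$, let $\bar a=\bar a_{\mu,\kappa,C}$ be the function of $x^1$ with $\delta\le\bar a\le K$ furnished by Lemma \ref{lem082001}; multiplying its conclusion by $|C|$ and using $\inf_C F\le\dashint_C F$ gives $\int_C|\bar a\bar u_{x^1}-(\bar a\bar u_{x^1})_C|\,dx\le N\int_C F\,dx$ with $F=F_{\mu,\kappa}$ as in the lemma. I would then invoke Theorem \ref{th081201} with $U=|\bar u_{x^1}|$, $V=(K/\delta)|\bar u_{x^1}|$, and $U^C=\delta^{-1}|\bar a\,\bar u_{x^1}|$: the sandwich $|U|\le U^C\le V$ is immediate from $\delta\le\bar a\le K$, and since replacing a function by its absolute value at most doubles its oscillation integral over $C$, $\int_C|U^C-(U^C)_C|\,dx\le 2\delta^{-1}\int_C|\bar a\bar u_{x^1}-(\bar a\bar u_{x^1})_C|\,dx\le N\int_C F\,dx$. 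Hence the hypotheses of Theorem \ref{th081201} hold with $F$ replaced by a multiple of itself, yielding $\|\bar u_{x^1}\|_{L_p}^p\le N\|F\|_{L_p}\|\bar u_{x^1}\|_{L_p}^{p-1}$, so $\|\bar u_{x^1}\|_{L_p}\le N\|F\|_{L_p}$ (division is legitimate since the norm is finite, and the bound is trivial otherwise).

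Next I would estimate $\|F\|_{L_p}$, which is where $p>2$ enters. Choose $\tau\in(1,p/2)$ and $\sigma=\tau/(\tau-1)$; the Hardy--Littlewood maximal function theorem applied to $|\bar u_{x^1}|^2$, $|\bar u_{x'}|^2$, $|\tilde g|^2\in L_{p/2}$ and to $|\bar u_{x^1}|^{2\tau}\in L_{p/(2\tau)}$ yields, with $N_0=N_0(d,p,\delta,K)$,
$$\|\bar u_{x^1}\|_{L_p}\le N_0\big(\kappa^{-1}+\kappa^{d/2}\mu^{-1}+\kappa^{d/2}\mu^{1/(2\sigma)}\gamma^{1/(2\sigma)}\big)\|\bar u_{x^1}\|_{L_p}+N_0\kappa^{d/2}\big(\|\bar u_{x'}\|_{L_p}+\|\tilde g\|_{L_p}\big).$$
One then absorbs the first term on the right by choosing parameters successively: pick $\kappa$ so large that $N_0\kappa^{-1}\le 1/4$; then, $\kappa$ being fixed, pick $\mu\ge1$ so large that $N_0\kappa^{d/2}\mu^{-1}\le 1/4$; then, $\kappa$ and $\mu$ being fixed, pick $\gamma$ so small that $N_0\kappa^{d/2}\mu^{1/(2\sigma)}\gamma^{1/(2\sigma)}\le 1/4$. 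This gives $\|\bar u_{x^1}\|_{L_p}\le N(\|\bar u_{x'}\|_{L_p}+\|\tilde g\|_{L_p})$ with $N$ depending only on $d,p,\delta,K$, and undoing the rescaling (which only inserts the now-fixed powers of $\mu$) completes the proof.

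I expect the one genuinely delicate point to be the choice of $U$, $U^C$ in the second step. Because $a^{11}$ is merely measurable in $x^1$, there is no hope of controlling $\int_C|\bar a^{11}\bar u_{x^1}-(\bar a^{11}\bar u_{x^1})_C|$ directly; the resolution is precisely the feature of Theorem \ref{th081201} that only the \emph{minimum} of two oscillation integrals must be small, with the cube-dependent $U^C$ carrying the good oscillation bound supplied by Lemma \ref{lem082001}. The sign indefiniteness of $\bar a\bar u_{x^1}$ forces the passage to absolute values, and it is uniform ellipticity that makes the required sandwich $|U|\le U^C\le V$ hold. Everything else --- the maximal function estimates and the successive choice of $\kappa$, $\mu$, $\gamma$ followed by absorption --- is routine.
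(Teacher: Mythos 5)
Your proposal is correct and follows essentially the same route as the paper's proof: pass to the rescaled functions $\bar u,\,\tilde g$, invoke the generalized Fefferman--Stein theorem (Theorem \ref{th081201}) with $U^C$ built from the cube-dependent coefficient $\bar a=\bar a_{\mu,\kappa,C}$ supplied by Lemma \ref{lem082001}, estimate $\|F_{\mu,\kappa}\|_{L_p}$ via the Hardy--Littlewood maximal function theorem (using $p>2\tau$), and then choose $\kappa$, $\mu$, $\gamma$ in that order to absorb the $\|\bar u_{x^1}\|_{L_p}$ term. The only cosmetic deviation is that the paper takes $U=\delta\bar u_{x^1}$, $U^C=|\bar a\bar u_{x^1}|$, $V=K|\bar u_{x^1}|$ rather than your $U=|\bar u_{x^1}|$, $U^C=\delta^{-1}|\bar a\bar u_{x^1}|$, $V=(K/\delta)|\bar u_{x^1}|$; both normalizations give the same inequality up to a constant depending only on $\delta,K$.
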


\begin{proof}
It is enough to prove
$$
\|u_{x^1}\|_{L_p}\leq N(\|u_{x'}\|_{L_p}+\|g\|_{L_p}).
$$
Fix $\tau$ in Lemma \ref{lem082001} such that $p > 2 \tau > 2$.
Also take $\kappa > 8K\delta^{-1}$ and $\mu \ge 1$ to be specified below.
To use Theorem \ref{th081201},
we set $U = \delta \bar{u}_{x^1}$
and $V = K |\bar{u}_{x^1}|$, where $\bar{u}$ is from Lemma \ref{lem082001}.
For each $C \in \bC_n$, we set $U^C = |\bar{a} \bar{u}_{x^1}|$,
where $\bar{a}  = \bar{a}_{\mu,\kappa,C}$ is also from Lemma \ref{lem082001}.
Since $\delta \le \bar{a} \le K$, we have
$$
|U| \le U^C \le V.
$$
Note that
$$
\int_C | U^C - \left(U^C\right)_C | \, dx
\le 2 \int_C | \bar{a} \bar{u}_{x^1} - \left(\bar{a} \bar{u}_{x^1}\right)_C | \, dx
\le N \int_C F_{\mu,\kappa}(x) \, dx,
$$
where the second inequality is due to Lemma \ref{lem082001}.
Then by Theorem \ref{th081201},
$$
\| \bar{u}_{x^1} \|^p_{L_p}
\le N \| F_{\mu,\kappa} \|_{L_p} \| \bar{u}_{x^1} \|^{p-1}_{L_p}.
$$
From this and using the maximal function theorem
(it is essential to have $p> 2\tau $)
we get
$$
\| \bar{u}_{x^1} \|_{L_p}
\le N \| F_{\mu,\kappa} \|_{L_p} \le N \kappa^{d/2} \|\bar{u}_{x'}\|_{L_p}
+ N \kappa^{d/2} \|\tilde{g}\|_{L_p}
$$
$$
+ N \left(\kappa^{-1} + \kappa^{d/2} \mu^{-1} + \kappa^{d/2} \mu^{1/(2\sigma)} \gamma^{1/(2\sigma)}\right) \|\bar{u}_{x^1}\|_{L_p},
$$
where $N = N(d,p,\delta,K)$.
Choose a sufficiently big $\kappa$, then  $\mu$, and finally a small $\gamma$ so that
$$
N \left(\kappa^{-1} + \kappa^{d/2} \mu^{-1} + \kappa^{d/2} \mu^{1/(2\sigma)} \gamma^{1/(2\sigma)}\right)
\le 1/2.
$$
Then
$$
\| \bar{u}_{x^1} \|_{L_p}
\le N \left( \| \bar{u}_{x'} \|_{L_p} + \| \tilde{g} \|_{L_p} \right).
$$
To finish the proof, we just return to $u$ and $g$ by using \eqref{eq081401} and \eqref{eq081402}.
\end{proof}

\mysection{Equations in divergence form with simple leading coefficients}
                                        \label{ellSec1}

In this section, we set
$$
\bar{\cL} u = (a^{ij}u_{x^i})_{x^j},
$$
where the coefficients are measurable functions of $x^1 \in \bR$ only, i.e., $a^{ij}=a^{ij}(x^1)$.
We denote, as usual,
$$
[ u ]_{\alpha, \Omega}
= \sup_{x, y\in \Omega} \frac{|u(x) - u(y)|}{|x-y|^{\alpha}}.
$$

\begin{lemma}				\label{lemma080701}
Let $p \in [1, \infty)$, $\lambda\ge 0$.
Assume $u \in C_{\text{loc}}^{\infty}$ and $\bar{\cL} u-\lambda u=0$ in $B_{2}$.
Then we have
$$
\left[u_{x'} \right]_{\alpha,B_1} \le N \left( \|u_{x}\|_{L_p(B_2)} + \lambda^{1/2} \|u\|_{L_p(B_2)} \right),
$$
where $(N, \alpha) = (N, \alpha)(d, p, \delta,K)$.
\end{lemma}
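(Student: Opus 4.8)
The plan is to exploit the fact that $a^{ij}=a^{ij}(x^1)$ depends only on $x^1$, so that tangential derivatives $u_{x'}$ (i.e.\ $u_{x^k}$ for $k\ge 2$) again solve an equation of the \emph{same} form. Differentiating $\bar{\cL}u-\lambda u=0$ in a direction $x^k$ with $k\ge 2$ and noting that $a^{ij}_{x^k}=0$, one sees immediately that $v:=u_{x^k}$ satisfies $\bar{\cL}v-\lambda v=0$ in $B_2$ as well. Thus it suffices to prove a single a priori bound: if $\bar{\cL}v-\lambda v=0$ in $B_2$ with $a^{ij}=a^{ij}(x^1)$, then $[v]_{\alpha,B_1}\le N\big(\|v_x\|_{L_p(B_2)}+\lambda^{1/2}\|v\|_{L_p(B_2)}\big)$ for some small $\alpha=\alpha(d,p,\delta,K)>0$; applying this to $v=u_{x^k}$ and summing over $k\ge 2$ gives the lemma. (One may also absorb the extra $\lambda^{1/2}\|v\|_{L_p}$ coming from treating $\lambda u$ as a zeroth-order term, or simply note that $v$ solves the homogeneous equation outright so only $\|v_x\|_{L_p}$ enters; either way the stated bound follows.)

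To prove the reduced statement, I would first upgrade $L_p$ control to $L_2$ control on a slightly smaller ball using an interior Caccioppoli-type/reverse-type argument, or, more cheaply, simply use that a solution of a \emph{homogeneous} divergence-form equation with bounded measurable coefficients is locally $W^1_2$ and enjoys the De Giorgi--Nash--Moser interior estimates. The cleanest route: on $B_{3/2}$ we have $\|v_x\|_{L_2(B_{3/2})}+\lambda^{1/2}\|v\|_{L_2(B_{3/2})}\le N\big(\|v_x\|_{L_p(B_2)}+\lambda^{1/2}\|v\|_{L_p(B_2)}\big)$ when $p\ge 2$ (Hölder), and when $p<2$ one uses the local boundedness/reverse-Hölder inequality for solutions to pass from $L_p$ to $L_2$ — in either case the right-hand side is the desired quantity. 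Then the core regularity input is that $v$, and each further tangential derivative of $v$, solves the same type of homogeneous equation, so by the De Giorgi--Nash estimate $v_{x'}\in C^{\alpha}_{\mathrm{loc}}$ with $[v_{x'}]_{\alpha,B_1}\le N\|v_{x'}\|_{L_2(B_{5/4})}\le N\|v_x\|_{L_2(B_{5/4})}$. Combining the two displays yields the claim. Actually for this lemma we only need Hölder continuity of $v_{x'}$, not of $v_{x^1}$ — which is exactly why the statement is phrased with $u_{x'}$ and not $u_x$; the coefficients are merely measurable in $x^1$ so no normal regularity is available, but tangential differentiation is harmless.

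The main obstacle is organizing the bootstrap so that the constants depend only on $d,p,\delta,K$ and not on the modulus of continuity of the $a^{ij}$ (there is none). The key point to be careful about: one may differentiate in $x'$ arbitrarily often formally, but to run De Giorgi--Nash one only needs \emph{one} tangential derivative to be a weak solution and then \emph{local $L^2$-to-$C^\alpha$} for that solution; iterating once more (differentiate $v$ again tangentially) is what promotes $v$ from $C^\alpha$ to $v_{x'}\in C^\alpha$. Concretely: $w:=u_{x^k}$ solves the homogeneous equation, hence $w\in C^\alpha_{\mathrm{loc}}(B_2)$; but we want $u_{x'}=w$ itself to be $C^\alpha$, which it already is — so in fact a single differentiation plus De Giorgi--Nash suffices, and the chain is $[u_{x'}]_{\alpha,B_1}\le N\|u_{x'}\|_{L_2(B_{3/2})}\le N\|u_x\|_{L_2(B_{3/2})}\le N(\|u_x\|_{L_p(B_2)}+\lambda^{1/2}\|u\|_{L_p(B_2)})$. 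The remaining technical care is just the standard cutoff/energy estimate needed to localize from $B_2$ down to $B_{3/2}$ while controlling the $\lambda u$ inhomogeneity, and the $p<2$ case which is handled by the local boundedness estimate for subsolutions.
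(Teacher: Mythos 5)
Your idea for $\lambda = 0$ coincides with the paper's: differentiate the equation in a tangential direction $x^k$, $k \ge 2$ (using that $a^{ij}$ depend on $x^1$ only), observe that $u_{x^k}$ again satisfies $\bar{\cL}u_{x^k} = 0$, and apply the De Giorgi--Nash--Moser interior H\"older estimate (which, via local boundedness, holds with an $L_p$ norm on the right for any $p \ge 1$). That part is sound.

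The gap is the case $\lambda > 0$. Your chain
$[u_{x'}]_{\alpha, B_1} \le N\|u_{x'}\|_{L_2(B_{3/2})} \le \cdots$
requires the constant in the De Giorgi--Nash H\"older estimate for $v := u_{x^k}$ solving $\bar{\cL}v - \lambda v = 0$ to be \emph{uniform in} $\lambda$. This is not automatic: treating $-\lambda v$ as a zeroth-order term gives constants depending on the size of that coefficient, and for the oscillation-decay step the favorable sign $-\lambda < 0$ does not by itself restore uniformity (shifting $v$ by a constant, as one must in the iteration, destroys the subsolution structure). Your parenthetical remark that one may ``absorb'' the $\lambda$ term, or that ``$v$ solves the homogeneous equation outright so only $\|v_x\|_{L_p}$ enters,'' conflates the absence of a forcing term with the absence of a zeroth-order coefficient; the $\lambda v$ term is still present. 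The paper closes this gap with Agmon's device: setting $\hat u(x,y) = u(x)\cos(\sqrt{\lambda}y)$ on $\bR^{d+1}$, the function $\hat u$ solves the genuinely $\lambda$-free equation $\bar{\cL}\hat u + \hat u_{yy} = 0$ in $\hat B_2$, one applies the $\lambda = 0$ result in $d+1$ variables, and the $y$-derivative $\hat u_y = -\sqrt{\lambda}\sin(\sqrt{\lambda}y)\,u$ is precisely what produces the term $\lambda^{1/2}\|u\|_{L_p}$ on the right. Without Agmon's trick or an alternative proof of a $\lambda$-uniform H\"older estimate, your argument is incomplete for $\lambda > 0$.
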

\begin{proof}
First assume that $\lambda = 0$.
By the De Giorgi-Moser-Nash H\"{o}lder estimate,
there exist $N$ and $\alpha \in (0,1)$, depending only on $d$, $p$, $\delta$, and $K$, such that
$$
\left[ u \right]_{\alpha, B_1} \le N \| u \|_{L_p(B_{2})}.
$$
Note that $u_{x'}$ also satisfies $\bar{\cL}u_{x'} = 0$ in $B_{2}$.
Thus
$$
\left[ u_{x'} \right]_{\alpha, B_1} \le N \| u_{x'} \|_{L_p(B_{2})}.
$$

If $\lambda > 0$, we use an idea by S. Agmon.
Let $z = (x,y)$ be a point in $\bR^{d+1}$, where $x \in \bR^{d}$, $y \in \bR$,
and $\hat{u}(z)$ and $\hat{B}_r$ be given by
$$
\hat{u}(z) = \hat{u}(x, y) = u(x) \cos(\sqrt{\lambda} y),
\quad
\hat{B}_r = \{ |z| < r: z \in \bR^{d+1} \}.
$$
Since $\hat{u}$ satisfies, in $\hat{B}_{2}$,
$$
\bar{\cL}\hat{u} +(\hat{u}_{y})_{y} = 0,
$$
by the above result applied to $\hat{u}$ we have
\begin{equation}								 \label{eq0804}
\left[ \hat{u}_{x'} \right]_{\alpha, \hat{B}_1}
\le N \|\hat{u}_z\|_{L_p(\hat{B}_2)}
\end{equation}
where $N = N(d,p, \delta,K)$.
Observe that
$$
\left[ u_{x'} \right]_{\alpha, B_1}
\le \left[ \hat{u}_{x'} \right]_{\alpha, \hat{B}_1}
$$
and $D_z \hat{u}$ is the collection consisting of
$$
\cos( \sqrt{\lambda} y) u_{x},
\quad
-\sqrt{\lambda} \sin (\sqrt{\lambda} y ) u.
$$
Thus the right-hand side of \eqref{eq0804} is less than the right-hand side of the inequality in the lemma.
The lemma is proved.
\end{proof}

\begin{corollary}			\label{cor080702}
Let $p \in [1, \infty)$, $\kappa\ge 2$, $r>0$, and $\lambda\ge 0$.
Assume $u \in C_{\text{loc}}^{\infty}$ and $\bar{\cL} u-\lambda u=0$ in $B_{\kappa r}$.
Then we have
$$
\left(|u_{x'}-\left(u_{x'}\right)_{B_r}|^p\right)_{B_r}\le N \kappa ^{-p\alpha}\left(|u_x|^p+\lambda^{p/2} |u|^p\right)_{B_{\kappa r}},
$$
where $(N, \alpha) = (N,\alpha)(d, p, \delta,K)$.
\end{corollary}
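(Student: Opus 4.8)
The plan is to reduce the statement to Lemma \ref{lemma080701} by the dilation that carries the large ball $B_{\kappa r}$ onto $B_2$. Put $\rho = \kappa r/2$ (so $\rho \ge r$ since $\kappa \ge 2$) and set $v(y) = u(\rho y)$, a function in $C_{\text{loc}}^{\infty}$. Because the $a^{ij}$ depend only on $x^1$, the rescaled coefficients $a^{ij}(\rho y^1)$ are again measurable functions of $y^1$ alone, with the \emph{same} ellipticity constant $\delta$ and the same bound $K$; consequently the constants $(N,\alpha)$ produced by Lemma \ref{lemma080701} are not affected by this rescaling. A direct computation shows that $v$ satisfies
$$
\left(a^{ij}(\rho y^1) v_{y^i}\right)_{y^j} - (\lambda \rho^2)\, v = 0
\quad \text{in } B_2,
$$
since $u$ solves $\bar{\cL} u - \lambda u = 0$ in $B_{\kappa r} = B_{2\rho}$.

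Next I would apply Lemma \ref{lemma080701} to $v$ with $\lambda$ replaced by $\lambda\rho^2 \ge 0$, obtaining
$$
\left[ v_{y'} \right]_{\alpha, B_1}
\le N\big( \|v_y\|_{L_p(B_2)} + \rho\sqrt{\lambda}\, \|v\|_{L_p(B_2)} \big),
$$
and then translate each quantity back to $u$ via $x = \rho y$. One has $v_{y'}(y) = \rho\, u_{x'}(\rho y)$, hence
$$
\left[ v_{y'} \right]_{\alpha, B_1} \ge \left[ v_{y'} \right]_{\alpha, B_{2/\kappa}} = \rho^{1+\alpha}\left[ u_{x'} \right]_{\alpha, B_r},
$$
where $\kappa \ge 2$ is what gives $B_{2/\kappa} \subseteq B_1$; similarly $\|v_y\|_{L_p(B_2)} = \rho^{1-d/p}\|u_x\|_{L_p(B_{\kappa r})}$ and $\|v\|_{L_p(B_2)} = \rho^{-d/p}\|u\|_{L_p(B_{\kappa r})}$. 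Combining these yields
$$
\left[ u_{x'} \right]_{\alpha, B_r}
\le N\rho^{-\alpha - d/p}\big( \|u_x\|_{L_p(B_{\kappa r})} + \sqrt{\lambda}\, \|u\|_{L_p(B_{\kappa r})} \big).
$$

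Finally, since the diameter of $B_r$ equals $2r$, we have $\left(|u_{x'} - (u_{x'})_{B_r}|^p\right)_{B_r} \le (2r)^{p\alpha}\left[ u_{x'} \right]^p_{\alpha, B_r}$. Substituting the previous bound, writing $\|u_x\|^p_{L_p(B_{\kappa r})} = |B_{\kappa r}|\,(|u_x|^p)_{B_{\kappa r}}$ and likewise for $u$, and using $\rho = \kappa r/2$, every power of $r$ and of $\rho$ cancels and the remaining factors of $\kappa$ collect into $\kappa^{-p\alpha}$; this is exactly the asserted inequality, with the numerical constant $2^{2p\alpha + d}$ and the volume of the unit ball absorbed into $N$. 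The argument is entirely routine; the only point that needs a line of care is the exponent bookkeeping, together with the two observations already highlighted — that the dilation preserves the constants of Lemma \ref{lemma080701} (this is where one uses that the coefficients depend only on $x^1$, not merely that they are bounded and measurable), and that the hypothesis $\kappa \ge 2$ is precisely what keeps $B_r$ inside the rescaled unit ball.
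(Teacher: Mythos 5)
Your proof is correct and follows essentially the same route as the paper: both reduce to Lemma~\ref{lemma080701} by dilating $B_{\kappa r}$ onto $B_2$, then undo the scaling (the paper just splits this into two steps, first normalizing $r=1$ and then substituting $v(x)=u(\kappa x/2)$, and bounds the oscillation over $B_1$ by $[u_{x'}]_{\alpha,B_{\kappa/2}}$ rather than $[u_{x'}]_{\alpha,B_r}$, but these are cosmetic differences). Your exponent bookkeeping checks out, and your observation that the rescaled coefficients remain measurable in $x^1$ alone with the same $\delta,K$—so that Lemma~\ref{lemma080701} applies with unchanged $(N,\alpha)$—is exactly the point that makes the argument work.
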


\begin{proof}
Thanks to the scaling argument it is enough to prove the estimate when $r = 1$.
Set
$$
\hat{a}^{ij}(x) = a^{ij}(\kappa x/2),
\quad
v(x)=u(\kappa x/2).
$$
Then $v$ satisfies, in $B_{2}$,
$$
\left( \hat{a}^{ij} v_{x^i} \right)_{x^j} - (\kappa/2)^2 \lambda v = 0.
$$
By Lemma \ref{lemma080701},
$$
\left[v_{x'} \right]_{\alpha,B_1} \le N \left( \|v_{x}\|_{L_p(B_2)} + \kappa \lambda^{1/2} \|v\|_{L_p(B_2)} \right)
$$
$$
\le N \kappa \left( |u_x|^p \right)^{1/p}_{B_{\kappa}}
+ N \kappa \lambda^{1/2}\left( |u|^p \right)^{1/p}_{B_{\kappa}}.
$$
Note that
$$
\left[v_{x'} \right]_{\alpha,B_1} = (\kappa/2)^{1+\alpha} \left[u_{x'} \right]_{\alpha,B_{\kappa/2}}.
$$
Using this and the above inequality, we see that
$$
\left(|u_{x'}-\left(u_{x'}\right)_{B_1}|^p\right)_{B_1}
\le N \left[u_{x'} \right]^p_{\alpha,B_{\kappa/2}}
\le N \kappa^{-p\alpha}
\left( |u_x|^p + \lambda^{p/2} |u|^p \right)_{B_{\kappa}}.
$$
\end{proof}

We prove a version of Theorem \ref{th080601} when $p = 2$ and the Laplace operator is replaced by $\bar{\cL}$.
However, due to the fact that $a^{ij}$ are measurable with respect to $x^1 \in \bR$,
we only have the estimate of the $L_2$-oscillations of $u_{x'}$.
In the proof we use Corollary \ref{cor080702} for $p = 2$.

\begin{theorem}
                                        \label{thm2.05}
Let $\lambda > 0$, $\kappa\ge 4$, $r>0$, $u\in W^1_{2,\text{loc}}$ and $f$, $g\in L_{2,\text{loc}}$.
Assume that
$$
\bar{\cL}u - \lambda u =\Div g + f
$$
in $B_{\kappa r}$.
Then there exist positive constants $N$ and $\alpha$, depending only on $d$, $\delta$, and $K$, such that
\begin{equation}
                                                    \label{eq2.06}
\left(|u_{x'}-(u_{x'})_{B_r}|^2\right)_{B_r}\leq N\kappa^{-2\alpha}
\left(|u_{x}|^2 + \lambda |u|^2\right)_{B_{\kappa r}}+N\kappa^{d}\left(|g|^2 + \lambda^{-1}|f|^2\right)_{B_{\kappa r}}.
\end{equation}
In particular, if $\lambda = f = 0$, i.e., $\bar{\cL} u = \Div g$, we have
$$
\left(|u_{x'}-(u_{x'})_{B_r}|^2\right)_{B_r}\leq N\kappa^{-2\alpha}
\left(|u_{x}|^2\right)_{B_{\kappa r}}+N\kappa^{d}\left(|g|^2\right)_{B_{\kappa r}}.
$$
\end{theorem}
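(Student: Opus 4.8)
The plan is to decompose $u$ into a part that solves the homogeneous equation in a large ball and a part that absorbs the data $g,f$, exactly as in the proof of Theorem \ref{th080601} but with $\Delta$ replaced by $\bar{\cL}$ and with Corollary \ref{cor080702} used in place of the Laplace-operator oscillation estimate. Since the second assertion follows from the first by writing $\bar{\cL}u-\lambda u = \Div g - \lambda u$, applying the first inequality, and letting $\lambda\searrow 0$ (the term $\lambda|u|^2$ disappears in the limit), I only need to establish \eqref{eq2.06}. By scaling it suffices to treat a fixed radius, and by mollification I may assume $u,f,g$ are smooth with compact support.

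First I would fix a cutoff $\zeta\in C_0^\infty$ with $\zeta=1$ on $B_{\kappa r/2}$ and $\zeta=0$ outside $B_{\kappa r}$, and use Theorem \ref{theorem08061901} (the $L_2$-theory for $\bar{\cL}$, which applies since $a^{ij}=a^{ij}(x^1)$ are merely bounded measurable) to find $w\in W^1_2$ solving $\bar{\cL}w-\lambda w = \Div(\zeta g)+\zeta f$. Then $v:=u-w$ satisfies $\bar{\cL}v-\lambda v = \Div((1-\zeta)g)+(1-\zeta)f$, which vanishes on $B_{\kappa r/2}$, so $\bar{\cL}v-\lambda v = 0$ there. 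Applying Corollary \ref{cor080702} with $p=2$ (after replacing $\kappa r$ by $\kappa r/2$, which only changes constants since $\kappa\ge 4$) gives
$$
\left(|v_{x'}-(v_{x'})_{B_r}|^2\right)_{B_r}\le N\kappa^{-2\alpha}\left(|v_x|^2+\lambda|v|^2\right)_{B_{\kappa r}}.
$$
For $w$, Theorem \ref{theorem08061901} yields $\sqrt{\lambda}\|w_x\|_{L_2}+\lambda\|w\|_{L_2}\le N(\sqrt\lambda\|\zeta g\|_{L_2}+\|\zeta f\|_{L_2})$ together with $\|w_x\|_{L_2}\le N(\|\zeta g\|_{L_2}+\lambda^{-1/2}\|\zeta f\|_{L_2})$; dividing the $L_2(\bR^d)$ norms of $w_x$ by $|B_r|$ and by $|B_{\kappa r}|$ respectively produces
$$
\left(|w_x|^2\right)_{B_r}\le N\kappa^d\left(|g|^2+\lambda^{-1}|f|^2\right)_{B_{\kappa r}},\qquad
\left(|w_x|^2+\lambda|w|^2\right)_{B_{\kappa r}}\le N\left(|g|^2+\lambda^{-1}|f|^2\right)_{B_{\kappa r}}.
$$

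Finally I would assemble the pieces. Using $u_{x'}=v_{x'}+w_{x'}$ and the elementary bound $(|u_{x'}-(u_{x'})_{B_r}|^2)_{B_r}\le N(|v_{x'}-(v_{x'})_{B_r}|^2)_{B_r}+N(|w_x|^2)_{B_r}$, the first term is controlled by the Corollary estimate and the second by the first $w$-inequality above. It then remains to pass from $v$ back to $u$ on $B_{\kappa r}$: since $v=u-w$, we have $(|v_x|^2+\lambda|v|^2)_{B_{\kappa r}}\le N(|u_x|^2+\lambda|u|^2)_{B_{\kappa r}}+N(|w_x|^2+\lambda|w|^2)_{B_{\kappa r}}$, and the second $w$-inequality bounds the last term by $N(|g|^2+\lambda^{-1}|f|^2)_{B_{\kappa r}}$. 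Combining everything gives \eqref{eq2.06}. I do not expect a genuine obstacle here; the only point requiring care is the bookkeeping with the factors $\kappa^{-2\alpha}$ versus $\kappa^d$ and making sure the radius shrinkage from $\kappa r$ to $\kappa r/2$ in applying Corollary \ref{cor080702} is harmless (it is, because $\kappa\ge 4$ so $\kappa/2\ge 2$ as required there, and it only affects the implicit constant $N$).
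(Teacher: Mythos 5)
Your proof proposal follows exactly the paper's argument: the same cutoff $\zeta$, the same decomposition $u=v+w$ via Theorem \ref{theorem08061901}, the same application of Corollary \ref{cor080702} with $p=2$ on $B_{\kappa r/2}$, and the same bookkeeping to assemble the estimate, including the reduction of the $\lambda=0$ case by writing $\bar{\cL}u-\lambda u=\Div g-\lambda u$ and letting $\lambda\searrow 0$. The proposal is correct and essentially identical to the paper's proof.
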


\begin{proof}
As in the proof of Lemma \ref{lemma081701},
it suffices to prove \eqref{eq2.06}.
We proceed adopting the same strategy as in the proof of Theorem \ref{th080601}.
As noted there, we can assume that all the coefficients as well as $u$, $f$, and $g$ are infinitely differentiable.

Take a $\zeta \in C_0^{\infty}$ such that
$$
\zeta = 1 \quad \text{on} \quad B_{\kappa r/2},
\quad
\zeta = 0 \quad \text{on}
\quad
\bR^d \setminus B_{\kappa r}.
$$
By Theorem \ref{theorem08061901}, for $\lambda > 0$, there exists a unique solution $w \in W_2^1$
to the equation
$$
\left( \bar{\cL} - \lambda \right) w = \Div ( \zeta g ) + \zeta f.
$$
Since all functions and coefficients involved are infinitely differentiable,
by the classical theory on elliptic equations in divergence form,
$w$ is infinitely differentiable.
The function $v := u - w$ is also infinitely differentiable and satisfies
$$
\left( \bar{\cL} - \lambda \right) v = \Div ( (1-\zeta) g ) + (1- \zeta) f,
$$
as well as $\left( \bar{\cL} - \lambda \right) v = 0$ in $B_{\kappa r/2}$.
Thus by Corollary \ref{cor080702} (note that $\kappa /2 \ge 2$)
\begin{equation}							\label{eq1003}
\left(|v_{x'}-\left(v_{x'} \right)_{B_r}|^2\right)_{B_r} \le N \kappa^{-2\alpha}\left(|v_x|^2+\lambda |v|^2\right)_{B_{\kappa r}}.
\end{equation}
Regarding $w$, by Theorem \ref{theorem08061901} we have
$$
\| w_x \|_{L_2}
+ \sqrt{\lambda}  \| w \|_{L_2}
\le N \left(\| \zeta g \|_{L_2} + \lambda^{-1/2}\| \zeta f \|_{L_2}\right),
$$
In particular,
\begin{equation}							\label{eq1001}
\left(|w_x|^2\right)_{B_r}
\le N r^{-d} \left(\| \zeta g \|^2_{L_2} + \lambda^{-1}\| \zeta f \|^2_{L_2}\right)
\le N \kappa^{d} \left( |g|^2
+ \lambda^{-1} |f|^2 \right)_{B_{\kappa r}},
\end{equation}
\begin{equation}							\label{eq1004}
\left(|w_x|^2\right)_{B_{\kappa r}}
+ \lambda  \left(|w|^2\right)_{B_{\kappa r}}
\le N \left( |g|^2 + \lambda^{-1} |f|^2 \right)_{B_{\kappa r}},
\end{equation}

Now we prove \eqref{eq2.06}.
From \eqref{eq1003}, \eqref{eq1001}, and the fact that $u = w + v$,
we obtain
$$
\left(| u_{x'} - \left( u_{x'} \right)_{B_r} |^2\right)_{B_r}
\le N\left(|v_{x'}-(v_{x'})_{B_r}|^2\right)_{B_r}
+ N\left(|w_{x'}|^2\right)_{B_r}
$$
$$
\le N \kappa^{-2\alpha}\left(|v_x|^2+\lambda |v|^2\right)_{B_{\kappa r}}
+ N \kappa^{d} \left( |g|^2 + \lambda^{-1}|f|^2\right)_{B_{\kappa r}}.
$$
From \eqref{eq1004}, we also get
$$
\left(|v_x|^2+\lambda |v|^2\right)_{B_{\kappa r}}
\le N \left(|u_x|^2+\lambda |u|^2\right)_{B_{\kappa r}}
+ N \left(|{w}_x|^2+\lambda |w|^2\right)_{B_{\kappa r}}
$$
$$
\le N \left(|u_x|^2+\lambda |u|^2\right)_{B_{\kappa r}}
+ N \left( |g|^2 + \lambda^{-1}|f|^2 \right)_{B_{\kappa r}}.
$$
Combining the above two sets of inequalities we come to the inequality \eqref{eq2.06}.
\end{proof}

\mysection{Equations with partially small BMO coefficients}
                                        \label{ellSec2}
We prove in this section Theorem \ref{th081901}, the first of our main results, where we consider the operator $\cL$ with coefficients in their full generality as given by Assumption \ref{assumption20080424}.
That is, we consider
$$
\cL u=(a^{ij}u_{x^i} + a^{j} u)_{x^j} + b^{i}u_{x^i}+cu,
$$
where $a^{ij}$ are measurable in $x^1$ and  have locally small BMO semi-norms in $x' \in \bR^{d-1}$.
All the other coefficients $a^{i}$, $b^{i}$, and $c$ are only bounded and measurable.

\begin{theorem}
                            \label{theorem3.42}
Let $a^i=b^i=c=0$,  $\gamma > 0$, $\tau,\sigma \in (1,\infty)$, $1/\tau+1/\sigma=1$, and $R\in (0,R_0]$.
Assume $u\in C_0^\infty$ vanishing outside $B_R$ and $\cL u=\Div g$,
where $g\in L_2$.
Then under Assumption \ref{assumption20080424} ($\gamma$) there exists a positive constant $N$, depending only on $d$, $\delta$, $K$, and $\tau$, such that
\begin{multline}
                                                    \label{eq3.49}
\left(|u_{x'}-(u_{x'})_{B_r(x_0)}|^2\right)_{B_r(x_0)}\leq N\kappa^{-2\alpha}
\left(|u_{x}|^2\right)_{B_{\kappa r}(x_0)}
\\
+N\kappa^d\left( (|g|^2)_{B_{\kappa r}(x_0)}+\gamma^{1/\sigma}
(|u_x|^{2\tau})_{B_{\kappa r}(x_0)}^{1/\tau}\right),
\end{multline}
for any $r\in (0,\infty)$, $\kappa\ge 4$, and $x_0\in \bR^d$,
where $\alpha = \alpha(d, \delta, K) > 0$.
\end{theorem}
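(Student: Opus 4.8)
The plan is to reduce to Theorem \ref{thm2.05} by freezing the coefficients $a^{ij}$ in the $x'$-variables over a suitable $(d-1)$-dimensional ball. Fix $x_0\in\bR^d$, $r>0$ and $\kappa\ge 4$. Set $\bar a^{ij}(x^1)$ equal to the average of $a^{ij}(x^1,\cdot)$ over $B'_{\kappa r}(x_0')$ when $\kappa r\le R_0$, and over $B'_R$ when $\kappa r>R_0$; in the second case the hypothesis that $u$, hence $u_{x^i}$, vanishes outside $B_R\subset B_{R_0}$ is essential. These frozen coefficients depend only on $x^1$, are bounded by $K$, and are uniformly elliptic with the same constant $\delta$. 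With $\bar{\cL}w:=(\bar a^{ij}w_{x^i})_{x^j}$, the identity $\cL u=\Div g$ rewrites as $\bar{\cL}u=\Div h$ in $\bR^d$, where $h_j=g_j+\sum_i(\bar a^{ij}-a^{ij})u_{x^i}$.

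First I would apply Theorem \ref{thm2.05} with $\lambda=f=0$, translated so the balls are centered at $x_0$, to obtain
\begin{equation*}
\left(|u_{x'}-(u_{x'})_{B_r(x_0)}|^2\right)_{B_r(x_0)}\le N\kappa^{-2\alpha}\left(|u_x|^2\right)_{B_{\kappa r}(x_0)}+N\kappa^d\left(|h|^2\right)_{B_{\kappa r}(x_0)},
\end{equation*}
with $\alpha=\alpha(d,\delta,K)>0$. It then remains to prove $\left(|h|^2\right)_{B_{\kappa r}(x_0)}\le N\left(|g|^2\right)_{B_{\kappa r}(x_0)}+N\gamma^{1/\sigma}\left(|u_x|^{2\tau}\right)_{B_{\kappa r}(x_0)}^{1/\tau}$. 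Since $|h-g|^2\le N\sum_{i,j}|(\bar a^{ij}-a^{ij})u_{x^i}|^2$ and $(\bar a^{ij}-a^{ij})u_{x^i}$ is supported in the support of $u$, H\"older's inequality with the conjugate exponents $\sigma,\tau$ bounds the average over $B_{\kappa r}(x_0)$ of each summand by a product whose $u$-factor is $\left(|u_{x^i}|^{2\tau}\right)_{B_{\kappa r}(x_0)}^{1/\tau}$ and whose $a$-factor is a power $1/\sigma$ of an average of $|\bar a^{ij}-a^{ij}|^{2\sigma}$. Using $|\bar a^{ij}-a^{ij}|\le 2K$, this $a$-factor is controlled by an $L^1$-average of $|\bar a^{ij}-a^{ij}|$ which, after comparing the ball $B_{\kappa r}(x_0)$ (resp. $B_R$) with the cylinder $\Gamma_{\kappa r}(x_0)$ (resp. $\Gamma_R(0)$), whose volumes differ only by a dimensional factor, is at most $N\,\text{osc}_{x'}(a^{ij},\Gamma_{\kappa r}(x_0))$ (resp. $N\,\text{osc}_{x'}(a^{ij},\Gamma_R(0))$); by Assumption \ref{assumption20080424} ($\gamma$) this is $\le N\gamma$, since $\kappa r\le R_0$ in the first case and $R\le R_0$ in the second. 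Feeding this back into the displayed inequality yields \eqref{eq3.49}.

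The step I expect to be the main obstacle is precisely the case $\kappa r>R_0$, where the cylinder $\Gamma_{\kappa r}(x_0)$ is too large for the smallness assumption $a^{\#}_{R_0}\le\gamma$ to apply on it directly. I would handle it exactly as in the proof of Lemma \ref{lem082001}: one freezes the coefficients by averaging over $B'_R$ and uses that $u_{x^i}$ and $h-g$ vanish outside $B_R$, so that only $B_{\kappa r}(x_0)\cap B_R$ contributes and a dimensional factor $|B_R|/|B_{\kappa r}|\le(R_0/(\kappa r))^d\le 1$ appears, keeping the oscillation term bounded by $N\gamma$. The rest is routine: $u$ is already smooth by hypothesis, the passage from $\cL$ to $\bar{\cL}$ is an algebraic identity, and the dependence of $N$ on $\tau$ enters only through the H\"older constant $(2K)^{(2\sigma-1)/\sigma}$.
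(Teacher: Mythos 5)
Your proposal is correct and follows essentially the same route as the paper's own proof: freeze $a^{ij}$ in $x'$ by averaging over $B'_{\kappa r}(x_0')$ or $B'_R$ depending on whether the scale exceeds the localization radius, apply Theorem \ref{thm2.05} to $\bar{\cL}u=\Div h$, and absorb the commutator term by H\"older with exponents $\sigma,\tau$ together with $|\bar a^{ij}-a^{ij}|\le 2K$ and the smallness of the oscillation. The only cosmetic difference is that you split cases at $\kappa r\lessgtr R_0$ while the paper splits at $\kappa r\lessgtr R$; both work since $R\le R_0$ guarantees $a^{\#}_{\kappa r}\le\gamma$ in your intermediate range $R<\kappa r\le R_0$.
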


\begin{proof}
The proof is similar to that of Lemma \ref{lem082001}.
Fix $\kappa\ge 4$, $r\in (0,\infty)$, and $x_0 = (x_0^1,x'_0)\in \bR^d$.
Then introduce, for all $i,j = 1, \cdots, d$,
$$
\sba^{ij}(x^1)= \dashint_{B'_{\kappa r}(x_0')} a^{ij}(x^1, y')\, dy'
\quad
\text{if}
\quad
\kappa r<R,
$$
$$
\sba^{ij}(x^1)=\dashint_{B'_{R}} a^{ij}(x^1, y') \, d y'
\quad
\text{if}
\quad
\kappa r\ge R,
$$
and $\bar{\cL} u = \left( \sba^{ij} u_{x^i} \right)_{x^j}$.
We see that $\bar{\cL} u = \Div \hat{g}$,
where
$$
\hat{g_j} =  \left(\sba^{ij} - a^{ij}\right) u_{x^i} + g_j.
$$
Then by Theorem \ref{thm2.05} with an appropriate translation,
\begin{equation}							 \label{eq081101}
\dashint_{B_r(x_0)} | u_{x'} - \left( u_{x'} \right)_{B_r(x_0)} |^2 \, dx
\le N \kappa^{-2\alpha} \left( |u_{x'}|^2 \right)_{B_{\kappa r}(x_0)}
+ N \kappa^{d} \left( |\hat{g}|^2 \right)_{B_{\kappa r}(x_0)},
\end{equation}
where $(N,\alpha) = (N,\alpha)(d,\delta,K)$.
Observe that
\begin{equation}							 \label{eq081102}
\int_{B_{\kappa r}(x_0)} |\hat{g}|^2 \, dx
\le N \int_{B_{\kappa r}(x_0)} |g|^2 \, dx
+ N I,
\end{equation}
where $N = N(d)$ and
$$
I =
\int_{B_{\kappa r}(x_0)}
\big| (\sba^{ij} - a^{ij}) u_{x^i} \big|^2 \, dx
= \int_{B_{\kappa r}(x_0) \cap B_R}
\big| (\sba^{ij} - a^{ij}) u_{x^i} \big|^2 \, dx.
$$
By the H\"{o}lder's inequality, we have
\begin{equation}							 \label{eq081103}
I \le J_1^{1/\sigma} J_2^{1/\tau},
\end{equation}
where
$$
J_1 = \int_{B_{\kappa r}(x_0) \cap B_R} | \sba^{ij} - a^{ij} |^{2\sigma} \, dx,
\quad
J_2 = \int_{B_{\kappa r}(x_0)} |u_{x}|^{2\tau} \, dx.
$$
If $\kappa r < R$,
$$
J_1 \le N \int_{B_{\kappa r}(x_0)} | \sba^{ij} - a^{ij} | \, dx
$$
$$
\le N \int_{x_0^1-\kappa r}^{x_0^1+\kappa r}
\int_{B'_{\kappa r}(x'_0)} \big| a^{ij}(x^1, x')
- \dashint_{B'_{\kappa r}(x'_0)} a^{ij} (x^1, z') \, dz'
\big| \, dx' \, dx^1
$$
$$
\le N (\kappa r)^d a^{\#}_{\kappa r}
\le  N (\kappa r)^d a^{\#}_{R},
$$
where $N$ depends only on $d$ and $K$.
In case $\kappa r \ge R$,
$$
J_1 \le N \int_{B_R} | \sba^{11} - a^{11} | \, dx
$$
$$
\le N \int_{-R}^{R}
\int_{B'_R} \big| a^{11} (x^1, x') - \dashint_{B'_R} a^{11} (x^1, z') \, dz' \big| \, dx' \, dx^1
$$
$$
\le N R^d a^{\#}_{R}
\le  N (\kappa r)^d a^{\#}_{R},
$$
where $N = N(d,K)$.
From the above estimates for $J_1$ as well as the inequalities \eqref{eq081101}, \eqref{eq081102},
and  \eqref{eq081103}, we prove
$$
\dashint_{B_r(x_0)} | u_{x'} - \left( u_{x'} \right)_{B_r(x_0)} |^2 \, dx
\le N \kappa^{-2\alpha} \left( |u_{x}|^2 \right)_{B_{\kappa r}(x_0)}
+ N \kappa^{d} \left( |g|^2 \right)_{B_{\kappa r}(x_0)}
$$
$$
+ N \kappa^{d} 
\big(a^{\#}_R\big)^{1/\sigma}
\left( |u_{x}|^{2\tau} \right)^{1/\tau}_{B_{\kappa r}(x_0)},
$$
where $N= N(d,\delta,K,\sigma)$.
It only remains to notice that $a^{\#}_R \le \gamma$.
\end{proof}

\begin{lemma}
                                \label{lem3.52}
Let $p\in (2,\infty)$, $a^i = b^i = c=0$, $g \in L_p$ and $\mu$ be the constant in Theorem \ref{theorem5.3}.
Then there exist positive constants $\gamma$ and $N$ depending on $d$, $p$, $\delta$ and $K$
such that, under Assumption \ref{assumption20080424}($\gamma$), for $u\in C_0^\infty$ vanishing outside $B_R$, $R \le \mu^{-1}R_0$
and satisfying $\cL u=\Div g$, we have
$$
\|u_x\|_{L_p}\leq N\|g\|_{L_p}.
$$
\end{lemma}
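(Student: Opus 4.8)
The plan is to run the usual sharp-function/absorption scheme, combining the two estimates already in hand: the pointwise oscillation bound \eqref{eq3.49} of Theorem \ref{theorem3.42} for $u_{x'}$, and the bound $\|u_x\|_{L_p}\le N(\|u_{x'}\|_{L_p}+\|g\|_{L_p})$ of Theorem \ref{theorem5.3} that controls the ``bad'' derivative $u_{x^1}$ by the others. Fix once and for all an exponent $\tau\in(1,p/2)$ with conjugate $\sigma$; this is possible precisely because $p>2$, and the condition $2\tau<p$ is what makes the Hardy--Littlewood maximal function theorem applicable to $|u_x|^{2\tau}\in L_{p/(2\tau)}$. Let $\mu\ge 1$ be the constant from Theorem \ref{theorem5.3}. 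Since $u\in C_0^\infty$ vanishes outside $B_R$ with $R\le\mu^{-1}R_0\le R_0$ and $\cL u=\Div g$ with $a^i=b^i=c=0$, both Theorem \ref{theorem5.3} and Theorem \ref{theorem3.42} apply to $u$.

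The first step is to convert \eqref{eq3.49} into a pointwise bound on the sharp function of $u_{x'}$. For $x\in B_r(x_0)$ and $\kappa\ge 1$ we have $B_{\kappa r}(x_0)\subset B_{2\kappa r}(x)$, so each average over $B_{\kappa r}(x_0)$ appearing in \eqref{eq3.49} is dominated, up to the dimensional factor $2^d$, by the corresponding maximal function at $x$. Taking square roots in \eqref{eq3.49} and the supremum over balls containing $x$ yields
$$(u_{x'})^{\#}(x)\le N\kappa^{-\alpha}\big(M|u_x|^2(x)\big)^{1/2}+N\kappa^{d/2}\big(M|g|^2(x)\big)^{1/2}+N\kappa^{d/2}\gamma^{1/(2\sigma)}\big(M|u_x|^{2\tau}(x)\big)^{1/(2\tau)}.$$
Applying the Fefferman--Stein theorem to $u_{x'}$ and the maximal function theorem to $|u_x|^2,\,|g|^2\in L_{p/2}$ and $|u_x|^{2\tau}\in L_{p/(2\tau)}$ (all exponents strictly above $1$) then gives
$$\|u_{x'}\|_{L_p}\le N\big(\kappa^{-\alpha}+\kappa^{d/2}\gamma^{1/(2\sigma)}\big)\|u_x\|_{L_p}+N\kappa^{d/2}\|g\|_{L_p}.$$

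The second step is to feed this into Theorem \ref{theorem5.3}. After possibly shrinking $\gamma$ so that it does not exceed the threshold required there, Theorem \ref{theorem5.3} combined with the previous display gives
$$\|u_x\|_{L_p}\le N_0\big(\kappa^{-\alpha}+\kappa^{d/2}\gamma^{1/(2\sigma)}\big)\|u_x\|_{L_p}+N_0\kappa^{d/2}\|g\|_{L_p}+N_0\|g\|_{L_p}$$
with $N_0=N_0(d,p,\delta,K)$. Because $u\in C_0^\infty$, the quantity $\|u_x\|_{L_p}$ is finite, so the first term on the right may be absorbed into the left-hand side once it is made small. I first choose $\kappa$ large enough that $N_0\kappa^{-\alpha}\le 1/4$, and then, with this $\kappa$ now fixed, choose $\gamma\in(0,1]$ small enough (and no larger than the constants demanded by Theorems \ref{theorem5.3} and \ref{theorem3.42}) that $N_0\kappa^{d/2}\gamma^{1/(2\sigma)}\le 1/4$; absorbing yields $\|u_x\|_{L_p}\le N\|g\|_{L_p}$ with $N=N(d,p,\delta,K)$.

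I do not expect a genuine obstacle here — the argument is the same template as Theorem \ref{theorem5.3}. The one point requiring care is the order of quantifiers: $\kappa$ must be fixed first (to kill the $\kappa^{-\alpha}$ term), and only afterward is $\gamma$ chosen as a function of $\kappa$; one must also note that decreasing $\gamma$ never invalidates the hypotheses of Theorems \ref{theorem5.3} and \ref{theorem3.42}, since Assumption \ref{assumption20080424}($\gamma$) only strengthens as $\gamma$ shrinks. The a priori finiteness of $\|u_x\|_{L_p}$ needed for the absorption is immediate from $u\in C_0^\infty$.
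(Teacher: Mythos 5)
Your proposal is correct and follows essentially the same route as the paper's proof: convert \eqref{eq3.49} into a pointwise bound on $u_{x'}^{\#}$ involving $M(|u_x|^2)$, $M(|g|^2)$, and $M(|u_x|^{2\tau})$, apply Fefferman--Stein and Hardy--Littlewood (using $p>2\tau$), then close with Theorem \ref{theorem5.3} and fix $\kappa$ large, then $\gamma$ small. The only cosmetic difference is that you pass through $B_{\kappa r}(x_0)\subset B_{2\kappa r}(x)$, whereas the paper applies \eqref{eq3.49} directly with $x_0=x$ so the averages are already centered maximal functions; both are fine.
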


\begin{proof}
Choose $\tau \in (1, \infty)$ such that $p > 2\tau$
and set
$$
\cA(x) = M(|g|^2)(x),
\quad
\cB(x) = M(|u_x|^2)(x),
\quad
\cC(x) = M(|u_x|^{2\tau}).
$$
Then
the inequality \eqref{eq3.49}
implies
$$
\left(| u_{x'} - (u_{x'})_{B_r(x_0)}|^2 \right)_{B_r(x_0)}
\le N \kappa^{d} \cA(x_0)
+ N \kappa^{-2\alpha}  \cB(x_0)
+ N \kappa^{d}\gamma^{1/\sigma} \cC(x_0)^{1/\tau}
$$
for all $x_0 \in \bR^d$, $\kappa \ge 4$, and $r > 0$.
Taking the supremum of the left-hand side of the above inequality with respect to $r > 0$
and using
$$
\left(| u_{x'} - (u_{x'})_{B_r(x_0)}| \right)_{B_r(x_0)}^2
\le \left(| u_{x'} - (u_{x'})_{B_r(x_0)}|^2 \right)_{B_r(x_0)},
$$
we obtain the following pointwise estimate:
$$
\big( u_{x'}^{\#}(x)\big)^2
\le N \kappa^{d} \cA(x) + N \kappa^{-2\alpha} \cB(x)
+  N \kappa^{d}\gamma^{1/\sigma} \cC(x)^{1/\tau}
$$
for all $x \in \bR^d$ and $\kappa \ge 4$.
Again apply the Fefferman-Stein theorem on sharp functions
and the Hardy-Littlewood maximal function theorem
on the above inequality to get
$$
\| u_{x'} \|_{L_p}
\le N \| u_{x'}^{\#} \|_{L_p}
\leq N \kappa^{d/2} \| M(|g|^2) \|_{L_{p/2}}^{1/2}
$$
$$
+ N \kappa^{-\alpha} \|M(|u_x|^2) \|_{L_{p/2}}^{1/2}
+ N \kappa^{d/2}\gamma^{1/(2\sigma)} \|M(|u_x|^{2\tau}) \|_{L_{p/(2\tau)}}^{1/(2\tau)}
$$
$$
\le N \kappa^{d/2} \| g \|_{L_p} + N \left(\kappa^{-\alpha} + \kappa^{d/2} \gamma^{1/(2\sigma)}\right)
\|u_x\|_{L_{p}},
$$
where the last inequality is possible due to $p > 2\tau>2$.
On the other hand, since $R \le \mu^{-1}R_0$, by Theorem \ref{theorem5.3}
 we have
$$
\|u_{x}\|_{L_p}\leq N(\|u_{x'}\|_{L_p}+\|g\|_{L_p})
$$
as long as $\gamma$ is less than the constant with the same notation in Theorem \ref{theorem5.3}.
Therefore,
$$
\| u_x \|_{L_p}
\le N \kappa^{d/2} \| g \|_{L_p}
+ N \left(\kappa^{-\alpha} + \kappa^{d/2} \gamma^{1/(2\sigma)}\right) \| u_x \|_{L_p},
$$
where $N = N(d,p,\delta,K)$.
Now we finish the proof by choosing a big enough $\kappa$ and then a possibly smaller $\gamma$
so that
$$
N \left(\kappa^{-\alpha} + \kappa^{d/2} \gamma^{1/(2\sigma)}\right) \le 1/2.
$$
\end{proof}

We  now conclude this section by proving Theorem \ref{th081901}.

\begin{proof}[Proof of Theorem \ref{th081901}]
To prove the first two assertions, by the method of continuity it is enough to prove the estimate.
Moreover, due to the duality argument we only need to consider the case $p \in (2, \infty)$.
Then the estimate in the theorem follows from Lemma \ref{lem3.52},
a partition of unity,
and the idea of Agmon shown, for example, in \cite{Krylov_2005}.
The last assertion is a consequence of the first two via a scaling argument.
\end{proof}

\mysection{Equations on a half space}
                                                \label{sec6}
This section is devoted to the proofs of Theorem \ref{thm6.6} and \ref{thm6.7}. We shall establish the solvability of divergence form elliptic equations on the half space  $\bR^d_+$ with either the Dirichlet boundary condition or the conormal derivative boundary condition by using the idea of odd/even extensions.

We will use the following well known results.

\begin{lemma}
                                \label{lem8.2}
Let $p,q\in (1,\infty)$.

\noindent (i) A function $u$ belongs to $W^{1}_{q,p}(\bR^d_+)$ if and only if its even extension $\tilde u$ with respect to $x_1$ belongs to $W^{1}_{q,p}(\bR^d)$. Moreover, we have,
\begin{equation}
                                    \label{eq2.27pm}
\|u\|_{L_{q,p}(\bR^d_+)}\le
\|\tilde u\|_{L_{q,p}(\bR^d)}\le 2
\|u\|_{L_{q,p}(\bR^d_+)},
\end{equation}
\begin{equation}
                                    \label{eq2.28pm}
\|u_x\|_{L_{q,p}(\bR^d_+)}\le
\|\tilde u_x\|_{L_{q,p}(\bR^d)}\le 2
\|u_x\|_{L_{q,p}(\bR^d_+)}.
\end{equation}

\noindent (ii) A function $u$ belongs to $W^{1}_{q,p}(\bR^d_+)$ and vanishes on $\partial \bR^d_+$ if and only if its odd extension $\tilde u$ with respect to $x_1$ belongs to  $W^{1}_{q,p}(\bR^d_+)$. Moreover, we have \eqref{eq2.27pm} and \eqref{eq2.28pm}.
\end{lemma}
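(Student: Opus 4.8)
The plan is to deduce everything --- the two membership characterizations together with the norm equivalences \eqref{eq2.27pm}, \eqref{eq2.28pm} --- from the definition of the reflected function and an elementary gluing argument across the hyperplane $\{x^1 = 0\}$. Write $x = (x^1, x')$ and $Rx = (-x^1, x')$; since $d_1 \ge 1$ the reflected coordinate $x^1$ lies in $\sbx_1$, so $R$ acts only on the inner variable of the mixed norm and is measure preserving.

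First I would record the bookkeeping for the candidate weak gradient of $\tilde u$. For the even extension, the claim is that $\tilde u_{x^1}$ is the \emph{odd} reflection of $u_{x^1}$ (that is, $\tilde u_{x^1}(x) = -u_{x^1}(Rx)$ for $x^1 < 0$) while $\tilde u_{x^j}$, $j \ge 2$, is the \emph{even} reflection of $u_{x^j}$; for the odd extension the two roles are exchanged. In either case $|\tilde u(x)| = |u(Rx)|$ and $|\tilde u_x(x)| = |u_x(Rx)|$ for $x^1 < 0$. Granting for the moment that these are the distributional derivatives of $\tilde u$, the norm equivalences follow at once: splitting the innermost $\bR^{d_1}$-integral at $\{x^1 = 0\}$ and applying the change of variables $x^1 \mapsto -x^1$ to the negative part gives $\int_{\bR^{d_1}} |\tilde u|^p\,d\sbx_1 = 2\int_{\bR^{d_1}\cap\{x^1>0\}} |u|^p\,d\sbx_1$, and likewise with $|u_x|$ in place of $|u|$; taking the $L_q$-norm in $\sbx_2$ yields $\|\tilde u\|_{L_{q,p}(\bR^d)} = 2^{1/p}\|u\|_{L_{q,p}(\bR^d_+)}$ and $\|\tilde u_x\|_{L_{q,p}(\bR^d)} = 2^{1/p}\|u_x\|_{L_{q,p}(\bR^d_+)}$, whence \eqref{eq2.27pm}--\eqref{eq2.28pm} since $1 \le 2^{1/p} \le 2$. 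The ``only if'' directions are immediate by restriction to $\bR^d_+$, except that in (ii) one notes the antisymmetry $\tilde u = -\tilde u\circ R$, which together with the existence of a boundary trace forces that trace to vanish, so $u = 0$ on $\partial\bR^d_+$.

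The substantive step is to show $\tilde u \in W^1_{q,p}(\bR^d)$ with the gradient just described --- equivalently, that differentiating the reflected function produces no singular part on $\{x^1 = 0\}$. Since $u \in W^1_p$ near the flat boundary it has a trace there, so for $\phi \in C_0^\infty(\bR^d)$ I would compute $\int_{\bR^d}\tilde u\,\phi_{x^j}\,dx$ by splitting into $\{x^1 > 0\}$ and $\{x^1 < 0\}$, substituting $x^1 \mapsto -x^1$ on the second region, and integrating by parts on each half-space. For $j \ge 2$ the relevant component of the outward normal $\mp e_1$ is zero, there is no interface term, and the two pieces reassemble into $-\int_{\bR^d}(\text{reflection of }u_{x^j})\,\phi\,dx$; for $j = 1$ each half-space contributes a boundary integral over $\{x^1 = 0\}$, and for the even extension these cancel (opposite outward normals) while for the odd extension they vanish individually because the trace of $u$ is $0$ --- in both cases giving the asserted derivative. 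Alternatively one may quote the standard gluing lemma: a function lying in $W^1$ on each of two complementary half-spaces with matching traces on the common boundary lies in $W^1$ on the union, with the obvious piecewise gradient; the even extension always has matching traces, the odd extension precisely when $u$ vanishes on $\partial\bR^d_+$. The only genuinely delicate point is this cancellation/vanishing of the interface boundary terms; the rest is change of variables and Fubini.
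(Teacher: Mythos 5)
The paper states Lemma \ref{lem8.2} without proof, introducing it only with the remark ``We will use the following well known results,'' so there is no internal argument to compare against. Your proof is correct and complete: the change-of-variables computation in the inner $\sbx_1$-integral (valid because $x^1$ is one of the $\sbx_1$-coordinates, so the reflection $R$ is a measure-preserving map of $\bR^{d_1}$ fixing $\sbx_2$) in fact gives the sharper identity $\|\tilde u\|_{L_{q,p}(\bR^d)} = 2^{1/p}\|u\|_{L_{q,p}(\bR^d_+)}$ and likewise for $u_x$, from which \eqref{eq2.27pm}--\eqref{eq2.28pm} follow since $1 < 2^{1/p} < 2$; and your integration-by-parts / gluing verification that the reflected piecewise gradient is indeed the distributional gradient (no interface term for the even extension because the two boundary contributions cancel, none for the odd extension because the trace vanishes) is exactly the standard argument. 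Two small remarks: the statement's reference to $W^1_{q,p}(\bR^d_+)$ in part (ii) is evidently a typo for $W^1_{q,p}(\bR^d)$; and when you invoke existence of a trace, the cleanest justification in the mixed-norm setting is the one you implicitly use --- by Fubini, for a.e.\ $\sbx_2$ the slice $\sbx_1 \mapsto u(\sbx_1,\sbx_2)$ lies in $W^1_p$ of a half-space of $\bR^{d_1}$, and the trace is taken slice by slice.
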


Now we are ready to prove Theorem \ref{thm6.6} and \ref{thm6.7}.
\begin{proof}[Proof of Theorem \ref{thm6.6}]
We define
$$
\tilde a^{ij}(x)={\text{sgn}(x^1)}a^{ij}(|x^1|,x')\quad \text{for}\,\,i=1,j\ge 2\,\,\text{or}\,\,j=1,i\ge 2,
$$
$$
\tilde a^{ij}(x)=a^{ij}(|x^1|,x')\quad \text{otherwise},
$$
and
$$
\tilde a^1(x)={\text{sgn}(x^1)}a^1(|x^1|,x'),\quad \tilde a^j(x)=a^j(|x^1|,x'),\,\,j\ge 2,
$$
$$
\tilde {b}^1(x)={\text{sgn}(x^1)} b^1(|x^1|,x'),\quad \tilde {b}^j(x)=b^j(|x^1|,x'),\,\,j\ge 2,
$$
$$
\tilde c(x)=c(|x^1|,x'),\quad \tilde f(x)= {\text{sgn}(x^1)} f(|x^1|,x'),
$$
$$
\tilde g_1(x)=g_1(|x^1|,x'),\quad \tilde g_j(x)={\text{sgn}(x^1)} g_j(|x^1|,x'),\,\,j\ge 2.
$$
It is easily seen that if the original coefficients satisfy Assumption \ref{assumption20080424} ($\gamma$), then the new coefficients $\tilde a^{ij}$ satisfy Assumption \ref{assumption20080424} ($2\gamma$). Moreover, we have $\tilde f,\tilde g\in L_{ p}(\bR^d)$. Let $\tilde L$ be the divergence form elliptic operator with coefficients $\tilde a^{ij},\tilde a^i,\tilde {b}^i,\tilde c$. Due to Theorem \ref{th081901}, we can find $\gamma>0$ and $\lambda_0\ge 0$ such that there exists a unique solution $u\in W^1_{ p}$ solving
\begin{equation}
                                            \label{eq24.3.29}
\tilde Lu-\lambda u=\Div \tilde g+\tilde f\quad \text{in}\,\,\bR^d,
\end{equation}
provided that $\lambda> \lambda_0$. By the definition of the coefficients and the data, we have
$$
\tilde Lu(-x^1,x')-\lambda u(-x^1,x')=-\Div \tilde g(x)-\tilde f(x)\quad \text{in}\,\,\bR^d.
$$
Consequently, $-u(-x^1,x')$ is also a solution to \eqref{eq24.3.29}. By the uniqueness of the solution, we obtain $u(x)=-u(-x^1,x')$. This implies that, as a function on $\bR_+^d$, $u$ has zero trace on the boundary and clearly $u$ satisfies \eqref{eq2008061901} in $\bR_+^d$. This proves the existence of the solution.

To prove the uniqueness, let $v$ be another solution of \eqref{eq2008061901} so that, for any $\phi\in W^1_{p'}(\bR_+^d)$ with zero trace on $\partial\bR_+^d$, we have
\begin{equation}
                                                \label{eq3.2.36}
\int_{\bR_+^d}-a^{ij}v_{x^i}\phi_{x^j}-a^jv \phi_{x^j}+b^i v_{x^i}\phi+(c-\lambda)v\phi\,dx=\int_{\bR_+^d}-g_j\phi_{x^j}+f\phi\,dx.
\end{equation}
Denote $\tilde v$ to be the odd extension of $v$ with respect to $x^1$. Then by the definition of $\tilde a^{ij}$, $\tilde a^i$, $\tilde {b}^i$, $\tilde c$, $\tilde g$,  and $\tilde f$, for any $\varphi\in W^1_{ p'}$ we have
\begin{align*}
&\int_{\bR^d}\left(-\tilde a^{ij}\tilde v_{x^i}\varphi_{x^j}
-\tilde a^j\tilde v \varphi_{x^j}
+\tilde b^i \tilde v_{x^i} \varphi+(\tilde c-\lambda)\tilde v \varphi\right)\,dx\\
&=\int_{\bR^d_+}\left(-a^{ij}v_{x^i}\phi_{x^j}
- a^j v \phi_{x^j}
+ b^i  v_{x^i} \phi+(  c-\lambda) v \phi\right)\,dx,
\end{align*}
where
$\phi(x):=\varphi(x)-\varphi(-x^1,x)$. It is clear that $\phi\in W^1_{p'}(\bR_+^d)$ and has zero trace on $\partial\bR_+^d$. By \eqref{eq3.2.36}, the integral above is equal to
$$
\int_{\bR_+^d}-g_j\phi_{x^j}+f\phi\,dx=\int_{\bR^d}-\tilde g_j\varphi_{x^j}+\tilde f\varphi\,dx,
$$
which implies that $\tilde v\in W^1_p$ is a solution to \eqref{eq24.3.29}. By the uniqueness, we get $u=\tilde v$, which implies that $u=v$ in $\bR^d_+$. Finally, the estimate \eqref{eq24.2.52pm} follows from \eqref{eq080904} and Lemma \ref{lem8.2}. The theorem is proved.

\end{proof}

\begin{proof}[Proof of Theorem \ref{thm6.7}]
We define $\tilde a^{ij}$, $\tilde a^i$, $\tilde {b}^i$ and $\tilde c$ as in the proof of Theorem \ref{thm6.6}. Let $\tilde L$ be the divergence form elliptic operator with coefficients $\tilde a^{ij},\tilde a^i,\tilde {b}^i,\tilde c$. Different from above, we define
$$
\tilde f(x)=f(|x^1|,x'),
$$
$$
\tilde g_1(x)={\text{sgn}(x^1)}g_1(|x^1|,x'),\quad \tilde g_j(x)=g_j(|x^1|,x'),\,\,j\ge 2.
$$
Recall that $\tilde a^{ij}$ satisfy Assumption \ref{assumption20080424} ($2\gamma$). Moreover, we have $\tilde f$, $\tilde g\in L_{ p}(\bR^d)$. Due to Theorem \ref{th081901}, we can find $\gamma>0$ and $\lambda_0\ge 0$ such that there exists a unique solution $u\in W^1_{ p}(\bR^d)$ solving \eqref{eq24.3.29}
provided that $\lambda> \lambda_0$. By the definition of the coefficients and the data, we have
$$
\tilde Lu(-x^1,x')-\lambda u(-x^1,x')=\Div \tilde g(x)+\tilde f(x)\quad \text{in}\,\,\bR^d.
$$
Consequently, $u(-x^1,x')$ is also a solution to \eqref{eq24.3.29}. By the uniqueness of the solution, we obtain $u(x)=u(-x^1,x')$.

Let $p'$ be such that $1/p+1/p'=1$. For any $\phi\in W^1_{ p'}(\bR^d_+)$, denote $\tilde \phi$ to be its even extension with respect to $x^1$. Since $u$ satisfies \eqref{eq24.3.29}, integrating by parts gives
\begin{multline}\label{eq27.4.50pm}
\int_{\bR^d}\left(-\tilde a^{ij}u_{x^i}\tilde\phi_{x^j}
-\tilde a^ju \tilde\phi_{x^j}
+\tilde b^i u_{x^i}\tilde\phi+(\tilde c-\lambda)u\tilde\phi\right)\,dx\\
=\int_{\bR^d}\left(-\tilde g_j\tilde\phi_{x^j}
+\tilde f\tilde\phi\right)\,dx.
\end{multline}
By the definition of $\tilde a^{ij}$, $\tilde a^i$, $\tilde {b}^i$, $\tilde c$, $\tilde g$,  and $\tilde f$  as well as the evenness of $u$ and $\tilde \phi$, all terms inside the integrals  in  \eqref{eq27.4.50pm} are even with respect to $x^1$. Thus, \eqref{eq27.4.50pm} implies
\begin{equation}
                                                \label{eq27.4.56pm}
\int_{\bR_+^d}-a^{ij}u_{x^i}\phi_{x^j}-a^ju \phi_{x^j}+b^i u_{x^i}\phi+(c-\lambda)u\phi\,dx=\int_{\bR_+^d}-g_j\phi_{x^j}+f\phi\,dx.
\end{equation}
Since $\phi\in W^1_{ p'}(\bR^d_+)$ is arbitrary, by the definition of weak solutions, $u$ solves \eqref{eq2008061902}. This proves the existence of the solution.

For the uniqueness, let $v$ be another solution of \eqref{eq2008061902} so that, for any $\phi\in W^1_{ p'}(\bR^d_+)$,  the equality  \eqref{eq27.4.56pm} holds. Let $\tilde v$ to be the odd extension of $v$ with respect to $x^1$. Then by the definition of $\tilde a^{ij}$, $\tilde a^i$, $\tilde {b}^i$, $\tilde c$, $\tilde g$,  and $\tilde f$, for any $\varphi\in W^1_{ p'}$ we have
\begin{align*}
&\int_{\bR^d}\left(-\tilde a^{ij}\tilde v_{x^i}\varphi_{x^j}
-\tilde a^j\tilde v \varphi_{x^j}
+\tilde b^i \tilde v_{x^i} \varphi+(\tilde c-\lambda)\tilde v \varphi\right)\,dx\\
&=\int_{\bR^d_+}\left(-a^{ij}v_{x^i}\varphi_{x^j}
- a^j v \varphi_{x^j}
+ b^i  v_{x^i} \varphi+(  c-\lambda) v \varphi\right)\,dx\\
&\,\,+\int_{\bR^d_+}\left(-a^{ij}v_{x^i}\varphi_{x^j}(-x^1,x')
- a^j v \varphi_{x^j}(-x^1,x')
+ b^i  v_{x^i} \varphi(-x^1,x')\right)\\
&\,\,+\int_{\bR^d_+}(c-\lambda) v \varphi(-x^1,x')\,dx.
\end{align*}
Due to \eqref{eq27.3.58pm}, the sum above is equal to
\begin{align*}
&\int_{\bR_+^d}-g_j\varphi_{x^j}+f\varphi\,dx
+\int_{\bR_+^d}-g_j\varphi_{x^j}(-x^1,x')+f\varphi(-x^1,x')\,dx\\
&=\int_{\bR^d}-\tilde g_j\varphi_{x^j}+\tilde f\varphi\,dx.
\end{align*}
This yields that $\tilde v\in W^{1}_{p}$ is  a solution of \eqref{eq24.3.29}. By the uniqueness, we get $u=\tilde v$, which implies that $u=v$ in $\bR^d_+$. Finally, the estimate \eqref{eq27.3.49pm} follows from \eqref{eq080904} and Lemma \ref{lem8.2}. The theorem is proved.
\end{proof}

\mysection{Equations in Lipschitz domains}
                                            \label{sec7}
In this section we present the proofs of Theorem \ref{thmA} and \ref{thmB}. Recall that we not only assume the leading coefficients $a^{ij}$ are partially small BMO, but also assume that they have small BMO semi-norms  in some neighborhood of $\partial \Omega$.
First we have the following classical $W^1_2$-solvability of the Dirichlet problem
\begin{equation}	\label{eq27.9.45}
\left\{
  \begin{array}{ll}
    \cL u=\Div g+f & \hbox{in $\Omega$} \\
    u=0 & \hbox{on $\partial \Omega$}
  \end{array};
\right.
\end{equation}
see, for example, \cite{Gilbarg&Trudinger:book:1983}.
\begin{theorem}
                                                \label{thm9.1}
Let $\Omega$ be a bounded domain. Assume $a^i_{x^i}+c\le 0$ in $\Omega$ in the weak sense. Then for any $f$, $g = (g_1, \cdots, g_d) \in L_{2}(\Omega)$ there exists a unique $u\in W^1_2(\Omega)$ solving \eqref{eq27.9.45}.
Moreover, we have
\begin{equation}
                            \label{eq27.9.46}
\|u\|_{W^1_2(\Omega)}\le N\|f\|_{L_2(\Omega)}+N\|g\|_{L_2(\Omega)}.
\end{equation}
\end{theorem}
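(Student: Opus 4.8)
The plan is to obtain this as a standard application of the Lax--Milgram lemma, the Fredholm alternative, and the weak maximum principle; the structure hypothesis $a^i_{x^i}+c\le 0$ is used only in the very last step. I would work in the Hilbert space $H:=\WO{1}{2}(\Omega)$ of $W^1_2(\Omega)$ functions with zero trace on $\partial\Omega$, normed by $\norm{\cdot}_{W^1_2(\Omega)}$; since $\Omega$ is bounded, the Poincar\'e inequality $\norm{v}_{L_2(\Omega)}\le N\norm{v_x}_{L_2(\Omega)}$ holds on $H$. For $\lambda\ge 0$ introduce the bounded bilinear form
$$
B_\lambda[u,\phi]=\int_\Omega\bigl(a^{ij}u_{x^i}\phi_{x^j}+a^ju\phi_{x^j}-b^iu_{x^i}\phi-cu\phi+\lambda u\phi\bigr)\,dx,
$$
so that solving $(\cL-\lambda)u=\Div g+f$ with $u|_{\partial\Omega}=0$ means finding $u\in H$ with $B_\lambda[u,\phi]=\int_\Omega(g_j\phi_{x^j}-f\phi)\,dx$ for all $\phi\in H$. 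Using uniform ellipticity, the bound $K$ on the coefficients, and Young's inequality, there is $\lambda_0=\lambda_0(d,\delta,K)$ with $B_\lambda[u,u]\ge \tfrac\delta2\norm{u_x}_{L_2(\Omega)}^2+\norm{u}_{L_2(\Omega)}^2$ for all $\lambda\ge\lambda_0$, so $B_\lambda$ is coercive on $H$. By Lax--Milgram, for every $\lambda\ge\lambda_0$ and all $f,g\in L_2(\Omega)$ there is a unique $u\in H$ solving $(\cL-\lambda)u=\Div g+f$, with $\norm{u}_{W^1_2(\Omega)}\le N(\norm{f}_{L_2(\Omega)}+\norm{g}_{L_2(\Omega)})$.

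Next I would reduce $\lambda=0$ to $\lambda=\lambda_0$ by the Fredholm alternative. Let $R:L_2(\Omega)\to H$ denote the bounded solution operator $h\mapsto w$ of $(\cL-\lambda_0)w=h$ (take $f=h$, $g=0$ above), and let $\iota:H\hookrightarrow L_2(\Omega)$ be the embedding, which is compact because $\Omega$ is bounded (Rellich--Kondrachov). Rewriting \eqref{eq27.9.45} as $(\cL-\lambda_0)u=\Div g+f+\lambda_0 u$, one sees that $u\in H$ solves \eqref{eq27.9.45} if and only if $u-\lambda_0 R\iota u=u_0$, where $u_0\in H$ is the Lax--Milgram solution of $(\cL-\lambda_0)u_0=\Div g+f$. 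The operator $\lambda_0 R\iota$ is compact on $H$, so this equation has a unique solution for every right-hand side---hence \eqref{eq27.9.45} is solvable for all $f,g\in L_2(\Omega)$ and the solution operator $(f,g)\mapsto u$ is bounded, which yields \eqref{eq27.9.46}---precisely when the homogeneous equation $u=\lambda_0 R\iota u$, i.e.\ $\cL u=0$ in $\Omega$ with $u\in H$, admits only $u\equiv 0$.

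The remaining point, and the only nonroutine one, is exactly this uniqueness, where the hypothesis $a^i_{x^i}+c\le 0$ enters. If $u\in H$ satisfies $\cL u=0$ in $\Omega$ in the weak sense, then $u$ is simultaneously a weak subsolution and a weak supersolution of $\cL$. Since the zeroth order part of $\cL$ is nonpositive in the sense that $\int_\Omega(-a^i\psi_{x^i}+c\psi)\,dx\le 0$ for all nonnegative $\psi\in C_0^1(\Omega)$, the weak maximum principle for divergence form operators (cf.\ \cite{Gilbarg&Trudinger:book:1983}, Theorem~8.1) applies: as $u$ has zero trace on $\partial\Omega$, it gives $\sup_\Omega u\le 0$ and $\sup_\Omega(-u)\le 0$, whence $u\equiv 0$. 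I would stress that a plain energy argument---testing $\cL u=0$ against $u$ and inserting $\psi=u^2$ in the sign condition---does not suffice here, since it only produces $\tfrac\delta2\norm{u_x}_{L_2(\Omega)}^2\le N\norm{u}_{L_2(\Omega)}^2$ with a constant $N$ that need not beat the Poincar\'e constant; the truncation argument underlying the weak maximum principle is what closes the gap. This establishes Theorem~\ref{thm9.1}, which then serves as the $p=2$ base case for the $W^1_p$ theory developed in this section.
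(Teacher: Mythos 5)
Your proof is correct, and it is precisely the standard argument from Gilbarg--Trudinger (Lax--Milgram for a shifted operator, Fredholm alternative via the compact embedding of $\WO{1}{2}(\Omega)$ into $L_2(\Omega)$, and the weak maximum principle under the structure condition $a^i_{x^i}+c\le 0$), which is exactly the reference the paper gives in lieu of a proof. Your remark that a naive energy estimate tested against $u$ only yields $\tfrac{\delta}{2}\|u_x\|_{L_2}^2\le N\|u\|_{L_2}^2$ and does not close the uniqueness step without the truncation argument behind the weak maximum principle is accurate and worth retaining.
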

In the sequel, we only focus on the case $p\in (2,\infty)$, since the remaining case $p\in (1,2)$ follows immediately from the duality. Because $\Omega$ is bounded, under the conditions of Theorem \ref{thmA}, we have $f,g\in L_p(\Omega)\subset L_2(\Omega)$. Owing to Theorem \ref{thm9.1}, there is a unique solution $u\in W^1_2(\Omega)$ to \eqref{eq27.9.45}. As is well known, by the method of continuity, in order to prove Theorem \ref{thmA} it suffices to show the a priori estimate \eqref{eq27.8.52} for $u\in W^1_p(\Omega)$. We need the following local estimates.

\begin{lemma}
                                            \label{lem9.2}
Let $\Omega'\Subset\Omega$, $f,g=(g_1,\cdots,g_d)\in L_p(\Omega)$, \and $\lambda_0$ and $\gamma$ are constants taken from Theorem \ref{th081901}. Then under Assumption \ref{assumption20080424} ($\gamma$),
for any $u \in W_p^1$, we have
\begin{equation}
                                    \label{eq28.2.59}
\sqrt{\lambda} \| u_x \|_{L_p(\Omega')}
\le N \left(\sqrt{\lambda} \| g \|_{L_p(\Omega)} +  \| f \|_{L_p(\Omega)}+ \lambda \| u \|_{L_p(\Omega)}+ \| u_x \|_{L_p(\Omega)}\right),
\end{equation}
provided that $\lambda\ge \lambda_0$ and
\begin{equation*}							 
\cL u = \Div g + f\quad \text{in}\,\,\Omega,
\end{equation*}
where $N=N(d,p,\delta,K,R_0,\Omega',\Omega)>0$.
\end{lemma}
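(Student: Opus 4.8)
The plan is to reduce the interior estimate in a general bounded domain to the whole-space estimate of Theorem \ref{th081901} via a partition of unity. First I would fix a cut-off function $\zeta\in C_0^\infty(\Omega)$ with $\zeta\equiv 1$ on $\Omega'$ and $0\le\zeta\le 1$; since $\Omega'\Subset\Omega$ such a $\zeta$ exists with $\|\zeta\|_{C^1}$ depending only on $\Omega'$ and $\Omega$. Set $v=\zeta u$. A direct computation shows that $v\in W_p^1(\bR^d)$ (extended by zero outside $\Omega$) and satisfies an equation of the form $\cL v-\lambda v=\Div \hat g+\hat f$ in $\bR^d$, where the new data involve $u$, $u_x$, $g$, $f$ multiplied by $\zeta$ and its first derivatives; schematically $\hat g_j=\zeta g_j+a^{ij}\zeta_{x^i}u$ (up to bounded-coefficient terms) and $\hat f=\zeta f-g_j\zeta_{x^j}+(\text{bounded})\cdot u_x+(\text{bounded})\cdot u$. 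The key point is that all the extra terms are controlled by $\|u\|_{L_p(\Omega)}$ and $\|u_x\|_{L_p(\Omega)}$ with constants depending on $\|\zeta\|_{C^1}$, hence on $\Omega'$ and $\Omega$, and that the leading coefficients $a^{ij}$ are unchanged, so Assumption \ref{assumption20080424} ($\gamma$) still holds.

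Next I would apply estimate \eqref{eq080904} of Theorem \ref{th081901} to $v$ on $\bR^d$, with the same $\gamma$ and $\lambda_0$: for $\lambda\ge\lambda_0$,
\[
\sqrt{\lambda}\,\|v_x\|_{L_p}\le N\sqrt{\lambda}\,\|\hat g\|_{L_p}+N\|\hat f\|_{L_p}.
\]
Since $v=u$ on $\Omega'$, the left side dominates $\sqrt{\lambda}\,\|u_x\|_{L_p(\Omega')}$. For the right side, estimate $\sqrt{\lambda}\,\|\hat g\|_{L_p}\le N\sqrt{\lambda}\,\|g\|_{L_p(\Omega)}+N\sqrt{\lambda}\,\|u\|_{L_p(\Omega)}$ and $\|\hat f\|_{L_p}\le N\|f\|_{L_p(\Omega)}+N\|g\|_{L_p(\Omega)}+N\|u_x\|_{L_p(\Omega)}+N\|u\|_{L_p(\Omega)}$. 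Using $\sqrt{\lambda}\le \max\{1,\lambda\}$ and $\lambda\ge\lambda_0$ (so that a term like $\sqrt\lambda\|u\|$ is absorbed into $\lambda\|u\|+\|u\|$, and $\|u\|_{L_p(\Omega)}\le N(\lambda\|u\|_{L_p(\Omega)}+\|u_x\|_{L_p(\Omega)})$ when $\lambda_0>0$, or is kept as is), one arrives at \eqref{eq28.2.59}. If $\lambda_0=0$ one must be slightly careful with the low-order terms, but the form of the right-hand side in \eqref{eq28.2.59} already carries both $\lambda\|u\|_{L_p(\Omega)}$ and $\|u_x\|_{L_p(\Omega)}$, which absorb everything.

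I expect the only mildly delicate point to be the bookkeeping of the commutator terms produced by moving $\zeta$ through the divergence-form operator: writing $\cL(\zeta u)$ in divergence form $\Div\hat g+\hat f$ requires care so that no derivatives of $u$ higher than first order appear and so that the genuinely divergence-structured part ($\zeta g$ plus $a^{ij}\zeta_{x^i}u$, etc.) is separated cleanly from the $L_p$-part. Once that algebra is organized, the estimate is immediate from Theorem \ref{th081901}, and the dependence of $N$ on $d,p,\delta,K,R_0,\Omega',\Omega$ is transparent from the construction. No compactness or covering argument is needed here because a single global cut-off suffices for an interior estimate on $\Omega'\Subset\Omega$.
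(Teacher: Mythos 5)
Your proof matches the paper's argument exactly: the paper also fixes a cut-off $\eta\in C_0^\infty(\Omega)$ with $\eta\equiv 1$ on $\Omega'$, writes $\cL(\eta u)-\lambda\eta u=\Div\tilde g+\tilde f$ with $\tilde g_j=\eta g_j+a^{ij}\eta_{x^i}u$ and $\tilde f=\eta f-\eta_{x^i}g_i+a^{ij}u_{x^i}\eta_{x^j}+a^iu\eta_{x^i}+b^iu\eta_{x^i}-\lambda\eta u$, and applies Theorem \ref{th081901}(i). One small bookkeeping point: your schematic $\hat f$ describes the $-\lambda\zeta u$ piece as ``$(\text{bounded})\cdot u$,'' but its coefficient is $-\lambda$; being explicit about this makes the $\lambda\|u\|_{L_p(\Omega)}$ term on the right of \eqref{eq28.2.59} appear directly, and the remaining $O(1)\cdot u$ and $\sqrt\lambda\cdot u$ contributions are absorbed using $\lambda\ge\lambda_0>0$.
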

\begin{proof}
Fix a $\lambda\ge \lambda_0$. We take a smooth cut-off function $\eta\in C_0^\infty(\Omega)$ such that $\eta\equiv 1$ in $\Omega'$. It is easily seen that
\begin{equation*}
\cL (\eta u)-\lambda \eta u=\Div \tilde g+\tilde f\quad \text{in}\,\,\bR^d,
\end{equation*}
where
\begin{equation}
                                    \label{Eq28.2.46}
\tilde g_j=\eta g_j+a^{ij}\eta_{x^i}u,\quad \tilde f=\eta f-\eta_{x^i}g_i+a^{ij}u_{x^i}\eta_{x^j}+a^i u \eta_{x^i}+b^i u \eta_{x^i}-\lambda \eta u.
\end{equation}
Due to Theorem \ref{th081901} (i), we have
\begin{equation}
                                            \label{eq28.2.47}
\sqrt{\lambda} \| u_x \|_{L_p(\Omega')}\le
\sqrt{\lambda} \| (\eta u)_x \|_{L_p}
\le N\sqrt \lambda \|\tilde g\|_{L_p}+N\|\tilde f\|_{L_p}.
\end{equation}
By \eqref{Eq28.2.46}, the right-hand side of \eqref{eq28.2.47} is less than the right-hand side of \eqref{eq28.2.59}. The lemma is proved.
\end{proof}

For $r>0$, we denote $B_r^+=B_r\cap \bR^d_+$.
\begin{lemma}
                                                \label{lem9.3}
Let $0<r<R<\infty$, $f,g=(g_1,\cdots,g_d)\in L_p(B_R^+)$, \and $\lambda_0$ and $\gamma$ are constants taken from Theorem  \ref{thm6.6}. Then under Assumption \ref{assumption20080424} ($\gamma$), for any $u\in W_p^1(B_R^+)$ satisfying $u=0$ on $B_R\cap \partial \bR^d_+$, we have
\begin{equation}
                                            \label{eq28.3.00}
\sqrt{\lambda} \| u_x \|_{L_p(B_r^+)}
\le N (\sqrt{\lambda} \| g \|_{L_p(B_R^+)} + \| f \|_{L_p(B_R^+)}
+ \lambda \| u \|_{L_p(B_R^+)}+\| u_x \|_{L_p(B_R^+)}),
\end{equation}
provided that $\lambda\ge \lambda_0$ and
\begin{equation*}							 
\cL u = \Div g + f\quad \text{in}\,\,B_R^+,
\end{equation*}
where $N=N(d,p,\delta,K,R_0,r,R)>0$.
\end{lemma}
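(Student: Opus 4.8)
The plan is to mirror the interior estimate of Lemma \ref{lem9.2}, replacing the role of Theorem \ref{th081901} by the half-space result Theorem \ref{thm6.6}. First I would pick a radius $\rho$ with $r<\rho<R$ and a smooth cut-off function $\eta\in C_0^\infty(B_\rho)$ with $\eta\equiv 1$ on $B_r$; since $\eta$ is supported in $B_\rho$, the function $\eta u$, extended by zero, belongs to $W^1_p(\bR^d_+)$ and vanishes on $\partial\bR^d_+$ (because $u$ does on $B_R\cap\partial\bR^d_+$ and $\eta$ kills the part of $\partial B_\rho$ inside $\bR^d_+$). A direct computation, exactly as in \eqref{Eq28.2.46}, shows
\begin{equation*}
\cL(\eta u)-\lambda\eta u=\Div\tilde g+\tilde f\quad\text{in }\bR^d_+,
\end{equation*}
with
\begin{equation*}
\tilde g_j=\eta g_j+a^{ij}\eta_{x^i}u,\quad
\tilde f=\eta f-\eta_{x^i}g_i+a^{ij}u_{x^i}\eta_{x^j}+a^iu\eta_{x^i}+b^iu\eta_{x^i}-\lambda\eta u,
\end{equation*}
together with the boundary condition $\eta u=0$ on $\partial\bR^d_+$, so that $\eta u$ solves a problem of the form \eqref{eq2008061901} on the half space.

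Next I would apply the a priori estimate \eqref{eq24.2.52pm} from Theorem \ref{thm6.6} to $\eta u$, which is legitimate since the coefficients satisfy Assumption \ref{assumption20080424} ($\gamma$) with $\gamma$ as in that theorem and $\lambda\ge\lambda_0$. This gives
\begin{equation*}
\sqrt{\lambda}\,\|u_x\|_{L_p(B_r^+)}\le\sqrt{\lambda}\,\|(\eta u)_x\|_{L_p(\bR^d_+)}\le N\sqrt{\lambda}\,\|\tilde g\|_{L_p(\bR^d_+)}+N\|\tilde f\|_{L_p(\bR^d_+)}.
\end{equation*}
Then I would estimate $\|\tilde g\|_{L_p}$ and $\|\tilde f\|_{L_p}$ termwise using the boundedness of the coefficients by $K$ and of $\eta,\eta_{x^i}$ by a constant depending only on $r,R,d$; every resulting term is bounded by the right-hand side of \eqref{eq28.3.00}, noting that $\sqrt{\lambda}$ times the term $\sqrt{\lambda}\|\eta u\|_{L_p}$ coming from $-\lambda\eta u$ in $\tilde f$ is exactly the $\lambda\|u\|_{L_p(B_R^+)}$ contribution (using $\lambda\ge\lambda_0$; if $\lambda_0=0$ one restricts to $\lambda\ge 1$ or absorbs harmlessly), and all the lower-order terms are controlled by $\|u\|_{L_p(B_R^+)}+\|u_x\|_{L_p(B_R^+)}$, which in turn is dominated by the bracket in \eqref{eq28.3.00}. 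Collecting these bounds yields \eqref{eq28.3.00} with $N=N(d,p,\delta,K,R_0,r,R)$.

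The only genuinely delicate point, and the one I would be most careful about, is the claim that $\eta u$ has zero trace on all of $\partial\bR^d_+$ and lies in $W^1_p(\bR^d_+)$ after zero extension: this is where one needs that $\eta$ is compactly supported in $B_\rho$ with $\rho<R$, so that on $\partial B_\rho\cap\bR^d_+$ the cutoff already vanishes, while on the flat portion $B_\rho\cap\partial\bR^d_+$ the hypothesis $u=0$ on $B_R\cap\partial\bR^d_+$ does the job; the extension by zero is then in $W^1_p$ by a standard gluing argument. Everything else is the routine product-rule bookkeeping already carried out in Lemma \ref{lem9.2}, so I expect no further obstacle.
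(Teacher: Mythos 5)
Your proof is correct and takes essentially the same approach as the paper: the paper's own proof is the one-liner ``the lemma follows immediately from the proof of Theorem \ref{thm6.6} and Lemma \ref{lem9.2},'' and your argument --- cut off as in Lemma \ref{lem9.2}, verify that $\eta u$ (extended by zero) lies in $W^1_p(\bR^d_+)$ with zero trace on $\partial\bR^d_+$, then invoke the half-space estimate \eqref{eq24.2.52pm} of Theorem \ref{thm6.6} --- is exactly the expected fleshing-out of that remark. The small caveats you flag (the trace/extension check, and absorbing the $\sqrt{\lambda}$-weighted lower-order terms into the right-hand side for $\lambda\ge\lambda_0$) are indeed the only points requiring care, and you handle them in the same spirit as Lemma \ref{lem9.2}.
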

\begin{proof}
The lemma follows immediately from the proof of Theorem \ref{thm6.6} and Lemma \ref{lem9.2}. We omit the detail.
\end{proof}
\begin{remark}
By an iteration argument, one actually can drop the $\|u_x\|_{L_p}$ term on the right-hand side of \eqref{eq28.2.59} and \eqref{eq28.3.00}. However, we will not use this in our proof.
\end{remark}

Next we locally flatten the boundary of $\partial\Omega$ under Assumption \ref{assump1} ($\gamma$) and \ref{assump2} ($\theta$). Let us choose a point $x_0\in \partial \Omega$ and a number $r_0=\min\{R_1,R_2\}$, so that
$$
\Omega\cap B_{r_0}(x_0) = \{x \in  B_{r_0}(x_0)\, :\, x^1 >\phi(x')\}.
$$
We define
$$
y_1=x_1-\phi(x'):=\Phi^1(x),\quad y^j=x^j:=\Phi^j(x),\,\,j\ge 2.
$$
There exists  a  small $r_1>0$ depending on $r_0$ such that
$$
B_{r_1}\subset \Phi(B_{r_0}(x_0)),\quad B_{r_1}^+\subset \Phi(\Omega\cap B_{r_0}(x_0)),
$$
where we assumed that, without loss of generality, $0 = y_0 = \Phi(x_0)$.
Denote $v(y)=u(\Psi(y))$ for any $y\in B_{r_1}^+$, where $\Psi=\Phi^{-1}$.
If $u \in W_p^1(\Omega)$ satisfies the equation \eqref{eq27.9.45}, it is easily seen that $v$ satisfies $v=0$ on    $B_{r_1}\cap \partial \bR^d_+$ and
$$
\hat \cL v=\Div \hat g+\hat f\quad\text{in}\,\, B_{r_1}^+,
$$
where for $y\in B_{r_1}$,
$$
\hat a^{ij}(y)=a^{kl}(\Psi(y))\Phi^i_{x^k}(\Psi(y))\Phi^j_{x^l}(\Psi(y)),\quad
\hat a^{i}(y)=a^{k}(\Psi(y))\Phi^i_{x^k}(\Psi(y)),
$$
$$
\hat b^{i}(y)=b^{k}(\Psi(y))\Phi^i_{x^k}(\Psi(y)),\quad \hat c(y)=c(\Psi(y)),
$$
$$
\hat f^i(y)=f^{k}(\Psi(y))\Phi^i_{x^k}(\Psi(y)),\quad \hat g(y)=g(\Psi(y)).
$$
These coefficients satisfy the boundedness and ellipticity conditions with possibly different but comparable constants. Following the argument in \cite{Byun05a}, we know that $\hat a^{ij}$ satisfy
\begin{equation}
                                                    \label{eq28.5.02}
\sup_{ij}\dashint_{B_r(x_1)}|\hat a^{ij}(x)-(\hat a^{ij})_{B_r(x_1)}|\,dx\le N_0(\theta+\gamma),
\end{equation}
for any $x_1\in B_{r_1/2}$ and $r\in (0,r_1/2]$,
where $N_0$ is independent of $\theta$ and $\gamma$. We may change the values of $\hat a^{ij}$ outside $B_{r_1/8}$ and extend them to $\bR^d$  so that the boundedness and ellipticity condition are still satisfied with the same constants, and \eqref{eq28.5.02} is satisfied for any $x_1\in \bR^d$ and $r\in (0,r_2]$ with a possibly larger $N_0$ and $r_2=\min\{1/4,\theta+\gamma\}r_1$. Indeed, this can be done by considering
$$
\hat a^{ij}\eta(\cdot/r_1)+\delta^{ij}(1-\eta(\cdot/r_1)),
$$
where $\eta\in C_0^\infty(B_{1/4})$ satisfying $\eta=1$ in $B_{1/8}$.
Now we choose sufficiently small $\theta$ and $\gamma$ such that $N_0(\theta+\gamma)$ is less than the constant  $\gamma$ in Lemma \ref{lem9.3}. By Lemma \ref{lem9.3}, we get
\begin{multline*}
\sqrt{\lambda} \| v_x \|_{L_p(B_{r_1/16}^+)}\\
\le N (\sqrt{\lambda} \| \hat g \|_{L_p(B_{r_1/8}^+)} + \| \hat f \|_{L_p(B_{r_1/8}^+)}
+ \lambda \| v \|_{L_p(B_{r_1/8}^+)}+\| v_x \|_{L_p(B_{r_1/8}^+)}),
\end{multline*}
which implies
\begin{align}
\sqrt{\lambda} \|u_x \|_{L_p(\Omega\cap B_{r_3})}
\le N ( \sqrt{\lambda} \| g \|_{L_p(\Omega\cap B_{r_0})} + \| f \|_{L_p(\Omega\cap B_{r_0})}\nonumber\\
                                                \label{eq5.29pm}
+ \lambda \|u\|_{L_p(\Omega\cap B_{r_0})}+\| u_x\|_{L_p(\Omega\cap B_{r_0})}),
\end{align}
for $\lambda\ge \lambda_0$, where $r_3$ depends only on  $r_0$.

Now we are in the position to prove Theorem \ref{thmA}
\begin{proof}[Proof of Theorem \ref{thmA}]
By using a partition of unity, we get from \eqref{eq28.2.59} and \eqref{eq5.29pm} that
$$
\sqrt{\lambda} \| u_x \|_{L_p(\Omega)}
\le N_2 (\sqrt{\lambda} \| g \|_{L_p(\Omega)} +  \| f \|_{L_p(\Omega)}+ \lambda \| u \|_{L_p(\Omega)}+ \| u_x \|_{L_p(\Omega)})
$$
for $\lambda\ge \lambda_0$. To absorb the $\|u_x\|_{L_p(\Omega)}$ term on the right-hand side, we take and fix a sufficiently large $\lambda$ so that $N_2\le \sqrt \lambda/2$. Therefore,
\begin{equation*}
\| u_x \|_{L_p(\Omega)}
\le N_3 (\| g \|_{L_p(\Omega)} +  \| f \|_{L_p(\Omega)}+\| u \|_{L_p(\Omega)}).
\end{equation*}
Take $p_1\in (p,\infty)$ such that  $1-d/p>-d/p_1$. By H\"older's inequality, Young's inequality and Poincar\'e-Sobolev inequality, we get for any $\epsilon>0$,
$$
\|u\|_{L_p(\Omega)}\le N(\epsilon)\|u\|_{L_2(\Omega)}+ \epsilon\|u\|_{L_{p_1}(\Omega)}\le N(\epsilon)\|u\|_{L_2(\Omega)}+N_4\epsilon\|u_x\|_{L_{p}(\Omega)}.
$$
Choosing $\epsilon=1/(2N_3N_4)$ and using \eqref{eq27.9.46}, we obtain \eqref{eq27.8.52}. The theorem is proved.
\end{proof}

Next we turn to study the conormal derivative problem:
\begin{equation}	\label{eq29.11.07}
\left\{
  \begin{array}{ll}
    \cL u=\Div g+f & \hbox{in $\Omega$} \\
    a^{ij}u_{x^i}n^j+a^j u n^j=g_j n^j & \hbox{on $\partial \Omega$}
  \end{array}.
\right.
\end{equation}

\begin{lemma}
                                            \label{lem9.5}
Let $\Omega$ be a bounded domain and $u\in W^1_2(\Omega)$. Assume in the weak sense $a^i_{x^i}+c\le 0$ in $\Omega$, $\cL u=0$ in $\Omega$, and $a^{ij}u_{x^i}n^j+a^j u n^j=0$ on $\partial \Omega$.

\noindent (i) If in the weak sense $a^i_{x^i}+c\equiv 0$ in $\Omega$ and $a^i n^i=0$ on $\partial \Omega$, then we have $u\equiv C$ for a constant $C\in \bR$.

\noindent (ii) Otherwise, we have $u\equiv 0$.
\end{lemma}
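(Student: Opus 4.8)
The plan is to run the classical energy argument on the weak formulation, conclude that $u$ is constant, and then use the sign hypothesis to rule out a nonzero constant. By definition, the assumptions $\cL u=0$ in $\Omega$ and $a^{ij}u_{x^i}n^j+a^ju n^j=0$ on $\partial\Omega$ mean that
\[
\int_\Omega\left(-a^{ij}u_{x^i}\phi_{x^j}-a^ju\phi_{x^j}+b^iu_{x^i}\phi+cu\phi\right)dx=0
\qquad\text{for all }\phi\in W^1_2(\Omega).
\]
First I would take $\phi=u$ (if $u$ is not bounded, use instead the truncations $\phi_N=\max\{\min\{u,N\},-N\}$, which are bounded in $W^1_2(\Omega)$ and converge to $u$ there, then pass to the limit). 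Using $u\,u_{x^j}=\tfrac12(u^2)_{x^j}$ this produces the energy identity
\[
\int_\Omega a^{ij}u_{x^i}u_{x^j}\,dx
=\int_\Omega\left(-\tfrac12 a^j(u^2)_{x^j}+\tfrac12 b^i(u^2)_{x^i}+cu^2\right)dx .
\]
Into the right-hand side I would feed the hypothesis $a^i_{x^i}+c\le 0$ tested against the nonnegative function $u^2$ (again via truncations); together with the uniform ellipticity $\delta|u_x|^2\le a^{ij}u_{x^i}u_{x^j}$ this should give $\int_\Omega a^{ij}u_{x^i}u_{x^j}\,dx\le 0$. In the principal case $b^i=c=0$ relevant to Theorem \ref{thmB}(i) this is immediate; in the presence of lower-order coefficients one has to absorb them with the help of $a^i_{x^i}+c\le 0$ and Young's inequality (the delicate bookkeeping flagged below). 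Hence $u_x\equiv 0$ a.e., and since $\Omega$ is connected, $u\equiv C$ for some $C\in\bR$, which settles part (i).

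For part (ii), I would substitute $u\equiv C$ back into the weak identity. Since $u_{x^i}=0$, it collapses to
\[
C\int_\Omega\left(-a^j\phi_{x^j}+c\phi\right)dx=0\qquad\text{for all }\phi\in W^1_2(\Omega).
\]
If $C\neq 0$, this forces $\int_\Omega(-a^j\phi_{x^j}+c\phi)\,dx=0$ for every $\phi\in W^1_2(\Omega)$, which is by definition precisely the excluded situation that $a^i_{x^i}+c\equiv 0$ in $\Omega$ and $a^in^i=0$ on $\partial\Omega$. Therefore, under the hypothesis of (ii) we must have $C=0$, i.e., $u\equiv 0$.

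The step I expect to require the most care is the use of the sign hypothesis inside the energy identity. One must justify that the truncations of $u$ and of $u^2$ are legitimate test functions (routine, from their $W^1_2$-boundedness and dominated convergence), and, more substantively, one must match the $\tfrac12$-factors coming from $u\,u_{x^j}=\tfrac12(u^2)_{x^j}$ against the inequality $a^i_{x^i}+c\le 0$ so that the zeroth- and first-order terms are genuinely dominated by the leading term; note also that $u^2$ does not vanish on $\partial\Omega$, so when only the version of $a^i_{x^i}+c\le 0$ against compactly supported test functions is available one must localize first. Everything else — connectedness of $\Omega$, the limiting arguments, and the final substitution — is routine.
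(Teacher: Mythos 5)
Your derivation of $u\equiv\text{const}$ via the energy identity does not close in the generality of Lemma \ref{lem9.5}, and the paper's proof is entirely different at this step: it simply invokes the \emph{strong maximum principle} for the conormal derivative problem (citing Gilbarg--Trudinger and Lieberman) to conclude that $u$ is constant, and only then substitutes the constant into the definitions \eqref{eq27.9.28pm} and \eqref{eq0831} to decide between (i) and (ii) --- that last substitution being exactly your final step, which is fine. The energy route breaks down for two reasons. Taking $\phi=u$ in the weak formulation gives
\begin{equation*}
\int_\Omega a^{ij}u_{x^i}u_{x^j}\,dx
=\int_\Omega\Bigl(-\tfrac12\,a^j(u^2)_{x^j}+\tfrac12\,b^i(u^2)_{x^i}+cu^2\Bigr)\,dx.
\end{equation*}
The term $\tfrac12\int_\Omega b^i(u^2)_{x^i}\,dx=\int_\Omega b^i\,u\,u_{x^i}\,dx$ has no sign and cannot be absorbed: Young's inequality trades it for $\varepsilon\int_\Omega|u_x|^2\,dx+C_\varepsilon\int_\Omega u^2\,dx$, and there is no way to dispose of the $\int_\Omega u^2\,dx$ term, since $u$ need not have zero mean (so no Poincar\'e inequality) and there is no datum on the right-hand side to compare against. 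Even with $b^i\equiv 0$ the argument fails: the hypothesis $a^i_{x^i}+c\le 0$, tested against $u^2$, controls $\int_\Omega(-a^i(u^2)_{x^i}+cu^2)\,dx$, whereas the energy identity contains $-\tfrac12\,a^i(u^2)_{x^i}+cu^2$; the mismatch you yourself flag leaves an uncontrolled $\tfrac12\int_\Omega cu^2\,dx$ of no definite sign. Your observation that the case $b^i=c=0$ is immediate is correct, but that hypothesis is added only in Theorem \ref{thmB}(i) and Theorem \ref{thm9.6}(i); Lemma \ref{lem9.5}, and in particular part (ii), must hold for arbitrary bounded $b^i$ and $c$ subject only to $a^i_{x^i}+c\le 0$. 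To make the proof work one has to switch tools: under that sign condition, a $W^1_2$ solution of $\cL u=0$ with homogeneous conormal boundary condition is handled by the strong maximum principle (together with the corresponding boundary version for the conormal problem, as in the references the paper cites), which yields constancy of $u$ without any energy estimate.
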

\begin{proof}
Owing to the strong maximum principle for the conormal derivative problem (cf. \cite{Gilbarg&Trudinger:book:1983} and \cite{Lieb}), under the assumption of the lemma, $u$ is a constant in $\Omega$. Now assertions (i) and (ii) follow from the definitions \eqref{eq27.9.28pm} and \eqref{eq0831}.
\end{proof}

Owing to Lemma \ref{lem9.5}, we get the $W^1_2$-solvability for the conormal derivative problem.
\begin{theorem}
                                                \label{thm9.6}
Let $\Omega$ be a bounded domain. Assume $a^i_{x^i}+c\le 0$ in $\Omega$ in the weak sense.

\noindent (i) If in the weak sense $a^i_{x^i}+c\equiv 0$ in $\Omega$ and $a^i n^i=0$ on $\partial \Omega$, for any $f$, $g = (g_1, \cdots, g_d) \in L_{2}(\Omega)$ there exists a unique up to a constant $u\in W^1_2(\Omega)$ solving \eqref{eq29.11.07} provided that
\begin{equation}
                                            \label{eq11.00}
b^i=c=0,\quad \int_\Omega f\,dx=0.
\end{equation}
Moreover, we have
\begin{equation}
                            \label{eq29.11.25}
\|u_x\|_{L_2(\Omega)}\le N\|f\|_{L_2(\Omega)}+N\|g\|_{L_2(\Omega)}.
\end{equation}

\noindent (ii) Otherwise, the solution is unique and we have
\begin{equation*}
\|u\|_{W^1_2(\Omega)}\le N\|f\|_{L_2(\Omega)}+N\|g\|_{L_2(\Omega)}.
\end{equation*}
\end{theorem}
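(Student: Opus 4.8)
The plan is to run the standard variational/Fredholm argument and then read off the two cases from Lemma~\ref{lem9.5}. Set $F(\phi):=\int_\Omega(g_j\phi_{x^j}-f\phi)\,dx$ and
\[
a(u,\phi):=\int_\Omega\bigl(a^{ij}u_{x^i}\phi_{x^j}+a^ju\phi_{x^j}-b^iu_{x^i}\phi-cu\phi\bigr)\,dx,
\]
so that, by \eqref{eq27.9.28pm}, a function $u\in W^1_2(\Omega)$ solves \eqref{eq29.11.07} if and only if $a(u,\phi)=F(\phi)$ for all $\phi\in W^1_2(\Omega)$. First I would record the G\aa rding inequality: from the ellipticity of $a^{ij}$, the bound $K$ on the coefficients, and Young's inequality, there is $\lambda_0=\lambda_0(d,\delta,K)$ with $a(u,u)+\lambda_0\|u\|_{L_2(\Omega)}^2\ge\tfrac\delta2\|u_x\|_{L_2(\Omega)}^2+\|u\|_{L_2(\Omega)}^2$ for all $u\in W^1_2(\Omega)$. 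Hence for a fixed $\lambda\ge\lambda_0$ the form $a_\lambda(u,\phi):=a(u,\phi)+\lambda\int_\Omega u\phi\,dx$ is bounded and coercive on $W^1_2(\Omega)$, so by Lax--Milgram there is a bounded operator $T_\lambda\colon(W^1_2(\Omega))^{\ast}\to W^1_2(\Omega)$ with $a_\lambda(T_\lambda F,\phi)=F(\phi)$ for all $\phi$. Composing with the (compact) Rellich embedding $W^1_2(\Omega)\hookrightarrow L_2(\Omega)$, the operator $K_\lambda:=\lambda T_\lambda$ is compact on $L_2(\Omega)$, and $u\in W^1_2(\Omega)$ solves \eqref{eq29.11.07} if and only if $(I-K_\lambda)u=T_\lambda F$; moreover any $u\in L_2(\Omega)$ satisfying this identity automatically lies in $W^1_2(\Omega)$.

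For part (ii) --- so that $a^i_{x^i}+c\not\equiv 0$ in $\Omega$ or $a^i n^i\not\equiv 0$ on $\partial\Omega$, and hence Lemma~\ref{lem9.5}(ii) applies --- I would invoke the Riesz--Schauder theorem. The null space of $I-K_\lambda$ is exactly the set of $u\in W^1_2(\Omega)$ with $a(u,\phi)=0$ for all $\phi$, i.e.\ the weak solutions of the homogeneous conormal problem, and by Lemma~\ref{lem9.5}(ii) this forces $u\equiv0$, so $\ker(I-K_\lambda)=\{0\}$. By the Fredholm alternative $I-K_\lambda$ is then a bijection of $L_2(\Omega)$ with bounded inverse, which gives existence and uniqueness of $u=(I-K_\lambda)^{-1}T_\lambda F$ for all $f,g\in L_2(\Omega)$. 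The estimate is a direct computation: $\|u\|_{L_2(\Omega)}\le N\|T_\lambda F\|_{L_2(\Omega)}\le N\|F\|_{(W^1_2(\Omega))^{\ast}}\le N(\|f\|_{L_2(\Omega)}+\|g\|_{L_2(\Omega)})$, and then $u=T_\lambda(F+\lambda u)$ gives $\|u\|_{W^1_2(\Omega)}\le N(\|f\|_{L_2(\Omega)}+\|g\|_{L_2(\Omega)})$.

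For part (i), where $b^i=c=0$, $\int_\Omega f\,dx=0$, and \eqref{eq0831} holds, I would argue more directly on the closed subspace $V:=\{u\in W^1_2(\Omega):\int_\Omega u\,dx=0\}$, bypassing the compactness step. The key identity is $a(u,u)=\int_\Omega a^{ij}u_{x^i}u_{x^j}\,dx$ for every $u\in W^1_2(\Omega)$: writing $a^ju u_{x^j}=\tfrac12 a^j(u^2)_{x^j}$ and using \eqref{eq0831} with $\phi=u^2$ (recall $c=0$) gives $\int_\Omega a^ju u_{x^j}\,dx=0$. Hence $a(u,u)\ge\delta\|u_x\|_{L_2(\Omega)}^2$, which on $V$ dominates $N^{-1}\|u\|_{W^1_2(\Omega)}^2$ by the Poincar\'e--Wirtinger inequality, so $a$ is coercive on $V$. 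By Lax--Milgram there is a unique $u\in V$ with $a(u,\phi)=F(\phi)$ for all $\phi\in V$; since $a(u,1)=0$ (as $b^i=c=0$) and $F(1)=-\int_\Omega f\,dx=0$, this extends to all $\phi\in W^1_2(\Omega)$, so $u$ solves \eqref{eq29.11.07}. Testing with $u$ and using $\|u\|_{L_2(\Omega)}\le N\|u_x\|_{L_2(\Omega)}$ on $V$ gives $\delta\|u_x\|_{L_2(\Omega)}^2\le F(u)\le N(\|f\|_{L_2(\Omega)}+\|g\|_{L_2(\Omega)})\|u_x\|_{L_2(\Omega)}$, which is \eqref{eq29.11.25}. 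Finally, two solutions of \eqref{eq29.11.07} differ by a weak solution of the homogeneous problem, hence by a constant by Lemma~\ref{lem9.5}(i), while adding a constant preserves \eqref{eq29.11.07} since \eqref{eq0831} yields $a(C,\phi)=0$ for every $\phi$; so the solution is unique up to an additive constant.

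The step I expect to require the most care is the justification of $\int_\Omega a^ju u_{x^j}\,dx=0$ for a general $u\in W^1_2(\Omega)$ in part (i): $u^2$ need not lie in $W^1_2(\Omega)$, whereas \eqref{eq0831} is only given for $W^1_2$ test functions. The remedy is a truncation --- apply \eqref{eq0831} to $\phi=u_N^2$ with $u_N:=(-N)\vee(u\wedge N)\in W^1_2(\Omega)\cap L_\infty(\Omega)$, use $(u_N)_{x^j}=u_{x^j}I_{\{|u|<N\}}$, and let $N\to\infty$ by dominated convergence. The remaining ingredients --- the G\aa rding inequality, the compactness of the Rellich embedding, the Fredholm alternative, the Poincar\'e--Wirtinger inequality, and the identification of $\ker(I-K_\lambda)$ with the weak solutions of the homogeneous conormal problem --- are classical on a bounded Lipschitz domain.
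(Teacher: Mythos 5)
Your proof is correct. Part (ii) follows essentially the same Fredholm route as the paper (coercivity of $a_\lambda$, Lax--Milgram resolvent, compactness of the Rellich embedding, Riesz--Schauder with the trivial kernel supplied by Lemma~\ref{lem9.5}(ii)); the only cosmetic difference is that you set up the compact operator on $L_2(\Omega)$ rather than on $W^1_2(\Omega)$, which is the same idea.

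For Part (i) you take a genuinely different route. The paper again runs the Fredholm machinery: it projects the resolvent onto the zero-mean subspace $H^1(\Omega)$, shows that solving the conormal problem there is equivalent to inverting $I+\lambda\cT$ (up to a constant $C$ which is killed by testing with $\phi\equiv 1$ and using \eqref{eq11.00}), and then appeals to Lemma~\ref{lem9.5}(i) plus the Fredholm alternative. You instead exploit the structural hypotheses of Part (i) directly: with $b^i=c=0$ and \eqref{eq0831}, the identity $\int_\Omega a^ju\,u_{x^j}\,dx=0$ (justified via the truncation $u_N=(-N)\vee(u\wedge N)$, which you correctly flag and resolve) gives $a(u,u)=\int_\Omega a^{ij}u_{x^i}u_{x^j}\,dx\ge\delta\|u_x\|_{L_2(\Omega)}^2$, hence coercivity of $a$ on the zero-mean subspace by Poincar\'e--Wirtinger. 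Lax--Milgram then gives existence and uniqueness in $V$ directly, the compatibility $a(u,1)=F(1)=0$ lifts the identity to all of $W^1_2(\Omega)$, and \eqref{eq29.11.25} falls out of the coercivity. This is more elementary than the paper's argument for Part (i): it bypasses compactness and the Fredholm alternative entirely, and yields the estimate \eqref{eq29.11.25} with a constant depending only on $d$, $\delta$, and the Poincar\'e constant of $\Omega$. The trade-off is that the paper's approach treats (i) and (ii) with a single uniform mechanism, whereas yours uses the special structure of (i) to shortcut it. Both uniqueness arguments rely on Lemma~\ref{lem9.5} in the same way.
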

\begin{proof}
We remark that for equations without lower order terms and $f$, this result was proved in \cite{ByunWang05}.

First we have the unique solvability in $W^1_2(\Omega)$ of $\cL u-\lambda u=\Div g+f$ with the same boundary condition for a sufficiently large $\lambda$. Indeed this follows from the coercivity of the bilinear form
$$
\mathcal B(u,v):=\int_{\Omega}\left(a^{ij}u_{x^i}v_{x^j}+a^ju v_{x^j}-b^i u_{x^i}v+(\lambda-c)uv\right)\,dx
$$
for large enough $\lambda$, and the Lax-Milgram lemma. We fix such a $\lambda$ and denote the corresponding resolvent operator to be $\cL_\lambda^{-1}$, which is bounded from $L_2(\Omega)$ to $W^1_2(\Omega)$.

{\em Part (i):}
Let
$$
H^1(\Omega)=\{v\in W^1_2(\Omega)\,|\,(v)_{\Omega}=0\},
$$
and $\cI:H^1(\Omega)\to L_2(\Omega)$ be the natural compact imbedding. Also we define $\cT:H^1(\Omega)\to H^1(\Omega)$ by
$$
\cT v=\cL_\lambda^{-1}\cI v-(\cL_\lambda^{-1}\cI v)_\Omega.
$$
Clearly $\cT$ is a compact operator on $H^1(\Omega)$.
Bearing Lemma \ref{lem9.5} in mind, to prove part (i), it suffices to show the unique solvability of \eqref{eq29.11.07} in $H^1(\Omega)$ and the bound \eqref{eq29.11.25}.
We claim that if $u$ satisfies
\begin{equation}
 				\label{eq23.10.30}
u+\lambda \cT u=\cL_\lambda^{-1}(\Div g+f)-\left(\cL_\lambda^{-1}(\Div g+f)\right)_\Omega,
\end{equation} then $u$ also solves \eqref{eq29.11.07}. Indeed, clearly $u$ solves
\begin{equation}	\label{eq29.11.078}
\left\{
  \begin{array}{ll}
    \cL u=\Div g+f+C & \hbox{in $\Omega$} \\
    a^{ij}u_{x^i}n^j+a^j u n^j=g_j n^j & \hbox{on $\partial \Omega$}
  \end{array},
\right.
\end{equation}
for some constant $C$. To see $C=0$, we take $\phi\equiv 1$ in the integral formulation of \eqref{eq29.11.078} and use \eqref{eq11.00}.

Therefore, any solution to
\begin{equation}
 					\label{eq23.10.35}
u+\lambda \cT u=0
\end{equation}
also solves \eqref{eq29.11.07} with $g^j\equiv f\equiv 0$. Due to Lemma \ref{lem9.5} (i) \eqref{eq23.10.35} has a unique solution $u\equiv 0$ in $H^1(\Omega)$. This together with the Fredholm alternative shows that there is a unique $u\in H^1(\Omega)$ satisfying \eqref{eq23.10.30}.
Moreover, we have the estimate
$$
\|u_x\|_{L_2(\Omega)}\le N\|f\|_{L_2(\Omega)}+N\|g\|_{L_2(\Omega)}.
$$
Part (i) of the theorem is proved.

{\em Part (ii):}
The proof is similar. Instead of the space $H^1(\Omega)$, we solve the equation in the usual Sobolev space $W^1_2(\Omega)$. Let $\cI:W_2^1(\Omega)\to L_2(\Omega)$ be the natural compact imbedding. Also we define $\cT:W_2^1(\Omega)\to W_2^1(\Omega)$ by
$$
\cT v=\cL_\lambda^{-1}\cI v.
$$
Clearly $\cT$ is a compact operator on $W_2^1(\Omega)$. Notice that for any $u\in W_2^1(\Omega)$ \eqref{eq29.11.07} is equivalent to
$$
u+\lambda \cT u=\cL_\lambda^{-1}(\Div g+f).
$$
By the Fredholm alternative, the unique solvability in $W_2^1(\Omega)$ and the bound follow from the uniqueness in $W_2^1(\Omega)$ of the trivial solution to $u+\lambda \cT u=0$, which is due to Lemma \ref{lem9.5} (ii). This completes the proof of the theorem.
\end{proof}

We also need a boundary estimate analogue to Lemma \ref{lem9.3}.
\begin{lemma}
                                                \label{lem9.7}
Let $0<r<R<\infty$, $f,g=(g_1,\cdots,g_d)\in L_p(B_R^+)$, \and $\lambda_0$ and $\gamma$ are constants taken from Theorem \ref{thm6.7}. Then under Assumption \ref{assumption20080424} ($\gamma$), for any $u\in W_p^1(B_R^+)$   we have
\begin{equation*}
\sqrt{\lambda} \| u_x \|_{L_p(B_r^+)}
\le N ( \sqrt{\lambda} \| g \|_{L_p(B_R^+)} + \| f \|_{L_p(B_R^+)}
+ \lambda \| u \|_{L_p(B_R^+)}+\| u_x \|_{L_p(B_R^+)}),
\end{equation*}
provided that $\lambda\ge \lambda_0$ and
\begin{equation*}	
\left\{
  \begin{array}{ll}
    \cL u=\Div g+f & \hbox{in $B_R^+$} \\
    a^{i1}u_{x^i}+a^1 u=g_1 & \hbox{on $B_R\cap \partial \bR^d_+$}
  \end{array},
\right.
\end{equation*}
where $N=N(d,p,\delta,K,R_0,r,R)>0$.
\end{lemma}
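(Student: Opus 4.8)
The plan is to reduce the estimate to the half-space conormal solvability of Theorem~\ref{thm6.7} by the same localization argument used for Lemma~\ref{lem9.2} and Lemma~\ref{lem9.3}; the only new point is to check that the conormal boundary condition is stable under multiplication by a cut-off function. Fix $\lambda\ge\lambda_0$, pick $\eta\in C_0^\infty(B_R)$ with $\eta\equiv 1$ on $B_r$, and set $v:=\eta u$, extended by zero outside $B_R^+$. Since $\eta$ vanishes near $\partial B_R\cap\bR^d_+$, we have $v\in W^1_p(\bR^d_+)$ and $v=u$ on $B_r^+$.

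First I would show that $v$ solves a half-space conormal problem with parameter $\lambda$. Writing $\cL u=\Div g+(f-\lambda u)+\lambda u$ in $B_R^+$ and distributing $\eta$ across the derivatives (the computation behind \eqref{Eq28.2.46}) gives
$$
\cL v-\lambda v=\Div\tilde g+\tilde f\quad\text{in }\bR^d_+,
$$
with
$$
\tilde g_j=\eta g_j+a^{ij}\eta_{x^i}u,\qquad \tilde f=\eta f-\eta_{x^j}g_j+a^{ij}u_{x^i}\eta_{x^j}+a^j u\eta_{x^j}+b^i u\eta_{x^i}-\lambda\eta u,
$$
and, on the flat boundary,
$$
a^{i1}v_{x^i}+a^1 v=\eta\left(a^{i1}u_{x^i}+a^1 u\right)+a^{i1}\eta_{x^i}u=\eta g_1+a^{i1}\eta_{x^i}u=\tilde g_1\quad\text{on }B_R\cap\partial\bR^d_+,
$$
using the conormal condition satisfied by $u$, while on $\partial\bR^d_+\setminus B_R$ both sides vanish because $v$ is supported in $B_R$. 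Rigorously I would argue through the weak formulation \eqref{eq27.3.58pm}: for $\psi\in W^1_{p'}(\bR^d_+)$ the product $\eta\psi$ is a legitimate test function for the local conormal problem satisfied by $u$ (it is compactly supported in $B_R$, with no restriction on $B_R\cap\partial\bR^d_+$), and transferring $\eta$ through the derivatives turns the identity for $u$ tested against $\eta\psi$ into the weak conormal identity for $v$ tested against $\psi$, with data $\tilde g,\tilde f$ as above.

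Next I would apply Theorem~\ref{thm6.7} to $v$ (the coefficients satisfy Assumption~\ref{assumption20080424}($\gamma$), so $\gamma$ and $\lambda_0$ may be taken from that theorem) to obtain
$$
\sqrt{\lambda}\,\|v_x\|_{L_p(\bR^d_+)}+\lambda\|v\|_{L_p(\bR^d_+)}\le N\sqrt{\lambda}\,\|\tilde g\|_{L_p(\bR^d_+)}+N\|\tilde f\|_{L_p(\bR^d_+)}.
$$
Since $v=u$ on $B_r^+$, the left side dominates $\sqrt{\lambda}\,\|u_x\|_{L_p(B_r^+)}$. The functions $\tilde g,\tilde f$ are supported in $B_R^+$, and from their formulas (with constants depending on $\|\eta\|_{C^1}$, hence on $r,R$) one gets $\|\tilde g\|_{L_p(B_R^+)}\le N(\|g\|_{L_p(B_R^+)}+\|u\|_{L_p(B_R^+)})$ and $\|\tilde f\|_{L_p(B_R^+)}\le N(\|f\|_{L_p(B_R^+)}+\|g\|_{L_p(B_R^+)}+\|u_x\|_{L_p(B_R^+)}+(1+\lambda)\|u\|_{L_p(B_R^+)})$. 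Absorbing the lower-order contributions into $\|u_x\|_{L_p(B_R^+)}+\lambda\|u\|_{L_p(B_R^+)}$ for $\lambda\ge\lambda_0$, exactly as in the proof of Lemma~\ref{lem9.2}, yields the asserted inequality.

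The only step needing genuine care is the second one: verifying, via the weak formulation, that the conormal boundary condition passes to $v=\eta u$. Everything else is a routine repackaging of the proof of Lemma~\ref{lem9.2} with Theorem~\ref{thm6.7} in place of Theorem~\ref{th081901}; indeed, in a terse write-up one could simply state, as is done for Lemma~\ref{lem9.3}, that the lemma follows at once from the proof of Theorem~\ref{thm6.7} and Lemma~\ref{lem9.2}.
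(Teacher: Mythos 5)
Your proof is correct and fills in exactly the details the paper leaves implicit: the paper's proof of Lemma \ref{lem9.7} is a one-line remark that the lemma ``follows immediately from the proof of Theorem \ref{thm6.7} and Lemma \ref{lem9.2},'' and your write-up is a faithful elaboration of that plan (cut off $u$ by $\eta\in C_0^\infty(B_R)$, check that $v=\eta u$ satisfies a half-space conormal problem with the expected data $\tilde g,\tilde f$, verify the conormal condition for $v$ via the weak formulation by testing $u$'s identity against $\eta\psi$, then invoke Theorem \ref{thm6.7} and absorb the commutator terms). The one step you flag as needing care -- that multiplication by a cutoff preserves the conormal boundary condition, with the boundary data becoming $\tilde g_1=\eta g_1+a^{i1}\eta_{x^i}u$ -- is indeed the only non-mechanical point, and your verification of it via the weak form is the right way to make the paper's terse remark rigorous.
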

\begin{proof}
The lemma follows immediately from the proof of Theorem \ref{thm6.7} and Lemma \ref{lem9.2}.
\end{proof}
\begin{proof}[Proof of Theorem \ref{thmB}]
Notice that under the conditions of Theorem \ref{thmB}, we still have \eqref{eq5.29pm} by relying on Lemma \ref{lem9.7} instead of Lemma \ref{lem9.3}. This together with Lemma \ref{lem9.2} yields the conclusions of the theorem by the same reasoning as in the proof of Theorem \ref{thmA}.
\end{proof}

\mysection{Auxiliary results for the mixed norm case}
                                            \label{sec8}

The results in this section are similar to those in \cite{Krylov_2007_mixed_VMO}
(specifically, Lemma 8.2, Corollary 8.3 and Corollary 8.4 in \cite{Krylov_2007_mixed_VMO}).
However,  since the conditions on the operators considered here are more general than those in \cite{Krylov_2007_mixed_VMO}, it is not possible to refer to the results in \cite{Krylov_2007_mixed_VMO}
based on the idea that the elliptic case can be considered as the time independent parabolic case.
In addition, contrary to the parabolic case where the Cauchy problem with zero initial condition is considered,
we are not able to use the solvability to the equation
$(a^{ij}u_{x^i})_{x^j} = \Div g$ in the whole space.
Because of these differences,
we present here complete proofs.
Throughout the section, set
$$
\cL_0 u = \left( a^{ij} u_{x^i} \right)_{x^j}
$$

\begin{lemma}							\label{lem080904}
Let $r \in (0,R_0]$, $\kappa > 1$, and $q \in (1,p]$, $p \in (1, \infty)$.
Assume that
\begin{equation}							 \label{eq081802}
\frac{1}{q} - \frac{1}{p} \le \frac{1}{d}.
\end{equation}
Then there exists $\gamma=\gamma(d,p,q,\delta,K)>0$ such that, under Assumption \ref{assumption20080424} ($\gamma$),
if $u \in W_{q,\text{loc}}^1$
satisfies $ \cL_0 u = 0$ in $B_{\kappa r}$,
then $u$, $u_x \in L_p(B_r)$
and
\begin{equation}							 \label{eq081901}
r^{-1}\left( |u|^p \right)_{B_r}^{1/p}
+ \left( |u_x|^p \right)_{B_r}^{1/p}
\le N \left( |u_x|^q + r^{-q} |u|^q \right)_{B_{\kappa r}}^{1/q},
\end{equation}
where $N = N(d, p, q, \delta, K, \kappa)$.
\end{lemma}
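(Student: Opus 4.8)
The idea is to gain integrability by a single Sobolev embedding step and then feed the result into the whole-space solvability of Theorem \ref{th081901}.

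I would first reduce to $r=1$. The estimate \eqref{eq081901} is invariant under the dilation $v(x):=u(rx)$, $\tilde a^{ij}(x):=a^{ij}(rx)$; since $\text{osc}_{x'}(\tilde a^{ij},\Gamma_\rho(x))=\text{osc}_{x'}(a^{ij},\Gamma_{r\rho}(rx))$ and $r\le R_0$, the rescaled coefficients satisfy $a^{\#}_1\le a^{\#}_{R_0}\le\gamma$, i.e.\ Assumption \ref{assumption20080424} ($\gamma$) with $R_0$ replaced by $1$; in particular Theorem \ref{th081901} applies to them with constants $N,\lambda_0$ independent of $R_0$. Thus it suffices to prove: if $\cL_0 v=0$ in $B_\kappa$ and $\gamma$ is small enough (depending on $d,p,q,\delta,K$), then $(|v_x|^p+|v|^p)^{1/p}_{B_1}\le N(|v_x|^q+|v|^q)^{1/q}_{B_\kappa}$ with $N=N(d,p,q,\delta,K,\kappa)$. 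Fix $\zeta\in C_0^\infty$ with $\zeta\equiv 1$ on $B_1$, $\operatorname{supp}\zeta\Subset B_\kappa$, $|\zeta_x|\le N(\kappa-1)^{-1}$. Using $\cL_0 v=0$ on $B_\kappa\supset\operatorname{supp}\zeta$ one gets, in $\bR^d$,
$$
\cL_0(\zeta v)=\Div G+F,\qquad G_j=a^{ij}\zeta_{x^i}v,\quad F=a^{ij}\zeta_{x^j}v_{x^i},
$$
with $G,F$ compactly supported. If $q=p$ there is nothing more to do: $\zeta v\in W^1_p$ already, $G,F\in L_p$, and Theorem \ref{th081901}(i) (applicable since for $\cL_0$ one has $a^i=b^i=c=0$), applied to $\cL_0(\zeta v)-\lambda(\zeta v)=\Div G+(F-\lambda\zeta v)$ with a fixed $\lambda=\lambda_0+1$, gives $\|(\zeta v)_x\|_{L_p}\le N(\|v_x\|_{L_p(B_\kappa)}+\|v\|_{L_p(B_\kappa)})$, which is the claim. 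So assume $q<p$.

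By the Sobolev inequality (on a ball slightly smaller than $B_\kappa$) one has $v\in L_{q_1}(B_\kappa)$ with $\|v\|_{L_{q_1}(B_\kappa)}\le N(\|v_x\|_{L_q(B_\kappa)}+\|v\|_{L_q(B_\kappa)})$, where by \eqref{eq081802} the exponent $q_1$ can be taken finite with $q_1\ge p$ (the Sobolev conjugate of $q$ if $q<d$, any finite $q_1\ge p$ if $q\ge d$). Now $G\in L_{q_1}$, but $F\in L_q$ only; the key step is to rewrite $F$ in divergence form with the improved integrability: solve $\Delta\Psi=F$ and set $H_j:=\Psi_{x^j}$, so that $\Div H=F$ and $\|H\|_{L_{q_1}}\le N\|F\|_{L_q}$ by the Hardy--Littlewood--Sobolev inequality (elementary when $q\ge d$, using $q_1\ge p\ge q\ge d$ and the compact support of $F$). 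Then
$$
\cL_0(\zeta v)-\lambda(\zeta v)=\Div(G+H)+(-\lambda\zeta v),
$$
with $G+H\in L_{q_1}$ and $\lambda\zeta v\in L_{q_1}$, all compactly supported. Fix $\lambda=\lambda_0+1$ with $\lambda_0=\lambda_0(d,q_1,\delta,K)$. By Theorem \ref{th081901} there is a unique $w\in W^1_{q_1}$ solving this equation; since the data lie in $L_q\cap L_{q_1}$ and are compactly supported, a standard approximation argument (approximate by smooth compactly supported data and pass to the limit using the a priori estimate \eqref{eq080904} in both $W^1_q$ and $W^1_{q_1}$) shows $w$ is independent of the exponent, so $w=\zeta v$. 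Hence $\zeta v\in W^1_{q_1}$ and \eqref{eq080904} gives
$$
\|(\zeta v)_x\|_{L_{q_1}}\le N\|G+H\|_{L_{q_1}}+N\|\zeta v\|_{L_{q_1}}\le N\big(\|v\|_{L_{q_1}(B_\kappa)}+\|v_x\|_{L_q(B_\kappa)}\big)\le N\big(\|v_x\|_{L_q(B_\kappa)}+\|v\|_{L_q(B_\kappa)}\big).
$$
Since $\zeta v\equiv v$ on $B_1$ and $q_1\ge p$, Jensen's inequality on $B_1$ yields $(|v_x|^p+|v|^p)^{1/p}_{B_1}\le N(|v_x|^q+|v|^q)^{1/q}_{B_\kappa}$, and undoing the scaling gives \eqref{eq081901}.

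The main obstacle is the mismatch of integrability of the two pieces of the right-hand side $\Div G+F$: after one Sobolev step $G$ (and $\lambda\zeta v$) gain integrability but $F$, which still carries $v_x$ at the unimproved exponent $q$, does not, so Theorem \ref{th081901} cannot be applied directly at the exponent $q_1$. Rewriting $F=\Div H$ with $H\in L_{q_1}$ via the Hardy--Littlewood--Sobolev estimate is exactly what removes this; the only other delicate point is the identification $w=\zeta v$, i.e.\ that the whole-space solution produced by Theorem \ref{th081901} does not depend on the integrability exponent, which follows from the density of smooth data together with the a priori estimate \eqref{eq080904}.
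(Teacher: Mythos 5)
Your overall strategy is the same as the paper's: rescale to $r=1$, cut off, observe that $\cL_0(\zeta v)$ has the form $\Div G+F$ with $G$ gaining integrability through Sobolev embedding while $F$ does not, rewrite $F$ in divergence form via a Laplace parametrix, and then bootstrap to the higher exponent using the uniqueness in Theorem~\ref{th081901}. However, there is a genuine gap in the specific choice of parametrix.

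You solve $\Delta\Psi=F$ and set $H=\Psi_x$, and then claim that the resulting data $(G+H,-\lambda\zeta v)$ ``lie in $L_q\cap L_{q_1}$ and are compactly supported.'' Both parts of that claim fail. The Newtonian potential $\Psi$ of a compactly supported $F\in L_q$ with $\int F\ne0$ decays like $|x|^{2-d}$, so $H=\Psi_x$ decays like $|x|^{1-d}$ at infinity: $H$ is not compactly supported, and $H\in L_s(\bR^d)$ only for $s>d/(d-1)$. One checks (via HLS) that $H\in L_{q_1}$ when $1/q_1=1/q-1/d$, but $H\notin L_q$ whenever $q\le d/(d-1)$, which occurs for $q$ near $1$. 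Consequently the ``standard approximation argument'' you invoke to identify the $W^1_{q_1}$ solution $w$ with $\zeta v$ breaks down: one cannot approximate $G+H$ by $C_0^\infty$ fields simultaneously in $L_q$ and $L_{q_1}$, nor can one approximate $F$ in $L_{q_1}$, so there is no sequence of data for which the a priori estimate holds in both exponents at once. Since $\zeta v-w$ lies only in the sum space $W^1_q+W^1_{q_1}$, no uniqueness statement in the paper directly applies to it.

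The paper avoids exactly this by solving $\Delta w-\lambda w=f$ rather than $\Delta\Psi=F$. The massive (zero-order) term forces $w\in W_q^2(\bR^d)$, so $w_x\in W_q^1\subset L_q\cap L_p$ by Sobolev embedding; the modified data $\tilde g=g-(a^{ij}-\delta^{ij})w_{x^i}$ then lies in $L_q\cap L_p$ and the identification step goes through cleanly. If you replace $\Delta\Psi=F$ by $\Delta\Psi-\lambda\Psi=F$ and carry the resulting extra term $\lambda\Psi$ (which is harmless since $\Psi\in W_q^2\subset L_q\cap L_{q_1}$) into the zero-order part of the data, your argument is repaired and becomes essentially the paper's proof.
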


\begin{proof}
First we show that it is enough to prove \eqref{eq081901} only for $r = R_0=1$.
To do this, assume that \eqref{eq081901} holds true for $r = R_0= 1$ and
let $u$ be a function in $W_{q,\text{loc}}^1$ such that $ \cL_0 u = 0$ in $B_{\kappa r}$.
Then $\hat{u}(x) := u(rx)$ satisfies
$$
(\hat{a}^{ij}(x) \hat{u}_{x^i})_{x^j}  = 0
$$
in $B_{\kappa}$, where $\hat{a}^{ij}(x) = a^{ij}(rx)$.
The coefficients $\hat{a}^{ij}$ carry the same constants
$\delta$ and $K$ as $a^{ij}$. Moreover, $\hat{a}^{ij}$ satisfy Assumption \ref{assumption20080424} ($\gamma$) with $R_0$ replaced by $1$ because
$$
\text{osc}_{x'}\left(\hat{a}^{ij},\Gamma_{\rho}(x)\right)
= \text{osc}_{x'}\left(a^{ij},\Gamma_{r \rho}(rx)\right),
$$
which implies that $\hat{a}^{\#}_{1} = a^{\#}_{r}\le  a^{\#}_{R_0} \le \gamma$.
Then by applying the estimate \eqref{eq081901} with $r =R_0= 1$ to the equation
$ \hat{\cL}_0 \hat{u} = 0$ in $B_{\kappa}$,
we have
$$
\left( |\hat{u}|^p \right)_{B_1}^{1/p}+
\left( |\hat{u}_x|^p \right)_{B_1}^{1/p}
\le N \left( |\hat{u}_x|^q + |\hat{u}|^q \right)_{B_{\kappa}}^{1/q}
$$
with the same constant $N$.
Returning back to $u$ proves the lemma for $r \in (0,R_0]$.

To deal with the case $r = R_0=1$,
we fix $\lambda > \lambda_0$ and $\gamma$, where $\lambda_0$ and $\gamma$ are from Theorem \ref{th081901} which work for both $p$ and $q$. First it follows from the Sobolev imbedding theorem that

\begin{equation}							 \label{eq081906}
\left( |u|^p \right)^{1/p}_{B_{\kappa}}
\le N \left( |u|^q + |u_x|^q \right)^{1/q}_{B_{\kappa}},
\end{equation}
where $N = N(d,p,q,\kappa)$. In particular, this shows that $\left( |u|^p \right)^{1/p}_{B_1}$
is controlled by the right-hand side of \eqref{eq081901}.

Let $\eta \in C_0^{\infty}$ be
such that $\eta = 1$ on $B_{1}$ and $\eta = 0$ outside $B_{\kappa}$.
Then
$$
( \cL_0 - \lambda) (\eta u) = \Div g + f,
$$
where
\begin{equation}							 \label{eq081908}
g_j = \sum_{i=1}^d a^{ij} u \eta_{x^i},
\quad
f = \sum_{i,j=1}^d a^{ij} u_{x^i} \eta_{x^j} - \lambda \eta u.
\end{equation}
Since $g$ has a compact support in $B_{\kappa}$,
the inequality \eqref{eq081906} implies
\begin{equation}							 \label{eq081904}
\| g \|_{L_p}
\le N \left( |u|^q + |u_x|^q \right)^{1/q}_{B_{\kappa}}.
\end{equation}
Now using the fact that $f \in L_q$
and the well-known $L_p$-theory for the Laplace operator,
we find a unique solution $w \in W_q^2$ to the equation
$$
\Delta w - \lambda w = f.
$$
By the Sobolev embedding theorem again
and the $L_q$-estimate corresponding to the above equation,
\begin{equation}							 \label{eq081905}
\| w \|_{L_p} + \| w_x \|_{L_p}
\le N \| w \|_{W_q^2}
\le N \|f\|_{L_q}
\le N \left( |u|^q + |u_x|^q \right)^{1/q}_{B_{\kappa}}.
\end{equation}
Define $v := \eta u - w$,
which is in $W_q^1$
because $\eta u$, $w \in W_q^1$.
In addition,
\begin{equation}							 \label{eq081903}
\cL_0 v - \lambda v = \Div g - ((a^{ij} - \delta^{ij}) w_{x^i})_{x^j}.
\end{equation}
Note that $g$, $w_x \in L_q$.
Thus, by Theorem \ref{th081901},
$v$ is the unique solution in $W_q^1$ to the above equation.
On the other hand,
by \eqref{eq081904} and \eqref{eq081905}
we have $g$, $w_{x^i} \in L_p$.
Thus, again by Theorem \ref{th081901},
there exists a unique solution in $W_p^1$ to the equation
\eqref{eq081903}.
This implies that $v$ has to be the unique solution in $W_p^1$ to the equation \eqref{eq081903}.
Moreover,
$$
\| v_x \|_{L_p} \le N \left(\| g \|_{L_p} + \| w_x \|_{L_p}\right).
$$
From this, \eqref{eq081904}, \eqref{eq081905},
and $\eta u = w + v$,
$$
\left( |u_x|^p \right)^{1/p}_{B_1}
\le N \| (\eta u)_x \|_{L_p}
\le N \left( |u|^q  + |u_x|^q \right)_{B_{\kappa}}^{1/q}.
$$
This finishes the proof.
\end{proof}

Assume that $1 < q < p$.
Then we can always find $p_0, p_1, \cdots, p_m$
such that $p_0 = q$, $p_m = p$, and $1/p_i - 1/p_{i+1} \le 1/d$, $i = 0, \cdots, m-1$.
Using the above lemma as many times as needed, we prove

\begin{corollary}							 \label{cor082001}
Let $r \in (0,R_0]$, $\kappa > 1$, and $q \in (1,p]$, $p \in (1, \infty)$.
Assume that $u \in W_{q,\text{loc}}^1$
satisfies $ \cL_0 u = 0$ in $B_{\kappa r}$.
Then there exists $\gamma=\gamma(d,p,q,\delta,K)>0$ such that, under Assumption \ref{assumption20080424} ($\gamma$), we have $u$, $u_x \in L_p(B_r)$
and
$$
r^{-1}\left( |u|^p \right)_{B_r}^{1/p} + \left( |u_x|^p \right)_{B_r}^{1/p}
\le N \left( |u_x|^q + r^{-q} |u|^q \right)_{B_{\kappa r}}^{1/q},
$$
where $N = N(d, p, q, \delta, K, \kappa)$.
\end{corollary}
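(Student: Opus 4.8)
The plan is to derive the estimate by iterating Lemma \ref{lem080904} finitely many times along a chain of intermediate exponents. If $q=p$ there is nothing new: then $1/q-1/p=0\le 1/d$ and Lemma \ref{lem080904} applies directly, so assume $1<q<p$. I would first fix an integer $m=m(d,p,q)\ge 1$, large enough, and reals $p_0=q<p_1<\dots<p_m=p$ with evenly spaced reciprocals, so that $1/p_i-1/p_{i+1}\le 1/d$ for every $i$. For the $i$-th link Lemma \ref{lem080904}, used with dilation parameter $\kappa^{1/m}>1$ in place of $\kappa$, furnishes a threshold $\gamma_i=\gamma_i(d,p_i,p_{i+1},\delta,K)$; I set $\gamma=\min_i\gamma_i$, which depends only on $d,p,q,\delta,K$, and assume Assumption \ref{assumption20080424} ($\gamma$).

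Next I would introduce the radii $t_j=\kappa^{\,1-j/m}r$ for $j=0,\dots,m$, so that $t_0=\kappa r$, $t_m=r$, $t_{j-1}=\kappa^{1/m}t_j$, and $r\le t_j\le\kappa r$. The iteration runs over $j=1,\dots,m$. Entering step $j$ I know $\cL_0u=0$ in $B_{t_{j-1}}\subseteq B_{\kappa r}$, and $u\in W^1_{p_{j-1},\mathrm{loc}}$ on $B_{t_{j-1}}$ — for $j=1$ this is the hypothesis $u\in W^1_{q,\mathrm{loc}}$, while for $j>1$ it is part of the conclusion of step $j-1$. Applying Lemma \ref{lem080904} with $(r,\kappa,q,p)$ there replaced by $(t_j,\kappa^{1/m},p_{j-1},p_j)$ gives $u,u_x\in L_{p_j}(B_{t_j})$ and
\[
t_j^{-1}\big(|u|^{p_j}\big)_{B_{t_j}}^{1/p_j}+\big(|u_x|^{p_j}\big)_{B_{t_j}}^{1/p_j}\le N_j\big(|u_x|^{p_{j-1}}+t_j^{-p_{j-1}}|u|^{p_{j-1}}\big)_{B_{t_{j-1}}}^{1/p_{j-1}},
\]
where $N_j=N_j(d,p_{j-1},p_j,\delta,K,\kappa)$.

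It then remains to telescope. For $j\ge 2$, since $(a+b)^{1/s}\le a^{1/s}+b^{1/s}$ and $t_j^{-1}=\kappa^{1/m}t_{j-1}^{-1}$, the right-hand side of the step-$j$ inequality is at most $N_j\kappa^{1/m}$ times the left-hand side of the step-$(j-1)$ inequality; for $j=1$ it is at most $N_1\big(|u_x|^q+r^{-q}|u|^q\big)_{B_{\kappa r}}^{1/q}$, because $t_1\ge r$ and $B_{t_0}=B_{\kappa r}$. Chaining these bounds from $j=m$ down to $j=1$ and using $t_m=r$, $p_m=p$, one gets $u,u_x\in L_p(B_r)$ together with the claimed inequality with $N=\kappa^{(m-1)/m}\prod_{j=1}^mN_j$; since $m$ and the $p_j$ depend only on $d,p,q$, both $N$ and $\gamma$ depend only on the stated quantities. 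The computation itself is routine bookkeeping; the points that genuinely require care are verifying that the $W^1_{\mathrm{loc}}$-regularity really does propagate along the chain (so that each application of Lemma \ref{lem080904} is legitimate) and checking that every intermediate radius $t_j$ stays in the range $(0,R_0]$ required by that lemma — the latter being the only mildly delicate point, and harmless since the $t_j$ differ from $r$ only by a factor in $[1,\kappa]$.
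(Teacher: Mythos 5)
Your approach is the one the paper intends — iterate Lemma \ref{lem080904} along a chain of exponents $q=p_0<p_1<\dots<p_m=p$ with $1/p_{i}-1/p_{i+1}\le 1/d$ — and the telescoping algebra you do (pulling out $t_j^{-1}=\kappa^{1/m}t_{j-1}^{-1}$, absorbing the $j=1$ term via $t_1\ge r$) is correct. However, the one issue you flag at the end and then wave away is in fact a genuine gap, not a harmless bookkeeping point.

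Lemma \ref{lem080904} requires its inner radius to lie in $(0,R_0]$; this is used essentially in its proof, where after the dilation $x\mapsto rx$ one needs $\hat a^{\#}_1=a^{\#}_r\le a^{\#}_{R_0}\le\gamma$, and that monotonicity argument fails if the radius exceeds $R_0$. In your chain $t_j=\kappa^{1-j/m}r$, one has $t_j\in[r,\kappa r]$, and the corollary allows $r$ up to $R_0$ and $\kappa>1$ arbitrary, so for instance when $r=R_0$ every $t_j$ with $j<m$ satisfies $t_j>R_0$ and Lemma \ref{lem080904} simply does not apply at those steps. Saying ``the $t_j$ differ from $r$ only by a factor in $[1,\kappa]$'' is true but is precisely the reason the constraint can fail, not a reason it cannot. (Nor can you repair it by shrinking the intermediate radii to $\min(t_j,R_0)$: when $r$ is near $R_0$ the successive dilation factors $t_{j-1}/t_j$ would collapse to $1$, and the constant in Lemma \ref{lem080904} blows up as its $\kappa\to 1^+$. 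And you cannot simply replace $\kappa$ by something $r$-dependent without letting $N$ depend on $r$.)

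To close the gap one has to avoid ever invoking Lemma \ref{lem080904} with an inner radius larger than $R_0$. A standard way to do this, keeping $N=N(d,p,q,\delta,K,\kappa)$, is a covering/recentering argument: fix $\kappa'=2$ (say) and a small radius $\sigma=(\kappa-1)r/(2m)\le r\le R_0$, take nested annular shells $B_{\rho_0}=B_{\kappa r}\supset B_{\rho_1}\supset\dots\supset B_{\rho_m}=B_r$ with $\rho_{j-1}-\rho_j=(\kappa-1)r/m$, and at step $j$ apply Lemma \ref{lem080904} on finitely many balls $B_{\sigma}(x_k)\subset B_{2\sigma}(x_k)\subset B_{\rho_{j-1}}$ centered at points $x_k\in B_{\rho_j}$, summing the resulting estimates. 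Each application then has inner radius $\sigma\le R_0$ and a dilation factor bounded away from $1$, the number of centers depends only on $d$, $m$, $\kappa$, and after $m$ steps one obtains the stated bound on $B_r$. As written, your proof implicitly assumes $\kappa r\le R_0$; under that extra hypothesis it is correct, but that is strictly weaker than the corollary's $r\le R_0$.
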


As noted earlier, the corollary below corresponds to Corollary 8.4 in \cite{Krylov_2007_mixed_VMO},
but the statement is a little different.

\begin{corollary}							 \label{cor082002}
Let $p, q \in (1,\infty)$, $\lambda \ge 0$, and $0 < r \le R_0/\sqrt{2}$.
Assume that $u \in W_{q,\text{loc}}^1$
and $ \cL_0 u - \lambda u = 0$ in $B_{2r}$.
Then there exists $\gamma=\gamma(d,p,q,\delta,K)>0$ such that, under Assumption \ref{assumption20080424} ($\gamma$), we have $u_x \in L_p(B_r)$ and
\begin{equation}							 \label{eq082002}
\left( |u_x|^p \right)^{1/p}_{B_r}
\le N \left( |u_x|^q + \lambda^{q/2} |u|^q \right)^{1/q}_{B_{2r}},
\end{equation}
where $N = N(d,p,q,\delta, K)$.
\end{corollary}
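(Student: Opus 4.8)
The plan is to use Agmon's device to convert $-\lambda u$ into one extra spatial variable and then quote Corollary \ref{cor082001}. Writing points of $\bR^{d+1}$ as $z=(x,y)$ with $x\in\bR^d$, put $\hat u(z)=u(x)\cos(\sqrt\lambda\,y)$. Since $(\cos\sqrt\lambda\,y)_{yy}=-\lambda\cos\sqrt\lambda\,y$, the function $\hat u$ solves $\hat\cL_0\hat u:=(\hat a^{ij}\hat u_{z^i})_{z^j}=0$ in $B_{2r}\times\bR$, where the indices run from $1$ to $d+1$, $\hat a^{ij}(z)=a^{ij}(x)$ for $i,j\le d$, $\hat a^{d+1,d+1}\equiv1$, and the mixed entries vanish. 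These coefficients are bounded by $K$ and uniformly elliptic with constant $\delta$, and, being independent of $y$, their oscillation in the directions orthogonal to $z^1$ over a cylinder is, up to a factor $N(d)$, the oscillation of $\{a^{ij}\}$ in $x'$; consequently $\hat a^{\#}_{R_0}$ (formed in $\bR^{d+1}$) is $\le N(d)\,a^{\#}_{R_0}\le N(d)\gamma$. Choosing $\gamma=\gamma(d,p,q,\delta,K)$ small enough, Corollary \ref{cor082001}, with $d$ replaced by $d+1$ and for this $p,q$, is applicable to $\hat u$.

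I would then apply it to $w:=\hat u-(\hat u)_{\hat B_{2r}}$, which has the same gradient as $\hat u$ and, since $\hat\cL_0$ annihilates constants, satisfies $\hat\cL_0 w=0$ in $\hat B_{2r}:=\hat B_{2r}(0)\subset B_{2r}\times\bR$. With inner radius $\rho=4r/3$ (admissible because $r\le R_0/\sqrt2$ forces $4r/3<R_0$) and $\kappa=3/2$, so that $\hat B_{\kappa\rho}=\hat B_{2r}$, Corollary \ref{cor082001} gives
$$
\big(|\hat u_z|^p\big)_{\hat B_{4r/3}}^{1/p}
\le N\Big(|\hat u_z|^q+(4r/3)^{-q}\big|\hat u-(\hat u)_{\hat B_{2r}}\big|^q\Big)_{\hat B_{2r}}^{1/q}.
$$
The crucial point is that, by the Poincar\'e inequality on $\hat B_{2r}$, the second term on the right is at most $N\big(|\hat u_z|^q\big)_{\hat B_{2r}}^{1/q}$; this is precisely what lets us discard the zeroth-order term that Corollary \ref{cor082001} would otherwise leave, and it is why the final estimate carries $\lambda^{q/2}|u|^q$ instead of $r^{-q}|u|^q$. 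We are thus reduced to $\big(|\hat u_z|^p\big)_{\hat B_{4r/3}}^{1/p}\le N\big(|\hat u_z|^q\big)_{\hat B_{2r}}^{1/q}$.

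It remains to return to $\bR^d$. From $\hat u_z=\big((\cos\sqrt\lambda\,y)\,u_x,\,-\sqrt\lambda(\sin\sqrt\lambda\,y)\,u\big)$ we get $|\hat u_z|^2=\cos^2(\sqrt\lambda\,y)|u_x|^2+\lambda\sin^2(\sqrt\lambda\,y)|u|^2$, so $|\hat u_z|^q\le N\big(|u_x|^q+\lambda^{q/2}|u|^q\big)$; since this bound is independent of $y$, comparing the average over $\hat B_{2r}$ with the average over $B_{2r}\subset\bR^d$ (the height of the $x$-section of $\hat B_{2r}$ lies between $0$ and $2r$) bounds the right-hand side above by $N\big(|u_x|^q+\lambda^{q/2}|u|^q\big)_{B_{2r}}^{1/q}$. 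For the left-hand side one uses $|\hat u_z|^p\ge|\cos\sqrt\lambda\,y|^p|u_x|^p$ and the elementary uniform inequality $\int_{-h}^{h}|\cos\sqrt\lambda\,y|^p\,dy\ge c_p\,h$, valid for all $h>0$ and $\lambda\ge0$ (after the substitution $t=\sqrt\lambda\,y$ the left side is $h$ times the continuous, everywhere positive function $s\mapsto s^{-1}\int_{-s}^{s}|\cos t|^p\,dt$ of $s=\sqrt\lambda\,h$, which has finite positive limits at $0$ and $\infty$ and is therefore bounded below); restricting the $x$-integration to $B_r$, where the half-height of $\hat B_{4r/3}$ is at least $(\sqrt7/3)\,r$, then gives $\big(|\hat u_z|^p\big)_{\hat B_{4r/3}}\ge N^{-1}\big(|u_x|^p\big)_{B_r}$. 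Combining the three displays yields $\big(|u_x|^p\big)_{B_r}^{1/p}\le N\big(|u_x|^q+\lambda^{q/2}|u|^q\big)_{B_{2r}}^{1/q}$ with $N=N(d,p,q,\delta,K)$, which in particular shows $u_x\in L_p(B_r)$ because the right side is finite. The two steps demanding the most care are the BMO bookkeeping for the lifted coefficients and the Poincar\'e-absorption; the remaining estimates are routine.
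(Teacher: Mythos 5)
Your proof is correct and follows essentially the same route as the paper: apply Corollary \ref{cor082001} together with the Poincar\'e inequality to absorb the zeroth-order term, and use Agmon's device to pass from $\lambda$ to a $(d+1)$-dimensional $\lambda$-free equation. The only differences are organizational — you invoke the lift from the outset for all $\lambda\ge0$ (and make the BMO bookkeeping for the lifted coefficients explicit, which the paper leaves implicit when it appeals to ``the proof above for $\lambda=0$'' in $\bR^{d+1}$), whereas the paper first treats $\lambda=0$ directly in $\bR^{d}$ and then lifts; your choice of radii ($4r/3$ and $2r$ with $\kappa=3/2$) differs from the paper's ($\sqrt2\,r$ and $2r$ with $\kappa=\sqrt2$) but is equally admissible.
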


\begin{proof}
It suffices to prove the case $q < p$. We take $\gamma$ from Corollary \ref{cor082001}.
First assume that $\lambda = 0$.
Set $\tau = \sqrt{2}r \in (0,R_0]$ and consider $u - (u)_{B_{\sqrt{2}\tau}}$, which is in $W_{q,\text{loc}}^1$
and satisfies $ \cL_0 u = 0$ in $B_{\sqrt{2}\tau}$.
Then by Corollary \ref{cor082001} and the Poincar\'e inequality
\begin{equation*}							 
\left( |u_x|^p \right)_{B_{\tau}}^{1/p}
\le N \left( |u_x|^q + \tau^{-q} |u - (u)_{B_{\sqrt{2}\tau}}|^q \right)_{B_{\sqrt{2}\tau}}^{1/q}\le N \left( |u_x|^q \right)_{B_{\sqrt{2}\tau}}^{1/q}.
\end{equation*}
This proves \eqref{eq082002} when $\lambda = 0$.

If $\lambda > 0$, we set
$$
\hat{u}(z) = u(x) \cos (\sqrt{\lambda} y),
$$
where $z \in \bR^{d+1}$, $z = (x,y)$, $x \in \bR^d$, and $y \in \bR$.
Then
$$
(a^{ij} \hat{u}_{x^i})_{x^j} + \hat{u}_{yy} = 0
$$
in $\hat{B}_{\sqrt{2}\tau} = \{ |z| \le \sqrt{2}\tau : z \in \bR^{d+1} \}$,
$\tau = \sqrt{2} r \in (0, R_0]$.
By the proof above for $\lambda = 0$, we obtain
$$
\left( |\hat{u}_z|^p \right)^{1/p}_{\hat{B}_{\tau}}
\le N \left( |\hat{u}_z|^q \right)^{1/q}_{\hat{B}_{\sqrt{2}\tau}}.
$$
Note that there exists a small constant $N = N(p)$,
independent of $r$ and $\lambda$,
such that
$$
N \le \dashint_{-r}^{\,\,\,\,r} |\cos(\sqrt{\lambda}y)|^p \, dy.
$$
Thus
$$
\dashint_{B_r} |u_x(x) |^p \, dx
\le N \dashint_{-r}^{\,\,\,\,r} \dashint_{B_r} |u_x(x) \cos(\sqrt{\lambda}y)|^p \, dx \, dy
\le N \dashint_{\hat{B}_{\tau}} | \hat{u}_z |^p \, dz.
$$
Also we have
$$
\dashint_{\hat{B}_{\sqrt{2}\tau}} |\hat{u}_z|^q \, dz
\le N \dashint_{-2r}^{\,\,\,\,2r} \dashint_{B_{2r}} |\cos(\sqrt{\lambda} y) u_x(x)|^q \, dx \, dy
$$
$$
+ N \dashint_{-2r}^{\,\,\,\,2r} \dashint_{B_{2r}} |\sqrt{\lambda}\sin(\sqrt{\lambda} y) u(x)|^q \, dx \, dy
\le N \left( |u_x|^q \right)_{B_{2r}}
+ N \lambda^{q/2} \left( |u|^q \right)_{B_{2r}}.
$$
Using the above two sets of inequalities we complete the proof of \eqref{eq082002}.
\end{proof}

\mysection{Results for the mixed norm case}
                                            \label{sec9}

As introduced earlier, the mixed norm $L_{q,p}$ of $u$ means
$$
\|u\|_{L_{q,p}} = \| u \|_{L_q^{\sbx_2}L_p^{\sbx_1}(\bR^d)}
= \left(\int_{\bR^{d_2}} \left( \int_{\bR^{d_1}} |u(\sbx_1, \sbx_2)|^p \, d \sbx_1 \right)^{q/p} \, \sbx_2\right)^{1/q}.
$$
Throughout the section, by $\cL$ we mean the elliptic operator in \eqref{eq0617_02},
the coefficients of which have the same conditions as in Section \ref{sec082001}.

We state the main results concerning elliptic equations in Sobolev spaces with mixed norms.
The proof of the first main result is presented at the end of Section \ref{mixednormsec}.

\begin{theorem}
                                \label{thm6.5}
Let $p, q \in (1,\infty)$, and $f$, $g = (g_1, \cdots, g_d) \in L_{q,p}$.
Then there exists a constant $\gamma=\gamma(d,p,q,\delta,K)$
such that, under Assumption \ref{assumption20080424} ($\gamma$),
the following hold true.

\noindent (i) There exist constants $\lambda_1$ and $N$, depending only on
$d_1$, $d_2$, $p$, $q$, $\delta$, $K$, and $R_0$, such that
$$
\sqrt{\lambda}\|u_x\|_{L_{q,p}}
+\lambda \|u\|_{L_{q,p}}
\leq N\sqrt{\lambda}\|g\|_{L_{q,p}}
+N\|f\|_{L_{q,p}},
$$
provided that $u \in W^1_{q,p}$,
$\lambda \ge \lambda_1$, and
\begin{equation}
                                        \label{eq090808_01}
\cL u - \lambda u = \Div g + f.
\end{equation}

\noindent (ii) For any $\lambda > \lambda_1$, there exists a unique $u\in W^1_{q,p}$ satisfying \eqref{eq090808_01}.

\noindent (iii)
If $a^i = b^i = c = 0$ and $a^{ij} = a^{ij}(x^1)$, i.e., measurable functions of $x^1 \in \bR$ only
with no regularity assumptions,
then one can take $\lambda_1 = 0$.
\end{theorem}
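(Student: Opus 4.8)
The plan is to reproduce the proof of Theorem~\ref{th081901}, replacing every $L_p$-ingredient by its $L_{q,p}$-counterpart. By the method of continuity it suffices to prove the a priori estimate in~(i); then~(ii) follows, and~(iii) follows from~(i)--(ii) by the same dilation argument used for Theorem~\ref{th081901}(iii). A perturbation argument for large $\lambda$ absorbs the lower-order terms, reducing~(i) to the operator $\cL_0 u=(a^{ij}u_{x^i})_{x^j}$; and setting $u=v+\psi$ where $\Delta\psi-\lambda\psi=f$, the classical mixed-norm estimate for the Laplacian (which controls $\|\nabla\psi\|_{L_{q,p}}$ by $N\lambda^{-1/2}\|f\|_{L_{q,p}}$) lets me absorb $(\delta^{ij}-a^{ij})\psi_{x^i}$ into $g$ and assume $f=0$. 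Finally, exactly as Theorem~\ref{th081901} is obtained from Lemma~\ref{lem3.52} (via a partition of unity and Agmon's device of adding an auxiliary variable $y$ with a factor $\cos(\sqrt{\lambda}\,y)$), it will be enough to prove $\|u_x\|_{L_{q,p}}\le N\|g\|_{L_{q,p}}$ for $u\in C_0^\infty$ supported in a ball of radius at most $R_0$ with $\cL_0 u=\Div g$.

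For the core estimate I fix an exponent $q_0\in(1,\min\{p,q\})$. The point is that, now that Theorem~\ref{th081901} is available --- in particular part~(iii), which gives $L_{q_0}$-solvability for operators with coefficients $a^{ij}(x^1)$ --- the arguments of Sections~\ref{ellSec1}--\ref{ellSec2} can be rerun with $L_2$ replaced by $L_{q_0}$: the $L_2$-solvability Theorem~\ref{theorem08061901}, used there for the ``particular part'' of the decomposition, is replaced by Theorem~\ref{th081901} with exponent $q_0$, while Corollary~\ref{cor082002} (and Corollary~\ref{cor082001}) is used on the ``homogeneous part'' to pass from $L_2$-means of the gradient over a small ball to $L_{q_0}$-means over a larger one. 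This should yield the pointwise bound
\begin{equation*}
u_{x'}^{\#}(x)\le N\kappa^{-\alpha}\bigl(M|u_x|^{q_0}(x)\bigr)^{1/q_0}+N\kappa^{d}\gamma^{c}\bigl(M|u_x|^{q_1}(x)\bigr)^{1/q_1}+N\kappa^{d}\bigl(M|g|^{q_0}(x)\bigr)^{1/q_0}
\end{equation*}
for all $x\in\bR^d$ and $\kappa\ge4$, where $q_1\in[q_0,\min\{p,q\})$, $c>0$, and $\alpha=\alpha(d,\delta,K)>0$. Applying the mixed-norm Fefferman--Stein theorem on sharp functions and the mixed-norm Hardy--Littlewood maximal function theorem --- here $u_{x'}^{\#}$ is replaced by the dyadic maximal oscillation over the relevant filtrations of partitions, as in \cite{Krylov_2007_mixed_VMO} and \cite{Krylov08}, and the conditions $q_0,q_1<\min\{p,q\}$ are precisely what keeps the maximal functions bounded on $L_{q,p}$ --- I obtain
\begin{equation*}
\|u_{x'}\|_{L_{q,p}}\le N\kappa^{d}\|g\|_{L_{q,p}}+N\bigl(\kappa^{-\alpha}+\kappa^{d}\gamma^{c}\bigr)\|u_x\|_{L_{q,p}}.
\end{equation*}

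It remains to control $u_{x^1}$ by $u_{x'}$ and $g$, that is, to establish a mixed-norm version of Theorem~\ref{theorem5.3}: $\|u_{x^1}\|_{L_{q,p}}\le N(\|u_{x'}\|_{L_{q,p}}+\|g\|_{L_{q,p}})$. For this I rerun Section~\ref{sec_aux} --- Lemmas~\ref{lemma081701} and~\ref{lem082001} --- with $L_2$ replaced by $L_{q_0}$ (again using Theorem~\ref{th081901} in place of Theorem~\ref{theorem08061901}), and then apply a mixed-norm analogue of the generalized Fefferman--Stein Theorem~\ref{th081201} to $U=\delta\bar u_{x^1}$, $V=K|\bar u_{x^1}|$, and the cube-dependent functions $U^C=|\bar a\,\bar u_{x^1}|$, exactly as in the proof of Theorem~\ref{theorem5.3}; it is essential here that the approximating coefficient $\bar a$ depends on the cube, so that a cube-by-cube version of Fefferman--Stein is genuinely needed. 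Combining with the previous display and choosing $\kappa$ large and then $\gamma$ small so that $N(\kappa^{-\alpha}+\kappa^{d}\gamma^{c})\le1/2$, I absorb $\|u_x\|_{L_{q,p}}$ on the left and get $\|u_x\|_{L_{q,p}}\le N\|g\|_{L_{q,p}}$, which finishes the reduced problem, hence~(i), and then~(ii) and~(iii).

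The main obstacle is infrastructural rather than computational: one needs the mixed-norm Fefferman--Stein theorem and its cube-dependent (``generalized'') form in $L_{q,p}$, both resting on the filtrations-of-partitions machinery of \cite{Krylov_2007_mixed_VMO} and \cite{Krylov08}, and one must verify that all of Sections~\ref{sec_aux}--\ref{ellSec2} really do go through with a sub-$\min\{p,q\}$ exponent $q_0$ in place of $2$. This last replacement is genuinely necessary when $\min\{p,q\}\le2$, where the $L_2$-based estimates of the earlier sections no longer suffice by themselves, and it is exactly Corollary~\ref{cor082002} --- the local self-improvement of integrability --- that makes it possible. The rest is a lengthy but routine bookkeeping of the constants $\kappa$, $\gamma$, $q_0$, $q_1$, and $\lambda_1$.
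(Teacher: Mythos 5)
The proposal diverges from the paper's proof at the crucial step, and the divergence amounts to a genuine gap, not merely "routine infrastructure."

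Your plan is to rerun Sections~\ref{sec_aux}--\ref{ellSec2} with $L_2$ replaced by $L_{q_0}$, $q_0<\min\{p,q\}$, and then close the argument for $u_{x^1}$ by invoking a ``mixed-norm analogue of the generalized Fefferman--Stein Theorem~\ref{th081201},'' applied to the cube-dependent representatives $U^C=|\bar a\,\bar u_{x^1}|$. But such a mixed-norm, cube-dependent Fefferman--Stein theorem is not proved in the paper nor in \cite{Krylov_2007_mixed_VMO} or \cite{Krylov08}; \cite{Krylov08} gives the cube-dependent version only for the unmixed $L_p$, and the mixed-norm machinery of \cite{Krylov_2007_mixed_VMO} does not incorporate cube-dependent representatives. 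Proving the combination is a substantial task that you defer as ``infrastructural.'' The paper deliberately takes a route that avoids it.

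What the paper actually does, in Sections~\ref{sec10}--\ref{mixednormsec}, is this. First, because the full $L_p$-theory (Theorem~\ref{th081901}) is now available --- unlike in Section~\ref{sec_aux} where only $L_2$-solvability was at hand --- the self-improvement of integrability (Corollary~\ref{cor082002}) can be used in the proof of Theorem~\ref{theorem082102} to absorb the bad H\"older term: the $L^{2p}$ norm of $\bar v_{x^1}$ on the small ball is improved back to an $L^p$ norm on a larger ball, so that the pointwise oscillation estimate for $\bar a\bar u_{x^1}$ no longer involves an extra exponent $\tau$; only $L^p$ averages appear, multiplied by $\gamma^{1/(2p)}$. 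Second, the mixed-norm estimate is reduced to an ordinary Fefferman--Stein estimate on $\bR^{d_2}$: one passes to the scalar function $\mathfrak{u}(\sbx_2)=\|\bar u_{x^1}(\cdot,\sbx_2)\|_{p,d_1}$ on $\bR^{d_2}$ and estimates its sharp function there (Corollary~\ref{cor081102}, Theorem~\ref{theorem082301}). At this point the cube-dependence of $\bar a$ is harmless because $\bar a=\bar a(x^1)$ depends only on a coordinate of $\sbx_1$ and is therefore constant in $\sbx_2$; so in the $\sbx_2$-oscillation one can write, for each fixed $\sbw_1$,
$$
|\bar u_{x^1}(\sbw_1,\sbx_2)-\bar u_{x^1}(\sbw_1,\sby_2)|\le\delta^{-1}\,|\bar a_{\sbz_1}(\sbw_1)\bar u_{x^1}(\sbw_1,\sbx_2)-\bar a_{\sbz_1}(\sbw_1)\bar u_{x^1}(\sbw_1,\sby_2)|,
$$
and the cube-dependent $\bar a_{\sbz_1}$ is integrated out in $\sbz_1$. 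The ordinary Fefferman--Stein and Hardy--Littlewood theorems on $\bR^{d_2}$ are then applied with exponent $q/p>1$, which is why the paper treats $q>p$ directly and gets $q<p$ by duality --- a reduction your proposal does not mention and would also need.

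So the gap is concrete: without either (a) proving the mixed-norm cube-dependent Fefferman--Stein theorem you invoke, or (b) noticing, as the paper does, that the $\sbx_2$-only oscillation lets $\bar a(x^1)$ cancel so that no cube-dependent sharp function theorem is needed at all, your argument does not close. The second observation, together with Corollary~\ref{cor082002} replacing the H\"older/$\tau$ trick, is precisely what makes the paper's proof work, and both are absent from your proposal.
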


The following two theorems are about the Dirichlet problem and the conormal derivative problem
on a half space when Sobolev spaces with mixed norms are considered.
Since their proofs are basically the same as those of Theorem \ref{thm6.6} and \ref{thm6.7}, that is, we use Theorem \ref{thm6.5}, Lemma \ref{lem8.2}, and odd/even extensions,
we only state the theorems.

\begin{theorem}
Let $p, q \in (1,\infty)$ and $f$, $g = (g_1, \cdots, g_d) \in L_{q,p}(\bR^{d}_{+})$.
Then there exists a constant $\gamma=\gamma(d,p,q,\delta,K)$
such that, under Assumption \ref{assumption20080424} ($\gamma$),
the following hold true. There exist constants $\lambda_1$ and $N$,  depending only on
$d_1$, $d_2$, $p$, $q$, $\delta$, $K$, and $R_0$, such that
$$
\sqrt{\lambda}\|u_x\|_{L_{q,p}(\bR^{d}_{+})}
+\lambda \|u\|_{L_{q,p}(\bR^{d}_{+})}
\leq N\sqrt{\lambda}\|g\|_{L_{q,p}(\bR^{d}_{+})}
+N\|f\|_{L_{q,p}(\bR^{d}_{+})},
$$
provided that $\lambda \ge \lambda_1$ and $u \in W^1_{q,p}(\bR^d_{+})$ satisfies $u(0,x')=0$ and
\begin{equation}	\label{eq090808_02}
\cL u-\lambda u=\Div g+f\quad \text{in}\,\,\bR^d_+.
\end{equation}
Moreover, for any $\lambda > \lambda_1$ and $g,f\in L_{q,p}(\bR^{d}_{+})$, there exists a unique $u\in W^1_{q,p}(\bR^{d}_{+})$ satisfying \eqref{eq090808_02} and $u(0,x') = 0$.
\end{theorem}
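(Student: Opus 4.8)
The plan is to transcribe the proofs of Theorem \ref{thm6.6} and \ref{thm6.7} essentially line by line, replacing the whole-space input Theorem \ref{th081901} by its mixed-norm counterpart Theorem \ref{thm6.5}, and replacing the scalar $L_p$ reflection facts by the $L_{q,p}$ reflection facts recorded in Lemma \ref{lem8.2}. The one structural point to keep in mind is that $d_1\ge 1$, so the coordinate $x^1$ always belongs to the block $\sbx_1$; hence even or odd reflection in $x^1$ merely rescales the inner $L_p$ integral and multiplies the mixed norm $\|\cdot\|_{L_{q,p}}$ by a factor in $[1,2]$. This is exactly the content of Lemma \ref{lem8.2}, which is already stated for mixed norms, so the argument of Theorem \ref{thm6.6} transplants without change at this level.

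Concretely, first I would define the reflected coefficients as in the proof of Theorem \ref{thm6.6}: set $\tilde a^{ij}(x)=\text{sgn}(x^1)a^{ij}(|x^1|,x')$ when exactly one of $i,j$ equals $1$, and $\tilde a^{ij}(x)=a^{ij}(|x^1|,x')$ otherwise, and likewise for $\tilde a^i,\tilde b^i,\tilde c$, together with the odd extensions $\tilde f(x)=\text{sgn}(x^1)f(|x^1|,x')$, $\tilde g_1(x)=g_1(|x^1|,x')$, and $\tilde g_j(x)=\text{sgn}(x^1)g_j(|x^1|,x')$ for $j\ge 2$. One checks, exactly as in Section \ref{sec6}, that $\tilde a^{ij}$ still satisfy Assumption \ref{assumption20080424}($2\gamma$), and that $\tilde f,\tilde g\in L_{q,p}(\bR^d)$ with norms comparable to those of $f,g$ on $\bR^d_+$. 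Choosing $\gamma$ below one half of the threshold in Theorem \ref{thm6.5} for the given $p,q$, that theorem produces, for $\lambda>\lambda_1$, a unique $u\in W^1_{q,p}$ with $\tilde\cL u-\lambda u=\Div\tilde g+\tilde f$ in $\bR^d$ and the stated estimate. The choice of sign factors makes $-u(-x^1,x')$ solve the same whole-space equation, so uniqueness forces $u(x)=-u(-x^1,x')$; thus $u$ is odd in $x^1$, has zero trace on $\partial\bR^d_+$, and restricts to a $W^1_{q,p}(\bR^d_+)$ solution of \eqref{eq090808_02}. The desired estimate then follows from the whole-space bound of Theorem \ref{thm6.5} together with \eqref{eq2.27pm}--\eqref{eq2.28pm} of Lemma \ref{lem8.2}.

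For uniqueness I would repeat the weak-formulation argument from the proof of Theorem \ref{thm6.6}: given a second solution $v\in W^1_{q,p}(\bR^d_+)$ with zero trace, its odd extension $\tilde v$ lies in $W^1_{q,p}(\bR^d)$ by Lemma \ref{lem8.2}(ii); testing the whole-space weak identity against an arbitrary $\varphi\in W^1_{q',p'}$ (with $1/q+1/q'=1/p+1/p'=1$) and symmetrizing through $\phi(x)=\varphi(x)-\varphi(-x^1,x')$, which vanishes on $\partial\bR^d_+$, shows $\tilde v$ solves the whole-space equation with data $\tilde g,\tilde f$; uniqueness there gives $\tilde v=u$, hence $v=u$ on $\bR^d_+$. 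The a priori estimate in the first assertion is handled by applying the same odd-extension device directly to the given $u$ and invoking the estimate in Theorem \ref{thm6.5}.

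I do not expect a genuinely hard step here; the content is a faithful adaptation of Theorem \ref{thm6.6}. The two places that need care are (i) confirming compatibility of the mixed norm with reflection in $x^1$, which is precisely why $d_1\ge 1$ is used and why Lemma \ref{lem8.2} replaces the scalar $L_p$ reflection estimate, and (ii) keeping the bookkeeping of sign factors consistent across the $a^{ij}$, the $g_j$, and $f$, so that both the reflected equation and all integrands in its weak formulation are even in $x^1$ and everything closes up. The dependence of $\lambda_1$ and $N$ on $d_1,d_2$ rather than merely $d$ is inherited unchanged from Theorem \ref{thm6.5}.
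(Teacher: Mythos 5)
Your proposal is correct and follows exactly the route the paper intends: the paper states explicitly that this theorem is proved ``in the same manner as Theorem~\ref{thm6.6}'' by substituting Theorem~\ref{thm6.5} for Theorem~\ref{th081901} and invoking Lemma~\ref{lem8.2}, which is already formulated in mixed norms. Your remark that $d_1\ge 1$ ensures the reflection in $x^1$ interacts only with the inner $L_p^{\sbx_1}$ integral is the right (and the only) point that needs checking, and it is handled correctly.
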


\begin{theorem}[Conormal derivative problem on a half space]
Let $p$, $q \in (1,\infty)$ and $f$, $g = (g_1, \cdots, g_d) \in L_{q,p}(\bR^{d}_{+})$.
Then there exists a constant $\gamma=\gamma(d,p,q,\delta,K)$
such that, under Assumption \ref{assumption20080424} ($\gamma$),
the following hold true.
There exist constants $\lambda_1$ and $N$,  depending only on
$d_1$, $d_2$, $p$, $q$, $\delta$, $K$, and $R_0$, such that
$$
\sqrt{\lambda}\|u_x\|_{L_{q,p}(\bR^{d}_{+})}
+\lambda \|u\|_{L_{q,p}(\bR^{d}_{+})}
\leq N\sqrt{\lambda}\|g\|_{L_{q,p}(\bR^{d}_{+})}
+N\|f\|_{L_{q,p}(\bR^{d}_{+})},
$$
provided that $\lambda \ge \lambda_1$ and $u \in W^1_{q,p}(\bR^d_{+})$ satisfies
\begin{equation}	\label{eq090808_03}
\left\{
  \begin{array}{ll}
    \cL u-\lambda u=\Div g+f & \hbox{in $\bR^d_+$} \\
    a^{i1}u_{x^i}+a^1u=g_1 & \hbox{on $\partial \bR^d_+$}
  \end{array},
\right.
\end{equation}
Moreover, for any $\lambda > \lambda_1$ and $g,f\in L_{q,p}(\bR^{d}_{+})$, there exists a unique $u\in W^1_{q,p}(\bR^{d}_{+})$ satisfying \eqref{eq090808_03}.
\end{theorem}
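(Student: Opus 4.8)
The plan is to transcribe the proof of Theorem~\ref{thm6.7} into the mixed-norm setting, the only genuinely new ingredient being the whole-space mixed-norm solvability supplied by Theorem~\ref{thm6.5}. Throughout I would replace $W^1_p$ by $W^1_{q,p}$, the pivot space $W^1_{p'}$ by $W^1_{q',p'}$ (with $1/q+1/q'=1/p+1/p'=1$), the estimate~\eqref{eq080904} by the mixed-norm estimate of Theorem~\ref{thm6.5}(i), and use Lemma~\ref{lem8.2}, which is already stated for arbitrary $p,q\in(1,\infty)$, for the norm equivalences under even/odd reflection in $x^1$. Since the conormal condition is of ``Neumann type'', the relevant reflection is the even one. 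Concretely, I would first set up the extended problem: define $\tilde a^{ij},\tilde a^i,\tilde b^i,\tilde c$ exactly as in the proof of Theorem~\ref{thm6.6} (entries carrying a single index equal to $1$ extended as odd functions of $x^1$, the remaining coefficients as even functions), and extend the data by $\tilde f(x)=f(|x^1|,x')$, $\tilde g_1(x)=\operatorname{sgn}(x^1)g_1(|x^1|,x')$, $\tilde g_j(x)=g_j(|x^1|,x')$ for $j\ge 2$. One checks as in the $L_p$ case that if $a^{ij}$ satisfies Assumption~\ref{assumption20080424}($\gamma$) then $\tilde a^{ij}$ satisfies Assumption~\ref{assumption20080424}($2\gamma$), and that $\tilde f,\tilde g\in L_{q,p}(\bR^d)$ by Lemma~\ref{lem8.2}.

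Next I would apply Theorem~\ref{thm6.5} to the operator $\tilde\cL$ built from these coefficients: for $\lambda$ above the corresponding threshold $\lambda_1$ there is a unique $u\in W^1_{q,p}(\bR^d)$ with $\tilde\cL u-\lambda u=\Div\tilde g+\tilde f$ in $\bR^d$, and because of the sign conventions $u(-x^1,x')$ solves the same equation, so uniqueness forces $u$ to be even in $x^1$. Restricting $u$ to $\bR^d_+$ and testing the whole-space weak identity against the even extension of an arbitrary $\phi\in W^1_{q',p'}(\bR^d_+)$, every term of the integrand is even in $x^1$; dividing by $2$ recovers exactly the weak formulation~\eqref{eq27.3.58pm} of~\eqref{eq090808_03}, so $u|_{\bR^d_+}$ is the desired solution, and the stated estimate follows from Theorem~\ref{thm6.5}(i) together with~\eqref{eq2.27pm}--\eqref{eq2.28pm}. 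For uniqueness I would take any $v\in W^1_{q,p}(\bR^d_+)$ solving~\eqref{eq090808_03}, form its even extension $\tilde v\in W^1_{q,p}(\bR^d)$ (Lemma~\ref{lem8.2}), split an arbitrary whole-space test integral into its two half-space pieces, change $x^1\mapsto -x^1$ in the lower one, and invoke the conormal identity for $v$ to see that $\tilde v$ solves the reflected whole-space equation; then the uniqueness part of Theorem~\ref{thm6.5} gives $\tilde v=u$, hence $v=u$ on $\bR^d_+$. The last assertion (one may take $\lambda_1=0$ when $a^i=b^i=c=0$ and $a^{ij}=a^{ij}(x^1)$) follows from Theorem~\ref{thm6.5}(iii) by the same reflection.

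I do not expect a real obstacle here, which is why the paper only states the theorem: the reflection machinery in Lemma~\ref{lem8.2} is insensitive to whether one uses $L_p$ or $L_{q,p}$, and the sole additional input over the proof of Theorem~\ref{thm6.7} is Theorem~\ref{thm6.5}. The one point that needs care, and which I would check explicitly, is the parity bookkeeping: that the sign-adjusted coefficients keep the partially small BMO bound (now with constant $2\gamma$), and that each term of the reflected weak formulation has the correct parity in $x^1$, so that restricting to $\bR^d_+$ reproduces the conormal boundary condition rather than some other one. This is the same computation already carried out in the $L_p$ case.
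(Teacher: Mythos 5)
Your proposal is correct and is precisely the proof the paper indicates: transcribe the argument of Theorem~\ref{thm6.7} into the mixed-norm setting, replacing the whole-space input Theorem~\ref{th081901} by Theorem~\ref{thm6.5} and using Lemma~\ref{lem8.2} together with the same even/odd reflection of coefficients and data. You also (correctly) use the \emph{even} extension of $v$ in the uniqueness step, whereas the printed $L_p$ proof of Theorem~\ref{thm6.7} says ``odd extension'' even though its own calculation requires the even one; the only small slip on your side is the closing sentence about taking $\lambda_1=0$, since the theorem as stated here has no such part (iii), so there is nothing to prove there.
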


Note that, similar to the homogeneous norm case, solutions of \eqref{eq090808_03} are understood in the weak sense, i.e. $u\in W^1_{q,p}(\bR_+^d)$ satisfies \eqref{eq090808_03} if we have
$$
\int_{\bR_+^d}\left(-a^{ij}u_{x^i}\phi_{x^j}-a^ju \phi_{x^j}+b^i u_{x^i}\phi+(c-\lambda)u\phi\right)\,dx=\int_{\bR_+^d}\left(-g_j\phi_{x^j}+f\phi\right)\,dx
$$
for any $\phi\in W^1_{q',p'}(\bR^d_+)$, where $q',p'$ satisfy $1/q+1/q'=1$ and $1/p+1/p'=1$.

\mysection{Mixed norm estimate of $u_{x^1}$}
                                                    \label{sec10}
In this section we set
$$
\cL_0 u = \left( a^{ij} u_{x^i} \right)_{x^j}
$$
and prove that the mixed norm of $u_{x^1}$ is controlled by
that of $g$ and $u_{x'}$ if $\cL_0 u = \Div g$ and $a^{ij}$ satisfy Assumption \ref{assumption20080424} ($\gamma$).

The first result of this section is an $L_p$-version of Lemma \ref{lemma081701}.
Since Theorem \ref{th081901}, more precisely, Theorem \ref{th081901} (iii) is now available, the proof of the lemma
is exactly the same as that of Lemma \ref{lemma081701}
with $p$ in place of $2$ and Theorem \ref{th081901}
in place of Theorem \ref{theorem08061901}.

\begin{lemma}							\label{lem082101}
Let $p \in (1,\infty)$, $\lambda > 0$, $r > 0$, $\kappa > 8 K \delta^{-1}$, and $a^{11} = a^{11}(x^1)$.
Assume that $u \in W_{p, \text{loc}}^1$ and
$$
\cL_0 u - \lambda u = \Div g + f,
$$
where $f$, $g \in L_{p,\text{loc}}$.
Then there exists a constant $N= N(d, p, \delta,K)$ such that
$$
\left( | \bar{a}^{11}\bar{u}_{x^1} - \left( \bar{a}^{11}\bar{u}_{x^1} \right)_{B_r} |^p \right)_{B_r}^{1/p}
\le N \big( \kappa^{-1} + \kappa^{d/p} \mu^{-1} \big) \left(|\bar{u}_{x^1}|^p\right)^{1/p}_{B_{\kappa r}}
$$
$$
+ N \kappa^{d/p} \left( |\bar{u}_{x'}|^p + \lambda^{p/2} |\bar{u}|^p + |\tilde{g}|^p
+ \lambda^{-p/2} |\tilde{f}|^p \right)^{1/p}_{B_{\kappa r}},
$$
for all $\mu \ge 1$,
where $\bar{a}^{ij}$, $\bar{u}$, $\tilde{f}$, and $\tilde{g}$  are those in \eqref{eq081401} and \eqref{eq081402}.
\end{lemma}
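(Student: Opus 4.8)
The plan is to repeat the proof of Lemma \ref{lemma081701} verbatim, with $p$ everywhere in place of $2$ and with Theorem \ref{th081901} (specifically part (iii)) in place of the $L_2$-solvability result Theorem \ref{theorem08061901}. Concretely, I first apply the change of variables \eqref{eq081401}--\eqref{eq081402}, which converts $\cL_0 u-\lambda u=\Div g+f$ into
$$
\bar{\cL_0}\bar u-\lambda\bar u=\Div\bar g+\bar f_\lambda,\qquad \bar f_\lambda:=\bar f+(\mu^{-2}-1)\lambda\bar u,
$$
where $\bar{\cL_0}w=(\bar a^{11}w_{x^1})_{x^1}+\Delta_{d-1}w$, whose coefficients depend on $x^1$ only (here only $a^{11}=a^{11}(x^1)$ is needed, as the remaining part is the constant-coefficient operator $\Delta_{d-1}$). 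Hence Theorem \ref{th081901}(iii) applies to $\bar{\cL_0}$ with $\lambda_0=0$, so for every $\lambda>0$ there is a unique $w\in W_p^1$ solving $\bar{\cL_0}w-\lambda w=\Div(I_{B_{\kappa r}}\bar g)+I_{B_{\kappa r}}\bar f_\lambda$, together with the associated $L_p$-estimate. Setting $v:=\bar u-w$ gives $\bar{\cL_0}v-\lambda v=0$ in $B_{\kappa r}$.

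Next, as in Lemma \ref{lemma081701}, I straighten $\bar a^{11}$ by the bi-Lipschitz substitution $y^1=\phi(x^1):=\int_0^{x^1}(\bar a^{11}(s))^{-1}\,ds$, $y^j=x^j$ for $j\ge2$, and put $\bar v(y)=v(\phi^{-1}(y^1),y')$. Using $\delta\le\bar a^{11}\le K$ and \eqref{eq081501}, one checks that in $B_{\kappa_1 r_1}$ with $r_1=\sqrt2\,\delta^{-1}r$ and $\kappa_1=\kappa/(2K\delta^{-1})$,
$$
\Delta\bar v-\lambda\bar v=(1-\hat a^{11}(y^1))\Div(0,\bar v_{y^2},\dots,\bar v_{y^d})-\lambda(1-\hat a^{11}(y^1))\bar v,
$$
where $\hat a^{11}(y^1)=\bar a^{11}(\phi^{-1}(y^1))$. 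Since $\kappa_1\ge4$, Theorem \ref{th080601} — which already holds for every $p\in(1,\infty)$ — applies (viewing $\bar v$ as $t$-independent in $\bR^{d+1}$), and after undoing the substitution one obtains, exactly as in \eqref{eq081503},
$$
\dashint_{B_r}|\bar a^{11}v_{x^1}-(\bar v_{y^1})_{B_{r_1}}|^p\,dx\le N\kappa^{-p}\left(|v_x|^p\right)_{B_{\kappa r}}+N\kappa^d\left(|v_{x'}|^p+\lambda^{p/2}|v|^p\right)_{B_{\kappa r}},
$$
with $N=N(d,p,\delta,K)$, since $\bar a^{11}v_{x^1}$ in the $x$-variables corresponds to $\bar v_{y^1}$ in the $y$-variables up to the (controlled) Jacobian.

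Finally I collect the $w$-estimates from Theorem \ref{th081901}(iii): $(|w_x|^p)_{B_r}\le N\kappa^d(|\bar g|^p+\lambda^{-p/2}|\bar f|^p+\lambda^{p/2}|\bar u|^p)_{B_{\kappa r}}$ and $(|w_x|^p+\lambda^{p/2}|w|^p)_{B_{\kappa r}}\le N(|\bar g|^p+\lambda^{-p/2}|\bar f|^p+\lambda^{p/2}|\bar u|^p)_{B_{\kappa r}}$. Writing
$$
I:=\left(|\bar a^{11}\bar u_{x^1}-(\bar a^{11}\bar u_{x^1})_{B_r}|^p\right)_{B_r}^{1/p}\le\left(|\bar a^{11}\bar u_{x^1}-(\bar v_{y^1})_{B_{r_1}}|^p\right)_{B_r}^{1/p}\le N\left(|\bar a^{11}v_{x^1}-(\bar v_{y^1})_{B_{r_1}}|^p\right)_{B_r}^{1/p}+N\left(|w_x|^p\right)_{B_r}^{1/p},
$$
using $\bar u=w+v$ to transfer the $v_{x'},v$ bounds to $\bar u_{x'},\bar u$, and rewriting $\bar g,\bar f$ through their definitions in \eqref{eq081403} in terms of $\tilde g,\tilde f,\bar u_{x^1},\bar u_{x'}$, one arrives at the stated inequality; as in Lemma \ref{lemma081701}, the $\kappa^{d/p}\mu^{-1}$ coefficient of $(|\bar u_{x^1}|^p)_{B_{\kappa r}}^{1/p}$ originates from the $\mu^{-1}\bar a^{ij}\bar u_{x^i}$ ($i>1$) terms inside $\bar g_1,\bar g_j$. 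I do not anticipate a genuine obstacle here: the only place where $p=2$ was used in the proof of Lemma \ref{lemma081701} is the solvability input Theorem \ref{theorem08061901}, and this is now available for all $p$ via Theorem \ref{th081901}(iii) precisely because $\bar{\cL_0}$ has $x^1$-dependent coefficients; the sole change in bookkeeping is that interior averages come with $\kappa^{d/p}$ rather than $\kappa^{d/2}$.
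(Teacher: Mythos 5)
Your proposal is correct and is exactly the argument the paper intends: the paper states that Lemma \ref{lem082101} ``is proved in the same manner as'' Lemma \ref{lemma081701}, replacing $2$ by $p$ throughout and invoking Theorem \ref{th081901}(iii) (applicable to $\bar{\cL}_0$ since its coefficients depend only on $x^1$) in place of Theorem \ref{theorem08061901}. Your elaboration of the steps, including the identification $\bar v_{y^1}(y) = (\bar a^{11}v_{x^1})(\phi^{-1}(y^1),y')$ and the resulting $\kappa^{d/p}$ factors from the interior averages, matches the paper's argument.
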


It is possible to derive a similar  but more complicated estimate
from the above lemma
if $a^{ij}$ are measurable in $x^1$ and BMO  in $x' \in \bR^{d-1}$.

\begin{theorem}							 \label{theorem082102}
Let $p \in (1,\infty)$.
If $u \in W_{p, \text{loc}}^1$ satisfies $\cL_0 u = \Div g$,
where $g \in L_{p,\text{loc}}$,
then for each $x_0 \in \bR^d$,
$\mu \ge 1$,
$\kappa > 16 K \delta^{-1}$,
and $r \in (0, \frac{R_0}{\sqrt{2}\kappa}]$,
there exist $\gamma_0=\gamma_0(d,p,\delta,K,\mu)$ and a measurable function $\bar{a}(x^1) = \bar{a}_{x_0, \mu,\kappa r}(x^1)$ such that $\delta \le \bar{a}(x^1) \le K$
and
$$
\left(| \bar{a} \bar{u}_{x^1} - \left(\bar{a} \bar{u}_{x^1}\right)_{B_r(x_0)} |^p \right)^{1/p}_{B_r(x_0)}
\le N_1 \kappa^{d/p} \left( |\bar{u}_{x'}|^p + R_0^{-p}|\bar{u}|^p + |\tilde{g}|^p\right)^{1/p}_{B_{\sqrt{2}\kappa r}(x_0)}
$$
$$
+ N \big( \kappa^{-1} + \kappa^{d/p} \mu^{-1} + N_1 \kappa^{d/p} \gamma^{1/(2p)} \big) \left(|\bar{u}_{x^1}|^p\right)^{1/p}_{B_{\sqrt{2}\kappa r}(x_0)},
$$
provided that $a^{ij}$ satisfy Assumption \ref{assumption20080424} ($\gamma$) and $\gamma\le \gamma_0$.
Here $N = N(d,p,\delta,K)$, independent of $\mu$, and
$N_1=N_1(d,p,\delta,K,\mu)$.
Recall that $\bar{u}$ and $\tilde{g}$  are those in \eqref{eq081401} and \eqref{eq081402}.
\end{theorem}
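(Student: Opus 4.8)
The plan is to follow the blueprint of Lemma \ref{lem082001}, upgrading $L_2$ to $L_p$ via Lemma \ref{lem082101} and, crucially, invoking the higher-integrability estimate Corollary \ref{cor082002} to dispose of the small-BMO error term of $a^{11}$ without carrying a maximal function of $|\bar u_{x^1}|$ at an exponent above $p$ (the device used in Lemma \ref{lem082001}); this is why the right-hand side here involves only the $L_p$ norm of $\bar u_{x^1}$, on the slightly enlarged ball $B_{\sqrt2\kappa r}$.

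\emph{Reduction to Lemma \ref{lem082101}.} Fix $x_0$, $\mu$, $\kappa$, $r$ as in the statement and set $\rho=\sqrt2\kappa r$ (so $\rho\le R_0$). Put $a(x^1)=\dashint_{B'_\rho(x_0')}a^{11}(x^1,y')\,dy'$ and $\bar a(x^1)=a(\mu^{-1}x^1)$, so $\delta\le\bar a\le K$, and let $\cL_0'$ be obtained from $\cL_0$ by replacing $a^{11}$ with $a$. Then $\cL_0 u=\Div g$ becomes $\cL_0'u=\Div\hat g$ with $\hat g_1=g_1+(a-a^{11})u_{x^1}$ and $\hat g_j=g_j$ for $j\ge2$. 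Since the $(1,1)$ coefficient of $\cL_0'$ depends on $x^1$ only, I would fix a $\lambda>0$ comparable to $R_0^{-2}$ and large enough that Theorem \ref{th081901} and Corollary \ref{cor082002} apply to the transformed operator $\bar{\cL}_0'$, write $\cL_0'u-\lambda u=\Div\hat g-\lambda u$, and apply Lemma \ref{lem082101} with its parameter taken to be $\kappa'=\kappa/\sqrt2>8K\delta^{-1}$. After the change of variables \eqref{eq081401}--\eqref{eq081402}, the term $\lambda^{-p/2}|\tilde f|^p$ with $f=-\lambda u$ collapses to $\lambda^{p/2}|\bar u|^p\sim R_0^{-p}|\bar u|^p$ (this accounts for the $R_0^{-p}|\bar u|^p$ term), and the output is an inequality of exactly the required shape except that the datum $(|\tilde{\hat g}|^p)^{1/p}_{B_{\kappa'r}}$ carries the extra piece $\mu\,(|(\bar a-\bar a^{11})\bar u_{x^1}|^p)^{1/p}_{B_{\kappa'r}}$, where $\bar a^{11}(x)=a^{11}(\mu^{-1}x^1,x')$.

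\emph{The error term.} I would split $\bar u=w+v$ on $B_{\sqrt2\kappa r}=B_{2\kappa'r}$: $w\in W_p^1$ solves $\bar{\cL}_0'w-\lambda w=\Div(I_{B_{\sqrt2\kappa r}}\bar g^\sharp)-I_{B_{\sqrt2\kappa r}}\lambda\bar u$, where $\bar g^\sharp$ is the transformed datum of $\cL_0'u=\Div\hat g$ (so $\bar g^\sharp_1$ contains $\mu^{-1}(\bar a-\bar a^{11})\bar u_{x^1}$), and $v=\bar u-w$ satisfies $\bar{\cL}_0'v-\lambda v=0$ in $B_{\sqrt2\kappa r}$. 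As in Lemma \ref{lem082001}, the coefficients of $\bar{\cL}_0'$ satisfy Assumption \ref{assumption20080424}$(N\mu\gamma)$ with the same $R_0$, so imposing $\gamma\le\gamma_0(d,p,\delta,K,\mu)$ legitimizes the use of Theorem \ref{th081901} and Corollary \ref{cor082002}. For $w$, Theorem \ref{th081901}(i) and the crude bound $|\bar a-\bar a^{11}|\le 2K$ give
$$\left(|(\bar a-\bar a^{11})w_{x^1}|^p\right)^{1/p}_{B_{\kappa'r}}\le N\mu^{-1}\left(|\bar u_{x^1}|^p+|\bar u_{x'}|^p+|\tilde g|^p+R_0^{-p}|\bar u|^p\right)^{1/p}_{B_{\sqrt2\kappa r}}$$
with $N$ independent of $\mu$. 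For $v$, H\"older with the conjugate pair $(2,2)$ and the oscillation bound $\dashint|\bar a-\bar a^{11}|\le N\mu\gamma$ (the same computation as in Lemma \ref{lem082001}, using $|\bar a-\bar a^{11}|\le 2K$ and Assumption \ref{assumption20080424}) give $(|(\bar a-\bar a^{11})v_{x^1}|^p)^{1/p}_{B_{\kappa'r}}\le N_1\gamma^{1/(2p)}(|v_{x^1}|^{2p})^{1/(2p)}_{B_{\kappa'r}}$; Corollary \ref{cor082002}, applicable since $r\le R_0/(\sqrt2\kappa)$ and $v$ is homogeneous on $B_{2\kappa'r}$, upgrades the last factor to $N(|v_x|^p+\lambda^{p/2}|v|^p)^{1/p}_{B_{\sqrt2\kappa r}}$, which via $v=\bar u-w$ and the $w$-estimates is at most $N(|\bar u_{x^1}|^p+|\bar u_{x'}|^p+R_0^{-p}|\bar u|^p+|\tilde g|^p)^{1/p}_{B_{\sqrt2\kappa r}}$.

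\emph{Assembly and the main difficulty.} Multiplying the resulting bound for $(|(\bar a-\bar a^{11})\bar u_{x^1}|^p)^{1/p}_{B_{\kappa'r}}$ by $\mu\kappa'^{d/p}$, using $\kappa'^{d/p}\le\kappa^{d/p}$, $\mu^{-1}\le\kappa^{d/p}\mu^{-1}$, and $\gamma^{1/(2p)}\le1$ to move the $\bar u_{x'}$, $\bar u$, and $\tilde g$ contributions into the $N_1\kappa^{d/p}(\cdots)$-slot, and adding the main term of Lemma \ref{lem082101}, one arrives at the stated inequality with $\gamma^{1/(2p)}$ surviving only in front of $(|\bar u_{x^1}|^p)^{1/p}_{B_{\sqrt2\kappa r}}$. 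The heart of the matter is the self-referential occurrence of $u_{x^1}$ through $(\bar a-\bar a^{11})u_{x^1}$: the coefficient of $(|\bar u_{x^1}|^p)^{1/p}$ must stay genuinely small, which forces the interpolation treatment of the homogeneous part $v$ and the exploitation of the gain $\mu^{-1}$ for the inhomogeneous part $w$, while at the same time the integrability of $v_{x^1}$ must be raised to $2p$ \emph{without leaving the admissible range of radii} — exactly what Corollary \ref{cor082002} supplies. Routing $\kappa^{-1}$, $\mu^{-1}$, and $\gamma^{1/(2p)}$ into their proper places, and tracking which constants are permitted to depend on $\mu$, is the remaining (routine but delicate) bookkeeping.
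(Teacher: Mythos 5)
Your proposal differs from the paper's proof in one structural respect, and that difference is where the argument breaks: you replace $a^{11}$ by its $x'$-average $a(x^1)$ \emph{at the level of the equation for $u$}, creating the inhomogeneity $\hat g_1=g_1+(a-a^{11})u_{x^1}$, and only then decompose $\bar u=w+v$. The paper instead keeps the original operator $\cL_0$ (with the true $a^{11}$) for the $w$-$v$ decomposition — solving $\cL_0 w-\lambda w = \Div(I_{C^\mu_{\kappa r}}g)+I_{C^\mu_{\kappa r}}f_\lambda$ in the \emph{original} variables — and only switches to the averaged coefficient on the homogeneous remainder $v$, rewriting $\cL_0 v-\lambda v=0$ in $C^\mu_{\kappa r}$ as $\fL_0 v-\lambda v=\Div\mathfrak g+\mathfrak f$ with $\mathfrak g_1=(a-a^{11})v_{x^1}+\cdots$. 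Consequently, in the paper the small-BMO error multiplies only $v_{x^1}$, never $u_{x^1}$ or $w_{x^1}$, and the entire $w$-contribution is bounded (after undoing the $\mu$-dilation) by data not containing $u_{x^1}$.

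In your version the error factor $(\bar a-\bar a^{11})\bar u_{x^1}$ is forced into the data $\bar g^\sharp$ for $w$. Two of your claims about it are incorrect. First, tracing the conventions \eqref{eq081401}--\eqref{eq081403}: $\tilde{\hat g}_1=\mu\hat g_1(\mu^{-1}x^1,x')$ and $u_{x^1}(\mu^{-1}x^1,x')=\mu\bar u_{x^1}$, so the error contributes $\mu^2(\bar a-\bar a^{11})\bar u_{x^1}$ to $\tilde{\hat g}_1$, hence $(\bar a-\bar a^{11})\bar u_{x^1}$ to $\bar g^\sharp_1$ with coefficient $1$ — not $\mu^{-1}$ as you assert, and not $\mu$ in the term you later multiply by $\kappa'^{d/p}$. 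Second, and more seriously, you solve for $w$ directly in the transformed variables with the simple operator $\bar\cL_0'w=(\bar a w_{x^1})_{x^1}+\Delta_{d-1}w$; Theorem \ref{th081901} for this operator is isotropic in $x^1$ vs.\ $x'$ and gives at best $\|w_x\|_{L_p}\le N(\|\bar g^\sharp\|_{L_p}+\sqrt\lambda\|\bar u\|_{L_p})$ with no gain in $\mu$. Since $\|\bar g^\sharp\|_{L_p}\gtrsim\|(\bar a-\bar a^{11})\bar u_{x^1}\|_{L_p}$, the crude bound gives $(|(\bar a-\bar a^{11})w_{x^1}|^p)^{1/p}\le N\kappa^{d/p}\bigl(|\bar u_{x^1}|^p\bigr)^{1/p}_{B_{\sqrt2\kappa r}}+\cdots$ with an $O(1)$ (indeed $O(\mu^2\kappa^{d/p})$ after reinserting the $\mu$-weights) coefficient in front of the $\bar u_{x^1}$-average, which cannot be absorbed by choosing $\kappa,\mu,\gamma$. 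The $\mu^{-1}$ gain in the paper comes precisely from solving the $w$-problem in the \emph{original} coordinates and then dilating: then $\|\bar w_{x^1}\|_{L_p}=\mu^{1/p-1}\|w_{x^1}\|_{L_p}$ is a factor $\mu^{-1}$ smaller than $\|\bar w_{x'}\|_{L_p}=\mu^{1/p}\|w_{x'}\|_{L_p}$, and, equally important, the data $g, f_\lambda$ for that $w$-problem do not contain $u_{x^1}$ at all. Your decomposition forfeits both of these features. (A further, minor, point: to make $v$ genuinely homogeneous you must carry the $(\mu^{-2}-1)\lambda\bar u$ correction — i.e.\ use $\bar f_\lambda$ as in Lemma \ref{lemma081701} — rather than $-\lambda\bar u$.) Repairing the proof essentially requires reverting to the paper's ordering: decompose $u=w+v$ first, using the unmodified operator, and introduce the $x'$-average of $a^{11}$ only when applying Lemma \ref{lem082101} to $v$.
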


\begin{proof}
First we prove the case when $x_0$ is the origin.
By a scaling, it suffices to consider the case $R_0=1$.
For given $\mu \ge 1$, $\kappa > 16 K \delta^{-1}$, and $r \in (0, \frac{1}{\sqrt{2}\kappa}]$,
denote
$$
C^{\mu}_{r} = (-\mu^{-1} r, \mu^{-1} r) \times B'_r,
\quad
C_{r} = (-r, r) \times B'_r.
$$
Fix a $\lambda > \lambda_0$
and let $\gamma_0 \le \gamma$,
where $\lambda_0 = \lambda_0(d,p,\delta,K)$ and $\gamma=\gamma(d,p,\delta,K)$ are taken from Theorem \ref{th081901}.
By Theorem \ref{th081901}
there exists a unique solution $w \in W_p^1$ to the equation
$$
\cL_0 w - \lambda w = \Div (I_{C^{\mu}_{\kappa r}} g) + I_{C^{\mu}_{\kappa r}} f_{\lambda},
$$
where $f_{\lambda} =  - \lambda u$.
It then follows that
$$
\sqrt{\lambda} \| w \|_{L_p} + \| w_{x} \|_{L_p}
\le N \left(\| I_{C^{\mu}_{\kappa r}} g \|_{L_p} + \lambda^{-1/2} \| I_{C^{\mu}_{\kappa r}} f_{\lambda} \|_{L_p}\right),
$$
where $N = N(d,p,\delta,K)$ is
 independent of $\mu$.
The notations in \eqref{eq081401} and \eqref{eq081402} turn the above estimate into
$$
\sqrt{\lambda} \| \bar{w} \|_{L_p} + \mu \| \bar{w}_{x^1} \|_{L_p}
+ \| \bar{w}_{x'} \|_{L_p}
$$
$$
\le N \mu^{-1} \| I_{C_{\kappa r}} \tilde{g}_1 \|_{L_p}
+ N \sum_{j\ge2}\| I_{C_{\kappa r}} \tilde{g}_j \|_{L_p}
+ N \lambda^{1/2} \|I_{C_{\kappa r}} \bar{u} \|_{L_p},
$$
where $N = N(d,p,\delta,K)$.
This indicates that
$$
\left( |\bar w_x|^p \right)_{B_r}^{1/p}
\le N \kappa^{d/p} \big( |\tilde{g}|^p + \lambda^{p/2} |\bar{u}|^p \big)^{1/p}_{B_{\sqrt{2}\kappa r}},
$$
$$
\left( |\bar w_x|^p + \lambda^{p/2} |\bar w|^p \right)_{B_{\kappa r}}^{1/p}
\le N \big( |\tilde{g}|^p + \lambda^{p/2} |\bar{u}|^p \big)^{1/p}_{B_{\sqrt{2}\kappa r}}
$$
since $\mu \ge 1$
and $C_{\kappa r} \subset B_{\sqrt{2}\kappa r}$.

Observe that $v := u - w$ satisfies
$$
\cL_0 v - \lambda v = \Div \left( (1- I_{C^{\mu}_{\kappa r}}) g\right) + (1 - I_{C^{\mu}_{\kappa r}}) f_{\lambda}.
$$
Define $\fL_0$ to be the operator given by replacing the coefficients $a^{11}$ of $\cL_0$ with
$$
a(x^1) = \dashint_{B'_{\kappa r/2}} a^{11}(x^1,y') \, dy'.
$$
We also set $\bar a(x^1)=a(\mu^{-1}x^1)$. Then
$$
\fL_0 v - \lambda v = \Div \mathfrak{g} + \mathfrak{f},
$$
where
$$
\mathfrak{g}_1 = (a - a^{11})v_{x^1} + (1- I_{C^{\mu}_{\kappa r}}) g_1,
\quad
\mathfrak{g}_j = (1- I_{C^{\mu}_{\kappa r}}) g_j,
\quad
j \ge 2,
$$
$$
\mathfrak{f} = (1 - I_{C^{\mu}_{\kappa r}}) f_{\lambda}.
$$
Note that $\kappa/2 > 8 K \delta^{-1}$. By Lemma \ref{lem082101} applied to
the above equation, we get
$$
\left( | \bar{a}\bar{v}_{x^1} - \left( \bar{a}\bar{v}_{x^1} \right)_{B_r} |^p \right)_{B_r}^{1/p}
\le N \big( \kappa^{-1} + \kappa^{d/p} \mu^{-1} \big) \left(|\bar{v}_{x^1}|^p\right)^{1/p}_{B_{\kappa r/2}}
$$
$$
+ N \kappa^{d/p} \left( |\bar{v}_{x'}|^p + \lambda^{p/2} |\bar{v}|^p + |\tilde{\mathfrak{g}}|^p
+ \lambda^{-p/2} |\tilde{\mathfrak{f}}|^p \right)^{1/p}_{B_{\kappa r/2}}.
$$
Due to the indicator functions in front of $g$ and $f_{\lambda}$, we see that
$$
\left( |\tilde{\mathfrak{g}}|^p + \lambda^{-p/2} |\tilde{\mathfrak{f}}|^p \right)_{B_{\kappa r/2}}
= \dashint_{B_{\kappa r/2}} | \mu^2 (\bar{a} - \bar{a}^{11}) \bar{v}_{x^1} |^p \, dx
$$
$$
\le \mu^{2p} \left(\dashint_{B_{\kappa r/2}} |\bar{a} - \bar{a}^{11}|^{2p} \, dx \right)^{1/2}
\left( \dashint_{B_{\kappa r/2}} |\bar{v}_{x^1}|^{2p} \, dx \right)^{1/2}.
$$
To estimate the last term in the above inequality, note that, in $C_{\kappa r}^{\mu}$,
$$
\cL_0 v - \lambda v = 0.
$$
Thus $\bar{v}$ satisfies, in $C_{\kappa r} \supset B_{\kappa r}$,
$$
\left(\check{a}^{ij} \bar{v}_{x^i}\right)_{x^j}
- \mu^{-2} \lambda \bar{v} = 0,
$$
where
$$
\check{a}^{11} = \bar{a}^{11},
\quad
\check{a}^{1j} = \mu^{-1}\bar{a}^{1j},
\quad
\check{a}^{i1} = \mu^{-1}\bar{a}^{i1},
\quad
\check{a}^{ij} = \mu^{-2}\bar{a}^{ij},
\quad
i,j \ge 2.
$$
A calculation along with the fact $\mu \ge 1$ shows that
$$
\text{osc}_{x'}\left(\check{a}^{11},\Gamma_r(x)\right)
\le \mu \, \text{osc}_{x'}\left(a^{11},\Gamma_r(\mu^{-1}x^1,x')\right).
$$
We have similar inequalities for the other coefficients,
so we have $\check{a}_{R}^{\#}\le\mu a_{R}^{\#}$.
As to the boundedness and the uniform ellipticity constant of these coefficients,
we see that they are bounded by $K$ as $a^{ij}$,
but the ellipticity constant is $\mu^{-2} \delta$ instead of $\delta$.
Find $\gamma_0$ such that $\check{a}^{\#}_{1} \le \gamma$,
where $\gamma = \gamma(d, p, 2p, \mu^{-2}\delta, K)$ is taken from
Corollary \ref{cor082002}.
Then by Corollary \ref{cor082002} along with $\kappa r \le 1/\sqrt 2$
there exists a constant $N_1 = N_1( d, p, \delta, K, \mu)$
such that
$$
\left( \dashint_{B_{\kappa r/2}} |\bar{v}_{x^1}|^{2p} \, dx \right)^{1/2}
\le N_1 \left( \dashint_{B_{\kappa r}} |\bar{v}_{x}|^{p} + \mu^{-p}\lambda^{p/2} |\bar{v}|^p \, dx \right).
$$

On the other hand,
$$
\dashint_{B_{\kappa r/2}} |\bar{a} - \bar{a}^{11}|^{2p} \, dx
\le \mu(2K)^{2p-1}\gamma.
$$

To finish the proof of the case $x_0 = 0$, we combine all the inequalities above as in the proof of Lemma \ref{lemma081701}.
We also bear in mind that the fixed $\lambda$ is a constant depending only on $d$, $p$, $\delta$ and $K$.
For the general $B_{r}(x_0)$, $x_0 = (x_0^1, x_0')$, we use a translation $u(x^1, x') \to u(x^1+ \mu^{-1}x_0^1, x' + x_0')$,
which gives $\bar{u}(x) \to \bar{u}(x+x_0)$.
\end{proof}

Recall that, for $x = (x^1,x^2, \cdots, x^d) \in \bR^{d}$,
$\sbx_1$ represents the first $d_1$ coordinates of $x$
and $\sbx_2$ represents the remaining $d_2$ coordinates of $x$,
where $d_1, d_2 > 0$ and $d_1 + d_2 = d$.
Let
$$
B^{d_1}_r(\sbx_1) = \{ |\sbx_1 - \sby_1| < r : \sby_1 \in \bR^{d_1} \},
\quad
B^{d_2}_r(\sbx_2) = \{ |\sbx_2 - \sby_2| < r : \sby_2 \in \bR^{d_2} \}.
$$
As before, we set $B^{d_1}_r = B^{d_1}_r(0)$ and $B^{d_2}_r = B^{d_2}_r(0)$.
For a function $f$ defined on $\bR^{d}$, denote
$$
\| f(\cdot, \sbx_2)\|_{p,d_1}
= \left(\int_{\bR^{d_1}} | f(\sbx_1, \sbx_2) |^p \, d \sbx_1\right)^{1/p}.
$$
Note that $\| f(\cdot, \sbx_2)\|_{p,d_1}$ is a function of $\sbx_2$.

\begin{corollary}							 \label{cor081102}
Under the assumptions of Theorem \ref{theorem082102},
there exist constants $N = N(d,p,\delta,K)$
and $N_1 = N_1(d,p,\delta,K,\mu)$
such that
$$
\dashint_{B^{d_2}_r}\dashint_{B^{d_2}_r}
\left| \|\bar{u}_{x^1}(\cdot, \sbx_2)\|_{p,d_1} - \|\bar{u}_{x^1}(\cdot, \sby_2)\|_{p,d_1} \right|^p\, d \sbx_2 \, d \sby_2
$$
$$
\le N_1 \kappa^{d} \dashint_{B^{d_2}_{2\kappa r}}\|\bar{u}_{x'}(\cdot, \sbx_2)\|_{p,d_1}^p
+  R_0^{-p}\|\bar{u}(\cdot, \sbx_2)\|_{p,d_1}^p +  \| \tilde{g}(\cdot, \sbx_2)\|_{p,d_1}^p \, d \sbx_2
$$
$$
+ N \big( \kappa^{-p} + \kappa^{d} \mu^{-p} + N_1 \kappa^{d} \gamma^{1/2} \big)
\dashint_{B^{d_2}_{2\kappa r}} \|\bar{u}_{x^1}(\cdot, \sbx_2)\|_{p,d_1}^p \, d \sbx_2
$$
for all $\kappa > 16 K \delta^{-1}$ and $r \in (0, \frac{ R_0}{2\kappa}]$.
\end{corollary}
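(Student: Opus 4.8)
The plan is to derive the estimate from Theorem \ref{theorem082102} by tiling $\bR^{d_1}$ with cubes of size comparable to $r$, applying the local oscillation bound for $\bar a\bar u_{x^1}$ on each piece, and summing. The point that makes this work is that the coefficient $\bar a$ furnished by Theorem \ref{theorem082102} depends on $x^1$ only, so dividing by it costs only the factor $\delta^{-1}$ and turns an oscillation of $\bar a\bar u_{x^1}$ into control of oscillations of $\bar u_{x^1}$ itself.

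First I would bound the left-hand side, using the triangle inequality in $L_p(\bR^{d_1})$, by $\dashint_{B^{d_2}_r}\dashint_{B^{d_2}_r}\int_{\bR^{d_1}}|\bar u_{x^1}(\sbx_1,\sbx_2)-\bar u_{x^1}(\sbx_1,\sby_2)|^p\,d\sbx_1\,d\sbx_2\,d\sby_2$. Then partition $\bR^{d_1}$ into essentially disjoint cubes $\{Q_k\}$ of side comparable to $r$, with $Q_k\subset B^{d_1}_r(\sbz_1^{(k)})$ where $\sbz_1^{(k)}$ is the center of $Q_k$. For each $k$, apply Theorem \ref{theorem082102} at the point $x_0^{(k)}=(\sbz_1^{(k)},0)\in\bR^d$ with the scale $\sqrt 2\,r$ in place of $r$ — this is exactly why the hypothesis $r\le R_0/(2\kappa)$ is imposed and why the ball $B_{2\kappa r}$ reappears on the right, since $\sqrt 2\kappa\cdot\sqrt 2 r=2\kappa r$. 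This gives a function $\bar a^{(k)}=\bar a^{(k)}(x^1)$ with $\delta\le\bar a^{(k)}\le K$ and, with $c_k:=(\bar a^{(k)}\bar u_{x^1})_{B_{\sqrt 2 r}(x_0^{(k)})}$, control of $\big(\dashint_{B_{\sqrt 2 r}(x_0^{(k)})}|\bar a^{(k)}\bar u_{x^1}-c_k|^p\big)^{1/p}$ by the right-hand side of Theorem \ref{theorem082102} at $x_0^{(k)}$. Since $\bar a^{(k)}$ is a function of $x^1$ alone (so the same quantity is subtracted at $\sbx_2$ and at $\sby_2$) and $\bar a^{(k)}\ge\delta$, on $Q_k$ one has $|\bar u_{x^1}(\sbx_1,\sbx_2)-\bar u_{x^1}(\sbx_1,\sby_2)|\le\delta^{-1}\big(|\bar a^{(k)}\bar u_{x^1}(\sbx_1,\sbx_2)-c_k|+|\bar a^{(k)}\bar u_{x^1}(\sbx_1,\sby_2)-c_k|\big)$; because $Q_k\times B^{d_2}_r\subset B_{\sqrt 2 r}(x_0^{(k)})$, performing the $\sbx_1$-integral over $Q_k$ and the $\sbx_2,\sby_2$-averages over $B^{d_2}_r$ bounds the $k$-th term by a dimensional constant times $\frac{|B_{\sqrt 2 r}|}{|B^{d_2}_r|}$ times the $p$-th power of the right-hand side of Theorem \ref{theorem082102} at $x_0^{(k)}$.

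It remains to sum over $k$. Each ball $B_{2\kappa r}(x_0^{(k)})$ occurring on the right of Theorem \ref{theorem082102} lies inside the box $B^{d_1}_{2\kappa r}(\sbz_1^{(k)})\times B^{d_2}_{2\kappa r}(0)$, and since the centers $\sbz_1^{(k)}$ form a grid of spacing $\sim r$, every point of $\bR^{d_1}$ belongs to at most $N(d_1)\kappa^{d_1}$ of the balls $B^{d_1}_{2\kappa r}(\sbz_1^{(k)})$; hence $\sum_k\int_{B_{2\kappa r}(x_0^{(k)})}(\cdots)\le N(d_1)\kappa^{d_1}\int_{B^{d_2}_{2\kappa r}}\int_{\bR^{d_1}}(\cdots)$, and $\int_{\bR^{d_1}}|\cdot|^p\,d\sbx_1=\|\cdot\|_{p,d_1}^p$. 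Combining the overlap factor $\kappa^{d_1}$ with the volume ratio $|B_{\sqrt 2 r}|/(|B^{d_2}_r|\,|B^d_{2\kappa r}|)$ produces precisely the factor $\kappa^d/|B^{d_2}_{2\kappa r}|$, i.e. the averages $\dashint_{B^{d_2}_{2\kappa r}}$ demanded by the statement; the purely dimensional constants (such as $(\sqrt 2)^{d/p}$) and the elementary passage from the $p$-th power of $\kappa^{-1}+\kappa^{d/p}\mu^{-1}+N_1\kappa^{d/p}\gamma^{1/(2p)}$ to $\kappa^{-p}+\kappa^d\mu^{-p}+N_1\kappa^d\gamma^{1/2}$ are absorbed into $N$ and $N_1$, the latter being permitted to depend on $\mu$. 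The only steps needing care are the choice of radius $\sqrt 2\,r$ (so that the product box $Q_k\times B^{d_2}_r$ fits inside the Euclidean ball used in Theorem \ref{theorem082102} while $B_{2\kappa r}$ is recovered on the right) and matching the overlap constant $\kappa^{d_1}$ against the various volume ratios; I do not expect any genuine difficulty beyond this bookkeeping.
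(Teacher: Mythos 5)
Your argument is correct and follows essentially the same route as the paper: after the triangle inequality in $L_p(\bR^{d_1})$, you localize in $\sbx_1$, apply Theorem \ref{theorem082102} at scale $\sqrt 2\,r$ around centers $(\sbz_1,0)$, exploit that $\bar a$ depends only on $x^1$ together with $\delta\le\bar a\le K$, and then recombine. The only deviation is technical: you tile $\bR^{d_1}$ by cubes and sum using a finite-overlap count of order $\kappa^{d_1}$, whereas the paper introduces a dummy average $\dashint_{B^{d_1}_r}$ via translation invariance and then applies Fubini (so the recombination is an exact identity rather than an overlap bound) --- both yield the same constants up to dimensional factors.
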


\begin{proof}
Fix $\kappa > 16 K \delta^{-1}$ and $r \in (0, \frac{R_0}{2\kappa}]$.
Note that
$$
\left| \|\bar{u}_{x^1}(\cdot, \sbx_2)\|_{p,d_1} - \|\bar{u}_{x^1}(\cdot, \sby_2)\|_{p,d_1} \right|^p
\le \|\bar{u}_{x^1}(\cdot, \sbx_2)- \bar{u}_{x^1}(\cdot, \sby_2)\|^p_{p,d_1}
$$
$$
= \dashint_{B^{d_1}_r} \int_{\bR^{d_1}} \left| \bar{u}_{x^1}(\sbz_1+\sbw_1,\sbx_2) - \bar{u}_{x^1}(\sbz_1+\sbw_1,\sby_2) \right|^p \, d \sbz_1 \, d \sbw_1
$$
$$
=\int_{\bR^{d_1}} \dashint_{B^{d_1}_r(\sbz_1)} \left| \bar{u}_{x^1}(\sbw_1,\sbx_2) - \bar{u}_{x^1}(\sbw_1,\sby_2) \right|^p \, d \sbw_1 \, d \sbz_1.
$$
We
use Theorem \ref{theorem082102} to find $\bar{a}_{\sbz_1} = \bar{a}_{(\sbz_1,0), \mu, \sqrt{2}\kappa r}$ corresponding to $B_{\sqrt{2}r}(\sbz_1, 0)$.
Since $\delta \le  \bar{a}_{\sbz_1}\le K$, the last term above is not greater than $\delta^{-p}$ times
$$
\int_{\bR^{d_1}} \dashint_{B^{d_1}_r(\sbz_1)} \left| \bar{a}_{\sbz_1}\bar{u}_{x^1}(\sbw_1,\sbx_2) - \bar{a}_{\sbz_1}\bar{u}_{x^1}(\sbw_1,\sby_2) \right|^p \, d \sbw_1 \, d \sbz_1.
$$
Thus the left-hand side of the inequality in the corollary, denoted by $I$, satisfies
$$
I
\le N \int_{\bR^{d_1}} \dashint_{B^{d_2}_r} \dashint_{B^{d_2}_r}
\dashint_{B^{d_1}_r(\sbz_1)} \left| \bar{a}_{\sbz_1}\bar{u}_{x^1}(\sbw_1,\sbx_2) - \bar{a}_{\sbz_1}\bar{u}_{x^1}(\sbw_1,\sby_2) \right|^p d \sbw_1 \, d \sbx_2 \, d \sby_2 \, d \sbz_1.
$$
Observe that
$$
| \bar{a}_{\sbz_1}\bar{u}_{x^1}(\sbw_1,\sbx_2) - \bar{a}_{\sbz_1}\bar{u}_{x^1}(\sbw_1,\sby_2) |^p
$$
$$
\le 2^{p} | \bar{a}_{\sbz_1}\bar{u}_{x^1}(\sbw_1, \sbx_2) - \left( \bar{a}_{\sbz_1}\bar{u}_{x^1} \right)_{B_{\sqrt{2}r}(\sbz_1, 0)} |^p
+ 2^{p} | \bar{a}_{\sbz_1}\bar{u}_{x^1}(\sbw_1,\sby_2) - \left( \bar{a}_{\sbz_1}\bar{u}_{x^1} \right)_{B_{\sqrt{2}r}(\sbz_1, 0)} |^p
$$
and
$$
B^{d_1}_r(\sbz_1) \times B^{d_2}_r
\subset B_{\sqrt{2}r}(\sbz_1, 0).
$$
Hence
$$
I \le N
\int_{\bR^{d_1}} \dashint_{B_{\sqrt{2}r}(\sbz_1,0)} | \bar{a}_{\sbz_1}\bar{u}_{x^1}(x) - \left( \bar{a}_{\sbz_1}\bar{u}_{x^1} \right)_{B_{\sqrt{2}r}(\sbz_1,0)} |^p \, d x \, d \sbz_1,
$$
where $N$ depends only on $d$ and $p$.
Then by Theorem \ref{theorem082102} we have
$$
I \le N_1 \kappa^{d} \int_{\bR^{d_1}} \left( |\bar{u}_{x'}|^p + R_0^{-p}|\bar{u}|^p + |\tilde{g}|^p\right)_{B_{2 \kappa r}(\sbz_1,0)} \, d \sbz_1
$$
$$
+ N \big( \kappa^{-p} + \kappa^{d} \mu^{-p} + N_1 \kappa^{d} \gamma^{1/2}\big)
\int_{\bR^{d_1}} \left(|\bar{u}_{x^1}|^p\right)_{B_{2\kappa r}(\sbz_1,0)} \, d \sbz_1,
$$
where $N$ is independent of $\mu$.

The same process as at the beginning of the proof yields,
for example,
$$
\int_{\bR^{d_1}} \left( | \tilde{g} |^p \right)_{B_{2\kappa r}(\sbz_1,0)} \, d \sbz_1
= \int_{\bR^{d_1}} \dashint_{B_{2\kappa r}} |\tilde{g}(\sbz_1 + \sbw_1,\sbx_2)|^p \, d \sbw_1 \, d \sbx_2 \, d \sbz_1
$$
$$
\le N(d) \int_{\bR^{d_1}} \dashint_{B^{d_1}_{2\kappa r}} \dashint_{B^{d_2}_{2\kappa r}} |\tilde{g}(\sbz_1 + \sbw_1,\sbx_2)|^p \, d \sbx_2 \, d \sbw_1 \, d \sbz_1
$$
$$
= N \dashint_{B^{d_2}_{2\kappa r}} \int_{\bR^{d_1}} |\tilde{g}(\sbz_1,\sbx_2)|^p \, d \sbz_1 \, d \sbx_2
= N \dashint_{B^{d_2}_{2\kappa r}} \| \tilde{g}(\cdot, \sbx_2)\|_{p,d_1}^p \, d \sbx_2.
$$
Therefore, we obtain the inequality in the corollary.
\end{proof}

If $g$ is a function defined on $\bR^{d_2}$, naturally its maximal and sharp functions are
$$
M g (\sbx_2) = \sup_{r>0} \dashint_{B^{d_2}_r(\sbx_1)} |g(\sby_2)| \, d\sby_2,
$$
$$
g^{\#}(\sbx_2) = \sup_{r>0} \dashint_{B^{d_2}_r(\sbx_2)} |g(\sby_2) -
(g)_{B^{d_2}_r(\sbx_2)}| \, d\sby_2.
$$

We now come to the main result of this section.

\begin{theorem}							 \label{theorem082301}
Let $1 < p < q < \infty$. Then there exists a constant $\gamma=\gamma(d,p,q,\delta,K)$
such that, under Assumption \ref{assumption20080424} ($\gamma$),
the following holds true. There exist constants $N$ and $R_3\in (0,1]$, depending only on
$d_1$, $d_2$, $p$, $q$,
$\delta$ and $K$, such that,
for any $u \in C_0^{\infty}$
satisfying $\cL_0 u = \Div g$, where $g \in L_{q,p}$,
$$
\|u_{x^1}\|_{L_{q,p}} \le N \left( \|u_{x'}\|_{L_{q,p}} + R_0^{-1}\| u \|_{L_{q,p}} + \| g \|_{L_{q,p}} \right),
$$
provided that $u(\sbx_1, \sbx_2) = 0$ for $\sbx_2 \notin  B_{R^2}^{d_2} $, $R\in (0, R_3R_0]$.
\end{theorem}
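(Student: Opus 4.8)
The plan is to follow the proof of Theorem~\ref{theorem5.3}, but with the generalized Fefferman--Stein theorem, Theorem~\ref{th081201}, applied on the lower-dimensional space $\bR^{d_2}$ in place of the classical one on $\bR^d$. Let $\bar u$, $\tilde g$ be as in \eqref{eq081401}--\eqref{eq081402} with a number $\mu\ge 1$ to be chosen, and put
$$
\bar h(\sbx_2):=\|\bar u_{x^1}(\cdot,\sbx_2)\|_{p,d_1},\qquad \sbx_2\in\bR^{d_2},
$$
so that $\|\bar u_{x^1}\|_{L_{q,p}}=\|\bar h\|_{L_q(\bR^{d_2})}$. Since $u\in C_0^\infty$, the function $\bar h$ is continuous with compact support, and by the hypothesis on $u$ it vanishes for $\sbx_2\notin B^{d_2}_{R^2}$. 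I shall apply Theorem~\ref{th081201} on $\bR^{d_2}$ (with $d$, $p$ there replaced by $d_2$, $q$) to $U=V=U^C=\bar h$, for which $|U|\le U^C\le V$ is automatic; thus it only remains to produce $F\in L_q(\bR^{d_2})$ with $\int_C|\bar h-(\bar h)_C|\le N\int_C F$ for every dyadic cube $C\subset\bR^{d_2}$ (the integrability of $\bar h$ and $F$ being clear, as $F$ will be built from Hardy--Littlewood maximal functions $M$ on $\bR^{d_2}$ of $L_{q/p}$ functions, $q/p>1$).

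Fix $\kappa>16K\delta^{-1}$ and $R_3\in(0,1]$ with $R_3^2\le 1/(2\kappa)$, so that $R^2\le R_3^2R_0^2\le R_0/(2\kappa)$, and set $c_0:=\kappa^{-1}+\kappa^{d/p}\mu^{-1}+N_1^{1/p}\kappa^{d/p}\gamma^{1/(2p)}$ with $N_1=N_1(d,p,\delta,K,\mu)$ the constant of Theorem~\ref{theorem082102}. Let
\begin{multline*}
F=c_0\,\big(M(\bar h^p)\big)^{1/p}\\
+N_1^{1/p}\kappa^{d/p}\Big[\big(M(\|\bar u_{x'}\|_{p,d_1}^p)\big)^{1/p}+R_0^{-1}\big(M(\|\bar u\|_{p,d_1}^p)\big)^{1/p}+\big(M(\|\tilde g\|_{p,d_1}^p)\big)^{1/p}\Big],
\end{multline*}
all $\|\cdot\|_{p,d_1}$ being evaluated at the running point of $\bR^{d_2}$, and let $B^{d_2}_r(\sbx_0')$ be the smallest ball containing $C$. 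If $r\le R_0/(2\kappa)$, then Corollary~\ref{cor081102} (with the translation replacing $B^{d_2}_\rho$ by $B^{d_2}_\rho(\sbx_0')$) bounds $\dashint_C|\bar h-(\bar h)_C|^p$ by $N$ times its right-hand side; raising to the power $1/p$ and dominating each average over $B^{d_2}_{2\kappa r}(\sbx_0')$ by $M(\cdot)(\sbx_2)$ for $\sbx_2\in C$ gives $\dashint_C|\bar h-(\bar h)_C|\le NF(\sbx_2)$ on $C$. If $r> R_0/(2\kappa)$ then $R^2<r$, so either $C$ misses $B^{d_2}_{R^2}$ (oscillation $0$) or it meets it; in the latter case $\int_C|\bar h-(\bar h)_C|\le 2\|\bar h\|_{L_1}\le N R^{2d_2/p'}\|\bar h\|_{L_p}$ by H\"older, while $B^{d_2}_{R^2}\subset B^{d_2}_{4r}(\sbx_2)$ for $\sbx_2\in C$ forces $\big(M(\bar h^p)\big)^{1/p}(\sbx_2)\ge Nr^{-d_2/p}\|\bar h\|_{L_p}$ and hence $\int_C\big(M(\bar h^p)\big)^{1/p}\ge Nr^{d_2/p'}\|\bar h\|_{L_p}$; since $R^2/r\le 2\kappa R_3^2$ this yields $\int_C|\bar h-(\bar h)_C|\le N(2\kappa R_3^2)^{d_2/p'}\int_C\big(M(\bar h^p)\big)^{1/p}$, which is $\le\int_C F$ provided $R_3$ is taken small enough that $N(2\kappa R_3^2)^{d_2/p'}\le c_0$. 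Thus $\int_C|\bar h-(\bar h)_C|\le N\int_C F$ for every $C$.

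Theorem~\ref{th081201} then gives $\|\bar h\|_{L_q}\le N\|F\|_{L_q}$, and the maximal function theorem on $\bR^{d_2}$ (applied with exponent $q/p>1$, which is where the hypothesis $p<q$ is used) yields
$$
\|\bar h\|_{L_q}\le N\big(c_0+(2\kappa R_3^2)^{d_2/p'}\big)\|\bar h\|_{L_q}+N N_1^{1/p}\kappa^{d/p}\big(\|\bar u_{x'}\|_{L_{q,p}}+R_0^{-1}\|\bar u\|_{L_{q,p}}+\|\tilde g\|_{L_{q,p}}\big).
$$
Now choose, in this order, $\kappa$ large, then $\mu$ large (which fixes $N_1$ and a threshold $\gamma_0$ coming from Theorem~\ref{theorem082102}), then $\gamma\le\gamma_0$ small and $R_3$ small, so that the coefficient of $\|\bar h\|_{L_q}$ on the right is at most $1/2$ and can be absorbed. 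Finally, undoing \eqref{eq081401}--\eqref{eq081402} by means of $\|\bar u_{x^1}\|_{L_{q,p}}=\mu^{1/p-1}\|u_{x^1}\|_{L_{q,p}}$, $\|\bar u_{x'}\|_{L_{q,p}}=\mu^{1/p}\|u_{x'}\|_{L_{q,p}}$, $\|\bar u\|_{L_{q,p}}=\mu^{1/p}\|u\|_{L_{q,p}}$, $\|\tilde g\|_{L_{q,p}}\le\mu^{1+1/p}\|g\|_{L_{q,p}}$, and noting that $\kappa,\mu,N_1$ are by now fixed constants depending only on $d_1,d_2,p,q,\delta,K$, one obtains the asserted estimate for $\|u_{x^1}\|_{L_{q,p}}$. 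The main obstacle is exactly the large-scale cubes $r>R_0/(2\kappa)$, on which Corollary~\ref{cor081102} is unavailable: it is to control these that the hypothesis confines the $\sbx_2$-support of $u$ to the tiny ball $B^{d_2}_{R^2}$, and the gain $(2\kappa R_3^2)^{d_2/p'}$ from the smallness of that support is what makes the final absorption possible after $R_3$ has been chosen small.
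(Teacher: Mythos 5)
Your proof is correct and follows essentially the same strategy as the paper: define the scalar function $\mathfrak{u}(\sbx_2)=\|\bar u_{x^1}(\cdot,\sbx_2)\|_{p,d_1}$ on $\bR^{d_2}$, feed Corollary~\ref{cor081102} into a Fefferman--Stein-type inequality for small-scale cubes, handle large-scale cubes by exploiting the confined $\sbx_2$-support of $u$ together with H\"older's inequality, apply the maximal function theorem on $\bR^{d_2}$ with exponent $q/p>1$, and then choose $\kappa$, $\mu$, $\gamma$, $R_3$ in that order to close the absorption, before rescaling $\bar u\to u$. The only cosmetic differences from the paper are that you invoke Theorem~\ref{th081201} with $U=V=U^C=\bar h$ (which in this degenerate case just reproduces the classical sharp-function inequality used by the paper), you set the small/large cutoff at $r=R_0/(2\kappa)$ rather than $r=R/(2\kappa)$, and you carry $R_0$ explicitly rather than normalizing $R_0=1$; none of these affect the argument.
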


\begin{proof}
Again we may assume $R_0=1$. Fix $\mu \ge 1$ and $\kappa \ge 16 K \delta^{-1}$, which are to be chosen below.
Let $\gamma \le \gamma_0$, where
$\gamma_0=\gamma_0(d,p,\delta,K,\mu)$ is taken from Theorem \ref{theorem082102}
and set
$$
\mathfrak{u}(\sbx_2) = \| \bar{u}_{x^1}(\cdot, \sbx_2) \|_{p,d_1},
$$
$$
\mathfrak{f}(\sbx_2) = \|\bar{u}_{x'}(\cdot, \sbx_2)\|_{p,d_1}
+  \|\bar{u}(\cdot, \sbx_2)\|_{p,d_1} +  \| \tilde{g}(\cdot, \sbx_2)\|_{p,d_1},
$$
where $\bar{u}$ and $\tilde{g}$ are defined as in \eqref{eq081401}
and \eqref{eq081402}.
If $r \le R/(2\kappa)$,
from Corollary \ref{cor081102} along with an appropriate translation as well as the H\"{o}lder's inequality
it follows that
$$
\dashint_{B^{d_2}_r(\bar{\sbx}_2)}
| \mathfrak{u} - \left(\mathfrak{u}\right)_{B^{d_2}_r(\bar{\sbx}_2)} | \, d \sbx_2
$$
$$
\le N_1 \kappa^{d/p} \left( \mathfrak{f}^p \right)^{1/p}_{B^{d_2}_{2\kappa r}(\bar{\sbx}_2)}
+ N \big( \kappa^{-1} + \kappa^{d/p} \mu^{-1} + N_1 \kappa^{d/p} \gamma^{1/(2p)} \big)
\left(\mathfrak{u}^p\right)^{1/p}_{B^{d_2}_{2\kappa r}(\bar{\sbx}_2)}
$$
for all $\bar{\sbx}_2 \in \bR^{d_2}$,
where $N$ is independent of $\mu$.
If  $r >  R /(2\kappa)$,
since $\mathfrak{u}$ has a compact support in $B_{ R^2 }^{d_2}$,
by the H\"{o}lder's inequality
$$
\dashint_{B^{d_2}_r(\bar{\sbx}_2)}
| \mathfrak{u} - \left(\mathfrak{u}\right)_{B^{d_2}_r(\bar{\sbx}_2)} | \, d \sbx_2
\le N \left(\dashint_{B^{d_2}_r(\bar{\sbx}_2)}
I_{B_{R^2}^{d_2}} \, \sbx_2 \right)^{1-1/p}
\left(\mathfrak{u}^p \right)_{B^{d_2}_r(\bar{\sbx}_2)}^{1/p}
$$
$$
\le N (R^2/r)^{d_2(1-1/p)} \left(\mathfrak{u}^p \right)_{B^{d_2}_r(\bar{\sbx}_2)}^{1/p}
\le N (\kappa R)^{d_2(1-1/p)} \left(\mathfrak{u}^p \right)_{B^{d_2}_r(\bar{\sbx}_2)}^{1/p}.
$$
Therefore, by the above two sets of inequalities as well as the fact that,
for example,
$\left(\mathfrak{f}^p\right)_{B^{d_2}_{2\kappa r}(\bar{\sbx}_2)}
\le M \mathfrak{f}^p (\bar{\sbx}_2)$,
we obtain
$$
\dashint_{B^{d_2}_r(\bar{\sbx}_2)}
| \mathfrak{u} - \left(\mathfrak{u}\right)_{B^{d_2}_r(\bar{\sbx}_2)} | \, d \sbx_2
\le N_1 \kappa^{d/p} \left(M \mathfrak{f}^p (\bar{\sbx}_2)\right)^{1/p}
$$
$$
+ N \big( \kappa^{-1} + \kappa^{d/p} \mu^{-1} + N_1 \kappa^{d/p} \gamma^{1/(2p)} + (\kappa R)^{d_2(1-1/p)} \big)
\left(M \mathfrak{u}^p (\bar{\sbx}_2) \right)^{1/p}
$$
for all $r > 0$ and $\bar{\sbx}_2 \in \bR^{d_2}$,
where $N$ is independent of $\mu$.
This implies the pointwise estimate that the sharp function $\mathfrak{u}^{\#}$ is bounded by the right-hand side of the inequality.
Then using the maximal function theorem and the Fefferman-Stein theorem, we get
(note that $q > p$)
$$
\| \mathfrak{u} \|_{L_q(\bR^{d_2})}
\le N_1 \kappa^{d/p} \| \mathfrak{f} \|_{L_q(\bR^{d_2})}
$$
$$
+ N \big( \kappa^{-1} + \kappa^{d/p} \mu^{-1} + N_1 \kappa^{d/p} \gamma^{1/(2p)} + (\kappa R)^{d_2(1-1/p)} \big)\| \mathfrak{u} \|_{L_q(\bR^{d_2})}.
$$
Bearing in mind that $N$ is independent of $\mu$, we choose first a sufficiently big $\kappa$, then a sufficiently big $\mu$, and finally sufficiently small $\gamma$ and $R_3$ so that
$$
N \big( \kappa^{-1} + \kappa^{d/p} \mu^{-1} + N_1 \kappa^{d/p}
\gamma^{1/(2p)} + (\kappa  R)^{d_2(1-1/p)} \big)
\le 1/2
$$
for all $R \le R_3$.
It then follows that
$$
\| \bar{u}_{x^1} \|_{L_{q,p}}
\le N \left(\| \bar{u}_{x'} \|_{L_{q,p}}
+   \| \bar{u} \|_{L_{q,p}}
+ \| \tilde{g} \|_{L_{q,p}}\right),
$$
where $N = N(d_1, d_2, p, q, \delta, K)$.
To finish the proof, we turn the above inequality into an inequality in terms of $u$ and $g$.
~\end{proof}

\mysection{Mixed norms}							 \label{mixednormsec}

Finally, in this section we prove Theorem \ref{thm6.5}.
First we present an $L_p$-version of Theorem \ref{thm2.05}.
Now that we have proved Theorem \ref{th081901} (iii),
which is an $L_p$-version of Theorem \ref{theorem08061901}
if $a^{ij}$ are measurable functions of $x^1 \in \bR$ only,
the following theorem is proved in the same manner as Theorem \ref{thm2.05}
using Theorem \ref{th081901} and Corollary \ref{cor080702}.

\begin{theorem}							\label{th082001}
Let $p \in (1,\infty)$, $\lambda > 0$, $\kappa\ge 4$, $r>0$,
and $a^i = b^i = c = 0$.
Assume that $a^{ij} = a^{ij}(x^1)$
and  $u\in W^1_{p,\text{loc}}$
satisfies $\cL u - \lambda u =\Div g + f$ in $B_{\kappa r}$,
where $f$, $g\in L_{p,\text{loc}}$.
Then there exist positive constants $N$ and $\alpha$, depending only on $d$, $p$, $\delta$, and $K$, such that
$$
\left(|u_{x'}-(u_{x'})_{B_r}|^p\right)_{B_r}\leq N\kappa^{-p\alpha}
\left(|u_{x}|^p + \lambda^{p/2} |u|^p\right)_{B_{\kappa r}}+N\kappa^{d}\left(|g|^p + \lambda^{-p/2}|f|^p\right)_{B_{\kappa r}}.
$$
\end{theorem}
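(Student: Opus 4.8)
The plan is to mimic, almost verbatim, the proof of Theorem~\ref{thm2.05}, replacing the exponent $2$ by $p$, replacing the $L_2$-solvability of Theorem~\ref{theorem08061901} by the $L_p$-solvability furnished by Theorem~\ref{th081901}(iii), and replacing Corollary~\ref{cor080702} for $p=2$ by Corollary~\ref{cor080702} with the general exponent $p$ (which is already stated for all $p\in[1,\infty)$). As in the proofs of Lemma~\ref{lemma081701} and Theorem~\ref{thm2.05}, I would first observe that it suffices to prove the displayed inequality itself: once it is known for all $\lambda>0$, the special case $\lambda=f=0$ is recovered by rewriting $\cL u=\Div g$ as $\cL u-\lambda u=\Div g-\lambda u$, applying the inequality, and letting $\lambda\searrow0$. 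Moreover, taking standard mollifications and passing to the limit (the constants $N,\alpha$ being independent of the smoothness) allows us to assume $a^{ij}$, $u$, $f$, $g$ are infinitely differentiable; note that here $\cL u=\bar{\cL}u$ since $a^i=b^i=c=0$.

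Next I would fix a cutoff $\zeta\in C_0^\infty$ with $\zeta=1$ on $B_{\kappa r/2}$ and $\zeta=0$ outside $B_{\kappa r}$. Since $a^{ij}=a^{ij}(x^1)$, Theorem~\ref{th081901}(iii) applies with $\lambda_0=0$, so for our fixed $\lambda>0$ there is a unique $w\in W_p^1$ solving $\bar{\cL}w-\lambda w=\Div(\zeta g)+\zeta f$, and it satisfies $\|w_x\|_{L_p}+\sqrt{\lambda}\,\|w\|_{L_p}\le N(\|\zeta g\|_{L_p}+\lambda^{-1/2}\|\zeta f\|_{L_p})$. Because $\zeta g$, $\zeta f$ are supported in $B_{\kappa r}$, this yields, as in the proof of Theorem~\ref{thm2.05},
\begin{equation*}
(|w_x|^p)_{B_r}\le N\kappa^{d}(|g|^p+\lambda^{-p/2}|f|^p)_{B_{\kappa r}},\qquad
(|w_x|^p+\lambda^{p/2}|w|^p)_{B_{\kappa r}}\le N(|g|^p+\lambda^{-p/2}|f|^p)_{B_{\kappa r}}.
\end{equation*}
Setting $v:=u-w$, we get $\bar{\cL}v-\lambda v=\Div((1-\zeta)g)+(1-\zeta)f$, hence $\bar{\cL}v-\lambda v=0$ in $B_{\kappa r/2}$, and $v$ is smooth by the classical interior theory for divergence form equations with smooth coefficients.

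Then I would apply Corollary~\ref{cor080702} with exponent $p$ to $v$ on $B_{\kappa r/2}$ (valid since $\kappa/2\ge2$), obtaining $(|v_{x'}-(v_{x'})_{B_r}|^p)_{B_r}\le N\kappa^{-p\alpha}(|v_x|^p+\lambda^{p/2}|v|^p)_{B_{\kappa r}}$ with $(N,\alpha)$ depending only on $d,p,\delta,K$. Using $u=v+w$ and the two $w$-estimates to pass from $v$ back to $u$, namely $(|v_x|^p+\lambda^{p/2}|v|^p)_{B_{\kappa r}}\le N(|u_x|^p+\lambda^{p/2}|u|^p)_{B_{\kappa r}}+N(|g|^p+\lambda^{-p/2}|f|^p)_{B_{\kappa r}}$, and combining with the bound on $(|w_{x'}|^p)_{B_r}$, gives the asserted inequality. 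I do not expect a genuine obstacle here: the content-bearing points are merely that the $\lambda_0=0$ solvability in Theorem~\ref{th081901}(iii) is available precisely because the coefficients depend on $x^1$ only, and that Corollary~\ref{cor080702} already covers all $p\in[1,\infty)$ (its proof, via Lemma~\ref{lemma080701}, rests on the De Giorgi--Moser--Nash estimate, which is insensitive to the choice of $L_p$-norm). The most laborious ingredient is the bookkeeping of the powers of $\kappa$ and the cutoff error terms, but this is entirely parallel to the proof of Theorem~\ref{thm2.05}.
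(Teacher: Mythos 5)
Your proof is correct and is exactly the paper's intended argument: the paper itself only remarks that Theorem~\ref{th082001} ``is proved in the same manner as Theorem~\ref{thm2.05}'' using Theorem~\ref{th081901}(iii) in place of Theorem~\ref{theorem08061901} together with Corollary~\ref{cor080702}, which is precisely the substitution you carry out.
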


Based on Corollary \ref{cor082002} and Theorem \ref{th081901}, we prove an estimate of the $L_p$-oscillations of $u_{x'}$ as follows.

\begin{theorem}							\label{th082301}
Let $p \in (1,\infty)$ and $a^j = b^i = c = 0$.
Assume that $u \in W_{p,\text{loc}}^1$
satisfies $\cL u = \Div g$, where $g \in L_{p,\text{loc}}$.
 Then there exists a constant
$\gamma_0=\gamma_0(d,p,q,\delta,K)$
such that, under Assumption \ref{assumption20080424} ($\gamma$),
$\gamma \le \gamma_0$,
the following holds true.
There exist positive constants $N = N(d,p,\delta,K)$
and $\alpha = \alpha(d,p,\delta,K)$ such that
$$
\left( | u_{x'} - \left(u_{x'}\right)_{B_r} |^p \right)^{1/p}_{B_r}
\le N \left( \kappa^{-\alpha} + \kappa^{d/p}\gamma^{1/(2p)} \right) \left( |u_x|^p \right)_{B_{\kappa r}}^{1/p}
$$
$$
+ N \kappa^{d/p} \left( |g|^p +  R_0^{-p}|u|^p \right)_{B_{\kappa r}}^{1/p}
$$
for all $\kappa \ge 8$, $r \in (0, \frac{R_0}{\sqrt{2}\kappa}]$.
\end{theorem}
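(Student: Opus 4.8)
The plan is to carry out the argument of the proof of Theorem~\ref{theorem3.42} in $L_p$ rather than in $L_2$; the only genuinely new point is that the coefficient--oscillation error must be controlled by the bare $p$-th power of $u_x$, which forces us to invoke the self-improving gradient estimate of Corollary~\ref{cor082002}. Under the hypothesis $a^j=b^i=c=0$ the equation reads $\cL_0 u=\Div g$. By the scaling $u(x)\mapsto u(R_0x)$, $a^{ij}(x)\mapsto a^{ij}(R_0x)$, $g(x)\mapsto R_0 g(R_0x)$ we may assume $R_0=1$, and by translation it suffices to treat the ball centered at the origin. Fix $\kappa\ge 8$ and $r\in(0,\tfrac{1}{\sqrt{2}\kappa}]$ (so $\kappa r/2\le 1/\sqrt{2}$), and fix once and for all a constant $\lambda>\lambda_0$ depending only on $d,p,\delta,K$, where $\lambda_0$ is the number from Theorem~\ref{th081901} applied to $\cL_0$ (which has no lower order terms). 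Writing $\cL_0 u-\lambda u=\Div g-\lambda u$, Theorem~\ref{th081901} gives a unique $w\in W^1_p$ with $\cL_0 w-\lambda w=\Div(I_{B_{\kappa r}}g)-\lambda I_{B_{\kappa r}}u$; since $\lambda$ is now a fixed constant, the accompanying estimate yields
\[
\left(|w_x|^p\right)_{B_r}^{1/p}\le N\kappa^{d/p}\left(|g|^p+|u|^p\right)_{B_{\kappa r}}^{1/p},\qquad
\left(|w_x|^p+|w|^p\right)_{B_{\kappa r}}^{1/p}\le N\left(|g|^p+|u|^p\right)_{B_{\kappa r}}^{1/p}.
\]
Then $v:=u-w$ solves the homogeneous equation $\cL_0 v-\lambda v=0$ in $B_{\kappa r}$.

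Next I would freeze the leading coefficients in $x'$ at scale $\kappa r/2$: put $\sba^{ij}(x^1)=\dashint_{B'_{\kappa r/2}}a^{ij}(x^1,z')\,dz'$ and $\bar\cL v=(\sba^{ij}v_{x^i})_{x^j}$, so that $\bar\cL v-\lambda v=\Div\hat g$ in $B_{\kappa r/2}$ with $\hat g_j=(\sba^{ij}-a^{ij})v_{x^i}$. Since $\bar\cL$ has coefficients depending on $x^1$ only, has no lower order terms, and $\kappa/2\ge4$, Theorem~\ref{th082001} (applied with $f=0$, the domain $B_{\kappa r/2}$ being enlarged to $B_{\kappa r}$ at the cost of a harmless constant) gives
\[
\left(|v_{x'}-(v_{x'})_{B_r}|^p\right)_{B_r}^{1/p}\le N\kappa^{-\alpha}\left(|v_x|^p+|v|^p\right)_{B_{\kappa r}}^{1/p}+N\kappa^{d/p}\left(|\hat g|^p\right)_{B_{\kappa r/2}}^{1/p},
\]
with $\alpha=\alpha(d,p,\delta,K)$ as in Theorem~\ref{th082001}. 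For the error term, H\"older's inequality in the form $\dashint|\hat g|^p\le N\left(\dashint|\sba^{ij}-a^{ij}|^{2p}\right)^{1/2}\left(\dashint|v_x|^{2p}\right)^{1/2}$, the bounds $\delta\le\sba^{ij},a^{ij}\le K$ together with $\dashint_{B_{\kappa r/2}}|\sba^{ij}-a^{ij}|\,dx\le N\,\text{osc}_{x'}\left(a^{ij},\Gamma_{\kappa r/2}(0)\right)\le Na^{\#}_{R_0}\le N\gamma$, and finally Corollary~\ref{cor082002} applied to the homogeneous solution $v$ on $B_{\kappa r}\supset B_{\kappa r/2}$ (legitimate because $\kappa r/2\le R_0/\sqrt{2}$ and because $\gamma$ is taken below the constant of Corollary~\ref{cor082002} for the exponents $2p$ and $p$) together give
\[
\left(|\hat g|^p\right)_{B_{\kappa r/2}}^{1/p}\le N\gamma^{1/(2p)}\left(|v_x|^{2p}\right)_{B_{\kappa r/2}}^{1/(2p)}\le N\gamma^{1/(2p)}\left(|v_x|^p+|v|^p\right)_{B_{\kappa r}}^{1/p}.
\]

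To conclude I would return from $v$ to $u$. From $u_{x'}=v_{x'}+w_{x'}$ and $\left(|w_{x'}-(w_{x'})_{B_r}|^p\right)_{B_r}^{1/p}\le 2\left(|w_x|^p\right)_{B_r}^{1/p}$, and from $\left(|v_x|^p+|v|^p\right)_{B_{\kappa r}}^{1/p}\le N\left(|u_x|^p\right)_{B_{\kappa r}}^{1/p}+N\left(|g|^p+|u|^p\right)_{B_{\kappa r}}^{1/p}$ (triangle inequality together with the $w$-estimates of the first paragraph), the three displays combine to
\[
\left(|u_{x'}-(u_{x'})_{B_r}|^p\right)_{B_r}^{1/p}\le N\left(\kappa^{-\alpha}+\kappa^{d/p}\gamma^{1/(2p)}\right)\left(|u_x|^p\right)_{B_{\kappa r}}^{1/p}+N\kappa^{d/p}\left(|g|^p+|u|^p\right)_{B_{\kappa r}}^{1/p},
\]
and undoing the scaling $R_0=1$ reinstates the factor $R_0^{-p}$ in front of $|u|^p$. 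The main obstacle is exactly this $\gamma$-term: in Theorem~\ref{theorem3.42} one was allowed $\left(|u_x|^{2\tau}\right)^{1/\tau}$ on the right, but here only the bare $\left(|u_x|^p\right)^{1/p}$ is permitted, and the one route I see to produce it is the splitting $u=w+v$ that leaves $v$ solving a homogeneous equation on which Corollary~\ref{cor082002} upgrades the $L_p$-bound on $v_x$ to an $L_{2p}$-bound. The rest is bookkeeping: checking the scale conditions ($\kappa/2\ge4$ for Theorem~\ref{th082001}, $\kappa r/2\le R_0/\sqrt{2}$ for Corollary~\ref{cor082002}) and intersecting the several smallness requirements on $\gamma$ (from Theorem~\ref{th081901}, from the oscillation bound, and from Corollary~\ref{cor082002}), whose minimum is the $\gamma_0$ of the statement.
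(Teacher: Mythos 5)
Your proof is correct and follows essentially the same route as the paper's: fix a $\lambda>\lambda_0$ and solve $\cL_0 w-\lambda w=\Div(I_{B_{\kappa r}}g)-\lambda I_{B_{\kappa r}}u$ via Theorem~\ref{th081901}, so that $v=u-w$ solves $\cL_0 v-\lambda v=0$ in $B_{\kappa r}$; freeze $a^{ij}$ in $x'$ at scale $\kappa r/2$ and apply Theorem~\ref{th082001}; bound the error $(\sba^{ij}-a^{ij})v_{x^i}$ by H\"older and upgrade $v_x$ to $L_{2p}$ with Corollary~\ref{cor082002}. These are precisely the steps in the paper, including the scale checks $\kappa/2\ge 4$ and $\kappa r\le 1/\sqrt{2}$.
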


\begin{proof}
By a scaling, we may again assume $R_0=1$. Fix a $\lambda>\lambda_0$ and let $\gamma_0 \le \gamma$,
where $\lambda_0 = \lambda_0(d,p,\delta,K)$ and $\gamma = \gamma(d,p,\delta,K)$ are from Theorem \ref{th081901}.
Then there exists $w \in W_p^1$
such that
$$
\cL w - \lambda w = \Div( I_{B_{\kappa r}} g ) - \lambda I_{B_{\kappa r}} u,
$$
$$
\sqrt{\lambda} \| w \|_{L_p} + \| w_x \|_{L_p}
\le N\| I_{B_{\kappa r}} g \|_{L_p} + N \sqrt{\lambda} \| I_{B_{\kappa r}} u \|_{L_p}.
$$
As before, this shows that
$$
\left(|w_x|^p\right)_{B_r}
\le N \kappa^{d} \left( |g|^p + \lambda^{p/2} |u|^p \right)_{B_{\kappa r}},
$$
$$
\left(|w_x|^p\right)_{B_{\kappa r}}
+ \lambda  \left(|w|^p\right)_{B_{\kappa r}}
\le N \left(|g|^p + \lambda^{p/2}|u|^p\right)_{B_{\kappa r}}.
$$

By setting $v := u - w$ we observe that $v \in W_{p,\text{loc}}^1$,
$$
\cL v - \lambda v = \Div \left( (1 - I_{B_{\kappa r}}) g \right) - \lambda (1 - I_{B_{\kappa r}}) u,
$$
and $\cL v - \lambda v = 0$ in $B_{\kappa r}$.

Let
$$
\sba^{ij}(x^1) = \dashint_{B'_{\kappa r/2}} a^{ij}(x^1,y') \, dy',\quad
\bar{\cL} \varphi  = (\sba^{ij} \varphi_{x^i})_{x^j}.
$$
Then due to the fact that $\cL v - \lambda v = 0$ in $B_{\kappa r}$,
$$
\bar{\cL} v  - \lambda v = \left( (\sba^{ij} - a^{ij}) v_{x^i} \right)_{x^j}
$$
in $B_{\kappa r}$.
Since $\kappa/2 \ge 4$, by Theorem \ref{th082001} applied to the operator $\bar{\cL}$
$$
\left(|v_{x'}-(v_{x'})_{B_r}|^p\right)_{B_r}
\leq N\kappa^{-p\alpha}
\left(|v_{x}|^p + \lambda^{p/2} |v|^p\right)_{B_{\kappa r/2}}+N\kappa^{d}\left(|\bar{g}|^p\right)_{B_{\kappa r/2}},
$$
where $\bar{g}_j = (\bar{a}^{ij} - a^{ij}) v_{x^i}$.
Note that
$$
\left(|\bar{g}|^p\right)_{B_{\kappa r/2}}
\le \left(|\sba^{ij} - a^{ij}|^{2p}\right)_{B_{\kappa r/2}}^{1/2}
\left(|v_x|^{2p}\right)_{B_{\kappa r/2}}^{1/2}
=: I_1^{1/2} I_2^{1/2},
$$
where $I_1 \le N a_{\kappa r/2}^{\#}$.
Under the assumption that
$\gamma_0 \le \gamma$,
where $\gamma=\gamma(d,p,2p,\delta,k)$ in Corollary \ref{cor082002}, we have
by Corollary \ref{cor082002}
applied to the fact that $\cL v - \lambda v = 0$ in $B_{\kappa r}$
$$
I_2 \le N \left(|v_x|^{p} + \lambda^{p/2}|v|^p\right)_{B_{\kappa r}}^2.
$$
Here we also used the fact $\kappa r \le 1/\sqrt{2}$.
Now to finish the proof we proceed as in the proof of Theorem \ref{th080601}.
\end{proof}

Theorem \ref{th082301} along with the argument in the proof of Corollary \ref{cor081102} yields 

\begin{corollary}							 \label{cor082301}
Under the assumptions of Theorem \ref{th082301},
there exists a constant $N = N(d,p,\delta,K)$
such that
$$
\dashint_{B^{d_2}_r}\dashint_{B^{d_2}_r}
\left| \|u_{x'}(\cdot, \sbx_2)\|_{p,d_1} - \|u_{x'}(\cdot, \sby_2,)\|_{p,d_1} \right|^p\, d \sbx_2 \, d \sby_2
$$
$$
\le N \left( \kappa^{-p\alpha} + \kappa^{d} \gamma^{1/2} \right) \dashint_{B^{d_2}_{2\kappa r}}\|u_x(\cdot, \sbx_2)\|_{p,d_1}^p \, d \sbx_2
$$
$$
+ N \kappa^{d/p} \dashint_{B^{d_2}_{2\kappa r}}\|g(\cdot, \sbx_2)\|_{p,d_1}^p
+R_0^{-p}\|u(\cdot, \sbx_2)\|_{p,d_1}^p \, d \sbx_2.
$$
for all $\kappa \ge 8$, $r \in (0, \frac{R_0}{\sqrt{2}\kappa}]$.
\end{corollary}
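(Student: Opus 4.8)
The plan is to mimic the proof of Corollary \ref{cor081102}, with the one-directional estimate of Theorem \ref{theorem082102} replaced by the tangential estimate of Theorem \ref{th082301}. Since Theorem \ref{th082301} already bounds the $L^p$-oscillation of $u_{x'}$ directly in terms of $|u_x|$, $|g|$, and $R_0^{-1}|u|$, there is no need here for the $x^1$-dilation with parameter $\mu$ nor for the multiplier $\bar a$; consequently the argument is shorter and the constant $N$ depends only on $d$, $p$, $\delta$, $K$.

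First I would reduce the pointwise difference of slice norms to an $L^p_{d_1}$-norm of a difference of vector fields: since $\bigl||a|-|b|\bigr|\le|a-b|$ pointwise and $\|\cdot\|_{p,d_1}$ obeys the triangle inequality,
$$
\bigl| \|u_{x'}(\cdot, \sbx_2)\|_{p,d_1} - \|u_{x'}(\cdot, \sby_2)\|_{p,d_1} \bigr|^p
\le \|u_{x'}(\cdot, \sbx_2) - u_{x'}(\cdot, \sby_2)\|_{p,d_1}^p .
$$
Then, exactly as at the beginning of the proof of Corollary \ref{cor081102}, I would apply the translation-averaging identity
$$
\|h(\cdot,\sbx_2)-h(\cdot,\sby_2)\|_{p,d_1}^p
= \int_{\bR^{d_1}}\dashint_{B^{d_1}_r(\sbz_1)}|h(\sbw_1,\sbx_2)-h(\sbw_1,\sby_2)|^p\,d\sbw_1\,d\sbz_1
$$
with $h=u_{x'}$, insert the constant $\left(u_{x'}\right)_{B_{\sqrt{2} r}(\sbz_1,0)}$, use the triangle inequality together with the inclusion $B^{d_1}_r(\sbz_1)\times B^{d_2}_r\subset B_{\sqrt{2} r}(\sbz_1,0)$, so that the left-hand side $I$ of the corollary is dominated by
$$
N\int_{\bR^{d_1}}\dashint_{B_{\sqrt{2} r}(\sbz_1,0)} \bigl|u_{x'}(x)-\left(u_{x'}\right)_{B_{\sqrt{2} r}(\sbz_1,0)}\bigr|^p\,dx\,d\sbz_1,
$$
with $N=N(d,p)$.

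For each fixed $\sbz_1$ I would then apply Theorem \ref{th082301} with center $(\sbz_1,0)$ and radius $\sqrt{2} r$ (admissible for $r$ in the stated range), raise the resulting estimate to the $p$-th power, and integrate in $\sbz_1\in\bR^{d_1}$; the enlarged balls $B_{\sqrt{2}\kappa r}(\sbz_1,0)$ may then be replaced by $B_{2\kappa r}(\sbz_1,0)$ at the cost of an absolute constant. Finally, running the translation-averaging identity backwards together with Fubini and the inclusion $B_{\sqrt{2}\kappa r}\subset B^{d_1}_{2\kappa r}\times B^{d_2}_{2\kappa r}$, precisely as in the closing lines of the proof of Corollary \ref{cor081102}, converts each term $\int_{\bR^{d_1}}(|h|^p)_{B_{\sqrt{2}\kappa r}(\sbz_1,0)}\,d\sbz_1$ into $N\dashint_{B^{d_2}_{2\kappa r}}\|h(\cdot,\sbx_2)\|_{p,d_1}^p\,d\sbx_2$ for $h\in\{u_x,g,u\}$, which is the asserted inequality. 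I do not expect any genuine obstacle, since every ingredient is already in hand; the steps that need care are purely bookkeeping — the clean reduction of the slice-norm difference to a difference of vector fields, and the tracking of the dilation factor $\sqrt{2}$ in the radii, arising because a product $B^{d_1}_r\times B^{d_2}_r$ sits inside the ball $B_{\sqrt{2} r}$ — but all of these are harmless.
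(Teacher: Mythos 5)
Your proof is correct and reproduces exactly what the paper intends: the paper gives no explicit argument for this corollary beyond the remark that ``Theorem \ref{th082301} along with the argument in the proof of Corollary \ref{cor081102} yields'' it, and your translation--averaging and ball-inclusion bookkeeping (now without the $\mu$-dilation or the multiplier $\bar a$, since Theorem \ref{th082301} already estimates the oscillation of $u_{x'}$ directly) is precisely that transfer. One small nit: applying Theorem \ref{th082301} at radius $\sqrt{2}\,r$ actually requires $r\le R_0/(2\kappa)$, not $r\le R_0/(\sqrt{2}\kappa)$, so the range in the corollary's statement appears to contain a typo (compare with the $r\le R_0/(2\kappa)$ in Corollary \ref{cor081102}); your phrase ``admissible for $r$ in the stated range'' glosses over this, but the substance of the argument is unaffected.
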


By adopting the same strategy as in the proof of Theorem \ref{theorem082301}
as well as using the argument in the last part of the proof of Lemma \ref{lem3.52},
we obtain the following lemma from Corollary \ref{cor082301}.

\begin{lemma}							\label{lemma082302}
Let $1 < p < q < \infty$,
$a^i = b^i = c = 0$.
Then there exists a constant $\gamma=\gamma(d,p,q,\delta,K)$
such that, under Assumption \ref{assumption20080424} ($\gamma$),
the following holds true.
There exist constants $N$ and $ R_3\in (0,1]$, depending only on
$d_1$, $d_2$, $p$, $q$,
$\delta$ and $K$,
such that,
for $u \in C_0^{\infty}$
satisfying $\cL u = \Div g$, where $g \in L_{q,p}$,
$$
\|u_{x}\|_{L_{q,p}} \le N \left( \| g \|_{L_{q,p}} +   R_0^{-p}\| u \|_{L_{q,p}} \right),
$$
provided that $u(\sbx_1, \sbx_2) = 0$ for $\sbx_2 \notin B_{(R_3R_0)^2}^{d_2}$.
\end{lemma}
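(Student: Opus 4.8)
The plan is to mirror the proof of Theorem \ref{theorem082301}, now estimating the whole gradient $u_x$ rather than only $u_{x^1}$: I feed in the oscillation estimate of Corollary \ref{cor082301} for $u_{x'}$ in place of the one used there for $u_{x^1}$, and then invoke Theorem \ref{theorem082301} itself to pass from a bound on $u_{x'}$ to a bound on $u_x$; the final absorption is as in the last part of the proof of Lemma \ref{lem3.52}. By scaling we may assume $R_0=1$. Fix $\kappa\ge 8$ (to be chosen) and set, for $\sbx_2\in\bR^{d_2}$,
$$
\mathfrak{h}(\sbx_2)=\|u_{x'}(\cdot,\sbx_2)\|_{p,d_1},\qquad
\mathfrak{u}(\sbx_2)=\|u_x(\cdot,\sbx_2)\|_{p,d_1},
$$
$$
\mathfrak{g}(\sbx_2)=\|g(\cdot,\sbx_2)\|_{p,d_1},\qquad
\mathfrak{w}(\sbx_2)=\|u(\cdot,\sbx_2)\|_{p,d_1},
$$
so that $\|u_{x'}\|_{L_{q,p}}=\|\mathfrak{h}\|_{L_q(\bR^{d_2})}$ and similarly for $\mathfrak{u},\mathfrak{g},\mathfrak{w}$, and note $\mathfrak{h}\le\mathfrak{u}$ pointwise.

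The first step is a pointwise bound on the sharp function $\mathfrak{h}^{\#}$ on $\bR^{d_2}$, obtained by splitting on the radius. For a ball $B^{d_2}_r(\bar\sbx_2)$ with $r\le 1/(\sqrt{2}\,\kappa)$ I would apply Corollary \ref{cor082301} after a translation, then use $\big(\dashint_{B^{d_2}_r(\bar\sbx_2)}|\mathfrak{h}-(\mathfrak{h})_{B^{d_2}_r(\bar\sbx_2)}|\big)^p\le\dashint\dashint|\mathfrak{h}(\sbx_2)-\mathfrak{h}(\sby_2)|^p$ and dominate the averages over $B^{d_2}_{2\kappa r}(\bar\sbx_2)$ by the corresponding maximal functions at $\bar\sbx_2$, to get
$$
\dashint_{B^{d_2}_r(\bar\sbx_2)}|\mathfrak{h}-(\mathfrak{h})_{B^{d_2}_r(\bar\sbx_2)}|
\le N\big(\kappa^{-\alpha}+\kappa^{d/p}\gamma^{1/(2p)}\big)\big(M\mathfrak{u}^p(\bar\sbx_2)\big)^{1/p}
+N\kappa^{d/p}\big(M\mathfrak{g}^p(\bar\sbx_2)+M\mathfrak{w}^p(\bar\sbx_2)\big)^{1/p}.
$$
For $r>1/(\sqrt{2}\,\kappa)$ I would instead use that $u$, hence $\mathfrak{h}$, vanishes outside $B^{d_2}_{R_3^2}$ (recall $R_0=1$); H\"older's inequality in $\sbx_2$, together with $R_3\le1$ and $\mathfrak{h}\le\mathfrak{u}$, then gives
$$
\dashint_{B^{d_2}_r(\bar\sbx_2)}|\mathfrak{h}-(\mathfrak{h})_{B^{d_2}_r(\bar\sbx_2)}|
\le N\,(\kappa R_3)^{d_2(1-1/p)}\big(M\mathfrak{u}^p(\bar\sbx_2)\big)^{1/p}.
$$
Combining the two ranges yields $\mathfrak{h}^{\#}(\bar\sbx_2)\le N\big(\kappa^{-\alpha}+\kappa^{d/p}\gamma^{1/(2p)}+(\kappa R_3)^{d_2(1-1/p)}\big)(M\mathfrak{u}^p(\bar\sbx_2))^{1/p}+N\kappa^{d/p}(M\mathfrak{g}^p(\bar\sbx_2)+M\mathfrak{w}^p(\bar\sbx_2))^{1/p}$. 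Applying the Fefferman--Stein theorem and the Hardy--Littlewood maximal theorem on $\bR^{d_2}$ (legitimate because $q>p$) then gives
$$
\|u_{x'}\|_{L_{q,p}}
\le N\big(\kappa^{-\alpha}+\kappa^{d/p}\gamma^{1/(2p)}+(\kappa R_3)^{d_2(1-1/p)}\big)\|u_x\|_{L_{q,p}}
+N\kappa^{d/p}\big(\|g\|_{L_{q,p}}+\|u\|_{L_{q,p}}\big).
$$

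The second step closes the estimate. By Theorem \ref{theorem082301}, whose constants $\gamma$ and $R_3$ we may assume are no larger than ours (shrinking if needed), $\|u_{x^1}\|_{L_{q,p}}\le N(\|u_{x'}\|_{L_{q,p}}+\|u\|_{L_{q,p}}+\|g\|_{L_{q,p}})$, hence $\|u_x\|_{L_{q,p}}\le N\|u_{x'}\|_{L_{q,p}}+N(\|u\|_{L_{q,p}}+\|g\|_{L_{q,p}})$. Substituting the inequality from the first step,
$$
\|u_x\|_{L_{q,p}}
\le N\big(\kappa^{-\alpha}+\kappa^{d/p}\gamma^{1/(2p)}+(\kappa R_3)^{d_2(1-1/p)}\big)\|u_x\|_{L_{q,p}}
+N\kappa^{d/p}\big(\|g\|_{L_{q,p}}+\|u\|_{L_{q,p}}\big).
$$
Now choose $\kappa$ large so that $N\kappa^{-\alpha}\le1/6$, then $\gamma_0$ small so that $N\kappa^{d/p}\gamma^{1/(2p)}\le1/6$ for $\gamma\le\gamma_0$, and finally $R_3$ small so that $N(\kappa R_3)^{d_2(1-1/p)}\le1/6$; the $\|u_x\|_{L_{q,p}}$-term on the right can then be absorbed on the left, leaving $\|u_x\|_{L_{q,p}}\le N(\|g\|_{L_{q,p}}+\|u\|_{L_{q,p}})$. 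Undoing the scaling $R_0=1$ restores the $R_0$-weight on $\|u\|_{L_{q,p}}$ and gives the claimed estimate.

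Since every oscillation estimate needed (Corollary \ref{cor082301}, Theorem \ref{theorem082301}) is already in hand, I do not expect a serious obstacle. The two points requiring care are the large-radius regime of the sharp-function estimate, where one must exploit the compact support of $u$ in $\sbx_2$ exactly as in the proof of Theorem \ref{theorem082301}, and the bookkeeping of the three smallness parameters $\kappa$, $\gamma_0$, $R_3$ in the correct order so that the final absorption is uniformly valid.
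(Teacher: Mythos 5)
Your proof is correct and takes essentially the same route the paper intends: the paper explicitly says Lemma \ref{lemma082302} follows by combining the strategy of Theorem \ref{theorem082301} (sharp-function estimate on $\bR^{d_2}$ from Corollary \ref{cor082301}, with the large-radius case handled via the compact $\sbx_2$-support) with the closing argument of Lemma \ref{lem3.52} (feed the $u_{x^1}$-by-$u_{x'}$ bound back in and absorb), which is precisely what you do, including the correct order $\kappa\!\to\!\gamma\!\to\!R_3$ when fixing the smallness parameters.
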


By modifying the proof of Lemma 5.5 in \cite{Krylov_2005}
and using Lemma \ref{lemma082302} above,
we prove the following lemma.

\begin{lemma}							\label{lem082601}
Let $1 < p < q < \infty$,
$f$, $g = (g_1, \cdots, g_d) \in L_{q,p}$, $ u \in C_0^{\infty}$, and
$$
\cL u - \lambda u = \Div g + f.
$$
Then there exists a constant $\gamma=\gamma(d,p,q,\delta,K)$
such that, under Assumption \ref{assumption20080424} ($\gamma$),
the following holds true.
There exist constants $R_3\in (0,1]$, $\lambda_1$ and $N$ depending only on
$d_1$, $d_2$, $p$, $q$,
$\delta$ and $K$,
such that
$$
\lambda \| u \|_{L_{q,p}}
+ \sqrt{\lambda}\|u_{x}\|_{L_{q,p}} \le N \left( \sqrt{\lambda}\| g \|_{L_{q,p}} + \| f \|_{L_{q,p}} \right),
$$
provided that $u(\sbx_1, \sbx_2) = 0$ for $\sbx_2 \notin B_{(R_3R_0)^2}^{d_2}$
and $\lambda > \lambda_1$.
\end{lemma}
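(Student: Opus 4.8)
The plan is to reduce, in two successive steps, to the $\lambda=0$, lower‑order‑free estimate of Lemma \ref{lemma082302}. By the usual scaling we may assume $R_0=1$. First we absorb the lower‑order coefficients into the data: writing $\cL_0 u=(a^{ij}u_{x^i})_{x^j}$, the equation $\cL u-\lambda u=\Div g+f$ reads $\cL_0 u-\lambda u=\Div\tilde g+\tilde f$ with $\tilde g_j=g_j-a^j u$ and $\tilde f=f-b^i u_{x^i}-cu$, and since $|a^j|,|b^i|,|c|\le K$,
\begin{equation*}
\|\tilde g\|_{L_{q,p}}\le\|g\|_{L_{q,p}}+K\|u\|_{L_{q,p}},\qquad
\|\tilde f\|_{L_{q,p}}\le\|f\|_{L_{q,p}}+K\|u_x\|_{L_{q,p}}+K\|u\|_{L_{q,p}}.
\end{equation*}
Thus it suffices to prove the weighted estimate for $\cL_0 u-\lambda u=\Div\tilde g+\tilde f$ and to absorb these contributions for $\lambda$ large.

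Second, we kill the term $-\lambda u$ by Agmon's device in one extra variable $y\in\bR$. Regard $\hat\cL_0:=\cL_0+\partial_{yy}$ as a $(d+1)$‑dimensional divergence‑form operator whose coefficients are independent of $y$; exactly as in the earlier uses of this device (Lemma \ref{lemma080701}, Corollary \ref{cor082002}), they are still measurable in $x^1$ and satisfy Assumption \ref{assumption20080424}($\gamma'$) in the directions $(x^2,\dots,x^d,y)$ with $\gamma'\le C_d\gamma$ and the same $R_0=1$. Fix $\zeta_0\in C_0^\infty((-1,1))$ with $\zeta_0\equiv1$ on $(-1/2,1/2)$, put $\zeta(y)=\zeta_0(y/T)$ for $T$ large, and set $v(x,y)=u(x)\,\zeta(y)\cos(\sqrt\lambda\,y)$, which lies in $C_0^\infty(\bR^{d+1})$ and vanishes for $\sbx_2\notin B^{d_2}_{(R_3R_0)^2}$ because $u$ does. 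Using $\cL_0 u=\lambda u+\Div\tilde g+\tilde f$ and $v(x,y)=u(x)\psi(y)$ with $\psi=\zeta\cos(\sqrt\lambda\,\cdot\,)$, one computes $\hat\cL_0 v=(\cL_0 u)\psi+u\psi''=\Div_{(x,y)}\hat G+F$, where
\begin{equation*}
\hat G=\bigl(\tilde g\,\zeta\cos(\sqrt\lambda\,y),\ \tfrac{1}{\sqrt\lambda}\,\tilde f\,\zeta\sin(\sqrt\lambda\,y)\bigr),\qquad
F=u\,(\psi''+\lambda\psi)-\tfrac{1}{\sqrt\lambda}\,\tilde f\,\zeta'\sin(\sqrt\lambda\,y);
\end{equation*}
since $\psi''+\lambda\psi=\zeta''\cos(\sqrt\lambda\,y)-2\sqrt\lambda\,\zeta'\sin(\sqrt\lambda\,y)$, the remainder $F$ is supported in $\{T/2\le|y|\le T\}$ and satisfies $|F|\le N\bigl(T^{-2}|u|+\sqrt\lambda\,T^{-1}|u|+(\sqrt\lambda\,T)^{-1}|\tilde f|\bigr)$.

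Now apply the $L_{q,p}$‑estimate of Lemma \ref{lemma082302} in dimension $d+1$, with the new variable $y$ placed among the inner coordinates (so $d_1\mapsto d_1+1$, $d_2$ unchanged, and $R_3$ inherited from that lemma), in the routine version also permitting a free term on the right‑hand side. Writing $\Lambda=\bigl(\int_\bR|\zeta(y)\cos(\sqrt\lambda\,y)|^p\,dy\bigr)^{1/p}\asymp T^{1/p}$ and separating the $y$‑integration, one obtains, for fixed $\lambda>0$ and $T\to\infty$, $\|v\|_{L_{q,p}}=\Lambda\|u\|_{L_{q,p}}$, $\|v_z\|_{L_{q,p}}\asymp\Lambda\bigl(\|u_x\|_{L_{q,p}}+\sqrt\lambda\,\|u\|_{L_{q,p}}\bigr)$, $\|\hat G\|_{L_{q,p}}\le N\Lambda\bigl(\|\tilde g\|_{L_{q,p}}+\tfrac{1}{\sqrt\lambda}\|\tilde f\|_{L_{q,p}}\bigr)$, and $\|F\|_{L_{q,p}}\le N\Lambda\bigl(T^{-2}\|u\|_{L_{q,p}}+\sqrt\lambda\,T^{-1}\|u\|_{L_{q,p}}+(\sqrt\lambda\,T)^{-1}\|\tilde f\|_{L_{q,p}}\bigr)$, where one uses that $\int_\bR|\zeta\cos(\sqrt\lambda\,y)|^p\,dy$, $\int_\bR|\zeta\sin(\sqrt\lambda\,y)|^p\,dy$ and $\int_{-T/2}^{T/2}|\sin(\sqrt\lambda\,y)|^p\,dy$ are mutually comparable, while $\int_\bR|\zeta'\cos|^p\lesssim T^{1-p}$ and $\int_\bR|\zeta''\cos|^p\lesssim T^{1-2p}$. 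Dividing the resulting inequality by $\Lambda$ and letting $T\to\infty$ annihilates the $F$‑contribution, and multiplying by $\sqrt\lambda$ yields $\sqrt\lambda\,\|u_x\|_{L_{q,p}}+\lambda\,\|u\|_{L_{q,p}}\le N\bigl(\sqrt\lambda\,\|\tilde g\|_{L_{q,p}}+\|\tilde f\|_{L_{q,p}}+\sqrt\lambda\,\|u\|_{L_{q,p}}\bigr)$. Inserting the first‑step bounds for $\tilde g,\tilde f$ and taking $\lambda\ge\lambda_1$ with $\sqrt{\lambda_1}$ large enough (depending, after the scaling $R_0=1$, only on $d_1,d_2,p,q,\delta,K$), the terms $NK\|u_x\|_{L_{q,p}}$, $N(K+1)\sqrt\lambda\,\|u\|_{L_{q,p}}$ and $NK\|u\|_{L_{q,p}}$ are absorbed into the left‑hand side, which gives the assertion.

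The main obstacle is making the Agmon step watertight. One must check that the $(d+1)$‑dimensional coefficients genuinely satisfy Assumption \ref{assumption20080424} with $x^1$ still the distinguished direction, so that Lemma \ref{lemma082302} applies verbatim in dimension $d+1$; and, more delicately, one must control the cut‑off remainder $F$ uniformly in the mixed norm so that it is killed on dividing by $\Lambda$ and letting $T\to\infty$. This last point is also where the (otherwise routine) extension of Lemma \ref{lemma082302} to a nonzero free term is needed, since a compactly supported cut‑off $\zeta$ unavoidably generates the $f$‑type error $F$.
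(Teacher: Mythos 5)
Your strategy -- absorb the lower-order terms into the data, then reduce the $\lambda>0$ estimate to the $\lambda=0$ estimate of Lemma \ref{lemma082302} via Agmon's device in $\bR^{d+1}$ -- is exactly what the paper has in mind (the paper itself only cites Krylov's Lemma 5.5 and Lemma \ref{lemma082302} without giving details), and your computation of $\hat G$, $F$, and the $T\to\infty$ asymptotics of the $y$-integrals is correct. The one ingredient you do not actually supply but correctly flag is the extension of Lemma \ref{lemma082302} to an equation $\cL_0 u = \Div g + f$ with a free term on the right-hand side; this is indeed needed because the cut-off $\zeta$ unavoidably produces the error $F$, and it is in fact as routine as you suggest: tracing back through Theorem \ref{th082301}, Corollary \ref{cor082301}, Theorem \ref{theorem082102}, Corollary \ref{cor081102} and Theorem \ref{theorem082301}, the free term enters at each stage via the auxiliary solve $\cL w-\lambda w=\Div(Ig)+I(f-\lambda u)$ with the fixed positive $\lambda$ from Theorem \ref{th081901}, so it appears on the right-hand side with a factor $\lambda^{-1/2}$ that is absorbed into $N$, yielding $\|u_x\|_{L_{q,p}}\le N(\|g\|_{L_{q,p}}+\|u\|_{L_{q,p}}+\|f\|_{L_{q,p}})$, which is precisely what your final absorption step requires. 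Minor remarks: the verification that $\hat a^{ij}$ satisfies Assumption \ref{assumption20080424} in $\bR^{d+1}$ should be written out (the coefficients are constant in $y$, so $\text{osc}_{(x',y)}(\hat a^{ij},\Gamma_r)\le C_d\,\text{osc}_{x'}(a^{ij},\Gamma_r)$), and the $R_3,\gamma$ you inherit are those of Lemma \ref{lemma082302} with $d_1\mapsto d_1+1$, which is consistent with the stated dependence on $d_1,d_2$.
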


\begin{proof}[Proof of Theorem \ref{thm6.5}]
If $p = q$, the theorem is a special case of Theorem \ref{th081901}.
The case $q < p$ is proved by the duality argument,
so we assume that $q > p$.
In this case,
it suffices to prove the estimate in the theorem for $u \in C_0^{\infty}$, which, by Lemma \ref{lem082601}, holds true
for $u$ with a small compact support with respect to $\sbx_2 \in \bR^{d_2}$.
Then we finish the proof by using a partition of unity (see the proofs of Theorem 5.7 in \cite{Krylov_2005}
or Lemma 3.4 in \cite{Krylov_2007_mixed_VMO}).
\end{proof}


\section*{Acknowledgement}

The authors are sincerely grateful to Nicolai V. Krylov and the referees for very helpful suggestions and comments.

\bibliographystyle{plain}

\begin{thebibliography}{1}

\bibitem{AuscherQafsaoui} \textsc{Auscher P., Qafsaoui M.}: Observations on $W^{1,p}$ estimates for divergence elliptic equations
with VMO coefficients, \textit{Boll. Unione Mat. Ital. Sez. B Artic. Ric. Mat.} \textbf{5} , 487--509 (2002).

\bibitem{MR1239929} \textsc{Bramanti M., Cerutti M.}:
$W_p^{1,2}$
solvability for the Cauchy-Dirichlet problem for parabolic
equations with VMO coefficients, \textit{Comm. Partial Differential
Equations} \textbf{18}, no. 9-10, 1735--1763 (1993).

\bibitem{Byun05a} \textsc{Byun S.}: Elliptic equations with BMO coefficients in Lipschitz domains, \textit{Trans. Amer. Math. Soc.} \textbf{357}, no. 3, 1025--1046 (2005).

\bibitem{Byun07} \textsc{Byun S.}:  Optimal $W\sp {1,p}$ regularity theory for parabolic equations in divergence form, \textit{J. Evol. Equ.} \textbf{7}, no. 3, 415--428  (2007).

\bibitem{ByunWang04} \textsc{Byun S., Wang L.}: Elliptic equations with BMO coefficients in Reifenberg domains,
\textit{Comm. Pure Appl. Math.} \textbf{57}, no. 10, 1283--1310  (2004).

\bibitem{ByunWang05} \textsc{Byun S., Wang L.}: The conormal derivative problem for elliptic equations with BMO coefficients on Reifenberg flat domains, \textit{Proc. London Math. Soc. (3)}  \textbf{90}, no. 1, 245--272  (2005).

\bibitem{CFL2} \textsc{Chiarenza F., Frasca M., Longo P.}: $W^{2,p}$-solvability of the Dirichlet problem for
nondivergence elliptic equations with VMO coefficients,
\textit{Trans. Amer. Math. Soc.} \textbf{336}, no. 2,  841--853  (1993).

\bibitem{DFG}
\textsc{Di Fazio G. }: $L^p$ estimates for divergence form elliptic equations with discontinuous coefficients. (Italian summary)
\textit{Boll. Un. Mat. Ital. A (7)} \textbf{10}, no. 2, 409--420  (1996).


\bibitem{DK08} \textsc{Dong H., Kim D.}: Parabolic and elliptic systems with VMO coefficients, submitted (2008).
    
\bibitem{DK09} \textsc{Dong H., Kim D.}: Parabolic and elliptic systems in divergence form with variably partially BMO coefficients, submitted (2009).

\bibitem{HHH} \textsc{Haller-Dintelmann R., Heck H.,  Hieber M.}:
$L^p$--$L^q$-estimates for parabolic systems in non-divergence form with VMO coefficients, \textit{J. London Math. Soc. (2)}   \textbf{74}, no. 3, 717--736 (2006).

\bibitem{JeKe} \textsc{Jerison D.,  Kenig C.}:
The inhomogeneous Dirichlet problem in Lipschitz domains, \textit{J. Funct. Anal.}
\textbf{130}, no. 1, 161--219 (1995). 

\bibitem{KimKrylov07} \textsc{Kim D.,  Krylov N.~V.}: Elliptic differential equations with coefficients measurable with respect to one variable and VMO with respect to the others, \textit{SIAM J. Math. Anal.} \textbf{39}, no. 2, 489--506 (2007).

\bibitem{KimKrylov:par06} \textsc{Kim D.,  Krylov N.~V.}: Parabolic equations with measurable coefficients, \textit{Potential Anal.} \textbf{26}, no. 4, 345--361 (2007).

\bibitem{Kim:par06} \textsc{Kim D.}: Parabolic equations with measurable coefficients {II},
\textit{J. Math. Anal. Appl.} \textbf{334}, no. 1, 534--548  (2007).


\bibitem{Kim07a} \textsc{Kim D.}: Elliptic and parabolic equations with measurable coefficients in $L_p$-spaces with mixed norms, preprint, (2008).

\bibitem{Krylov_2005} \textsc{Krylov N.~V.}: Parabolic and elliptic
equations with VMO coefficients, \textit{Comm. Partial Differential
Equations} \textbf{32}, no. 1-3, 453--475  (2007).

\bibitem{Krylov_2007_mixed_VMO} \textsc{Krylov N.~V.}: Parabolic
equations with VMO coefficients in spaces with mixed norms,
\textit{J. Funct. Anal.} \textbf{250}, no. 2, 521--558  (2007).


\bibitem{Krylov08}  \textsc{Krylov N.~V.}: Second-order elliptic equations with variably partially VMO coefficients, preprint (2008).

\bibitem{Krylov:book:2008}  \textsc{Krylov N.~V.}: \textit{Lectures on elliptic and parabolic equations in Sobolev spaces}, American Mathematical Society, 2008.

\bibitem{Gilbarg&Trudinger:book:1983} \textsc{Gilbarg D., Trudinger N.~S.}: \textit{Elliptic Partial Differential Equations of Second Order}, Springer, 2nd edition, 1983.

\bibitem{Lieb1} \textsc{Lieberman G. M.}: The conormal derivative problem for elliptic equations of variational type,
\textit{J. Differential Equations} \textbf{49}, no. 2,  218--257  (1983).


\bibitem{Lieb2} \textsc{Lieberman G. M.}: The conormal derivative problem for equations of variational type in nonsmooth domains,
\textit{Trans. Amer. Math. Soc.} \textbf{330}, no. 1, 41--67  (1992).

\bibitem{Lieb} \textsc{Lieberman G. M.}: \textit{Second order parabolic differential equations}, World Scientific Publishing Co., Inc., River Edge, NJ, 1996.

\bibitem{Lo72} \textsc{Lorenzi A.}: On elliptic equations with piecewise constant coefficients. II , \textit{Ann. Scuola Norm. Sup. Pisa (3)} \textbf{26}, 839-870  (1972).

\bibitem{Shen05} \textsc{Shen Z.}: Bounds of Riesz transforms on $L\sp p$ spaces for second order elliptic operators, \textit{Ann. Inst. Fourier (Grenoble)} \textbf{55}, no. 1, 173--197  (2005). 

\bibitem{Weid02} \textsc{Weidemaier P.}: Maximal regularity for parabolic equations with inhomogeneous boundary conditions in Sobolev spaces with mixed $L_p$-norm, \textit{Electron. Res. Announc. Amer. Math. Soc.} \textbf{8}, 47--51  (2002). 

\end{thebibliography}

\def\cprime{$'$}\def\cprime{$'$} \def\cprime{$'$} \def\cprime{$'$}
  \def\cprime{$'$} \def\cprime{$'$}

\end{document}